\newcommand{\tikztriangle}{
\begin{tikzpicture}[x=0.3cm,y=0.3cm,baseline=-0.15cm]
\draw (-0.4,-0.433) -- (1.4,-0.433);
\draw (-0.2,-0.7794) -- (0.7,0.7794);
\draw (1.2,-0.7794) -- (0.3,0.7794);
\end{tikzpicture}
}
\newcommand{\tikzsharp}{
\begin{tikzpicture}[x=0.3cm,y=0.3cm,baseline=-0.15cm]
\draw (-0.8,-0.5) -- (0.8,-0.5);
\draw (-0.8,0.5) -- (0.8,0.5);
\draw (-0.5,-0.8) -- (-0.5,0.8);
\draw (0.5,-0.8) -- (0.5,0.8);
\end{tikzpicture}
}
\newcommand{\tikztetrahedron}{
\begin{tikzpicture}[x=0.3cm,y=0.3cm,baseline=-0.18cm]
\draw[dashed] (-0.3,-0.433) -- (1,-0.433);
\draw (-0.3,-0.433) -- (0.4,0.7);
\draw (1,-0.433) -- (0.4,0.7);
\draw (-0.3,-0.433) -- (0.35,-1);
\draw (0.35,-1) -- (1,-0.433);
\draw (0.35,-1) -- (0.4,0.7);
\end{tikzpicture}
}
\def\acts{\ \rotatebox[origin=c]{-90}{$\circlearrowright$}\ }
\newtheorem{prop}{Proposition}[section]
\newtheorem{cor}[prop]{Corollary}
\newtheorem{lem}[prop]{Lemma}
\newtheorem{thm}[prop]{Theorem}
\newtheorem{que}[prop]{Question}
\newtheorem{claim}[prop]{Claim}
\theoremstyle{definition}
\newtheorem{rem}[prop]{Remark}
\newtheorem{notn}[prop]{Notation}
\newtheorem*{claim*}{Claim}
\newtheorem*{notn-term}{Notation and Terminology}
\numberwithin{equation}{section}
\newcommand{\RomanNumeralCaps}[1]{\MakeUppercase{\romannumeral #1}}
\newcommand{\bk}{\mathbf{k}}
\newcommand{\bA}{\mathbb{A}}
\newcommand{\bF}{\mathbb{F}}
\newcommand{\bP}{\mathbb{P}}
\newcommand{\bQ}{\mathbb{Q}}
\newcommand{\bR}{\mathbb{R}}
\newcommand{\bZ}{\mathbb{Z}}
\newcommand{\cE}{\mathcal{E}}
\newcommand{\cF}{\mathcal{F}}
\newcommand{\cL}{\mathcal{L}}
\newcommand{\cO}{\mathcal{O}}
\newcommand{\cQ}{\mathcal{Q}}
\newcommand{\fm}{\mathfrak{m}}
\newcommand{\codim}{\mathrm{codim}}
\newcommand{\Amp}{\mathrm{Amp}}
\newcommand{\Exc}{\mathrm{Exc}}
\newcommand{\rk}{\operatorname{rk}}
\newcommand{\Ext}{\operatorname{Ext}}
\newcommand{\N}{\operatorname{N}}
\newcommand{\NE}{\overline{\operatorname{NE}}}
\newcommand{\Sing}{\operatorname{Sing}}
\newcommand{\Supp}{\operatorname{Supp}}
\title[Endomorphisms of Fano 4-folds]
{Amplified endomorphisms of Fano fourfolds}
\author{Jia Jia, Guolei Zhong}
\address{
% \textsc{Department of Mathematics} \endgraf
\textsc{Department of Mathematics, National University of Singapore, Singapore 119076, Republic of Singapore}
}
\email{jia{\_}jia@u.nus.edu}
\address{
% \textsc{Department of Mathematics} \endgraf
\textsc{Department of Mathematics, National University of Singapore, Singapore 119076, Republic of Singapore.
\newline
\indent
Current Address:
Center for Complex Geometry, Institute for Basic Science, 55 Expo-ro, Yuseong-gu, Daejeon, Republic of Korea, 34126.}
}
\email{zhongguolei@u.nus.edu}
\email{guolei@ibs.re.kr}
\subjclass[2020]{
14M25, % Toric varieties, Newton polyhedra, Okounkov bodies
14E30, % Minimal model program (Mori theory, extremal rays)
32H50, % Iteration of holomorphic maps, ﬁxed points of holomorphic maps and related problems for several complex variables
% 20K30, % Automorphisms, homomorphisms, endomorphisms, etc. for abelian groups
08A35. % Automorphisms and endomorphisms of algebraic structures
}
\keywords{Fano fourfold, toric variety, amplified endomorphism, equivariant minimal model program, totally periodic subvarieties}
\begin{document}

\begin{abstract}
	Let $X$ be a smooth Fano fourfold admitting a conic bundle structure.
	We show that $X$ is toric if and only if $X$ admits an amplified endomorphism;
	in this case, $X$ is a rational variety.
\end{abstract}

\maketitle
\setcounter{tocdepth}{1}
\tableofcontents

\section{Introduction}

We work over an algebraically closed field $\bk$ of characteristic $0$.
% It has been a long history and involved many people working on the classification of smooth projective varieties $X$ admitting a non-isomorphic surjective endomorphism $f$.
As a fundamental building block of non-isomorphic surjective endomorphisms, the rationally connected projective variety plays a significant role in the equivariant minimal model program.

Let $f\colon X\to X$ be a surjective endomorphism on a projective variety.
In a joint work by Meng and the second author, generalizing \cite{fakhruddin2003questions}*{Question 4.4}, they asked the following question (cf.~\cite{meng2020rigidity}*{Question 1.2}), which characterizes toric varieties from dynamical viewpoints.
Recall that $f$ is $q$-\textit{polarized} if $f^*H \sim qH$ for some ample Cartier divisor $H$ on $X$ and integer $q > 1$, \textit{int-amplified} if $f^*L - L$ is ample for some ample Cartier divisor $L$ on $X$, and \textit{amplified} if $f^*L - L$ is ample for some (not necessarily ample) Cartier divisor $L$ on $X$ (cf.~\cite{krieger2017cohomological}); see \cite{meng2018building}*{Propositions 1.1, 2.9} and \cite{meng2020building}*{Theorem 1.1} for the equivalent definitions.
% We refer readers to \cite{meng2018building}*{Propositions 1.1, 2.9} and \cite{meng2020building}*{Theorem 1.1} for the equivalent definitions.
Clearly, ``polarized'' $\Rightarrow$ ``int-amplified'' $\Rightarrow$ ``amplified''.
% If $X$ is Fano (i.e., the anti-canonical divisor $-K_X$ is ample $\bQ$-Cartier), then (cf.~\cite{meng2020nonisomorphic}*{Lemma 2.6}).

\begin{que}\label{main-ques}
	Let $X$ be a rationally connected smooth projective variety.
	Suppose that $X$ admits an int-amplified (or polarized) endomorphism $f$.
	Is $X$ a toric variety?
\end{que}

For the surface case, Nakayama confirmed Sato's conjecture that a rational smooth projective surface admitting a non-isomorphic endomorphism is toric (cf.~\cite{nakayama2002ruled}*{Theorem 3});
hence Question~\ref{main-ques} is also considered as a higher dimensional analogue of Sato's conjecture.
Besides, Question~\ref{main-ques} is known to be true under the further assumption that $f$ has totally invariant ramifications (cf.~\cite{meng2020rigidity}*{Theorem 1.4}, \cite{meng2019characterizations}*{Corollary 1.4} and \cite{hwang2011endomorphisms}*{Theorem 1.2}).
Moreover, Meng, Zhang and the second author recently answered Question~\ref{main-ques} affirmatively for smooth Fano threefolds (cf.~\cite{meng2020nonisomorphic}*{Theorem 1.4}):

\begin{thm}\label{thm-threefold-class}
	Let $Y$ be a smooth Fano threefold admitting an (int-)amplified endomorphism $g$.
	Then $Y$ is toric.
	Further, after iteration, either of the following holds.
	\begin{enumerate}[label=\textbf{(\Alph*)}]
		\item If $\rho(Y) \leq 2$, then $Y$ is either $\bP^3$, a (toric) splitting $\bP^1$-bundle over $\bP^2$, or a (toric) blow-up of $Y':=\bP^3$ along a $(g|_{Y'})^{-1}$-invariant line (cf.~\cite{meng2020nonisomorphic}*{Theorem 5.1}).
		\item If $\rho(Y) \geq 3$, then $Y$ admits a conic bundle over a toric surface $Z$ which factors as $Y\xrightarrow{\varphi}Y'\xrightarrow{p_0}Z$ where $\varphi$ is a (toric) blow-up of a (not necessarily Fano) splitting $\bP^1$-bundle $Y'$ over $Z$ along disjoint curves which are intersections of $(g|_{Y'})^{-1}$-invariant prime divisors (cf.~\cite{meng2020nonisomorphic}*{Theorems 6.1 and 8.1}).
	\end{enumerate}
\end{thm}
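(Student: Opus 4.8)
The plan is to run a $g$-equivariant minimal model program and to induct on the Picard number $\rho(Y)$, peeling off one extremal contraction at a time and reducing either to a toric variety of lower dimension or to a toric Fano-type variety of smaller Picard number; throughout we freely replace $g$ by an iterate. First we reduce to the int-amplified case: since $Y$ is Fano, $\NE(Y)$ is rational polyhedral and $\Aut(Y)$ acts on $N^1(Y)$ through a finite group, and since surjective endomorphisms of projective varieties are finite, iterating the ampleness of $g^*L-L$ shows every power of $g$ is amplified; hence $g$ cannot be an automorphism (as an automorphism acts on $N^1(Y)$ through a finite group), so $\deg g>1$, and a short further argument with the integral operator $g^*|_{N^1(Y)}$ promotes ``amplified'' to ``int-amplified''. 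After one more iterate we may assume $g_*$ fixes every extremal ray of $\NE(Y)$ and every $g$-periodic prime divisor, so the equivariant MMP produces $Y=Y_0\dashrightarrow Y_1\dashrightarrow\cdots\dashrightarrow Y_m$ in which every step is $g$-equivariant, every $Y_i$ is $\bQ$-factorial terminal with an induced int-amplified $g_i$, and $Y_m$ is a $g_m$-equivariant Mori fibre space over a base $W$ with $\dim W\in\{0,1,2\}$.

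\emph{The bottom of the tower.} If $\dim W=0$, then $Y_m$ is Fano of Picard number one admitting a non-isomorphic endomorphism, so by the rigidity theorems for such Fanos (Paranjape--Srinivas; Amerik--Rovinsky--Van de Ven; Hwang--Mok, cf.~\cite{hwang2011endomorphisms}) we get $Y_m\cong\bP^3$. If $\dim W=1$ then $W=\bP^1$; and if $\dim W=2$, then $g$ descends to an int-amplified endomorphism of the rational surface $W$, which is therefore toric by the surface case of Sato's conjecture (\cite{nakayama2002ruled}, together with its refinement to mildly singular surfaces). In every case the base of the final fibration is toric.

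\emph{Climbing back up.} It remains to show that every $Y_i$ is toric and that each divisorial contraction (and flip, if any) in the MMP reverses to a torus-equivariant operation; then $Y=Y_0$ is toric. Three ingredients are required. \emph{(1)} A totally-periodic-subvariety lemma: a $g_i$-periodic subvariety of a toric variety equipped with an int-amplified endomorphism becomes, after a further iterate, a union of closures of torus orbits; this pins the centres of the blow-ups that reverse Mori's divisorial contractions to torus-invariant points or curves, so each $Y_i\dashrightarrow Y_{i-1}$ is toric. \emph{(2)} For a fibre-type contraction onto a surface, i.e.~a conic bundle $\pi\colon Y_i\to W$ over the toric surface $W$, the discriminant curve $\Delta\subset W$ is $g_W$-invariant, hence by (1) a union of torus-invariant curves; since the function field of $W$ is $C_2$ the generic fibre of $\pi$ has a rational point, so $\pi$ is generically a $\bP^1$-bundle, and an Amerik--Nakayama-type rigidity argument for endomorphisms of projective bundles over toric bases forces the underlying rank-two bundle to split and the degenerations of $\pi$ along $\Delta$ to arise from a splitting $\bP^1$-bundle $Y'\to W$ by blowing up disjoint curves that are intersections of torus-invariant prime divisors --- precisely the structure (B). \emph{(3)} For a del Pezzo fibration $Y_i\to\bP^1$, restricting $g$ to a general fibre and invoking the surface case shows the fibre is a toric del Pezzo surface; picking a further $g$-equivariant extremal ray that contracts inside the fibres (when the relative Picard number permits) reduces this to a split projective bundle over $\bP^1$ or to a case already treated. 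Unwinding the tower then gives (A) when $\rho(Y)\le2$ --- one extremal ray contracts divisorially onto $\bP^3$, exhibiting $Y$ as the blow-up of $\bP^3$ along a $g^{-1}$-invariant line, or else both extremal rays are of fibre type onto $\bP^2$ or $\bP^1$, forcing $Y$ to be a splitting $\bP^1$-bundle over $\bP^2$ --- and (B) when $\rho(Y)\ge3$.

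\emph{Main obstacle.} The crux is ingredient (2): pinning the discriminant of a conic bundle to torus-invariant position, showing that its generically trivial part is a \emph{split} $\bP^1$-bundle rather than merely a locally trivial one, and exhibiting the whole conic bundle as a torus-equivariant blow-up of a split bundle. This requires marrying the fine geometry of smooth Fano threefolds (the Mori--Mukai classification of extremal contractions and del Pezzo fibrations) with the structure theory of int-amplified endomorphisms of $\bQ$-factorial klt varieties; and the totally-periodic-subvariety lemma of (1) --- subtle because $g$ need not be a toric morphism to begin with --- is the technical engine that recurs at every step, including in the reduction of the general amplified case to the int-amplified one.
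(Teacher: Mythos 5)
Your proposal cannot be checked against an in-paper proof, because the paper does not prove this theorem: it is imported wholesale from \cite{meng2020nonisomorphic} (Theorem 1.4 there, with the refined descriptions in (A) and (B) coming from Theorems 5.1, 6.1 and 8.1 of that reference). Your outline does track the strategy of that cited work — $g$-equivariant MMP, control of totally invariant divisors, and a conic-bundle/splitting-bundle analysis — but as written it is a roadmap whose three ``ingredients'' are precisely the substantive content of the cited paper, asserted rather than supplied.

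Beyond that structural point, several steps as literally stated would fail. First, your ingredient (1) — that every $g$-periodic subvariety becomes, after iteration, a union of torus orbit closures — is not a known lemma: it holds for prime divisors (this is what Theorem~\ref{thm-TID-Fano-3fold} establishes for Fano threefolds, following \cite{meng2020nonisomorphic}*{Theorems 3.2 and 3.3}), but for curves it is still open even on $\bP^3$; the present paper explicitly flags \cite{meng2020nonisomorphic}*{Conjecture 1.9} and only proves the weaker Proposition~\ref{pro-ti-curves} under an additional conic-bundle hypothesis. The genuine argument sidesteps this by only ever blowing up curves realized as intersections of periodic \emph{divisors}, which is exactly why statement (B) is phrased that way. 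Second, at the ``bottom of the tower'' the output of an MMP from a smooth Fano threefold need not be smooth, so the Picard-number-one rigidity theorems you invoke (Hwang--Mok, etc., which require smoothness) do not apply directly to $Y_m$; for $\rho(Y)\le 2$ one must instead show the single birational step is a smooth blow-up with controlled centre, which is the content of \cite{meng2020nonisomorphic}*{Theorem 5.1}. Third, in ingredient (2), knowing the generic fibre of the conic bundle is $\bP^1$ (Tsen/$C_2$) plus an unspecified ``Amerik--Nakayama-type rigidity argument'' does not produce the structure in (B): one needs the factorization through an elementary conic bundle, the smoothness of that elementary piece (via the nontrivial double cover of the discriminant and simple-connectedness of its components), and a semistability or wall-crossing argument to force the rank-two bundle to split. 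You correctly identify this as the crux, but identifying the obstacle is not the same as overcoming it, so the proposal does not constitute a proof.
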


\begin{rem}
	Note that, when $X$ is a smooth Fano variety, the surjective endomorphism $f$ on $X$ being amplified is equivalent to being int-amplified (cf.~\cite{meng2020nonisomorphic}*{Lemma 2.6}).
\end{rem}

% \begin{thm}
% Question \ref{main-ques} has an affirmative answer if one of the following cases occurs.
% \begin{enumerate}
% 	\item $\dim X \leq 2$ (cf.~\cite{nakayama2002ruled}*{Theorem 3}).
% 	\item $X$ is a smooth Fano threefold (cf.~).
% 	\item $f$ has
% 	\item $f^*|_{\N^1(X)}$ has $n$ $(= \dim X)$ distinct real eigenvalues (cf.~\cite{meng2020rigidity}*{Theorem 1.5}).
% \end{enumerate}
% \end{thm}

In this paper, we shall give a positive answer to Question~\ref{main-ques} for smooth Fano fourfolds admitting (possibly non-elementary) conic bundles (cf.~Notation~\ref{notation-conic-bundle}).
% Recall that a fibration $\tau$ of smooth projective varieties is a \textit{conic bundle} if every fibre of $\tau$ is isomorphic to a (possibly reducible or nonreduced) conic curve in $\bP^2$.

Theorem~\ref{thm-main} and Corollary~\ref{cor-splitting} below are our main results.
\begin{thm}\label{thm-main}
	Let $X$ be a smooth Fano fourfold admitting a (possibly non-elementary) conic bundle.
	Then the following are equivalent.
	\begin{enumerate}
		\item $X$ is a toric variety.
		\item $X$ admits a polarized endomorphism.
		\item $X$ admits an (int-)amplified endomorphism.
	\end{enumerate}
	In particular, $X$ is rational if one of the above equivalent conditions holds.
\end{thm}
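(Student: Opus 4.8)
The plan is to prove the cycle of implications. The implications $(1)\Rightarrow(2)\Rightarrow(3)$ are essentially formal: a smooth projective toric variety carries the polarized endomorphism induced by multiplication by $q$ on the lattice (for $q>1$), and ``polarized'' $\Rightarrow$ ``int-amplified'' $\Rightarrow$ ``amplified'' as noted in the introduction; the final rationality assertion is immediate once $X$ is known to be toric. So the entire content is the implication $(3)\Rightarrow(1)$: a smooth Fano fourfold $X$ admitting a conic bundle $\tau\colon X\to S$ and an amplified endomorphism $f$ must be toric. Since $X$ is Fano, by the cited `\cite{meng2020nonisomorphic}*{Lemma 2.6}` we may assume $f$ is int-amplified.

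\textbf{Reduction to the base.} The first main step is to run an $f$-equivariant MMP. After replacing $f$ by an iterate, the int-amplified endomorphism descends along extremal contractions and fibrations (this is the equivariant MMP machinery of Meng--Zhang), so we obtain an induced int-amplified endomorphism on the base $S$ of the conic bundle, or more precisely on the target of the MMP run relative to $\tau$. The base $S$ is a smooth rationally connected projective \emph{surface} (a conic bundle over $S$ with $X$ Fano forces $S$ to be rational), and it inherits an int-amplified endomorphism; by Nakayama's theorem (Sato's conjecture in dimension two, `\cite{nakayama2002ruled}`), $S$ is a toric surface. The harder part of this step is not the existence of the equivariant MMP but controlling its \emph{output}: one needs the relative MMP over $S$ (or a suitable minimal model of the pair) to terminate in a structure one can analyze — ideally a splitting $\bP^1$-bundle, or a mild blow-up thereof, mirroring the threefold picture in Theorem~\ref{thm-threefold-class}(B).

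\textbf{From toric base to toric total space.} Once $S$ is toric with its torus action, the second main step is to upgrade this to a torus action on $X$. The strategy is to show that, after further iteration, the conic bundle $X\to S$ is built from torus-equivariant data: the discriminant locus of $\tau$ should be a totally periodic (indeed, after iteration, totally invariant) divisor in $S$, hence a union of torus-invariant curves; and $X$ itself should be realized as a blow-up, along centres which are intersections of $f^{-1}$-periodic prime divisors, of a \emph{splitting} $\bP^1$-bundle over $S$ — exactly the shape of `\cite{meng2020nonisomorphic}*{Theorem 8.1}` one dimension up. Here the key inputs are: (i) the classification of the possible conic bundle / blow-up structures on a Fano fourfold with $\rho(X)$ large, drawn from the structure theory of Fano fourfolds with conic bundles; and (ii) the rigidity statements forcing the relevant divisors and centres to be torus-invariant once they are preserved (up to iteration) by the endomorphism. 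Assembling the torus action on the $\bP^1$-bundle from split summands that are torus-linearized line bundles, and then lifting it through the equivariant blow-ups, yields a faithful action of a four-torus on $X$, so $X$ is toric.

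\textbf{Main obstacle.} I expect the crux to be the case analysis when $\rho(X)$ is small, say $\rho(X)\le 2$ — where the conic bundle structure interacts with the few possible Mori contractions on $X$ and one cannot simply lean on a $\bP^1$-bundle presentation — together with establishing that all discriminant/exceptional divisors become torus-invariant after iteration; this ``totally periodic $\Rightarrow$ torus-invariant'' step (using that on a toric surface the only candidate invariant divisors are the boundary components, plus a positivity/ampleness argument to pin down their configuration) is where the real work lies. The low Picard number sub-cases will likely need to be enumerated by hand, paralleling part (A) of Theorem~\ref{thm-threefold-class} but with the extra fourth dimension and the conic bundle hypothesis constraining which Fano fourfolds actually occur.
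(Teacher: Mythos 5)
Your outline of the logical skeleton is right: $(1)\Rightarrow(2)$ via the lattice-multiplication endomorphism, $(2)\Rightarrow(3)$ formally, and all the content in $(3)\Rightarrow(1)$, which is how the paper proceeds. But your plan for $(3)\Rightarrow(1)$ contains a dimensional error that undermines the whole strategy: a conic bundle has one-dimensional fibres, so the base of a conic bundle on a Fano \emph{fourfold} is a smooth Fano \emph{threefold} $Y$, not a surface. Your ``reduction to the base'' step rests on the base being a rational surface so that Nakayama's theorem (Sato's conjecture in dimension two) makes it toric. That tool is unavailable here. What the paper actually uses is the Fano threefold case of the question (Theorem~\ref{thm-threefold-class}, i.e.\ the main theorem of \cite{meng2020nonisomorphic}) to conclude $Y$ is toric, and then a substantial new result (Theorem~\ref{thm-TID-Fano-3fold}) showing that \emph{all} $g^{-1}$-periodic prime divisors on such a $Y$ sit inside a common toric boundary and are smooth rational surfaces --- this is what controls the discriminant $\Delta_\tau$ and the blow-up centres. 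None of this is reachable from a surface-based argument.

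Beyond that, two further ingredients of the actual proof are absent from your proposal and would not fall out of your plan. First, one must show that the elementary conic bundle $\tau_0\colon W\to Y$ obtained from the relative MMP is a \emph{smooth} $\bP^1$-bundle (Section~\ref{section-elementary-Fano-conic}); this is done by a case analysis on $Y$ using simple-connectedness of the discriminant components to contradict the nontrivial double cover attached to an elementary conic bundle. Second, the splitting-ness of $\cE$ with $W=\bP_Y(\cE)$ over the Fano threefold $Y$ cannot be obtained from the ``walls'' on surfaces used in the threefold paper; the authors introduce generalized walls in $\N_1(Y)$ \`a la Greb--Toma (Notation~\ref{notation-walls}, Lemmas~\ref{lem-locally-finite-walls} and \ref{lem-semistable-split}) together with explicit intersection computations (Claims~\ref{claim-nonempty-wall-P2}--\ref{claim-nonempty-wall-F1}, \ref{claim-nonempty-wall-P3}). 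Finally, your guess that the hard case is $\rho(X)\le 2$ is inverted: large relative Picard number forces $X$ to be a product of del Pezzo surfaces and is easy, whereas the delicate case is $\rho(X)-\rho(W)=1$ with the blow-up centre lying over the discriminant, which the paper circumvents by producing a second conic bundle structure $\widehat\tau$ via the Lefschetz defect (Theorem~\ref{thm-splitting}).
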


Note that conic bundle structures $\tau$ appear naturally in the study of birational geometry.
% , it is important for us to study a smooth projective variety $X$ admitting conic bundle structures $\tau$, since
Indeed, a Fano contraction of a smooth projective variety with one-dimensional fibres is necessarily a conic bundle (cf.~Lemma~\ref{lem-blowup-or-conic}).
% In particular, when $\dim X=3$, such $\tau$ takes a significant role in the classification of Fano threefolds (cf.~\cite{mori1983fano});
Precisely, when $\dim X=3$, such $\tau$ takes a significant role in the classification of Fano threefolds (cf.~\cite{mori1983fano});
when $\dim X=4$, such $\tau$ characterizes Fano fourfolds with Lefschetz defect $\delta_X\geq 3$ (cf.~Notation \ref{lefschetz}), with all of such fourfolds being rational (cf.~\cite{romano2019characterization}*{Theorem 1.1 and Corollary 1.3}).
In higher dimension, we expect that the Fano contraction is equidimensional under the dynamical assumption, and hence it becomes a conic bundle if the relative dimension is one (cf.~\cite{mori1982threefolds}*{Theorem 3.5} for the threefold case).
% such $\tau$ is closely related to a Fano contraction of relative dimension one (cf.~Lemma~\ref{lem-blowup-or-conic} and \cite{romano2019non}), where

% In view of the difficulties mentioned in Remark \ref{rem-difference},
Corollary \ref{cor-splitting} below is a generalization of \cite{meng2020nonisomorphic}*{Theorem 6.4}, and it follows from Theorem~\ref{thm-main}.
In contrast, \cite{meng2020nonisomorphic}*{Theorem 6.4} is the main ingredient in the proof of Theorem~\ref{thm-threefold-class} (cf.~\cite{meng2020nonisomorphic}*{Proof of Theorem 8.1}).

\begin{cor}\label{cor-splitting}
	Let $f \colon X \to X$ be an (int-)amplified endomorphism of a smooth Fano fourfold.
	Suppose that $X$ admits a conic bundle $\tau \colon X \to Y$ which factors as $X \xrightarrow{\pi} W \xrightarrow{\tau_0} Y$ where $\pi$ is a composition of blow-ups along disjoint smooth projective surfaces and $\tau_0$ is an elementary conic bundle (cf.~Notation~\ref{notation-elementary}).
	Then $W$ is a splitting $\bP^1$-bundle over $Y$ (cf.~Notation~\ref{notation-alg-spl-bundle}).
\end{cor}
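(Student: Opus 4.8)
The plan is to deduce everything from Theorem~\ref{thm-main} and then argue entirely inside toric geometry. Since $X$ admits the conic bundle $\tau$ together with the (int-)amplified endomorphism $f$, Theorem~\ref{thm-main} applies and $X$ is a smooth projective toric variety; in particular $X$ is $\bQ$-factorial, $\NE(X)$ is rational polyhedral, and any extremal contraction of $X$ is a toric morphism onto a projective toric variety (toric Mori theory; cf.~Reid, Oda). I would record at the outset that $f$ is used only at this step: no further equivariance of $f$ along $\pi$ or $\tau_0$ is needed for the corollary. Recall also that $W$ and $Y$ are smooth projective by the conventions on conic bundles, so the only thing to propagate along the factorization is toricity.

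Next I would push toricity through $X \xrightarrow{\pi} W \xrightarrow{\tau_0} Y$. Write $\pi = \pi_N \circ \cdots \circ \pi_1$, where $\pi_i \colon X_{i-1} \to X_i$ is the blow-up of a smooth projective surface $S_i \subseteq X_i$ (so $X_0 = X$, $X_N = W$), with exceptional divisor $E_i \subseteq X_{i-1}$; disjointness of the centres guarantees this is a genuine chain of smooth blow-ups. Arguing inductively, suppose $X_{i-1}$ is a smooth projective toric variety. Then $\pi_i$ is a birational contraction which drops the Picard number by exactly one, is not an isomorphism, and is $K_{X_{i-1}}$-negative on the fibres of $E_i \to S_i$ (blowing up a smooth codimension-two centre has discrepancy $1$, so a line in such a fibre has $K_{X_{i-1}}$-degree $-1$). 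Hence $\pi_i$ contracts a $K_{X_{i-1}}$-negative extremal ray of $\NE(X_{i-1})$, so by toric Mori theory $X_i$ is again a projective toric variety — and being smooth by hypothesis, a smooth projective toric variety — with $S_i$ torus-invariant. Thus $W$ is a smooth projective toric variety. Likewise $\tau_0 \colon W \to Y$, being an elementary conic bundle, is a fibre-type $K_W$-negative extremal contraction with $\rho(W/Y) = 1$ onto the smooth projective variety $Y$; the same reasoning makes $\tau_0$ a toric morphism and $Y$ a smooth projective toric variety.

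Finally I would invoke the structure of toric Mori fibre spaces of relative dimension one. The map $\tau_0 \colon W \to Y$ is a fibre-type toric contraction of relative Picard number one between smooth complete toric varieties whose general fibre is a conic, hence $\bP^1$; the combinatorics of the associated fans then forces $\tau_0$ to be a toric $\bP^1$-bundle (no degenerate fibres occur). Every toric $\bP^1$-bundle over $Y$ has the form $\bP_Y(\cO_Y \oplus \cL)$ for some $\cL \in \Pic(Y)$ (see e.g.~Oda or Cox--Little--Schenck), and by Notation~\ref{notation-alg-spl-bundle} this is exactly the assertion that $W$ is a splitting $\bP^1$-bundle over $Y$. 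This completes the argument.

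I expect the main obstacle to be the middle paragraph: one must be confident that the given presentation of $\pi$ as a chain of blow-ups along disjoint smooth surfaces genuinely is a chain of extremal contractions of ($\bQ$-factorial, projective) toric varieties, so that toricity descends to $W$ and then across $\tau_0$ to $Y$. The disjointness of the centres, the elementariness of $\tau_0$, and the smoothness hypotheses are precisely what keep every intermediate variety a well-behaved smooth projective toric variety; once that is in place, the implication ``toric elementary conic bundle $\Rightarrow$ split $\bP^1$-bundle'' is purely combinatorial.
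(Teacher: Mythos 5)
Your overall strategy --- invoke Theorem~\ref{thm-main} to make $X$ toric and then argue purely inside toric geometry --- is genuinely different from the paper's proof, which instead quotes Theorem~\ref{thm-structure-mmp} for the statement $W=\bP_Y(\cE)$ (this is where all the hard work of Section~\ref{section-elementary-Fano-conic} on killing the discriminant enters), descends the torus action from $X$ to $W$ via Brion's theorem on equivariant fibrations with connected fibres, and concludes with Druel's lemma that a rank-two bundle with toric projectivization splits. Your first two paragraphs are essentially sound: each $\pi_i$ and $\tau_0$ is the contraction of an extremal ray (relative Picard number one plus numerical proportionality of the contracted curves), and toric Mori theory does make every intermediate variety toric and every map in the chain a toric morphism.

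The gap is in the last paragraph, exactly at the point you wave at with ``the combinatorics of the associated fans then forces $\tau_0$ to be a toric $\bP^1$-bundle (no degenerate fibres occur).'' Being a fibre-type toric contraction of relative dimension one does not by itself exclude degenerate fibres (blow up a torus-fixed point of $\bF_0$ and project to $\bP^1$: a toric conic bundle with a reducible fibre), so elementariness must be used in an essential way, and the argument is not a one-liner. It can be filled: a component $D$ of $\Delta_{\tau_0}$ is torus-invariant; by Notation~\ref{notation-elementary} its preimage $\tau_0^{-1}(D)$ is irreducible, hence a single invariant divisor $V(\rho)$; since $N_W\to N_Y$ is surjective, so is $N_W/\bZ u_{\rho}\to N_Y/\bZ u_{D}$, whence the generic fibre of $V(\rho)\to D$ is irreducible --- contradicting Sarkisov's fact that the fibre over the generic point of $D$ is a pair of distinct lines. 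Without some such argument, the claim $\Delta_{\tau_0}=\emptyset$ --- which is the actual content here, and which the paper spends Sections 4--5 proving from the dynamical hypothesis --- is simply asserted. You also misplace the difficulty: the descent of toricity you flag as the main obstacle is routine, whereas the degenerate-fibre exclusion and the identification of a toric $\bP^1$-bundle with $\bP_Y(\cO\oplus\cL)$ (where the two disjoint invariant sections $V(\pm v)$ do the work) are where the proof actually lives.
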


We briefly explain the strategy for the proof of Theorem~\ref{thm-main} $(3) \Rightarrow (1)$.
Let $\tau \colon X \to Y$ be a conic bundle.
By Theorem~\ref{thm-structure-mmp}, $\tau$ factors through an elementary conic bundle $\tau_0 \colon W \to Y$.
% which is birationally dominated by $\tau$.
If one of the following holds: (1) $\rho(X) - \rho(Y) \neq 2$;
(2) $Y \cong \bP^3$ or the blow-up of a line on $\bP^3$;
or (3) $Y$ is a $\bP^1$-bundle over $\bP^2$, $\bF_0$ or $\bF_1$ (cf.~Notation~\ref{notation-Hirzebruch}), then $W$ is a splitting $\bP^1$-bundle.
Otherwise, after replacing $\tau$ by a new conic bundle $\widehat{\tau}$ if necessary, our new $\widehat{\tau}_0$ is a splitting $\bP^1$-bundle (cf.~Theorem~\ref{thm-splitting}).
Choosing a suitable equivariant minimal model program for $\tau$, one can verify that $X$ is a toric blow-up of a splitting $\bP^1$-bundle over $Y$;
hence $X$ is toric (cf.~\cite{meng2020nonisomorphic}*{Proposition 2.9} and Theorem~\ref{thm-main-int-to-toric}).

\par \vskip 1pc %\noindent

% We end up this section with the following remark.
\begin{rem}[Difference with previous papers]
	\label{rem-difference}
	In the joint work \cite{meng2020nonisomorphic} of the second author, the Fano threefold case was dealt with by using the known surface theory and the important paper \cite{mori1983fano} for threefolds.
	The Fano fourfold case here is considerably harder.
	% This paper is a continuation of \cite{meng2020nonisomorphic} and we follow the strategy therein to show Theorem~\ref{thm-main}.
	% However, the extension is not an easy copy and we need to overcome the following obstacles.
	First, when showing the smoothness of elementary conic bundles, the discriminant and its self-intersection are less known in higher dimension (cf.~Lemma~\ref{lem-no-3-comp}). %, \ref{lem-embedded-stable-under-BC}).
	% Under the dynamical assumption, we show the smoothness of an elementary conic bundle by reduction to threefold case $\tau_0$ (cf.~Theorems~\ref{thm-elementary-smoothness} and \ref{thm-structure-mmp-algebraic}).
	Second, when $Y$ is imprimitive, the induced contraction $W \to W'$ may not be $K_W$-negative and $W'$ is possibly not $\bQ$-factorial (cf.~Remark~\ref{rem-reduction-lem}); hence a suitable new conic bundle is required (cf.~Theorem~\ref{thm-splitting}).
	Finally, when proving the splitting-ness of $\bP_Y(\cE) \to Y$, previous results (for surfaces) on ``walls'' do not work any more (cf.~\cite{qin1993equivalence}).
	So we introduce the generalized tool inspired by \cite{greb2017compact} (cf.~Lemmas~\ref{lem-locally-finite-walls}, \ref{lem-semistable-split}).
	Accordingly, more investigations for Fano threefolds are needed (cf.~Claims~\ref{claim-nonempty-wall-P2} $\sim$ \ref{claim-nonempty-wall-F1} and \ref{claim-nonempty-wall-P3}).
	% we give the final results but leave the detailed calculations to careful readers
\end{rem}

%Inspired by Mori's motivating result for threefolds (cf.~\cite{mori1982threefolds}*{Theorem 3.5}), we end up this section with the following question (cf.~Lemma~\ref{lem-blowup-or-conic}).
%The dynamical conditions are imposed so as to eliminate the higher dimensional special fibres.
%Then the Fano contraction $\tau$ being equi-dimensional (of relative dimensional one) implies that it is an elementary conic bundle (cf.~Lemma~\ref{lem-blowup-or-conic}).
% \cite{ando1985extremal}*{Theorem 3.1}

%\begin{que}
%Let $f \colon X \to X$ be a polarized or int-amplified endomorphism of a rationally connected smooth projective variety.
%Suppose that $\tau \colon X \to Y$ is a Fano contraction of a ($K_X$-negative) extremal ray such that $\dim X = \dim Y + 1$.
%Is $\tau$ a conic bundle?
%\end{que}

%
%

\par \noindent
\textbf{Acknowledgements.}
The authors would like to thank Professor De-Qi Zhang and Doctor Sheng Meng for many inspiring discussions.
The authors would also like
to thank the referee for the very careful reading and suggestions to improve the paper.
The authors are supported by President's Scholarships of NUS.

\section{Preliminaries}

\begin{notn}\label{notation}
	Let $X$ be a projective variety.
	We use the following notation throughout this paper.
	\begin{enumerate}[label=\arabic*),ref=2.1\,(\arabic*),leftmargin=1.5em]
		\item The symbols $\sim$ (resp.~$\equiv$) denote the \textit{linear equivalence} (resp.~\textit{numerical equivalence}) on $\bQ$- (or $\bR$-) Cartier divisors.

		\item % Denote by $\NS(X)$ the \textit{N{\'e}ron-Severi group} of $X$.
		      Let $\N^1(X)$ be the space of $\bR$-Cartier divisors modulo numerical equivalence $\equiv$, and $\rho(X) \coloneqq \dim_{\bR}\N^1(X)$ the \textit{Picard number} of $X$.
		      Let $\N_1(X)$ be the dual space of $\N^1(X)$ consisting of $1$-cycles, $\Amp(X)$ the cone of \textit{ample divisors} in $\N^1(X)$ and $\NE(X)$ the \textit{Mori cone} of pseudo-effective $1$-cycles in $\N_1(X)$.

		\item Let $f \colon X \to X$ be a surjective endomorphism.
		      A subset $D \subseteq X$ is \textit{$f^{-1}$-invariant} (resp.~\textit{$f^{-1}$-periodic}) if $f^{-1}(D) = D$ (resp.~$f^{-s}(D) = D$ for some $s \geq 1$).

		\item A surjective endomorphism $f \colon X \to X$ is \textit{amplified} if $f^*L - L$ is ample for some (not necessarily ample) Cartier divisor $L$ on $X$, \textit{int-amplified} if $f^*L - L$ is ample for some ample Cartier divisor $L$ on $X$, and \textit{polarized} if $f^*H \sim qH$ for some ample Cartier divisor $H$ on $X$ and integer $q > 1$;
		      see \cite{krieger2017cohomological}, \cite{meng2020building} and \cite{meng2018building}.

		\item A normal projective variety $X$ is of \textit{Fano type}, if there is an effective Weil $\bQ$-divisor $\Delta$ on $X$ such that the pair $(X, \Delta)$ has at worst klt singularities and $-(K_X + \Delta)$ is an ample $\bQ$-Cartier divisor.
		      If $\Delta = 0$, we say that $X$ is a \textit{(klt) Fano variety}.

		\item A smooth Fano surface is usually called a \textit{del Pezzo surface}.
		      The \textit{degree} of a del Pezzo surface is defined as the self-intersection number of its canonical divisor.

		\item \label{notation-Hirzebruch}
		      Denote by $\bF_d \coloneqq \bP_{\bP^1}(\cO \oplus \cO(-d))$ the Hirzebruch surface of degree $d$ with $d \in \bZ_{\geq 0}$.

		\item A smooth Fano threefold is \textit{imprimitive} if it is isomorphic to the blow-up of another smooth Fano threefold along a smooth irreducible curve (cf.~\cite{mori1983fano}*{Definition 1.3}).

		\item A normal projective variety $X$ of dimension $n$ is a \textit{toric variety} if $X$ contains a \textit{big torus} $T = (k^*)^n$ as an (affine) open dense subset such that the natural multiplication action of $T$ on itself extends to an action on the whole variety.
		      In this case, $B \coloneqq X \setminus T$ is a divisor;
		      the pair $(X, B)$ is said to be a \textit{toric pair}.

		\item Let $\pi \colon X \to W$ be the blow-up of a smooth toric variety $W$ along a smooth closed subvariety $S$.
		      We say that $\pi$ is a \textit{toric blow-up} if there exists some big torus $T$ acting on $W$ with $T(S) = S$.
		      In this case, $X$ is still toric.

		\item \label{notation-conic-bundle}
		      A fibration $\tau \colon X \to Y$ of smooth projective varieties is a \textit{(regular) conic bundle} if every fibre of $\tau$ is a conic, i.e., it is isomorphic as a scheme to the zeros of a nontrivial section $\cO_{\bP^2}(2)$.
		      Note that \(\tau\) is flat since both \(X\) and \(Y\) are smooth and \(\tau\) is equi-dimensional.
		      If $X$ is further assumed to be Fano, then $\tau$ is a \textit{Fano conic bundle}.
		      \begin{enumerate}[label=(\alph*),ref=2.1\,(11\,\alph*)]
			      \item \label{notation-discriminant}
			            Denote by $\Delta_{\tau} \coloneqq \{y \in Y \mid \tau \text{ is not smooth over } y \}$ the \textit{discriminant} of $\tau$, which is a reduced divisor on $Y$ (cf.~\cite{sarkisov1983conic}*{Proposition 1.8 and Corollary 1.9}).
			            % When $y \in \Delta_{\tau}$ is a smooth point, the fibre $X_y$ is a pair of lines and $X$ is regular along $X_y$.
			            % Using the fact in (a), one can define
			      \item There is a double cover $\sigma \colon \widetilde{\Delta_{\tau}} \to \Delta_{\tau}$, which is {\'e}tale over the regular locus of $\Delta_{\tau}$ (cf.~\cite{beauville1977varietes}*{Proposition 1.5} or \cite{sarkisov1983conic}*{\S 1.17}).
			            % (called \textit{pseudo-covering})
			      \item \label{notation-elementary}
			            A conic bundle $\tau \colon X \to Y$ is said to be \textit{elementary}, if $\rho(X) = \rho(Y) + 1$; in this case, the double cover $\sigma$ in (b) is nontrivial over each irreducible component $D_i \subseteq \Delta_{\tau}$;
			            hence $\sigma^{-1}(D_i)$'s are all connected (cf.~\cite{sarkisov1983conic}*{\S 1.17}).
			            % In this case, for any prime divisor $D \subseteq Y$, its preimage $\tau^*D$ is irreducible and $\tau \colon X \to Y$ is relatively minimal with $\rho(X) - \rho(Y) = 1$.
			      \item Let $\cE$ be a locally free sheaf of rank $3$ on a smooth projective variety $Y$, and $\pi \colon \bP_Y(\cE) \to Y$ the standard projection.
			            An irreducible reduced (possibly singular) divisor $V$ such that the generic fibre of $\pi|_V$ is a smooth rational curve is called an \textit{embedded conic} over $Y$ (cf.~\cite{sarkisov1983conic}*{Definition 1.1 and \S 1.5}).
		      \end{enumerate}

		\item \label{notation-alg-spl-bundle}
		      A fibration $\tau \colon X \to Y$ is a (smooth) \textit{$\bP^1$-bundle}, if $\tau$ is a conic bundle and $\Delta_{\tau} = \emptyset$.
		      A fibration $\tau \colon X \to Y$ is an \textit{algebraic $\bP^1$-bundle}, if $X \cong \bP_Y(\cE)$ for some locally free sheaf $\cE$ of rank $2$ on $Y$.
		      An algebraic $\bP^1$-bundle $X \coloneqq \bP_Y(\cE) \xrightarrow{\tau} Y$ is a \textit{splitting $\bP^1$-bundle} if $\mathcal{E}$ is a direct sum of two invertible sheaves.
		      % Note that a splitting $\bP^1$-bundle over a toric normal projective variety is toric (cf.~\cite{meng2020nonisomorphic}*{Proposition 2.9}).

		\item \label{lefschetz}For a prime divisor $D$ on $X$, the inclusion $i \colon D \hookrightarrow X$ induces a pushforward of $1$-cycles $i_* \colon \N_1(D) \to \N_1(X)$ with the image a linear subspace.
		      The \textit{Lefschetz defect} $\delta_X$ is defined as $\delta_{X} \coloneqq \max \{\codim_{\N_1(X)} i_*(\N_1(D)) \mid D \subseteq X \text { is a prime divisor}\}$ (cf.~\cite{casagrande2012picard}).

		\item \label{notation-walls}
		      Fix a locally free sheaf $\cE$ of rank $2$ on a smooth Fano threefold $Y$.
		      For every saturated subsheaf $\cF \subseteq \cE$, i.e., $\cE/\cF$ is torsion free (so $\cF$ is reflexive and thus locally free), denote by $\xi_{\cF} \coloneqq 2c_1(\cF) - c_1(\cE)$ and define the set
		      \[
			      W_{\cE}(\cF) \coloneqq \{H^2 \mid H \in \Amp(Y), H^2 \cdot \xi_{\cF} = 0\} \subseteq P(Y) \coloneqq \{H^2 \mid H \in \Amp(Y)\}.
		      \]
		      to be a \textit{generalized wall} of $P(Y) \subseteq \N_1(Y)$ with respect to $\cE$ (cf.~\cite{greb2017compact}*{Section 6}).
	\end{enumerate}
\end{notn}

In the rest of this section, we gather several results to be used frequently in the subsequent sections.
We begin with Lemmas~\ref{lem-equiv-MMP} $\sim$ \ref{thm-bh}, which are on endomorphisms.

\begin{lem}\label{lem-equiv-MMP}
	Let $f \colon X \to X$ be a surjective endomorphism on a normal projective variety.
	Then any finite sequence of a minimal model program starting from $X$, is $f$-equivariant (after iteration), if either of the following conditions is satisfied.
	\begin{enumerate}
		\item The (closed) Mori cone $\NE(X)$ has only finitely many extremal rays (this holds when $X$ is of Fano type) (cf.~\cite{kollar1998birational}*{Theorem 3.7} and \cite{zhang2010polarized}*{Lemma 2.11}).
		\item $X$ admits an int-amplified endomorphism (cf.~\cite{meng2020semi}*{Theorem 1.1}).
	\end{enumerate}
\end{lem}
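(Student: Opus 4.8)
The plan is to reduce both cases to the quoted results, so I would only lay out the skeleton. Fix once and for all a finite minimal model program
\[
X = X_0 \dashrightarrow X_1 \dashrightarrow \cdots \dashrightarrow X_m,
\]
where each $\psi_i \colon X_i \dashrightarrow X_{i+1}$ is a divisorial contraction, a flip, or a Fano contraction (the last step only), attached to a $K_{X_i}$-negative extremal ray $R_i \subseteq \NE(X_i)$. I would run an induction on the length $m$; the crux is that the first step $\psi_0$ becomes $f$-equivariant after replacing $f$ by a suitable iterate, and that $X_1$ inherits the running hypothesis.

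First I would pin down $R_0$. Since $f$ is surjective, hence finite, the projection formula gives $f_* f^* = (\deg f)\cdot\id$ on $\N^1(X)$, so $f^* \colon \N^1(X) \to \N^1(X)$ is a linear automorphism; it preserves the nef cone, and as $D$ nef forces $f_*D$ nef (because $f^{-1}$ of a curve is an effective $1$-cycle), it maps the nef cone onto itself. Dually $f_* \colon \N_1(X) \to \N_1(X)$ is a linear automorphism carrying $\NE(X)$ bijectively onto itself, hence permuting its extremal rays. Under hypothesis (1) there are finitely many of these by assumption — automatically so when $X$ is of Fano type, by the cone theorem \cite{kollar1998birational}*{Theorem 3.7} — and under hypothesis (2) the analogous finiteness is provided by \cite{meng2020semi}*{Theorem 1.1}. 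So some power $f^s$ fixes every extremal ray, and after replacing $f$ by $f^s$ I may assume $f_* R_0 = R_0$.

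Next I would descend. Let $\varphi \colon X_0 \to Z$ be the contraction of $R_0$, so $Z = X_1$ when $\varphi$ is divisorial or Fano, and $\varphi$ is the flipping contraction when $\psi_0$ is a flip. Because $f_* R_0 = R_0$, a connected fibre $F$ of $\varphi$ is sent to the connected subset $f(F)$, whose curves all have numerical class in $f_*R_0 = R_0$ and are therefore contracted by $\varphi$; thus $f$ maps fibres into fibres, and $\varphi \circ f = g \circ \varphi$ for a surjective endomorphism $g$ of $Z$. If $\psi_0$ is divisorial or Fano, this is already the equivariant descent, with $f_1 := g$ on $X_1$. If $\psi_0$ is a flip, I would lift $g$ along the flip $X_1 \to Z$ to a surjective endomorphism $f_1$ of $X_1$ using the uniqueness of flips — the finite endomorphism $g$ of the base lifts to the unique small modification of $Z$ whose canonical class is relatively ample. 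In every case $\psi_0$ is $f$-equivariant with $f_1$ a surjective endomorphism of $X_1$.

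Finally I would recurse: $X_1$ still satisfies the hypothesis. If $X$ is of Fano type, then so is $X_1$, since Fano type is preserved by every step of a minimal model program, whence $\NE(X_1)$ again has finitely many extremal rays (the general statement of (1) being exactly \cite{zhang2010polarized}*{Lemma 2.11}); and if $X$ admits an int-amplified endomorphism, then so does $X_1$ by \cite{meng2020semi}*{Theorem 1.1}. Applying the inductive hypothesis to the shorter program $X_1 \dashrightarrow \cdots \dashrightarrow X_m$ together with $f_1$, and folding the at most $m$ iterations used along the way into a single power of $f$, finishes the argument. The genuinely substantial inputs are the finiteness of extremal rays that makes the second paragraph work — the cone theorem in case (1), the structural dichotomy of \cite{meng2020semi}*{Theorem 1.1} in case (2) — together with the stability of hypothesis (2) along the MMP; the descent and the flip-lifting of the third paragraph are formal once $R_0$ is fixed, and this is where the argument is expected to be least delicate.
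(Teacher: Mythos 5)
The paper does not actually prove this lemma; it is quoted as a combination of known results, with \cite{zhang2010polarized}*{Lemma 2.11} covering case (1) and \cite{meng2020semi}*{Theorem 1.1} being, essentially verbatim, the statement of case (2). Your reconstruction of case (1) is the standard argument and matches what those references do: $f_*f^*=(\deg f)\,\id$ makes $f^*$ an automorphism of $\Nef(X)$, hence $f_*$ an automorphism of $\NE(X)$ permuting its finitely many extremal rays, so an iterate fixes the contracted ray; descent along the contraction and lifting along the flip (by uniqueness of the small modification with relatively ample canonical class) are then formal, and Fano type is preserved by each step so the induction closes.

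Case (2), however, has a genuine gap. You write that ``the analogous finiteness is provided by \cite{meng2020semi}*{Theorem 1.1}'', but a variety admitting an int-amplified endomorphism need not have finitely many extremal rays in $\NE(X)$: take $X=A\times\bP^1$ with $A$ an abelian surface of Picard number $\geq 3$ and $f=[2]_A\times(z\mapsto z^2)$; then $f$ is int-amplified, the projection to $A$ is a legitimate MMP step, yet $\NE(X)$ contains the round cone $\NE(A)$ and has uncountably many extremal rays, so no iterate of $f$ can fix them all. The cone theorem only controls the $K_X$-negative part of the cone, and even there the $K$-negative extremal rays may accumulate towards $K_X^{\perp}$. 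The cited Theorem 1.1 of \cite{meng2020semi} does not assert finiteness of extremal rays; it asserts the equivariance conclusion itself, proved by a different mechanism (one shows that the $f_*$-orbit of the specific contractible $K_X$-negative ray being contracted is finite, using the int-amplified hypothesis, rather than that the whole cone is polyhedral). So your argument for (2) either silently assumes a false finiteness statement or collapses to ``cite the theorem that is the statement''; as a self-contained proof of (2) it does not work. A smaller point: in case (1) as literally stated (finitely many extremal rays, $X$ not assumed of Fano type), you still need to know that the model $X_1$ obtained after a flip again has finitely many extremal rays, which is not automatic from the hypothesis on $X$; in the paper's applications $X$ is always of Fano type, so this is harmless, but your induction should flag it.
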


\begin{lem}(cf.~\cite{meng2020building}*{Theorem 1.1})
	\label{lem-intamplified}
	Let $f$ be a surjective endomorphism of a projective variety $X$.
	Then the following are equivalent.
	\begin{enumerate}
		\item $f$ is int-amplified, i.e., $f^*L - L$ is ample for some ample Cartier divisor $L$ on $X$.
		\item All the eigenvalues of $f^*|_{\N^1(X)}$ are of modulus greater than $1$.
		      % \item All the eigenvalues of $f^*|_{\Pic(X) \otimes_{\Z} \Q}$ are of modulus greater than $1$.
		      % \item If $C$ is a $\varphi$-invariant convex cone in $\N^1(X)$, then $\emptyset\neq(\varphi-\textup{id}_{\N^1(X)})^{-1}(C)\subseteq C$.
	\end{enumerate}
\end{lem}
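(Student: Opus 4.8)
The plan is to analyse the linear automorphism $\phi := f^*|_{\N^1(X)}$ through its action on positivity cones, together with the transpose action on curves. A surjective endomorphism of a projective variety is generically finite, say of degree $d := \deg f$, so the projection formula gives $f_* f^* = d\cdot\id$ on $\N^1(X)$; thus $\phi$ is invertible with $\phi^{-1} = d^{-1} f_*$. The key structural point is that $\phi$ is a \emph{linear automorphism} of the pseudo-effective cone $\overline{\mathrm{Eff}}(X)\subseteq\N^1(X)$: pullbacks of effective divisors are effective and pushforwards of effective divisors are effective, so both $\phi$ and $\phi^{-1}$ preserve $\overline{\mathrm{Eff}}(X)$, which is closed, salient and full-dimensional. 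Dually, $\phi^\vee = f_*$ on $\N_1(X)$ is a linear automorphism of the salient full-dimensional cone $\overline{\mathrm{Eff}}(X)^\vee\subseteq\N_1(X)$, and since $\Nef(X)\subseteq\overline{\mathrm{Eff}}(X)$ we have $\overline{\mathrm{Eff}}(X)^\vee\subseteq\Nef(X)^\vee=\NE(X)$.

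For $(1)\Rightarrow(2)$, assume $f^*L-L$ is ample with $L$ an ample Cartier divisor. Apply the Perron--Frobenius theorem to the automorphism $\phi^{-1}$ of $\overline{\mathrm{Eff}}(X)$: its spectral radius $r$ is an eigenvalue, so $\lambda_0:=r^{-1}$ is a positive real eigenvalue of $\phi$, and by construction $\lambda_0$ equals the \emph{least} modulus of an eigenvalue of $\phi$. Likewise, applying Perron--Frobenius to the automorphism $(\phi^\vee)^{-1}$ of $\overline{\mathrm{Eff}}(X)^\vee$ produces a nonzero class $\xi\in\overline{\mathrm{Eff}}(X)^\vee\subseteq\NE(X)$ with $\phi^\vee\xi=\lambda_0\xi$. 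Since $L$ and $f^*L-L$ are ample and $\xi\in\NE(X)\setminus\{0\}$, we have $L\cdot\xi>0$ and $(f^*L-L)\cdot\xi>0$, whence $\lambda_0\,(L\cdot\xi)=L\cdot(\phi^\vee\xi)=(f^*L)\cdot\xi>L\cdot\xi$, forcing $\lambda_0>1$. As $\lambda_0$ is the minimal modulus of an eigenvalue of $\phi$, every eigenvalue of $\phi$ has modulus greater than $1$.

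For $(2)\Rightarrow(1)$, assume every eigenvalue of $\phi$ has modulus $>1$. Then $1$ is not an eigenvalue, so $\phi-\id$ is a $\bQ$-linear automorphism of $\N^1(X)_\bQ$, and $\phi^{-1}$ has spectral radius $<1$, so the series $(\phi-\id)^{-1}=\sum_{n\ge1}\phi^{-n}$ converges. Fix an ample Cartier divisor $H$ and set $L_N:=(\phi-\id)^{-1}(\phi^N H)=\sum_{m=0}^{N-1}\phi^m H+v$, where $v:=\sum_{n\ge1}\phi^{-n}H$ is a fixed class. Because all eigenvalues of $\phi^\vee$ have modulus $>1$, there is a uniform estimate $\|(\phi^\vee)^n C\|\ge c_0\, s^n\|C\|$ with $s>1$ for all $C\in\N_1(X)$; pairing $L_N$ against a nonzero $C\in\NE(X)$, using that $H$ is ample and $\phi^\vee$ preserves $\NE(X)$, the contribution $\sum_{m=0}^{N-1}(\phi^m H)\cdot C=\sum_{m=0}^{N-1}H\cdot(\phi^\vee)^m C$ grows at least like $s^N\|C\|$ uniformly on the slice $\|C\|=1$, while $v\cdot C$ is bounded. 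Hence $L_N\cdot C>0$ for every nonzero $C\in\NE(X)$ once $N\gg0$, so $L_N$ is an ample $\bQ$-Cartier class; clearing denominators, $mL_N$ is an ample Cartier divisor class with $f^*(mL_N)-mL_N\equiv m\phi^N H$ ample, so $f$ is int-amplified.

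The step I expect to be the main obstacle is $(1)\Rightarrow(2)$. Pairing $f^*L-L$ with a nef eigencurve only yields that the spectral radius of $f^*$ exceeds $1$, and for a linear map merely preserving the nef cone nothing more is true in general; the bridge to the full statement is the observation that $f^*$ is an automorphism of the \emph{pseudo-effective} cone --- which relies on $f_*f^*=\deg(f)\cdot\id$ and on effectivity being preserved by pushforward --- so that Perron--Frobenius can be applied to $(f^*)^{-1}$ and to its transpose to single out the smallest eigenvalue as a positive real; the ampleness (not merely pseudo-effectivity) of $f^*L-L$ is then precisely what pushes that eigenvalue past $1$. The remaining ingredients --- convergence of the series, rationality of $L_N$, and the uniform exponential estimates --- are routine.
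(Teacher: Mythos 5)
The paper does not prove this lemma at all --- it is quoted verbatim from \cite{meng2020building}*{Theorem 1.1} --- so the only meaningful comparison is with Meng's original argument, and your proof is in essence a correct reconstruction of it: both directions hinge on the observation that $\phi=f^*$ and its inverse preserve a closed, salient, full-dimensional cone, so that the cone version of Perron--Frobenius applied to $\phi^{-1}$ and its transpose singles out the \emph{minimal}-modulus eigenvalue $\lambda_0$ together with a dual eigenvector $\xi\in\NE(X)$, against which the ampleness of $f^*L-L$ forces $\lambda_0>1$; and the converse is the geometric-series inversion of $\phi-\id$ plus a uniform growth estimate on $\NE(X)$. Both halves are sound, including the rationality of $L_N$ and the Kleiman-criterion endgame. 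The one step whose justification is thinner than it should be is the claim that $\phi^{-1}$ preserves $\overline{\mathrm{Eff}}(X)$ ``because pushforwards of effective divisors are effective'': for a general (non-$\bQ$-factorial, possibly non-normal) projective variety, $f_*$ of a Cartier divisor need not define a class in $\N^1(X)$, so this identification of $\phi^{-1}$ with $d^{-1}f_*$ on divisor classes requires care. The conclusion is still true (e.g.\ via $\vol(f^*\alpha)=\deg(f)\cdot\vol(\alpha)$, which shows $f^*\alpha$ is big iff $\alpha$ is, hence pseudo-effectivity is preserved both ways), but the cleanest repair is to run your entire Perron--Frobenius argument with the nef cone instead: $f^*D$ nef implies $D$ nef for a finite surjective $f$ by pushing forward curves, so $\phi$ is an automorphism of $\Nef(X)$ with no Weil-divisor subtleties, and its dual cone is exactly $\NE(X)$, which is where you need $\xi$ to live anyway. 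Your closing remark that ``for a linear map merely preserving the nef cone nothing more is true'' therefore slightly misidentifies where the leverage comes from --- it is the invertibility of $\phi$ on the cone, available for $\Nef(X)$ just as for $\overline{\mathrm{Eff}}(X)$, not the passage to the pseudo-effective cone, that makes the argument work.
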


\begin{lem}\label{lem-split-pdt}
	Let $f \colon X \to X$ be an int-amplified endomorphism on the product space $X \coloneqq X_1 \times X_2$ such that $f$ splits into $f_1 \times f_2$ after iteration (this is the case when both $X_i$ are of Fano type).
	Then every $f^{-1}$-periodic prime divisor $D$ is of the form $D_1 \times X_2$ or $X_1 \times D_2$ where $D_i$ is some prime divisor on $X_i$ for $i=1,2$.
\end{lem}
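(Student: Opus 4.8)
The plan is to read off the conclusion from the behaviour of $f$ along $D$, specifically from its \emph{ramification divisor}, using the eigenvalue criterion for int-amplified endomorphisms (Lemma~\ref{lem-intamplified}). First I would reduce to a clean invariant product situation: after replacing $f$ by a suitable iterate (which is still int-amplified and still finite surjective), one may assume that $f = f_1 \times f_2$ for surjective endomorphisms $f_i \colon X_i \to X_i$ and that $D$ is $f^{-1}$-\emph{invariant}, i.e.\ $f^{-1}(D) = D$. Write $p_i \colon X \to X_i$ for the two projections. Since $f$ is finite and $f^{-1}(D) = D$, the effective Cartier divisor $f^{*}D$ is supported on the irreducible divisor $D$, hence $f^{*}D = eD$, where $e \geq 1$ is the ramification index of $f$ along $D$. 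The argument then splits according to whether $e = 1$ or $e \geq 2$.

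Suppose $e = 1$. Then $f^{*}D = D$, so $f^{*}[D] = [D]$ in $\N^1(X)$. Since $f$ is int-amplified, Lemma~\ref{lem-intamplified} shows that every eigenvalue of $f^{*}|_{\N^1(X)}$ has modulus strictly greater than $1$; in particular $1$ is not an eigenvalue, $f^{*}|_{\N^1(X)} - \id$ is invertible, and therefore $[D] = 0$. This contradicts the fact that $D$ is a nonzero effective divisor on the projective variety $X$ (for instance $D \cdot H^{\dim X - 1} > 0$ for any ample $H$). Hence $e \geq 2$; this step --- that an int-amplified $f$ cannot be étale along an $f^{-1}$-invariant prime divisor, so such a divisor must sit inside the ramification divisor $R_f$ --- is the crux of the proof.

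It remains to analyse $R_f$ when $e \geq 2$. Because $f = f_1 \times f_2$, the differential of $f$ is block-diagonal, so $f$ fails to be étale at a point $(x_1, x_2)$ exactly when $f_1$ fails to be étale at $x_1$ or $f_2$ fails to be étale at $x_2$; equivalently $\Supp R_f = p_1^{-1}(\Supp R_{f_1}) \cup p_2^{-1}(\Supp R_{f_2})$, where $R_{f_i}$ is the ramification divisor of $f_i$. Every irreducible component of $p_1^{-1}(\Supp R_{f_1})$ has the form $D_1 \times X_2$ for a prime divisor $D_1$ on $X_1$, and symmetrically every component of $p_2^{-1}(\Supp R_{f_2})$ has the form $X_1 \times D_2$. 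Since an irreducible component of a union of two closed sets is a component of one of them, the prime divisor $D$ --- being a component of $\Supp R_f$ --- must equal $D_1 \times X_2$ or $X_1 \times D_2$, which is the assertion.

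I do not anticipate a genuine obstacle: the only conceptual point is the dichotomy of the second paragraph, and the rest is bookkeeping, provided $D$ is $\bQ$-Cartier and $R_f$ behaves as usual (automatic in the applications, where $X$ is smooth). If one prefers to avoid passing through $\N^1(X)$ in the case $e = 1$, an equivalent route is to restrict $D$ to a general fibre $\{x_1\} \times X_2$: when both $p_i|_D$ are dominant one obtains, for general $x_1$, a nonzero effective divisor $D_{x_1}$ on $X_2$ with $f_2^{*}D_{f_1(x_1)} = e\,D_{x_1}$, so the case $e = 1$ contradicts the int-amplification of $f_2$ (the map $p_2^{*}$ embeds $\N^1(X_2)$ into $\N^1(X)$ compatibly with $f^{*}$, so all eigenvalues of $f_2^{*}$ have modulus $>1$), whereas if some $p_i|_D$ fails to be dominant a dimension count forces $D = D_1 \times X_2$ or $X_1 \times D_2$ immediately.
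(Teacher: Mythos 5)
Your proposal is correct and follows essentially the same route as the paper: reduce to $f^*D = eD$, rule out $e=1$ via the eigenvalue criterion of Lemma~\ref{lem-intamplified} (the paper compresses this into the single assertion $f^*D=qD$ with $q>1$), conclude that $D$ is a component of $R_f$, and then use $R_f = p_1^*R_{f_1} + p_2^*R_{f_2}$ for the split endomorphism. The only difference is that you spell out the $e=1$ exclusion in detail, which the paper leaves implicit.
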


\begin{proof}
	Let $R_f$ and $R_{f_i}$ be the ramification divisors of $f$ and $f_i$, respectively.
	After iteration, we assume $f^*D = qD$ for some $q > 1$ (cf.~Lemma~\ref{lem-intamplified});
	thus $D$ is a component of $R_f$.
	Since $f$ splits, we have $R_f = p_1^* R_{f_1} + p_2^* R_{f_2}$ with $p_i$ being the natural projections.
	Hence, $D$ is of the form $D_1 \times X_2$ or $X_1 \times D_2$ where $D_i$ is some prime divisor on $X_i$.
\end{proof}

\begin{lem}(cf.~\cite{broustet2014singularities}*{Theorem 1.4} and \cite{meng2020nonisomorphic}*{Theorem 2.11})
	\label{thm-bh}
	Let $f \colon X \to X$ be an int-amplified endomorphism on a normal projective variety.
	Let $\Delta$ be an $f^{-1}$-invariant reduced divisor such that $K_X + \Delta$ is $\bQ$-Cartier.
	Then
	\begin{enumerate}
		\item $(X, \Delta)$ has at worst log canonical singularities, and
		\item $-(K_X + \Delta)$ is effective.
	\end{enumerate}
\end{lem}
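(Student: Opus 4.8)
The plan is to follow the strategy of \cite{broustet2014singularities} (compare \cite{meng2020nonisomorphic}*{Theorem 2.11}), organised in three steps.

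\emph{Step 1 (ramification set-up).} Since $f$ is finite and surjective, the ramification formula reads $K_X \sim f^{*}K_X + R_f$ with $R_f \geq 0$ the ramification divisor. Because $\Delta$ is reduced and $f^{-1}(\Delta) = \Delta$, the morphism $f$ carries every component $\Delta_i$ of $\Delta = \sum_i \Delta_i$ onto a component of $\Delta$, and $f^{*}\Delta = \Delta + \sum_i (e_i - 1)\Delta_i$, where $e_i \geq 1$ is the ramification index of $f$ along $\Delta_i$; the divisor $\sum_i (e_i - 1)\Delta_i$ is precisely the part of $R_f$ supported on $\Delta$. Writing $R \geq 0$ for the complementary (effective) part of $R_f$, whose support shares no component with $\Delta$, I obtain
\[
 f^{*}(K_X + \Delta) \sim_{\bQ} (K_X + \Delta) - R ,
\]
and hence, by iteration, $(f^{n})^{*}(K_X+\Delta) \sim_{\bQ} (K_X+\Delta) - R_n$ with $R_n := \sum_{i=0}^{n-1}(f^{i})^{*}R \geq 0$.

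\emph{Step 2 (effectivity of $-(K_X+\Delta)$).} Set $\beta := [-(K_X+\Delta)] \in \N^1(X)$ and $\phi := f^{*}|_{\N^1(X)}$. By Lemma~\ref{lem-intamplified} every eigenvalue of $\phi$ has modulus $> 1$; in particular $\phi - \id$ is invertible and the spectral radius of $\phi^{-1}$ is less than $1$. Step 1 gives $\phi\beta = \beta + [R]$, so $\beta = (\phi-\id)^{-1}[R] = \sum_{i \geq 1}\phi^{-i}[R]$, a convergent Neumann series. Each class $\phi^{-i}[R]$ is pseudo-effective: by the projection formula $\deg(f^{i})\cdot\phi^{-i}[R]$ is the numerical class of the effective divisor $(f^{i})_{*}R$. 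As the pseudo-effective cone is closed, $\beta$ is pseudo-effective. Upgrading this to $\bQ$-effectivity of $-(K_X+\Delta)$ — which genuinely uses that $f$ is an endomorphism and not merely the linear datum $\phi$ — I would either run an $f$-equivariant minimal model program for a suitable klt pair refining $(X,\Delta)$ (Lemma~\ref{lem-equiv-MMP}), pushing the problem down to a Fano-type or $K$-trivial output where non-vanishing is available, or invoke \cite{broustet2014singularities}*{Theorem 1.4} directly.

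\emph{Step 3 (log canonicity).} Suppose $(X,\Delta)$ were not log canonical. The identity $K_X + (\Delta - R) = f^{*}(K_X+\Delta)$ from Step 1 is crepant for the finite morphism $f$, so log discrepancies of divisorial valuations over $X$ with respect to $(X, \Delta - R)$ transform by multiplication by the ramification index; combining this with $R \geq 0$, one produces for every $n$ a divisorial valuation $E_n$ over $X$ with $a_{\ell}(E_n;X,\Delta) < 0$ whose centre is mapped by $f^{n}$ into the (non-log-canonical) centre of a fixed such $E_0$. Confronting this backward orbit of non-log-canonical places with the fact that $f$ is int-amplified — the non-log-canonical locus is $f^{-1}$-invariant, while $f$ is expanding — yields the contradiction, exactly as in \cite{broustet2014singularities}; the subcase $\Delta = 0$, asserting that $X$ itself is log canonical, is the heart of the argument and reduces to the polarized setting. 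I expect Step 3 to be the main obstacle: the linear-algebra input of Step 2 is elementary, whereas excluding non-log-canonical singularities requires controlling divisorial valuations under iteration of $f$, which is precisely the technical content of \cite{broustet2014singularities}.
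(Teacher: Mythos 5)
The paper does not prove this lemma at all: it is quoted verbatim from \cite{broustet2014singularities}*{Theorem 1.4} and \cite{meng2020nonisomorphic}*{Theorem 2.11}, so there is no in-paper argument to compare against. Measured against that, your write-up is partly a genuine proof and partly the same citation. Step 1 is correct: since $\Delta$ is reduced and $f^{-1}\Delta=\Delta$, the logarithmic ramification formula $f^{*}(K_X+\Delta)\sim_{\bQ}(K_X+\Delta)-R$ with $R\ge 0$ supported off $\Delta$ holds, and $R$ is $\bQ$-Cartier because both sides of the difference are. Step 2 correctly derives \emph{pseudo-effectivity} of $-(K_X+\Delta)$ from the Neumann series $\sum_{i\ge 1}\phi^{-i}[R]$ and the identity $\phi^{-i}[R]=\frac{1}{\deg(f^{i})}[(f^{i})_{*}R]$; this is the standard int-amplified argument. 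But the lemma asserts \emph{effectivity}, which is strictly stronger, and at exactly that point you fall back on ``run an equivariant MMP'' or ``invoke \cite{broustet2014singularities}*{Theorem 1.4} directly'' --- i.e.\ on the same references the paper cites, so no new proof is supplied there.

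Step 3 is where the sketch does not close. From $K_X+(\Delta-R)=f^{*}(K_X+\Delta)$ and \cite{kollar1998birational}*{Proposition 5.20} one gets, for $E'$ over $E$ with ramification index $r$, that $a_{\ell}(E';X,\Delta)\le a_{\ell}(E';X,\Delta-R)=r\,a_{\ell}(E;X,\Delta)$; so a non-lc place of $(X,\Delta)$ does produce a backward orbit of non-lc places with log discrepancies tending to $-\infty$ and centres mapping into the original non-lc centre. That alone is not a contradiction --- a non-lc pair already has divisors of arbitrarily negative log discrepancy --- and the actual mechanism in \cite{broustet2014singularities} (total invariance of the non-lc locus combined with a positivity/expansion argument in the polarized case, then the reduction from int-amplified to polarized) is precisely what you do not supply. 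In short: Steps 1--2 up to pseudo-effectivity are correct and self-contained; effectivity and log canonicity remain, in your proposal as in the paper, a citation to \cite{broustet2014singularities} and \cite{meng2020nonisomorphic}.
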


The following Lemma~\ref{lem-blowup-or-conic} was first proved in \cite{ando1985extremal}*{Theorems 2.3 and 3.1} and then reformulated in \cite{wisniewski1991contractions}*{Theorem 1.2} (cf.~\cite{romano2019note}*{Theorem 1.2 and Remark 2.4}).

% \begin{lem}\label{lem-bir-neg-to-blowup}
% Let $\chi \colon X \to X'$ be a birational contraction of a $K_X$-negative extremal ray of a smooth projective variety $X$.
% If every fibre of $\chi$ is of dimension $\leq 1$, then $X'$ is smooth and $\chi$ is a blow-up of $X'$ along a smooth subvariety of codimension $2$.
% \end{lem}
\begin{lem}\label{lem-blowup-or-conic}
	Let $\chi \colon X \to X'$ be a contraction of a $K_X$-negative extremal ray of a smooth projective variety $X$.
	If every fibre of $\chi$ is of dimension $\leq 1$, then $X'$ is smooth and either
	\begin{enumerate}
		\item $\chi$ is a conic bundle, or
		\item $\chi$ is a blow-up of $X'$ along a smooth subvariety of codimension $2$.
	\end{enumerate}
\end{lem}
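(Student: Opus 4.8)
The plan is to separate two cases according to whether $\chi$ is of fibre type or birational, to use a dimension bound on the fibres to rule out everything else, and then to read off the precise structure from a local analysis of the rational curves contracted by $\chi$. I would start from the Ionescu--Wi\'sniewski inequality (see \cite{wisniewski1991contractions}): writing $\ell(R)$ for the length of the contracted extremal ray $R$ and $E$ for the exceptional locus of $\chi$, every positive-dimensional fibre $F$ of $\chi$ satisfies $\dim F + \dim E \geq \dim X + \ell(R) - 1$. If $\chi$ is birational, then $\dim F \leq 1$ and $\ell(R) \geq 1$ force $\dim E \geq \dim X - 1$; hence $E$ is a prime divisor and $\chi$ is a divisorial contraction — in particular $\chi$ is never small — and moreover $\ell(R) = 1$ and every nontrivial fibre is a curve. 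If $\chi$ is of fibre type, then its general fibre is a curve, so $\dim X' = \dim X - 1$; since $\{y : \dim \chi^{-1}(y) \geq 1\}$ is closed and contains the generic point of $X'$, it is all of $X'$, so $\chi$ is equidimensional with all fibres $1$-dimensional.

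In the birational case I would analyse the fibres of $\chi|_E$. Set $Z := \chi(E)$; since all fibres are $\leq 1$-dimensional, $\dim Z = \dim X - 2$, and over a general point of $Z$ the fibre $F$ is a $\mathbb{P}^1$ with $-K_X \cdot F = 1$ (using $\ell(R) = 1$ together with $E \cdot F < 0$). Adjunction then pins down $N_{F/X} \cong \mathcal{O}^{\dim X - 2} \oplus \mathcal{O}(-1)$, hence $N_{E/X}|_F \cong \mathcal{O}_{\mathbb{P}^1}(-1)$, $Z$ is smooth, and $\chi|_E \colon E \to Z$ is a $\mathbb{P}^1$-bundle over (at least) the locus of good fibres. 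Ruling out degenerate (reducible or nonreduced) special fibres by the curve-degeneration arguments of \cite{ando1985extremal}, one concludes that $Z' := \chi(E) \subseteq X'$ is smooth of codimension $2$, that $E \cong \mathbb{P}(N_{Z'/X'})$, and that $\chi$ is exactly the blow-up of $X'$ along $Z'$; the smoothness of $X'$ comes out of the same local picture.

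In the fibre type case I would first check that every fibre $F$ is a local complete intersection curve of arithmetic genus $0$; then $R^1\chi_*\mathcal{O}_X = 0$, $\chi_*\mathcal{O}_X = \mathcal{O}_{X'}$, and since $h^0(F, \omega_F^{-1}) = 3$ for every such $F$, the sheaf $\mathcal{E} := \chi_*\,\omega_{X/X'}^{-1}$ is locally free of rank $3$, with formation commuting with base change. The relative anticanonical map then realises $X$ as a divisor in the $\mathbb{P}^2$-bundle $\mathbb{P}_{X'}(\mathcal{E})$ cutting a conic on every fibre, so $\chi$ is a conic bundle. For smoothness of $X'$ one notes that a general fibre $F_0 \cong \mathbb{P}^1$ has $N_{F_0/X} \cong \mathcal{O}^{\dim X - 1}$, hence deforms in an unobstructed $(\dim X - 1)$-dimensional family whose parameter space is locally identified with $X'$ near $\chi(F_0)$, and one propagates smoothness over the closed ``bad'' locus by the same kind of specialization argument.

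The step I expect to be the main obstacle is precisely this local, deformation-theoretic input at the degenerate fibres: the Ionescu--Wi\'sniewski inequality only controls dimensions, whereas pinning down the scheme structure of the special fibres — so that in the birational case $\chi$ is genuinely a smooth blow-up (and not a non-Gorenstein contraction) and in the fibre type case $X'$ genuinely stays smooth with all fibres honest conics — is the delicate part, carried out in \cite{ando1985extremal} and reformulated in \cite{wisniewski1991contractions}. Everything else in the argument is formal once that structure is available.
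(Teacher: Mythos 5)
Your proposal is essentially correct, but note that the paper offers no proof of this lemma at all: it is quoted verbatim from the literature, with the authors stating only that it ``was first proved in \cite{ando1985extremal}*{Theorems 2.3 and 3.1} and then reformulated in \cite{wisniewski1991contractions}*{Theorem 1.2}.'' Your sketch faithfully reproduces the strategy of those cited sources (the Ionescu--Wi\'sniewski inequality to force the divisorial/equidimensional dichotomy, then the local deformation-theoretic analysis of degenerate fibres to identify the smooth blow-up and conic-bundle structures), and you correctly flag that the genuinely delicate content lives in Ando's local analysis rather than in the formal dimension count.
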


\begin{lem}[cf.~\cite{casagrande2012picard}*{Theorem 1.1}]\label{lem-leftschetz-defect}
	Let $X$ be a smooth Fano fourfold and $\delta_X$ its Lefschetz defect.
	If $\delta_X \ge 4$, then $X \cong S_1 \times S_2$ with $S_i$ being del Pezzo surfaces.
	If $\delta_X = 3$, then there exists a conic bundle $X \to Y$ such that $\rho(X) - \rho(Y) = 3$.
\end{lem}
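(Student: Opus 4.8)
The claim is precisely \cite{casagrande2012picard}*{Theorem 1.1} in the case $\dim X = 4$, so the quickest route is to quote that result; for orientation I indicate the strategy one follows to prove it. First I would fix a prime divisor $D \subseteq X$ realizing the Lefschetz defect, so that $i_* \colon \N_1(D) \to \N_1(X)$ has image of codimension $\delta_X$. By the projection formula the subspace $\mathcal{H}_D \coloneqq \ker\!\big(i^* \colon \N^1(X) \to \N^1(D)\big)$ is the annihilator of $i_*\N_1(D)$, so $\dim_{\bR}\mathcal{H}_D = \delta_X$. The first substantial step is to use that $X$ is Fano---so that $-K_X$ is ample and $\NE(X)$ is rational polyhedral---to show that $\mathcal{H}_D$ is generated by classes of prime divisors, each of which is disjoint from $D$; fix such a spanning family $B_1, \dots, B_{\delta_X}$.

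Next I would run a minimal model program for $X$ (which terminates since $X$ is Fano, cf.~Lemma~\ref{lem-equiv-MMP}) contracting precisely the $B_j$'s: because the $B_j$ are pairwise disjoint and disjoint from $D$, and their classes span a subspace ``transverse'' to the numerical behaviour of $D$, every extremal ray contracted along the way is either divisorial with exceptional locus one of the $B_j$, or of fibre type with the $B_j$ vertical. One thereby arrives at a Mori fibre space $\varphi \colon X' \to Z$ under which the birational transform of $D$ dominates $Z$, and for which $\rho(X') - \rho(Z)$ together with the number of preceding divisorial contractions sums to $\delta_X$. Counting dimensions, a large Lefschetz defect forces $\dim Z$ to be small: $Z$ is a surface when $\delta_X \ge 4$, and a threefold when $\delta_X = 3$.

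In the case $\delta_X = 3$, the fibre-type contraction onto the threefold has one-dimensional fibres, hence by Lemma~\ref{lem-blowup-or-conic} it is a conic bundle; composing it with the divisorial blow-ups produced above yields a conic bundle $X \to Y$ over a smooth threefold $Y$ with $\rho(X) - \rho(Y) = 3$, as required.

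In the case $\delta_X \ge 4$, the Mori fibre space $X \to S_1$ has two-dimensional general fibre and $S_1$ is a del Pezzo surface; the remaining task---and the genuine obstacle---is to prove that this fibration is trivial, i.e.\ $X \cong S_1 \times S_2$ with $S_2$ a del Pezzo surface. Here one exploits that $\delta_X \ge 4$ makes $\mathcal{H}_D$ large enough to contain several independent effective classes restricting to zero on the fibres, which rigidifies the family; once triviality is known, the Fano condition on $X$ forces both $S_1$ and $S_2$ to be smooth Fano surfaces. I expect this rigidity/triviality step to be the crux, while everything else is bookkeeping with the cone theorem and the classification of extremal contractions of smooth fourfolds.
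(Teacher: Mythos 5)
Your proposal takes the same route as the paper: the paper's entire proof is a two-line citation of \cite{casagrande2012picard}, and your opening move of quoting that reference (with the MMP sketch offered only as orientation) is exactly what is done there. The one imprecision is that only the $\delta_X \ge 4$ assertion is literally \cite{casagrande2012picard}*{Theorem 1.1}; the $\delta_X = 3$ statement about a conic bundle with $\rho(X)-\rho(Y)=3$ is not part of that theorem but is extracted from Step 3.3.15 of the proof of Proposition 3.3.1 in the same reference, which is the citation the paper actually gives for it.
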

\begin{proof}
	The first assertion is a direct consequence from \cite{casagrande2012picard}*{Theorem 1.1}.
	For the second assertion, see \cite{casagrande2012picard}*{Step 3.3.15 of Proof of Proposition 3.3.1}.
\end{proof}

\begin{lem}[cf.~\cite{wisniewski1991contractions}*{pp. 156, Corollary}]
	\label{lem-base-Fano}
	Let $\tau \colon X \to Y$ be a conic bundle.
	If $X$ is smooth Fano and either $\dim Y \leq 3$ or $\rho(Y) = 1$, then $Y$ is also smooth Fano.
\end{lem}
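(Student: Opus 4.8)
The plan is to prove directly that $-K_Y$ is ample; recall that $Y$ is already smooth by the definition of a conic bundle (Notation~\ref{notation-conic-bundle}), so this is all that is needed.

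I would dispose of the case $\rho(Y)=1$ first, where a short argument works. Since $-K_X$ is ample, $X$ is uniruled, and hence so is its image $Y$: a free rational curve through a general point of $X$ is not contracted by $\tau$ once $\dim Y\ge 1$, and its image is a rational curve through a general point of $Y$. Therefore $K_Y$ is not pseudo-effective. As $\rho(Y)=1$ and $K_Y$ is Cartier, $K_Y$ is a rational multiple of the ample generator of $\N^1(Y)$, and that multiple must be negative; thus $-K_Y$ is ample.

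Now suppose $\dim Y\le 3$. First I would observe that $\NE(Y)=\tau_*\NE(X)$: every irreducible curve $C\subseteq Y$ lifts to a curve $C'\subseteq X$ with $\tau|_{C'}$ finite onto $C$ (take a general curve section of a component of $\tau^{-1}(C)$ dominating $C$), so $\tau_*\NE(X)$ contains all curve classes of $Y$; and $\tau_*\NE(X)$ is a rational polyhedral — in particular closed — cone, being the linear image of the rational polyhedral cone $\NE(X)$ (here we use that $X$ is Fano). Hence $\NE(Y)$ is rational polyhedral, and each of its extremal rays is spanned by $\tau_*[\ell]$ for some $K_X$-negative extremal rational curve $\ell\subseteq X$ with $\tau(\ell)$ a curve. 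By Kleiman's criterion it now suffices to prove $(-K_Y)\cdot C>0$ for $C:=\tau(\ell)$ with $\ell$ as above. To do this, cut down to a ruled surface: after replacing $\ell$ by a suitable deformation (and $C$ by a suitable representative of the same ray), arrange that $C$ is a smooth rational curve meeting the discriminant $\Delta_\tau$ transversally; then $S:=\tau^{-1}(C)$ is a smooth surface and $\tau|_S\colon S\to C\cong\bP^1$ is a conic bundle over $\bP^1$, so $S$ is rational and carries a section $C_0\cong\bP^1$ of $\tau|_S$ with $C_0^2\le 0$ (pull back the minimal section of a relatively minimal model $\bF_e$). Combining adjunction $K_S=(K_X+S)|_S$, the identification $N_{S/X}\cong(\tau|_S)^*N_{C/Y}$ (so $S|_S\cdot C_0=\deg\det N_{C/Y}=(-K_Y)\cdot C-2$, since $\det N_{C/Y}\cong\omega_C\otimes\omega_Y^\vee|_C$ and $C$ is rational), and adjunction on the surface $S$ (so $K_S\cdot C_0=-2-C_0^2$), one computes
\[
	K_X\cdot C_0 = K_S\cdot C_0 - S|_S\cdot C_0 = (-2-C_0^2) - \bigl((-K_Y)\cdot C - 2\bigr) = -C_0^2 - (-K_Y)\cdot C,
\]
whence $(-K_Y)\cdot C = (-K_X)\cdot C_0 - C_0^2 \ge (-K_X)\cdot C_0 > 0$, because $-K_X$ is ample and $C_0^2\le 0$. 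This proves the claim, and with it the lemma when $\dim Y\le 3$.

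The delicate point — and the main obstacle — is the deformation step: an extremal ray of $\NE(Y)$ need not obviously be represented by a smooth rational curve transverse to $\Delta_\tau$; it could be a ray of curves forced to lie inside $\Delta_\tau$, over which $\tau^{-1}(C)$ is singular and the clean adjunction above breaks down. Controlling this — either by moving $C$ off $\Delta_\tau$, or by running the computation on a resolution of $\tau^{-1}(C)$ and checking that the discrepancy corrections have the right sign — is exactly where the hypotheses $\dim Y\le 3$ and $\rho(Y)=1$ are used; this analysis is carried out in \cite{wisniewski1991contractions}, of which the present lemma is a corollary.
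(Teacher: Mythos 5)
The paper offers no internal proof of this lemma at all --- it is imported verbatim from Wi\'sniewski \cite{wisniewski1991contractions}*{p.~156, Corollary} --- so the only comparison available is between your write-up and that reference. Judged on its own terms, your argument for the case $\rho(Y)=1$ is complete and correct (uniruledness of $Y$ forces $K_Y$ off the pseudo-effective cone, and $\rho(Y)=1$ then makes $-K_Y$ ample; equivalently one could note that $\Delta_{\tau}$ is automatically nef when $\rho(Y)=1$, which kills the same difficulty). The reduction $\NE(Y)=\tau_*\NE(X)$, its polyhedrality, and the intersection computation on $S=\tau^{-1}(C)$ for $C$ smooth and transverse to $\Delta_{\tau}$ are also correct, with the small caveat that for $\dim Y=3$ the surface $S$ has codimension $2$ in $X$, so ``$S|_S$'' must be read as $\det N_{S/X}$ and ``$K_S=(K_X+S)|_S$'' as lci adjunction --- which is how you in fact use them.

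The genuine gap is the one you flag yourself, and it is not a removable technicality: for $\dim Y\le 3$ you must exhibit, on \emph{each} extremal ray of $\NE(Y)$, a representative that is a smooth rational curve meeting $\Delta_{\tau}$ transversally, and in general no such representative exists. An extremal ray can be rigid and supported inside a component of $\Delta_{\tau}$; by extremality every effective $1$-cycle in that ray then has all components inside $\Delta_{\tau}$, so there is nothing to deform to. Over such a curve $\tau^{-1}(C)$ is non-normal (generically a union of two lines), and both the identification $N_{S/X}\cong(\tau|_S)^*N_{C/Y}$ and the existence of a section with $C_0^2\le 0$ fail. The transverse computation essentially amounts to the standard identity $\tau_*(K_X^2)=-(4K_Y+\Delta_{\tau})$, which gives ampleness of $-(4K_Y+\Delta_{\tau})$ but says nothing about curves with $\Delta_{\tau}\cdot C<0$; handling those --- via the double cover $\widetilde{\Delta_{\tau}}\to\Delta_{\tau}$ and the low-dimensionality of $\Delta_{\tau}$ --- is exactly the nontrivial content of Wi\'sniewski's corollary. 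Deferring that step to the reference means your argument proves the $\dim Y\le 3$ case only modulo the statement being proved; since the paper likewise only cites \cite{wisniewski1991contractions}, this is not a defect relative to the paper, but the honest formulation is ``cite the corollary for $\dim Y\le 3$'' rather than a proof sketch with a delicate point left open.
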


Lemma \ref{lem-Rom4.2} below characterizes the behaviour of the non-elementary Fano conic bundles. %(cf.~Theorem~\ref{thm-structure-mmp}).
We refer readers to Section \ref{section-main-reduction} for descriptions with a dynamical assumption.

\begin{lem}[cf.~\cite{romano2019non}*{Theorem 4.2}]
	\label{lem-Rom4.2}
	Let $X$ be a smooth Fano fourfold and $\tau \colon X \to Y$ a conic bundle which factors as $X \xrightarrow{\pi} W \xrightarrow{\tau_0} Y$ where $\pi$ is a composition of blow-ups along disjoint smooth projective surfaces and $\tau_0$ is an elementary conic bundle.
	\begin{enumerate}[label=(\arabic*),ref=2.9\,(\arabic*)]
		\item \label{lem-Rom4.2-geq-4}
		      If $\rho(X) - \rho(Y) \geq 4$, then $X \cong S_1 \times S_2$,
		      where $S_i$ are del Pezzo surfaces.
		      %, $Y \cong \bP^1 \times S_2$, and $\tau$ is induced by a conic bundle $S_1 \to \bP^1$.
		\item \label{lem-Rom4.2-=3}
		      If $\rho(X) - \rho(Y) = 3$, then $\tau_0$ is a smooth $\bP^1$-bundle, and there exists a smooth $\bP^1$-bundle $p \colon Y \to Z$ with $Z$ being a del Pezzo surface.
		\item \label{lem-Rom4.2-=2}
		      If $\rho(X) - \rho(Y) = 2$, and $\tau_0$ is singular, then there exists a smooth $\bP^1$-bundle $p \colon Y \to Z$ with $Z$ being a del Pezzo surface.
		\item \label{lem-split-P^1-bundle}
		      If $X\cong S_1\times S_2$ with $S_i$ being del Pezzo surfaces (this is the case when (1) occurs), then $\tau_0$ is a splitting $\bP^1$-bundle.
	\end{enumerate}
\end{lem}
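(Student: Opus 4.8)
Assertions (1)--(3) are contained in \cite{romano2019non}*{Theorem 4.2}; note that $\dim Y = 3$, so $Y$ is a smooth Fano threefold by Lemma~\ref{lem-base-Fano}. The substantial point is (4): assuming $X \cong S_1 \times S_2$ with the $S_i$ del Pezzo surfaces, we must show that $\tau_0 \colon W \to Y$ is a splitting $\bP^1$-bundle. The plan is to first identify $W$ as a product of two surfaces, and then $\tau_0$ as the product of a Hirzebruch ruling with an identity map.

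\emph{Step 1: the shape of $W$.} I would use the standard description $\NE(A \times B) = \NE(A) \times \NE(B)$ for smooth rational surfaces $A, B$, whose extremal rays are precisely those of the two factors, with corresponding contractions of the form $\psi \times \id$; the same applies after each blow-down below. Write $\pi$ as a tower of blow-downs $X = X_0 \to X_1 \to \cdots \to X_k = W$ contracting the pairwise disjoint exceptional divisors $E_1, \ldots, E_k$ of $\pi$ in turn, so that the $j$-th step $c_j \colon X_{j-1} \to X_j$ contracts a prime divisor and drops the Picard number by one; hence it contracts an extremal ray $R_j \subseteq \NE(X_{j-1})$ (not necessarily $K$-negative). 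For $j = 1$: as $X_0 = S_1 \times S_2$, the ray $R_1$ comes from one factor, say $S_1$ (after possibly swapping the factors), so $c_1 = \psi_1 \times \id_{S_2}$ where $\psi_1 \colon S_1 \to S_1^{(1)}$ contracts $R_1$; being divisorial, $\psi_1$ is the contraction of a $(-1)$-curve $C_1 \subseteq S_1$, so $E_1 = C_1 \times S_2$ and $X_1 \cong S_1^{(1)} \times S_2$ (again a del Pezzo surface). Since $E_1$ is ``horizontal'' while a horizontal divisor $C \times S_2$ always meets a ``vertical'' one $S_1 \times C'$, and the $E_j$ are pairwise disjoint in $X$, an easy induction (strict transforms of vertical divisors under horizontal contractions stay vertical) shows that every $R_j$ comes from the successively blown-down $S_1$-factor; hence $W \cong S_1' \times S_2$ with $S_1'$ a del Pezzo surface obtained from $S_1$ by contracting disjoint $(-1)$-curves.

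\emph{Step 2: $\tau_0$ is a splitting $\bP^1$-bundle.} Since $\tau_0$ is a conic bundle, $-K_W$ has degree $2$ on a general fibre $\cong \bP^1$, so $\tau_0$ contracts a $K_W$-negative extremal ray $R$; arguing as in Step~1, $R$ comes from one factor, WLOG $S_1'$, so $\tau_0 = \phi \times \id_{S_2}$ with $\phi \colon S_1' \to B$ the contraction of a $K_{S_1'}$-negative extremal ray and $Y = B \times S_2$. Elementarity of $\tau_0$ (i.e.\ $\rho(W) = \rho(Y) + 1$) gives $\rho(S_1') - \rho(B) = 1$, that is $\rho(S_1'/B) = 1$. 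A conic bundle is equidimensional of relative dimension $1$, hence so is $\phi$, whence $\dim B = 1$ and, $Y$ being smooth, $B \cong \bP^1$; having relative Picard number $1$, $\phi$ has no reducible fibres, so it is a $\bP^1$-bundle over $\bP^1$ and $S_1' \cong \bP_{\bP^1}(\cO \oplus \cO(-d)) = \bF_d$ for some $d \ge 0$. Finally, writing $p \colon Y = \bP^1 \times S_2 \to \bP^1$ for the projection, flat base change gives an isomorphism $W = \bF_d \times S_2 \cong \bP_Y(p^*(\cO \oplus \cO(-d)))$ carrying $\tau_0$ to the structure map; as $p^*(\cO \oplus \cO(-d)) = p^*\cO \oplus p^*\cO(-d)$ is a direct sum of two invertible sheaves, $\tau_0$ is a splitting $\bP^1$-bundle.

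I expect Step~1 to be the main obstacle: since $\pi$ need not be $K_X$-negative, the argument cannot proceed through the MMP but must use extremality of each $R_j$ in $\NE(X_{j-1})$ directly, and one needs the disjointness bookkeeping along the tower of blow-downs that prevents the two factors from getting mixed. Once $W \cong S_1' \times S_2$ is established, Step~2 is routine surface geometry.
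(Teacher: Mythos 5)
Your proposal is correct and takes essentially the same route as the paper: assertions (1)--(3) are quoted from Romano's theorem, and for (4) the paper likewise identifies $W$ as a product of del Pezzo surfaces with one factor a Hirzebruch surface over $\bP^1$ and recognizes $\tau_0$ as the base change of that ruling, hence splitting. The only difference is that the paper states these identifications as ``clear'' while you supply the $\NE(S_1\times S_2)=\NE(S_1)\times\NE(S_2)$ bookkeeping along the tower of blow-downs; your extra care about which factor each contracted ray lives in is exactly the content the paper suppresses.
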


\begin{proof}
	All the assertions follow from \cite{romano2019non}*{Theorem 4.2} and note that (4) is a consequence of the proof of \cite{romano2019non}*{Theorem 4.2\,(1)}.
\end{proof}

% \begin{proof}
% (1) $\sim$ (3) follow from \cite{romano2019non}*{Theorem 4.2}.
% So we only need to show (4).
% If $X \cong S_1 \times S_2$, then it is clear that $W \cong S_1' \times S_2'$ is a product of del Pezzo surfaces with $S_1' = \bF_0$ or $\bF_1$ and $Y \cong \bP^1 \times S_2'$.
% Hence $\tau_0$ is induced by the pull-back of a splitting $\bP^1$-bundle ($\bF_0$ or $\bF_1$) over $\bP^1$ under the base change;
% thus $\tau_0$ is also a splitting $\bP^1$-bundle.
% \end{proof}

The following Lemma~\ref{lem-blowup-basechange} is well known and we rewrite it here for readers' convenience.
% Stack Projects, Lemma 085S.

\begin{lem}\label{lem-blowup-basechange}
	Let $W \to Z$ be a flat morphism of algebraic varieties.
	Let $C \subseteq Z$ be a closed subscheme and $D$ the inverse image of $C$ in $W$.
	Let $W'$ be the blow-up of $W$ along $D$ and $Z'$ the blow-up of $Z$ along $C$.
	Then $W' \cong W \times_Z Z'$.
\end{lem}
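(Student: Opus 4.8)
The plan is to prove this via the universal property of the blow-up rather than by an explicit local computation. Recall that $\Bl_D W\to W$ is characterized as the terminal morphism $g\colon T\to W$ for which the inverse image ideal sheaf $\cI_D\cdot\cO_T$ is invertible; equivalently, a morphism $T\to W$ factors uniquely through $\Bl_D W$ exactly when $\cI_D\cdot\cO_T$ is an invertible sheaf of ideals. So I would show that the first projection $p\colon W\times_Z Z'\to W$ satisfies this same universal property, from which $W\times_Z Z'\cong\Bl_D W$ over $W$ follows formally.

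First I would set up the formal identities of ideal sheaves. By definition $D$ is the scheme-theoretic inverse image $W\times_Z C$, so $\cI_D=\cI_C\cdot\cO_W$; and since extension of ideals is transitive, for any morphism $T\to W$ one has $\cI_D\cdot\cO_T=(\cI_C\cdot\cO_W)\cdot\cO_T=\cI_C\cdot\cO_T$. Applying this with $T=W\times_Z Z'$ and letting $q\colon W\times_Z Z'\to Z'$ denote the second projection, we get $\cI_D\cdot\cO_{W\times_Z Z'}=\cI_C\cdot\cO_{W\times_Z Z'}=(\cI_C\cdot\cO_{Z'})\cdot\cO_{W\times_Z Z'}$.

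The key step — and the only place the flatness hypothesis is used — is to check that this ideal is invertible. By the universal property of the blow-up $Z'=\Bl_C Z$, the inverse image ideal sheaf $\cI_C\cdot\cO_{Z'}$ is invertible, so there is a short exact sequence $0\to\cI_C\cdot\cO_{Z'}\to\cO_{Z'}\to\cQ\to 0$ with $\cI_C\cdot\cO_{Z'}$ a line bundle. Since $q$ is the base change of the flat morphism $W\to Z$, it is flat, hence $q^*$ is exact and $q^*(\cI_C\cdot\cO_{Z'})\hookrightarrow\cO_{W\times_Z Z'}$; the image of this injection is $(\cI_C\cdot\cO_{Z'})\cdot\cO_{W\times_Z Z'}=\cI_D\cdot\cO_{W\times_Z Z'}$, which is therefore invertible. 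Without flatness the pulled-back structure map $q^*(\cI_C\cdot\cO_{Z'})\to\cO_{W\times_Z Z'}$ could acquire a kernel and this conclusion would fail, so I expect this to be the one point that actually needs care.

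Finally I would verify the factorization property. Given $g\colon T\to W$ with $\cI_D\cdot\cO_T$ invertible, composing with $W\to Z$ and using $\cI_C\cdot\cO_T=\cI_D\cdot\cO_T$, the universal property of $Z'=\Bl_C Z$ provides a unique morphism $T\to Z'$ over $Z$; together with $g$ this determines a unique $T\to W\times_Z Z'$ over $W$. Hence $W\times_Z Z'$ and $\Bl_D W$ enjoy the same universal property over $W$ and are canonically isomorphic. As an alternative route, one can argue with Rees algebras: flatness of $W\to Z$ yields $\cI_C^{\,n}\otimes_{\cO_Z}\cO_W\xrightarrow{\ \sim\ }\cI_D^{\,n}$ for all $n\ge 0$, so $\bigoplus_{n\ge 0}\cI_D^{\,n}=\bigl(\bigoplus_{n\ge 0}\cI_C^{\,n}\bigr)\otimes_{\cO_Z}\cO_W$ as graded $\cO_W$-algebras, and then $W'=\Proj$ of the left-hand side equals $\Proj$ of the right-hand side, which is $Z'\times_Z W$ by compatibility of $\Proj$ with base change.
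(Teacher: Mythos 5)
Your proof is correct. Note that the paper itself offers no proof of this lemma at all --- it is introduced with the remark that it ``is known to experts and we rewrite it here for readers' convenience'' and then stated without argument --- so there is nothing to compare your proof against; you are supplying a proof the authors chose to omit. Both routes you sketch are standard and sound: the universal-property argument correctly isolates the only nontrivial point, namely that flatness of $q \colon W \times_Z Z' \to Z'$ is what guarantees that $q^*(\cI_C \cdot \cO_{Z'}) \to \cO_{W \times_Z Z'}$ remains injective, so that the inverse image ideal $\cI_D \cdot \cO_{W \times_Z Z'}$ is invertible rather than merely a quotient of an invertible sheaf; and the Rees-algebra alternative, using $\cI_C^{\,n} \otimes_{\cO_Z} \cO_W \cong \cI_D^{\,n}$ together with compatibility of relative $\Proj$ with base change, is an equally valid (and perhaps more self-contained) way to conclude. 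One small point worth making explicit in a final write-up: the hypothesis that $D$ is the \emph{scheme-theoretic} inverse image of $C$ is what gives the identity $\cI_D = \cI_C \cdot \cO_W$ on which both arguments rest.
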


In what follows, we prove two lemmas that naturally complement Notation~\ref{notation-conic-bundle}.
Lemma~\ref{lem-no-3-comp} was first proved in \cite{sarkisov1983conic}*{Proposition 1.16}.
It shows, at each singular point of the discriminant of a conic bundle, there are precisely two components intersecting transversally.
We give an alternative proof here for readers' convenience.

\begin{lem}\label{lem-no-3-comp}
	Let $D_i \subseteq \Delta_{\tau}$ $(i = 1, 2, 3)$ be three distinct irreducible components (if exists) of a conic bundle $\tau \colon X \to Y$.
	Then $D_1 \cap D_2 \cap D_3 = \emptyset$.
\end{lem}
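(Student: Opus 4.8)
The plan is to study the local structure of the conic bundle $\tau$ at a point lying in the intersection of three components of the discriminant and derive a contradiction from the classification of degenerate conics. First I would pass to a formal or \'etale neighbourhood of a point $y_0 \in D_1 \cap D_2 \cap D_3$ in $Y$ (if such a point existed). Since $\tau$ is a conic bundle, after shrinking $Y$ we may realize $X$ as an embedded conic inside $\bP_Y(\cE)$ for a rank-$3$ bundle $\cE$, and $X$ is then cut out by a quadratic form, i.e.\ by a symmetric $3\times 3$ matrix $M(y)$ with entries regular functions near $y_0$. The discriminant $\Delta_\tau$ is precisely the zero locus of $\det M(y)$, which is a reduced divisor by Notation~\ref{notation-discriminant}, and over the regular locus of $\Delta_\tau$ the fibre is a pair of distinct lines (rank $M = 2$), while at a point where $\operatorname{rk} M$ drops to $1$ the fibre is a double line.

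The key step is to compare the two natural stratifications near $y_0$: the rank stratification of $M$, and the decomposition of $\Delta_\tau$ into irreducible components. A point where three components of $\Delta_\tau$ meet is in particular a point where $\Delta_\tau$ fails to be a normal crossing (or at least where three analytic branches pass), and one expects this to force $\operatorname{rk} M(y_0) \le 1$. But a standard fact about symmetric matrices is that the locus $\{\operatorname{rk} M \le 1\}$ has codimension $\ge 3$ in $\{\det M = 0\}$ locally generically — more precisely, near a point of $\{\operatorname{rk} M \le 1\}$ the determinantal hypersurface $\{\det M = 0\}$ looks (after a change of coordinates) like the affine cone over a conic, i.e.\ like $xz = y^2$ times a smooth factor, which is irreducible. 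Thus through a rank-$1$ point of $Y$ only one local branch of $\Delta_\tau$ passes, contradicting the existence of three components through $y_0$. Conversely, if $\operatorname{rk} M(y_0) = 2$, then near $y_0$ the hypersurface $\{\det M = 0\}$ is, in suitable coordinates, either smooth or a node $uv = 0$ — at most two branches — again contradicting three components meeting. Either way we reach the desired conclusion $D_1 \cap D_2 \cap D_3 = \emptyset$.

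Concretely, I would run the argument as follows. Assume for contradiction $y_0 \in D_1 \cap D_2 \cap D_3$. Reduce to the case $Y$ is a smooth affine (or the spectrum of a complete local ring) and $X \subseteq \bP^2 \times Y$ is given by a symmetric matrix $M$. Diagonalize $M(y_0)$ over $\bk$ and split into cases by $r \coloneqq \operatorname{rk} M(y_0) \in \{1, 2\}$ (the case $r = 3$ is impossible since $y_0 \in \Delta_\tau$; the case $r = 0$ cannot occur because the conic is nontrivial, i.e.\ $M \not\equiv 0$). In the case $r = 2$, use the implicit function theorem to change basis so that $M = \operatorname{diag}(1,1,\phi(y))$ with $\phi(y_0) = 0$; then $\det M$ is a unit times $\phi$, so $\Delta_\tau = \{\phi = 0\}$ near $y_0$, and the number of local branches of $\Delta_\tau$ through $y_0$ equals the number of irreducible factors of $\phi$ in the local ring, which is the number of irreducible factors of $\det M$; one checks this is at most two by analysing the Hessian of $\phi$, since $d\phi$ and the quadratic part of $\phi$ together cut out at most a node. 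In the case $r = 1$, change basis so that $M$ has the block form with a unit in the top-left corner and a $2\times 2$ symmetric block $N(y)$ with $N(y_0) = 0$; completing the square shows $\det M$ is a unit times $\det N = N_{11}N_{22} - N_{12}^2$, and the quadric $\{N_{11}N_{22} = N_{12}^2\}$ — the cone over a smooth conic — is irreducible with a single branch through the origin, so at most one component of $\Delta_\tau$ passes through $y_0$. In all cases fewer than three components meet at $y_0$, the contradiction.

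The main obstacle I anticipate is making the ``number of analytic branches'' bookkeeping rigorous and uniform — one must be careful that distinct global irreducible components $D_i$ of $\Delta_\tau$ that pass through $y_0$ give distinct local branches (this uses that $\Delta_\tau$ is reduced, so $\cO_{Y,y_0}$-localization of its ideal is radical, hence a finite intersection of distinct primes, one per local branch), and that the completion-of-the-square manipulations genuinely reduce $\det M$ to the claimed normal form up to a unit. A secondary technical point is justifying the \'etale-local reduction of a conic bundle to an embedded conic in a projectivized rank-$3$ bundle; this is exactly the content referenced in Notation~\ref{notation-conic-bundle} and \cite{sarkisov1983conic}, so I would simply cite it. Once the normal forms are in place, the irreducibility statements for $\{xz = y^2\}$ and the branch-counting for the node are classical and essentially a one-line computation each.
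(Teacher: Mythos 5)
Your overall strategy --- realize $X$ locally as the zero locus of a symmetric $3\times 3$ matrix $M(y)$, identify $\Delta_\tau$ with $\{\det M=0\}$, and bound the number of local branches at a putative point $y_0\in D_1\cap D_2\cap D_3$ --- is the same as the paper's. But your case analysis contains a concrete error, and it omits the one ingredient that actually makes the lemma true: the smoothness of the total space $X$. In your rank-$1$ case, the claim that $\{N_{11}N_{22}=N_{12}^2\}$ is ``the cone over a smooth conic, irreducible with a single branch'' is false: this hypersurface is the \emph{preimage} of the cone $\{xz=y^2\}$ under $y\mapsto(N_{11}(y),N_{12}(y),N_{22}(y))$, and preimages of irreducible germs need not be irreducible unless the entries form part of a coordinate system. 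For instance $N_{11}=y_1$, $N_{22}=-y_1$, $N_{12}=y_2$ gives a conic bundle with \emph{smooth} total space whose discriminant $y_1^2+y_2^2=(y_1+iy_2)(y_1-iy_2)=0$ has two distinct branches through the rank-$1$ point --- indeed, rank-$1$ points are exactly where two branches of $\Delta_\tau$ generically meet, so ``at most one branch'' cannot be the right bound there. Similarly, in your rank-$2$ case the bound ``at most a node'' via the Hessian of $\phi$ is vacuous if $\phi\in\fm_{y_0}^3$, which you never exclude.

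Both gaps are repaired by the same observation, which is precisely the paper's proof: three components through $y_0$ force $\det M\in\fm_{y_0}^3$, hence the Hessian $(\partial^2\det M/\partial y_i\partial y_j)_{y_0}$ vanishes, and this contradicts \cite{sarkisov1983conic}*{Proposition 1.8 5.c} --- a statement whose content is exactly that smoothness of $X$ bounds the vanishing order of $\det M$ (equivalently, forces a nonzero Hessian) at every point of $\Delta_\tau$. Without that input the lemma is simply false: $M=\operatorname{diag}(1,\,y_1y_2,\,y_3)$ defines an embedded conic, flat with conic fibres over a smooth base, whose discriminant $y_1y_2y_3=0$ has three components through the origin; it is only excluded because its total space is singular. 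So you should either invoke Sarkisov's proposition directly, or redo your two cases carrying along the explicit smoothness condition on $X$ at the singular points of the fibre over $y_0$ (which in the rank-$2$ case forces $d\phi(y_0)\neq 0$, and in the rank-$1$ case forces the quadratic part of $N_{11}N_{22}-N_{12}^2$ to be nonzero, giving multiplicity exactly $2$ and hence at most two branches).
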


\begin{proof}
	Suppose the contrary that there exists $v \in D_1 \cap D_2 \cap D_3 \subseteq \Sing \Delta_{\tau}$.
	Choose an affine open neighbourhood $U \subseteq Y$ of $v$ with local coordinates $y_i$ such that one can write the local equation of $X_U \coloneqq \tau^{-1}(U) \subseteq U \times \bP^2_{x_0, x_1, x_2}$ in the form $Q_U \coloneqq \sum_{0 \leq i, j \leq 2} a_{ij} x_i x_j = 0$, where $x_i$ are the coordinates in $\bP^2$ and $a_{ij} \in \cO_Y(U)$.
	% \[
	% 	Q_U \coloneqq \sum_{0 \leq i, j \leq 2} a_{ij} x_i x_j = 0
	% \]
	Locally, $\Delta_{\tau}$ is given by the determinant equation $\det Q_U = 0$.
	% \det \Vert a_{i,j} (y) \Vert =
	% By \cite{sarkisov1983conic}*{Proposition 1.8 5.a and 5.b} one has $\rk Q_U(v) = 1$.
	% Then after a non-degenerate $\cO_Y(U)$-linear transformation, the form $Q_U$ can be transformed to
	% \[
	% 	x_0^2 + a x_1^2 + b x_1 x_2 + c x_2^2,
	% \]
	% where $a, b, c \in \fm_v \subseteq \cO_Y(U)$, or equivalently, $a(v) = b(v) = c(v) = 0$.
	% Now $(D_1 \cup D_2 \cup D_3) \cap U$ is defined by
	% \[
	% 	\det Q_U(y) = a(y) c(y) - b(y)^2/4 = 0.
	% \]
	Since $\Delta_{\tau}$ has at least three components near $v$, we have $\det Q_U \in \fm_v^3$.
	It follows that $\rk (\partial^2 \det Q_U/\partial y_i \partial y_j)_v = 0$, contradicting \cite{sarkisov1983conic}*{Proposition 1.8 5.c}.
	% \[
	% 	\rk (\partial^2 \det Q_U/\partial y_i \partial y_j)_v = 0,
	% \]
\end{proof}

\begin{lem}\label{lem-embedded-stable-under-BC}
	Let $\tau \colon X \to Y$ be a conic bundle, and $\pi \colon Y' \to Y$ a morphism from a smooth projective variety \(Y'\) such that $\pi(Y') \not\subset \Delta_{\tau}$.
	Then the morphism $\tau' \colon X' \to Y'$ by the base change is an embedded conic.
	In particular, if $Y'\subseteq Y$ is a smooth closed subvariety not contained in $\Delta_{\tau}$, and $\Delta_{\tau'} \coloneqq \Delta_{\tau}|_{Y'}$ is a reduced divisor with simple normal crossings on $Y'$, then $\tau'$ is also a conic bundle.
\end{lem}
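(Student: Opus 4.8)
The plan is to deduce the first assertion from the standard presentation of a conic bundle as an embedded conic, and then to obtain the ``in particular'' statement by a local computation showing that $X'$ remains smooth.

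For the first assertion, recall (\cite{sarkisov1983conic}*{Definition 1.1 and \S 1.5}) that the conic bundle $\tau\colon X\to Y$ is an embedded conic: there are a locally free sheaf $\cE$ of rank $3$ on $Y$, a line bundle $\cL$ on $Y$, and a section $s$ of $\cO_{\bP_Y(\cE)}(2)\otimes p^*\cL$, where $p\colon\bP_Y(\cE)\to Y$, whose zero scheme is $X$ and whose restriction to each fibre $p^{-1}(y)\cong\bP^2$ is a nonzero quadratic form, of rank $3$ exactly when $y\notin\Delta_\tau$. Since $\bP_{Y'}(\pi^*\cE)=\bP_Y(\cE)\times_Y Y'$, the base change $X'$ is the zero scheme of $\pi^*s$, a section of $\cO_{\bP_{Y'}(\pi^*\cE)}(2)\otimes (p')^*(\pi^*\cL)$ with $p'\colon \bP_{Y'}(\pi^*\cE)\to Y'$. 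The hypothesis $\pi(Y')\not\subset\Delta_\tau$ produces a point of $Y'$ mapping outside $\Delta_\tau$, over which $\pi^*s$ is a rank-$3$ form; hence $\pi^*s\not\equiv 0$ and $X'$ is a Cartier divisor in $\bP_{Y'}(\pi^*\cE)$ of relative degree two. Moreover $\pi^*s$ is fibrewise nonzero at every point, so $X'$ contains no fibre of $p'$; being pure of codimension one, every component of $X'$ therefore dominates $Y'$. Assuming (as we may) that $Y'$ is a variety with generic point $\eta'$, the point $\eta'$ avoids the proper closed subset $\pi^{-1}(\Delta_\tau)$, so the generic fibre $X'_{\eta'}$ is a smooth conic over $\kappa(\eta')$, in particular geometrically integral; by flatness of $\tau'$ the irreducible components of $X'$ correspond to those of $X'_{\eta'}$, whence $X'$ is irreducible, and it is reduced because $X'_{\eta'}$ is. Thus $\tau'\colon X'\to Y'$ is an embedded conic.

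For the ``in particular'' statement, note that $Y'$ is smooth projective, $X'$ is projective, $\tau'$ is surjective with connected fibres, and each fibre of $\tau'$ is a genuine conic (the quadratic form being fibrewise nonzero); hence it suffices to show that $X'$ is smooth, which then forces $X'$ to be connected and so a variety. Write $X'=\tau^{-1}(Y')$ as the subscheme of the smooth variety $X$ cut out, near a point over $y\in Y'$, by equations $g_1=\cdots=g_c=0$ defining $Y'\subseteq Y$ (with $c=\codim_Y Y'$, so $dg_1,\dots,dg_c$ independent at $y$). Over $Y'\setminus\Delta_{\tau'}$, and at the smooth points of the degenerate fibres of $\tau'$, the morphism $\tau'$ is smooth and hence so is $X'$; the only points to examine are the singular points of the fibres $X'_{y'}$ with $y'\in\Delta_{\tau'}$. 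There I would invoke Sarkisov's local normal forms for a conic bundle with smooth total space (\cite{sarkisov1983conic}*{Proposition 1.8}, the same input used in Lemma~\ref{lem-no-3-comp}): $\Delta_\tau$ has at worst nodes, the locus over which the fibre is a double line coincides with $\Sing\Delta_\tau$, and near $y$ there are coordinates $y_1,\dots,y_n$ on $Y$ ($n=\dim Y$) so that either $\Delta_\tau=\{y_1=0\}$ is smooth at $y$ and $X=\{x_1x_2=y_1x_0^2\}$ locally (the degenerate fibre being two distinct lines with node $[1:0:0]$, at which the conormal of $X$ is spanned by $dy_1$), or $\Delta_\tau=\{y_1y_2=0\}$ has a node at $y$ and $X=\{x_0^2=y_1x_1^2+y_2x_2^2\}$ locally (the degenerate fibre being the double line $\{x_0=0\}$, along which the conormal of $X$ is spanned by $x_1^2\,dy_1+x_2^2\,dy_2$). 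The hypothesis that $\Delta_{\tau'}=\Delta_\tau|_{Y'}$ is reduced forces $dy_1,dg_1,\dots,dg_c$ to be independent at $y$ in the first case, while the hypothesis that $\Delta_{\tau'}$ has simple normal crossings forces $dy_1,dy_2,dg_1,\dots,dg_c$ to be independent at $y$ in the second; feeding this into the Jacobian criterion for $X'\subseteq X$ at the node (respectively, along the double line) shows that $X'$ is smooth at each of these points, completing the proof.

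The main obstacle I anticipate is exactly this last local verification: one must correctly extract, from Sarkisov's normal forms, the conormal directions of $X$ along the degenerate fibres and then check that ``$\Delta_{\tau'}$ reduced and simple normal crossing'' is precisely the transversality of $Y'$ (to $\Delta_\tau$ at its smooth points, and to $\Sing\Delta_\tau$ at its nodes) needed to keep $X'=\tau^{-1}(Y')$ smooth. The passage to embedded conics and the irreducibility and reducedness of $X'$, by contrast, are routine flatness arguments.
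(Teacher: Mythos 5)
Your proof of the first assertion follows essentially the same route as the paper's: both realize $X$ as a relative conic inside $\bP_Y(\cE)$ (the paper takes $\cE=\tau_*\cO(-K_X)$ and identifies $X'$ with $X\times_{\widetilde{X}}\widetilde{X'}$, where $\widetilde{X'}=\bP_{Y'}(\pi^*\cE)$), and both use the generic fibre to get irreducibility and reducedness of $X'$. That part is fine.

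The problem is in the ``in particular'' statement, where the paper simply invokes \cite{sarkisov1983conic}*{Corollary 1.11} while you attempt a direct Jacobian computation. Your local models and conormal computations are correct, but the bridge from the hypothesis to the transversality you feed into the Jacobian criterion is not: it is \emph{not} true that ``$\Delta_{\tau}|_{Y'}$ reduced (or even reduced with simple normal crossings)'' forces $dy_1,dg_1,\dots,dg_c$ to be independent in your first case. Concretely, take $\Delta_{\tau}=\{y_1=0\}$ smooth near $y$ with local model $X=\{x_1x_2=y_1x_0^2\}$, and let $Y'\subseteq Y$ be a smooth surface on which $y_1|_{Y'}=uv$ in suitable local coordinates (e.g.\ $Y'=\{y_1=y_2y_3\}$ inside a threefold $Y$ with local coordinates $y_1,y_2,y_3$). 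Then $\Delta_{\tau}|_{Y'}=\{uv=0\}$ is a reduced simple normal crossing divisor, yet $dy_1$ vanishes on $T_yY'$ and $X'=\{x_1x_2=uvx_0^2\}$ acquires an ordinary double point at the node of the fibre over $y$; the analogous failure occurs in your second case when one of $dy_1,dy_2$ dies on $T_yY'$. What your argument actually needs --- and what does hold in every application of the lemma in the paper, where $Y'$ is a fibre or a section meeting pullback divisors transversally --- is that the branches of $\Delta_{\tau'}$ through a point of $Y'$ are precisely the transversal traces of the branches of $\Delta_{\tau}$ through its image, one for each; this is strictly stronger than ``the restriction is reduced snc'' and is the condition Sarkisov's criterion is really tracking (over a node of the discriminant of a conic bundle the fibre must be a double line, which fails in the example above). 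So you must either import that transversality as an explicit hypothesis and verify it where the lemma is applied, or fall back on the paper's citation; as written, the implication you assert is false and the Jacobian computation does not go through. (This is arguably a soft spot of the lemma as stated as well, but the paper delegates it wholesale to the cited corollary.)
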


\begin{proof}
	Since $\tau$ is a conic bundle, $\cE \coloneqq \tau_* \cO(-K_X)$ is locally free of rank $3$ on $Y$ (cf.~\cite{hartshorne1977algebraic}*{Chapter \RomanNumeralCaps{3}, Corollary 12.9}).
	Let $X' \coloneqq X \times_Y Y'$, $\widetilde{X} \coloneqq \bP_Y(\cE)$ and $\widetilde{X'} \coloneqq \bP_{Y'}(\pi^*\cE)$.
	\[
		\xymatrix@C=3.5pc{
		\widetilde{X'} \ar[r]^q \ar@/_2pc/[dd]_{\varphi'}	&\widetilde{X} \ar@/^2pc/[dd]^{\varphi} \\
		X' \ar[r]^p \ar@{^{(}->}[u]^{i'} \ar[d]_{\tau'}		&X \ar@{^{(}->}[u]_i \ar[d]^{\tau} \\
		Y' \ar[r]_{\pi}										&Y
		}
	\]

	Clearly, the generic fibre of $\tau'$ is an irreducible (and reduced) rational curve by the base change.
	% irreducible Lemma 0554, generic fibre reduced Lemma 0575.
	Note that \(\tau'\) is flat (and proper) and hence \(X'\) is irreducible and reduced (cf.~e.g.~\cite{liu2002algebraic}*{Chapter~4, Proposition 3.8}).
	% Liu Qing, pp137
	By \cite{sarkisov1983conic}*{\S 1.5}, $X$ embeds into $\widetilde{X}$ and $\tau = \varphi \circ i$.
	% noting that $h^0(\bP^1, \cO_{\bP^1}(2)) = 3$ (cf.\,\cite{hartshorne1977algebraic}*{Corollary 12.9}).
	Then $\widetilde{X'} \cong \widetilde{X} \times_Y Y'$ with $\varphi'$ and $q$ being the natural projections.
	Since $\pi \circ \tau' = \varphi \circ i \circ p$, our $\tau'$ factors through $\varphi'$ by the universal property and we get the morphism $i'$.
	Then $i'$ is an embedding since $X' \cong X \times_{\widetilde{X}} \widetilde{X'}$;
	hence $\tau'$ is an embedded conic.
	The second part follows from the first part, $\tau$ being a (regular) conic bundle and \cite{sarkisov1983conic}*{Corollary 1.11}.
	% Note that $i \circ p = q \circ i'$.
	% First, it's easy to see that $X'$ maps to $X \times_{\bP(\cE)} \bP(\pi^*\cE)$ by the universal property of fibre product.
	% Second, $X \times_{\bP(\cE)} \bP(\pi^*\cE)$ maps to $X' \cong X \times_Y Y'$ by the universal property of the second fibre product.
\end{proof}

At the end of this section, we recall the following lemma, which works in the proof of Lemma~\ref{lem-semistable-split} under dynamical assumptions.
Note that the system of walls given by \cite{greb2017compact}*{Theorem 6.6} yields an obvious stratification of $P(X)$ into connected chambers.
\begin{lem}(cf.~\cite{greb2017compact}*{Proposition 6.5 and Theorem 6.6})\label{lem-locally-finite-walls}
	There is a homeomorphism from $\Amp(Y)$ to $P(Y)$ (cf.~Notation~\ref{notation-walls}).
	The set of walls $\{W_{\cE}(\cF)\}_{\cF}$ is locally finite in $P(Y)$, i.e., there are only finitely many walls $W_{\cE}(\cF)$ in each compact set $K \subseteq P(Y)$.
\end{lem}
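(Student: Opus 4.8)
The plan is to treat the two assertions separately, both following the template of \cite{greb2017compact}*{Section 6} specialized to $\dim Y = 3$ and $\rk \cE = 2$. For the homeomorphism, consider the squaring map $\phi \colon \Amp(Y) \to \N_1(Y)$, $H \mapsto H^2$, whose image is $P(Y)$ by definition. It is manifestly continuous, so it suffices to prove it is injective and then invoke invariance of domain: since $\Amp(Y)$ is an open subset of $\N^1(Y) \cong \bR^{\rho(Y)}$ and $\dim_\bR \N_1(Y) = \rho(Y)$ as well, a continuous injection is automatically open onto its image, hence a homeomorphism $\Amp(Y) \xrightarrow{\sim} P(Y)$. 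For injectivity, suppose $H_1^2 = H_2^2$ in $\N_1(Y)$ for ample $H_1, H_2$. Intersecting with $H_1$ and $H_2$ gives $H_1^3 = H_1 \cdot H_2^2$ and $H_1^2 \cdot H_2 = H_2^3$. Feeding these into the Khovanskii--Teissier inequalities $(H_1^2 \cdot H_2)^2 \ge (H_1^3)(H_1 \cdot H_2^2)$ and $(H_1 \cdot H_2^2)^2 \ge (H_1^2 \cdot H_2)(H_2^3)$ forces equalities throughout; the equality case then yields $H_1 \equiv \lambda H_2$ for some $\lambda > 0$, and $H_1^2 = H_2^2$ pins down $\lambda = 1$, so $H_1 = H_2$.

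For the local finiteness, fix a compact set $K \subseteq P(Y)$ and let $K' \coloneqq \phi^{-1}(K) \subseteq \Amp(Y)$ be the corresponding compact set under the homeomorphism above, so that $H \in K'$ if and only if $H^2 \in K$. Every saturated subsheaf $\cF \subsetneq \cE$ has rank $1$ and is a line bundle (being reflexive on a smooth variety), with torsion-free quotient $\cG \coloneqq \cE/\cF$ of rank $1$. A wall $W_\cE(\cF)$ meets $K$ precisely when there is $H \in K'$ with $H^2 \cdot \xi_\cF = 0$, so it suffices to show that only finitely many classes $\xi_\cF = 2c_1(\cF) - c_1(\cE)$ can occur. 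The key input is the Hodge index theorem: for ample $H$ the quadratic form $Q_H(D) \coloneqq D^2 \cdot H$ on $\N^1(Y)$ has signature $(1, \rho(Y) - 1)$ with $H$ spanning the positive direction, so any nonzero $\xi$ with $H^2 \cdot \xi = 0$ satisfies $\xi^2 \cdot H = Q_H(\xi) < 0$.

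This negativity is complemented by a purely algebraic Bogomolov-type bound. Writing $\Delta(\cE) \coloneqq 4c_2(\cE) - c_1(\cE)^2 \in \N_1(Y)$ and using $c_1(\cE) = c_1(\cF) + c_1(\cG)$ together with $c_2(\cE) = c_1(\cF) \cdot c_1(\cG) + [Z]$ for an effective curve class $[Z] \ge 0$ (the non-locally-free locus of $\cG$), a direct computation gives $\xi_\cF^2 + \Delta(\cE) = 4[Z]$ in $\N_1(Y)$, whence $-\xi_\cF^2 \cdot H \le \Delta(\cE) \cdot H$ for every nef $H$. Combining with the Hodge index inequality, every occurring $\xi_\cF$ satisfies $0 < -\xi_\cF^2 \cdot H \le \Delta(\cE) \cdot H \le C$, where $C \coloneqq \max_{H \in K'} \Delta(\cE) \cdot H < \infty$ by compactness. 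A normalization argument then shows the set $\{\xi \in \N^1(Y) \mid \exists\, H \in K',\ H^2 \cdot \xi = 0,\ -\xi^2 \cdot H \le C\}$ is bounded: if a sequence $\xi_n$ in it had $\|\xi_n\| \to \infty$, then after rescaling and passing to a subsequence the unit vectors $\xi_n/\|\xi_n\|$ would converge to some $\eta \ne 0$ with $H^2 \cdot \eta = 0$ and $\eta^2 \cdot H \ge 0$ for a limit $H \in K'$, contradicting the Hodge index negativity. Since the classes $\xi_\cF$ lie in the fixed coset $2\NS(Y) - c_1(\cE)$ of the lattice $2\NS(Y)$, which is discrete in $\N^1(Y)$, a bounded region contains only finitely many of them; hence only finitely many walls meet $K$.

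I anticipate the main obstacle to be the uniformity over the compact set $K$: the Bogomolov bound and the Hodge index negativity each hold pointwise in $H$, and the step that genuinely requires care is packaging them into a single bounded region independent of $H \in K'$ and then intersecting with the discrete N\'eron--Severi coset. The equality case of the Khovanskii--Teissier inequalities, needed for global injectivity in the first part, is standard but should be cited precisely.
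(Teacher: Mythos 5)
Your proof is correct, and it is essentially the same argument as the paper's: the paper gives no proof of this lemma at all (it is quoted verbatim from Greb--Toma \cite{greb2017compact}*{Proposition 6.5 and Theorem 6.6}), and what you have written is precisely the argument of that reference specialized to $\rk\cE = 2$ and $\dim Y = 3$ --- Khovanskii--Teissier plus its equality case and invariance of domain for the homeomorphism, then Hodge-index negativity on $\{\xi \mid H^2\cdot\xi = 0\}$, the identity $\xi_{\cF}^2 + \Delta(\cE) = 4[Z]$ with $[Z]$ effective, and discreteness of the coset $2\NS(Y) - c_1(\cE)$ for local finiteness. Both halves check out (the discriminant computation and the rescaling/compactness packaging are sound); the only loose ends are the precise references you yourself flag, namely the equality case of Khovanskii--Teissier for ample classes and the signature $(1,\rho(Y)-1)$ form of the Hodge index theorem on threefolds, both standard in characteristic $0$.
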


\section{Totally periodic subvarieties on Fano threefolds}

In this section, we shall study \cite{meng2020nonisomorphic}*{Question 1.8} for Fano threefolds.
The main results are Theorem~\ref{thm-TID-Fano-3fold} (confirming the divisor case) and Proposition~\ref{pro-ti-curves}.

\begin{thm}\label{thm-TID-Fano-3fold}
	Let $Y$ be a smooth Fano threefold admitting an int-amplified endomorphism $g$.
	Then $Y$ is toric and there is a toric pair $(Y, \Delta)$ such that $\Delta$ contains the union $\Sigma$ of all the $g^{-1}$-periodic prime divisors.
	Further, every $g^{-1}$-periodic prime divisor is a smooth rational surface.
\end{thm}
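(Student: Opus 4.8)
The plan is to bootstrap from the classification in Theorem~\ref{thm-threefold-class}, using the logarithmic constraints coming from Lemma~\ref{thm-bh} and the equivariant minimal model program of Lemma~\ref{lem-equiv-MMP}. First I would set up the bookkeeping. By Theorem~\ref{thm-threefold-class}, $Y$ is toric, so there is a toric pair $(Y,B)$ with $B = Y\setminus T$ the complement of a big torus; $B\in|-K_Y|$ is SNC and its components are exactly the closures of the codimension-one $T$-orbits. Since there are only finitely many $g^{-1}$-periodic prime divisors, after replacing $g$ by a positive power I may assume that each of them is $g^{-1}$-invariant, that $Y$ is in one of the normal forms of Theorem~\ref{thm-threefold-class}, and (Lemma~\ref{lem-equiv-MMP}) that every extremal contraction occurring in those normal forms is $g$-equivariant. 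Let $\Sigma$ be the reduced union of the $g^{-1}$-periodic prime divisors; it is $g^{-1}$-invariant, and since $Y$ is smooth $K_Y+\Sigma$ is Cartier, so Lemma~\ref{thm-bh} gives that $(Y,\Sigma)$ is log canonical and that $-(K_Y+\Sigma)$ is linearly equivalent to an effective divisor.

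The core step is the claim that $\Sigma$ is contained in some toric boundary: that there is a big torus $T'$ on $Y$ with $\Sigma\le\Delta:=Y\setminus T'$. Granting this, the first assertion holds with this $\Delta$; and for the last assertion each component $D$ of $\Sigma$ is then a component of the toric boundary $\Delta$ of the smooth toric threefold $Y$, hence the closure of a codimension-one torus orbit, hence a smooth complete toric surface, which is in particular a smooth projective rational surface. To establish the claim I would argue componentwise, reducing to: every $g^{-1}$-invariant prime divisor $D$ is torus-invariant; and then go through the cases of Theorem~\ref{thm-threefold-class} after iteration.

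When $\rho(Y)\le 2$, $Y$ is $\bP^3$, a toric splitting $\bP^1$-bundle over $\bP^2$, or the toric blow-up of $Y'=\bP^3$ along a $(g|_{Y'})^{-1}$-invariant line; using the $g$-equivariant contractions to $\bP^3$, $\bP^2$ or $\bP^1$, the induced polarized endomorphisms downstairs, the surface case (Nakayama's theorem for endomorphisms of rational surfaces, cf.~\cite{nakayama2002ruled}), and the constraints of Lemma~\ref{thm-bh} together with log canonicity of $(Y,D)$, one should identify $D$ as a coordinate hyperplane of $\bP^3$, a section or a pulled-back line of the $\bP^1$-bundle, or a $\varphi$-exceptional divisor. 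When $\rho(Y)\ge 3$ I would use the factorisation $Y\xrightarrow{\varphi}Y'\xrightarrow{p_0}Z$ of part (B) of Theorem~\ref{thm-threefold-class}: a $g^{-1}$-invariant prime divisor $D$ is either $\varphi$-exceptional — hence torus-invariant, because $\varphi$ is a toric blow-up along intersections of $(g|_{Y'})^{-1}$-invariant divisors — or it dominates a $(g|_{Y'})^{-1}$-invariant prime divisor of the splitting $\bP^1$-bundle $Y'\to Z$, which is then handled by descending to $Y'$ (the two natural sections and the pull-backs of the boundary of $Z$ being the only candidates) and to the toric surface $Z$; when $Y$ is a product $\bP^1\times S$ of $\bP^1$ with a del Pezzo surface this last reduction is immediate from Lemma~\ref{lem-split-pdt}. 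Finally one checks that this finite collection of torus-invariant prime divisors lies simultaneously in a single toric boundary, i.e. that they are invariant under a common torus compatible with the chosen equivariant structure.

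The main obstacle I anticipate is precisely this torus-invariance step, and inside it the base cases $Y\cong\bP^3$ and $Y\cong\bP^2$: the numerical bound of Lemma~\ref{thm-bh} alone does \emph{not} force a $g^{-1}$-invariant prime divisor to be linear (for instance $(\bP^3,Q)$ with $Q$ a smooth quadric is log canonical and $Q$ is anticanonically small), so one must genuinely exploit the polarized dynamics of $g$ on the complement of $D$ and on $D$ itself. A secondary but real difficulty is the bookkeeping when $\rho(Y)\ge 3$: the periodic components are produced as torus-invariant divisors for a priori different toric structures on $Y$, $Y'$ and $Z$, and one must organise the equivariant minimal model program so that they all land in one toric boundary of $Y$.
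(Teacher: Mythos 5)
Your overall strategy coincides with the paper's: reduce via Theorem~\ref{thm-threefold-class} and Lemma~\ref{lem-equiv-MMP} to the normal forms, use Lemma~\ref{thm-bh} to get log canonicity of $(Y,\Sigma)$, and show $\Sigma$ sits inside a toric boundary by a case analysis on $\rho(Y)$. Your derivation of the final assertion (a component of the boundary of a smooth toric threefold is a smooth toric, hence rational, surface) is in fact slicker than the paper's case-by-case identification of each divisor as $\bP^2$, $\bF_0$, $\bF_1$ or a toric blow-up thereof, and it is correct granted the containment $\Sigma\subseteq\Delta$.

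However, the proposal has a genuine gap exactly where you flag the ``main obstacle'': you observe that log canonicity does not force a $g^{-1}$-invariant prime divisor of $\bP^3$ to be a hyperplane, but you do not supply the argument that closes this. The paper does not derive linearity from the dynamics ad hoc; it invokes H\"oring's theorem (\cite{horing2017totally}*{Corollary 1.2}, together with \cite{nakayama2010polarized}*{Theorem 1.5\,(5)}), which says precisely that a totally invariant prime divisor of a non-isomorphic endomorphism of $\bP^n$ is a hyperplane. Without this external input (or an equivalent), Case $Y\cong\bP^3$ --- and hence the imprimitive case over $\bP^3$, which reduces to it --- does not close. A second, smaller omission is your ``secondary difficulty'': in the blow-up case $Y\to Z\cong\bP^3$ one must show the blown-up line $L$ can be placed in the intersection of boundary components of a single toric pair $(Z,\Delta_Z)$ containing $\Sigma_Z$. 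The paper's Claim~\ref{claim-L-subseteq-intersection} does real combinatorial work here, using Lemma~\ref{thm-bh} to forbid three invariant divisors meeting along a curve and to bound the number of $(g|_L)^{-1}$-invariant points on $L$ by two; this analysis is not sketched in your proposal and is not automatic. For $\rho(Y)\geq 3$ the paper simply quotes \cite{meng2020nonisomorphic}*{Theorem 3.3} for the splitting $\bP^1$-bundle $Y'$ and adds the exceptional divisors, which matches your outline.
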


% \begin{rem}(cf.~\cite{meng2020nonisomorphic}*{Corollary 1.5})
% The condition of $g$ being int-amplified is optimal.
% If we take $g$ to be the product map $\id_S \times g_1$ on the product space $S \times \bP^1$, with $g_1 \colon \bP^1\to\bP^1$ the square map $[x \colon y] \mapsto [x^2 \colon y^2]$ and $S$ a del Pezzo surface, then any divisor $C \times \bP^1$ with is totally invariant.
% \end{rem}

\begin{proof}
	By \cite{meng2020semi}*{Corollary 3.8}, there are only finitely many $g^{-1}$-periodic subvarieties;
	hence we may assume that they are all $g^{-1}$-invariant, after iteration.
	Further, $(Y, \Sigma)$ is lc and $-(K_Y + \Sigma)$ is effective (cf.~Lemma~\ref{thm-bh}).
	By Theorem~\ref{thm-threefold-class}, $Y$ is toric.
	We shall treat all cases of $Y$ in Theorem~\ref{thm-threefold-class} and replace $g$ by a power if necessary (cf.~Lemma \ref{lem-equiv-MMP}).

	\par \vskip 0.3pc \noindent
	\textbf{Case (1):} $Y \cong \bP^3$.
	Then $\Sigma$ is a union of at most four planes (cf.~\cite{horing2017totally}*{Corollary 1.2}, \cite{nakayama2010polarized}*{Theorem 1.5\,(5)}) by applying Lemma~\ref{thm-bh}~(2) to the pair \((Y,\Sigma)\).
	Since $(Y, \Sigma)$ is lc, the boundary $\Sigma$ is a simple normal crossing divisor (cf.~\cite{kollar1998birational}*{Lemma 2.29});
	thus $\Sigma$ is contained in a tetrahedron $\Delta$ (looking like \tikztetrahedron) in $\bP^3$.
	%* a union of four planes with no common intersection
	Clearly, $(Y, \Delta)$ is a toric pair.

	% \noindent
	% \textbf{Case (2):} $p \colon Y \to \bP^2$ is a splitting $\bP^1$-bundle.
	% In this case, our result follows from \cite{meng2020nonisomorphic}*{Theorem 3.3}, noting that every component of $\Sigma$ is a smooth rational surface.

	\par \vskip 0.3pc \noindent
	\textbf{Case (2):} $\pi \colon Y \to Z \cong \bP^3$ is a (toric) blow-up along an $(h \coloneqq g|_Z)^{-1}$-invariant line $L$ after iteration.
	%By Lemma~\ref{lem-equiv-MMP}, we %may assume $\pi$ is $g$-equivariant after iteration.
	%Let $h \coloneqq g|_Z$. %after iteration.
	Then, $\Sigma$ consists of the $\pi$-exceptional divisor $E$ and the $\pi$-strict transform of the $h^{-1}$-invariant divisor $\Sigma_Z \coloneqq \pi_*\Sigma$ on $Z$ (cf.~\cite{cascini2020polarized}*{Lemma 7.5}).

	\begin{claim}\label{claim-L-subseteq-intersection}
		There exists a reduced divisor $\Delta_Z$ containing $\Sigma_Z$ such that $(Z, \Delta_Z)$ is a toric pair and $L$ is contained in the intersection of toric boundary components of $(Z, \Delta_Z)$.
	\end{claim}

	\noindent
	\textbf{Proof of Claim~\ref{claim-L-subseteq-intersection}.}
	If $\Sigma_Z$ has at most one component, then we can choose a suitable $\Delta_Z$ satisfying the condition of our claim.
	Assume $D_1, D_2, \cdots$ are irreducible components of $\Sigma_Z$ and denote $\ell_{ij} \coloneqq D_i \cap D_j$.
	If the blown-up line $L = \ell_{ij}$ for some \(i,j\), then we are done (cf.~\textbf{Case (1)});
	hence we assume this is not the case.
	% If $L \not\subset D_i$ for all $i$ but $L$ intersects some $\ell_{ij}$ at a point $z_0$,
	If $L \not\subset D_i$, $L\not\subset D_j$ but $L \cap \ell_{ij} \neq \emptyset$,
	then the three $g^{-1}$-invariant prime divisors $E, \pi^*D_i$ and $\pi^*D_j$ intersect along the curve $\pi^{-1}(L\cap\ell_{ij})$;
	consequently, applying Lemma~\ref{thm-bh}~(1) to the pair \((Y,E+\pi^*D_i+\pi^*D_j)\), we get a contradiction by noting that \((Y,E+\pi^*D_i+\pi^*D_j)\) is not log canonical (cf.~\cite{kollar1998birational}*{Lemma 2.29}).
	%Hence either $L \subseteq D_i$ for some $i$ and $L$ intersects $\ell_{ij}$ at one point, or $L \cap \ell_{ij} = \emptyset$.
	Hence, for each $\ell_{ij}$, either $L \cap \ell_{ij} = \emptyset$, or $L$ is contained in $D_i$ or $D_j$.

	If $\Sigma_Z$ consists of two components, then we can choose a suitable $\Delta_Z$ satisfying the condition.
	Note that every irreducible component of \(\Delta_Z\) is ample and there are at most two $(g|_L)^{-1}$-invariant points in $L$ by applying Lemma~\ref{thm-bh}~(2) to \(L\simeq \mathbb{P}^1\).

	Suppose that $\Sigma_Z$ has at least three components \(D_1,D_2\) and \(D_3\).
	Then $L$ must lie in one of them.
	Otherwise, by the previous argument, $L$ intersects them at three distinct points \(p,q,r\) away from \(l_{12},l_{13}\) and \(l_{23}\); in particular, \(p,q,r\) are $(g|_L)^{-1}$-invariant after iteration, and we deduce a contradiction by applying Lemma~\ref{thm-bh}~(2) to the pair \((L,p+q+r)\).
	Assume $L \subseteq D_1$.
	Then $L \cap \ell_{1i} \cap \ell_{1j} = \emptyset$ by applying Lemma~\ref{thm-bh}~(1) to the pair \((D_1\cong\mathbb{P}^2, L + \ell_{1i} + \ell_{1j})\), since \(L, \ell_{1i}, \ell_{1j}\) are $(g|_{D_1})^{-1}$-invariant.
	Hence if $\Sigma_Z$ has three components, we may choose another plane $D \supseteq L$ such that $\Delta_Z = \Sigma_Z + D$.
	Finally, if $\Sigma_Z$ has four components, then $L$ intersects $\ell_{1i}$ $(i = 2, 3, 4)$ at three distinct points \(p,q,r\), a contradiction to Lemma~\ref{thm-bh}~(2), noting that \(-(K_L+p+q+r)\) is not effective.

	\par \vskip 0.3pc \noindent
	\textbf{We come back to the proof of Case (2).}
	By Claim~\ref{claim-L-subseteq-intersection}, $(Y, \Delta \coloneqq \pi_*^{-1} \Delta_Z + E)$ is a toric pair.
	For $E \neq D \subseteq \Delta$, if $L \subseteq \pi(D)$, then $D = \pi_*^{-1}(\pi(D)) \cong \bP^2$ by \textbf{Case (1)};
	% TODO
	if $L \not\subset \pi(D)$, then $D = \pi^{-1}(\pi(D)) \cong \bF_1$.
	On the other hand, $Y$ admits another Fano contraction to $L' \cong \bP^1$ along which, $E$ dominates $L'$.
	So $E$ admits two (distinct) rulings and hence $E\cong\bF_0$.
	As a result, each component of $\Sigma_Z$ is smooth rational.

	\par \vskip 0.3pc \noindent
	\textbf{Case (3):} $\pi \colon Y \to Y'$ is a (toric) blow-up along disjoint curves $C_i$ which are intersections of $(g|_{Y'})^{-1}$-invariant prime divisors, and $Y' \to Z$ is a splitting $\bP^1$-bundle over a smooth toric surface.
	By \cite{meng2020nonisomorphic}*{Theorem 3.3}, there exists a reduced divisor $\Delta_{Y'}$ containing all the $(g'\coloneqq g|_{Y'})^{-1}$-periodic prime divisors such that $(Y', \Delta_{Y'})$ is a (smooth) toric pair.
	%* every component of $\Delta_{Y'}$ is smooth rational: sections and Fibre classes
	Let $\sum E_i$ be the sum of $\pi$-exceptional divisors.
	Then $(Y, \Delta_Y \coloneqq \sum E_i + \pi_*^{-1} \Delta_{Y'})$ is a toric pair.
	Note that $Y$ is a conic bundle over $Z$ (cf.~Theorem \ref{thm-threefold-class}).
	So every ($g^{-1}$-invariant) $\pi$-exceptional divisor $E_i$ is a $\bP^1$-bundle over $C_i \cong \bP^1$ (cf.~\cite{mori1983fano}*{Proposition 6.3}, \cite{cascini2020polarized}*{Lemma 7.5} and \cite{meng2020nonisomorphic}*{Corollary 3.4}).
	Hence every non-\(\pi\)-exceptional $g^{-1}$-invariant prime divisor is the smooth blow-up of a component of $\Delta_{Y'}$, and thus smooth rational.
	%is a Hirzebruch surface, isomorphic to a $g'^{-1}$-invariant prime divisor on $Y'$ or a blow-up from it
\end{proof}

As an application of Theorem~\ref{thm-TID-Fano-3fold}, the following corollary slightly generalizes \cite{meng2020nonisomorphic}*{Theorem 3.3} to higher dimensional cases.
The proof is the same as \cite{meng2020nonisomorphic}*{Theorem 3.3} after replacing \cite{meng2020nonisomorphic}*{Theorem 3.2} by Theorem~\ref{thm-TID-Fano-3fold}.

\begin{cor}\label{cor-splitting-to-toric}
	Let $\tau \colon X \to Y$ be a splitting $\bP^1$-bundle over a smooth Fano threefold $Y$.
	Suppose that $X$ admits an int-amplified endomorphism $f$.
	Then $X$ is toric and there is a toric pair $(X, \Delta)$ such that $\Delta$ contains the union $\Sigma$ of all the $f^{-1}$-periodic prime divisors.
\end{cor}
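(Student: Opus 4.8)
The plan is to push $f$ down to the base, apply Theorem~\ref{thm-TID-Fano-3fold}, and then lift the toric structure back up the $\bP^1$-bundle, following \cite{meng2020nonisomorphic}*{Theorem 3.3} with Theorem~\ref{thm-TID-Fano-3fold} in place of \cite{meng2020nonisomorphic}*{Theorem 3.2}. Write $X = \bP_Y(\cO_Y \oplus \cL)$ with its two disjoint tautological sections $X_0, X_\infty \cong Y$. Since $\tau$ is a $\bP^1$-bundle, $\rho(X) = \rho(Y) + 1$ and $-K_X \cdot F = 2$ for a fibre $F$, so $\tau$ contracts a $K_X$-negative extremal ray; by Lemma~\ref{lem-equiv-MMP}\,(2), after iterating $f$ we may assume $f$ descends to a surjective endomorphism $g \colon Y \to Y$ with $g \circ \tau = \tau \circ f$. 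As $\tau^*\N^1(Y) \subseteq \N^1(X)$ is $f^*$-invariant and $f^*$ restricted to it is conjugate to $g^*$, all eigenvalues of $g^*|_{\N^1(Y)}$ have modulus $> 1$, so $g$ is int-amplified by Lemma~\ref{lem-intamplified}. Theorem~\ref{thm-TID-Fano-3fold} then provides a big torus $T_Y \subseteq Y$ whose boundary $\Delta_Y \coloneqq Y \setminus T_Y$ contains the union $\Sigma_Y$ of all $g^{-1}$-periodic prime divisors (each of which is a smooth rational surface).

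Because $Y$ is smooth toric, $\Pic(Y)$ is generated by the classes of $T_Y$-invariant prime divisors, so $\cL \cong \cO_Y(D_0)$ for a $T_Y$-invariant divisor $D_0$; the resulting $T_Y$-linearization makes $X = \bP_Y(\cO_Y \oplus \cO_Y(D_0))$ a toric variety with big torus $T_X \cong T_Y \times \bG_m$, which proves that $X$ is toric. Its toric boundary is $B_X = \tau^*\Delta_Y + X_0 + X_\infty$, after relabelling the two $T_X$-invariant sections as $X_0, X_\infty$; each summand is reduced because $\tau$ is a $\bP^1$-bundle with irreducible fibres. Conversely, any big torus on $X$ acts trivially on $\N_1(X)$, hence fixes the extremal ray contracted by $\tau$ and so preserves $\tau$; thus every toric structure on $X$ arises this way, and in particular its boundary has exactly two prime components dominating $Y$, which form two disjoint sections of a splitting.

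It remains to arrange $\Sigma \subseteq B_X$. By \cite{meng2020semi}*{Corollary 3.8} (as in the proof of Theorem~\ref{thm-TID-Fano-3fold}) there are only finitely many $f^{-1}$-periodic prime divisors, so after a further iteration all are $f^{-1}$-invariant; write $\Sigma = \Sigma^{\mathrm{vert}} + \Sigma^{\mathrm{hor}}$ according to whether a component maps to a divisor on $Y$ or dominates $Y$. Each component of $\Sigma^{\mathrm{vert}}$ equals $\tau^*D$ for a $g^{-1}$-invariant prime divisor $D \subseteq \Sigma_Y \subseteq \Delta_Y$, hence lies in $\tau^*\Delta_Y \subseteq B_X$. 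For $\Sigma^{\mathrm{hor}}$, Lemma~\ref{thm-bh}\,(2) gives that $-(K_X + \Sigma)$ is effective; restricting to a general fibre $F \cong \bP^1$, on which $-K_X$ has degree $2$ and $\Sigma^{\mathrm{vert}}$ restricts to $0$, yields $\deg(\Sigma^{\mathrm{hor}}|_F) \leq 2$, so $\Sigma^{\mathrm{hor}}$ is empty, a single section, a pair of sections, or an irreducible double section of $\tau$. To exclude the double section and, in the two-section case, to force the sections to be disjoint, one uses that over a general $g$-periodic point $y$ of period $s$, $f^s$ restricts to a degree-$a^s$ endomorphism of $F_y \cong \bP^1$, where $a > 1$ is the fibre degree of $f$ (a positive integer, being the eigenvalue of $f^*$ on $\N^1(X)/\tau^*\N^1(Y)$, of modulus $> 1$ by Lemma~\ref{lem-intamplified}); since $\Sigma^{\mathrm{hor}} \cap F_y$ is $(f^s|_{F_y})^{-1}$-invariant, Riemann--Hurwitz forces its points to be totally ramified, and $f^s|_{F_y}$ to be a power map. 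Feeding this rigidity back into the effectivity of $-(K_X + \Sigma)$ and the toricness of $Y$ just established pins down $\Sigma^{\mathrm{hor}} \subseteq X_0 + X_\infty$ after relabelling, whence $(X, B_X)$ is the desired toric pair.

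The main obstacle is precisely this last point: ruling out an irreducible double section among the horizontal $f^{-1}$-invariant divisors, and showing that two horizontal invariant sections are disjoint and coincide with the coordinate sections of a splitting compatible with $T_Y$. Everything else is formal once Theorem~\ref{thm-TID-Fano-3fold} is available, and this remaining step is handled exactly as in \cite{meng2020nonisomorphic}*{Theorem 3.3}.
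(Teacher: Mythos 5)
Your proposal matches the paper's proof, which is simply the observation that the argument of \cite{meng2020nonisomorphic}*{Theorem 3.3} goes through verbatim once \cite{meng2020nonisomorphic}*{Theorem 3.2} is replaced by Theorem~\ref{thm-TID-Fano-3fold}; your write-up is a correct expansion of that same reduction (descend $f$ to an int-amplified $g$ on $Y$, invoke Theorem~\ref{thm-TID-Fano-3fold}, lift the torus via the splitting, and handle the horizontal periodic divisors as in the cited proof). No discrepancy to report.
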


At the end of this section, we show the following proposition, which takes a first glance at the totally periodic curves of an int-amplified endomorphism on Fano threefolds.
So far, we are only able to deal with the case when $Y$ admits a conic bundle.
It is conjectured that every totally periodic curve on $\bP^3$ is linear (cf.~e.g.,~\cite{meng2020nonisomorphic}*{Conjecture 1.9}).

\begin{prop}\label{pro-ti-curves}
	Let $Y$ be a smooth Fano threefold admitting a conic bundle and an int-amplified endomorphism $g$.
	Then every $g^{-1}$-periodic curve is smooth rational.
\end{prop}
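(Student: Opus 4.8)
The plan is to treat a $g^{-1}$-periodic curve $C$ by distinguishing whether it lies in the toric boundary produced by Theorem~\ref{thm-TID-Fano-3fold} or inside a fibre of the conic bundle, and in the former case to reduce to the two-dimensional analogue of Theorem~\ref{thm-TID-Fano-3fold}. The conic bundle enters precisely to control the curves that avoid the toric boundary.

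First I would replace $g$ by a power so that all (finitely many, by \cite{meng2020semi}*{Corollary 3.8}) $g^{-1}$-periodic subvarieties are $g^{-1}$-invariant, and fix a $g^{-1}$-invariant irreducible curve $C\subseteq Y$. By Theorem~\ref{thm-TID-Fano-3fold}, $Y$ is toric, there is a toric pair $(Y,\Delta)$ whose boundary contains every $g^{-1}$-periodic prime divisor, and each such prime divisor is a smooth rational surface. After one further iteration the given conic bundle $\tau\colon Y\to Z$ becomes $g$-equivariant by Lemma~\ref{lem-equiv-MMP} (here I use that $Y$ is of Fano type); write $\tau\circ g=g_Z\circ\tau$ with $g_Z$ an int-amplified endomorphism of $Z$.

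Suppose first that $C\subseteq\Delta=\sum_j D_j$; since $C$ is irreducible it lies in some $D_j$. If $C$ is a one-dimensional torus-invariant stratum of the toric threefold $Y$, then $C\cong\bP^1$ and we are done. Otherwise $C$ is an irreducible curve on the smooth rational surface $D_j$, which by Nakayama's theorem (cf.~\cite{nakayama2002ruled}*{Theorem 3}, applied via the $g^{-1}$-invariance of $D_j$) is in fact a smooth toric surface carrying the int-amplified endomorphism $g|_{D_j}$; by the surface version of Theorem~\ref{thm-TID-Fano-3fold} (which for the surfaces occurring here follows from \cite{nakayama2002ruled} and \cite{meng2020nonisomorphic}*{Theorem 3.2}), the $g|_{D_j}^{-1}$-periodic curve $C$ lies in the boundary of a toric pair on $D_j$, hence $C\cong\bP^1$. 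Suppose instead that $C\not\subseteq\Delta$. Then I claim $\tau(C)$ is a point: if $C'\coloneqq\tau(C)$ were a curve, it would be irreducible with $g_Z(C')=C'$, and since the chain $C'\subseteq g_Z^{-1}(C')\subseteq g_Z^{-2}(C')\subseteq\cdots$ of curves stabilises, after a further iterate $C'$ is a $g_Z^{-1}$-invariant prime divisor of $Z$; then $\tau^{-1}(C')$ is $g^{-1}$-invariant, every irreducible component of it is a $g^{-1}$-periodic prime divisor of $Y$, and so lies in $\Delta$ by Theorem~\ref{thm-TID-Fano-3fold}, contradicting $C\subseteq\tau^{-1}(C')\not\subseteq\Delta$. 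Therefore $C$ lies in a fibre of $\tau$, which is a conic in $\bP^2$; every irreducible component of a conic is isomorphic to $\bP^1$, hence so is $C$.

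The step I expect to be the main obstacle is the two-dimensional input used above: that every periodic curve of an int-amplified endomorphism of a smooth rational (toric) surface is smooth rational. Although this can be quoted from the surface theory of \cite{nakayama2002ruled} and \cite{meng2020nonisomorphic}, the mechanism I have in mind is the following. If $C$ were a $g|_{D_j}^{-1}$-invariant curve of arithmetic genus one, or a singular rational such curve, then $(D_j,C)$ is log canonical with $-(K_{D_j}+C)$ effective by Lemma~\ref{thm-bh}, which confines $C$ to the anticanonical range; writing $g|_{D_j}^{*}C=eC$ with $e>1$, the ramification divisor $K_{D_j}-g|_{D_j}^{*}K_{D_j}$ dominates the ramification $(e-1)C$ of $g|_{D_j}$ along $C$, and one checks it is supported on $C$, so $g|_{D_j}$ restricts to a finite étale self-cover of $D_j\setminus C$ of degree $e^2>1$; but the complement of such a curve in the del Pezzo-type surfaces that arise has finite étale fundamental group, which admits no such self-cover — a contradiction. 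Making this uniform over the list of surfaces, and bounding the ramification at singular points, are the delicate points; since Corollary~\ref{cor-splitting-to-toric} (proved just above) already rests on the same surface input, quoting it here is legitimate.
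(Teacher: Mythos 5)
Your overall strategy is essentially the paper's: reduce to the two situations where $C$ is contracted by the conic bundle (so that $C$ is a reduced component of a conic, hence $\bP^1$) or where $C$ sits inside a totally periodic prime divisor, which Theorem~\ref{thm-TID-Fano-3fold} identifies as a smooth rational surface on which the two-dimensional result (\cite{meng2020nonisomorphic}*{Corollary 3.4}) applies. The paper implements this by first factoring the conic bundle as $Y\xrightarrow{\varphi}Y'\xrightarrow{p_0}Z$ via \cite{meng2020nonisomorphic}*{Theorem 6.2} and splitting according to whether $C\subseteq\Exc(\varphi)$, which costs an extra normalization argument to carry smoothness from $\varphi(C)$ back to $C$; your direct route on $Y$ avoids that step, so the difference is only bookkeeping.

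One point needs repair. Your Case A dichotomy is ``$C\subseteq\Delta$'', but the toric boundary $\Delta$ produced by Theorem~\ref{thm-TID-Fano-3fold} only \emph{contains} the union $\Sigma$ of $g^{-1}$-periodic prime divisors; its remaining components (for instance the extra planes completing the tetrahedron when $Y\cong\bP^3$) are in general not $g^{-1}$-invariant, so ``$g|_{D_j}$'' need not exist and the surface argument cannot be run on such a $D_j$. The fix is already contained in your Case B: the correct dichotomy is whether $\tau(C)$ is a point or a curve. If it is a curve, your own argument shows that $\tau^{-1}(\tau(C))$ is $g^{-1}$-invariant, so the component of it containing $C$ is a genuinely $g^{-1}$-periodic prime divisor --- a smooth rational surface by Theorem~\ref{thm-TID-Fano-3fold} --- and \cite{meng2020nonisomorphic}*{Corollary 3.4} applies to $C$ there; if it is a point, $C$ is a reduced component of a conic. (Also, the assertion that the image of a totally invariant curve becomes totally periodic after iteration should be quoted from \cite{cascini2020polarized}*{Lemma 7.5}, as the paper does, rather than deduced from the stabilisation of the chain $g_Z^{-k}(C')$, which is not obvious as stated.) With that reorganisation your proof is complete.
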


\begin{proof}
	Let $C$ be a $g^{-1}$-periodic curve.
	Let $Y \to Z$ be the conic bundle, which factors as $Y \xrightarrow{\varphi} Y' \xrightarrow{p_0} Z$ such that $\varphi$ is the blow-up of $W$ along a disjoint union of some $(g|_{Y'})^{-1}$-periodic smooth curves, and $p_0$ is an algebraic $\bP^1$-bundle over a smooth rational surface $Z$ (cf.~\cite{meng2020nonisomorphic}*{Theorem 6.2}).
	By Lemma~\ref{lem-equiv-MMP}, we may assume that both $\varphi$ and $p_0$ are $g$-equivariant and $C$ is $g^{-1}$-invariant after iteration.

	Suppose that $C$ is contained in some ($g^{-1}$-invariant) $\varphi$-exceptional prime divisor $E$.
	% since the irreducible components of $\pi$-exceptional divisors are disjoint.
	Then $C$ is a $(g|_E)^{-1}$-invariant curve on the smooth rational surface $E$ (cf.~Theorem~\ref{thm-TID-Fano-3fold});
	thus $C$ is a smooth rational curve by \cite{meng2020nonisomorphic}*{Corollary 3.4}.

	Suppose that $C$ is not contained in the exceptional locus of $\varphi$.
	Then $C$ is birational to its image $C' \coloneqq \varphi(C)$ on $Y'$, which is $(g|_{Y'})^{-1}$-invariant (after iteration) by \cite{cascini2020polarized}*{Lemma 7.5}.
	If $p_0$ contracts $C'$, then $C'\cong\bP^1$.
	If $p_0$ does not contract $C'$, then $p_0(C')$ is $(g|_Z)^{-1}$-periodic and hence a smooth rational curve (cf.~\cite{cascini2020polarized}*{Lemma 7.5} and \cite{meng2020nonisomorphic}*{Corollary 3.4}).
	After iteration, $C'$ is contained in the $g^{-1}$-invariant smooth rational surface $F' \coloneqq p_0^{-1}(p_0(C'))$.
	Then $C'$, as a $(g|_{F'})^{-1}$-invariant curve on $F'$, is also smooth rational by \cite{meng2020nonisomorphic}*{Corollary 3.4}.
	In both cases, $\varphi|_C$ is an isomorphism.
	Indeed, taking the normalization $\widetilde{C} \to C \to C'$, the composition is birational and thus an isomorphism, which forces $\widetilde{C} \cong C$.
	% birational morphism to $\bP^1$, then isomorphic to $\bP^1$
	So $C$ is smooth rational and our proposition holds.
\end{proof}

\section{Elementary Fano conic bundles}
\label{section-elementary-Fano-conic}

The whole section is devoted to proving the smoothness of an elementary Fano conic bundle with the dynamical assumption.
To be more precise, we shall prove the following:
\begin{thm}\label{thm-elementary-smoothness}
	Let $\tau \colon X \to Y$ be an elementary Fano conic bundle from a smooth fourfold $X$.
	Suppose that $X$ admits an int-amplified endomorphism $f$.
	Then $\Delta_{\tau} = \emptyset$, i.e., $\tau$ is a smooth (and hence an algebraic) $\bP^1$-bundle.
\end{thm}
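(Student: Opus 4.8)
The plan is to argue by contradiction, assuming $\Delta_\tau \neq \emptyset$, and extract a configuration of totally invariant/periodic divisors on $Y$ that violates the known structure theory for Fano threefolds developed in Section~3. First I would reduce to the equivariant setting: since $X$ is Fano, Lemma~\ref{lem-equiv-MMP}\,(1) (or (2)) makes the elementary contraction $\tau$ $f$-equivariant after iteration, so $f$ descends to an int-amplified endomorphism $g$ on $Y$; since $Y$ is a smooth Fano threefold by Lemma~\ref{lem-base-Fano}, Theorem~\ref{thm-TID-Fano-3fold} applies to $Y$ and $g$. Because $\tau$ is $f$-equivariant, $f^{-1}(\tau^{-1}(\Delta_\tau)) = \tau^{-1}(\Delta_\tau)$, and as $\tau$ is elementary the components of $\Delta_\tau$ are permuted by $g$; after a further iteration we may assume every component $D_i \subseteq \Delta_\tau$ is $g^{-1}$-invariant and each $\sigma^{-1}(D_i)$ is connected.

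Next I would study the singular fibres over $\Delta_\tau$. Over a general point of a component $D_i$ the fibre of $\tau$ is a union of two lines meeting at a point, sweeping out (generically) two divisorial components in $\tau^{-1}(D_i) \subseteq X$; but since $\sigma^{-1}(D_i)$ is connected (elementarity), $\tau^{-1}(D_i)$ is irreducible, and the monodromy interchanges the two lines. The key local input is Lemma~\ref{lem-no-3-comp}: at any point of $\Delta_\tau$ at most two components of the discriminant meet, transversally. Combined with the classification in Theorem~\ref{thm-TID-Fano-3fold} — $Y$ is $\bP^3$, a blow-up of a line in $\bP^3$, or a blow-up of a splitting $\bP^1$-bundle over a toric surface, together with the explicit toric boundary — I would pin down the possible $g^{-1}$-invariant divisors $D_i$ and their self-intersection/intersection behaviour. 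The tension to exploit: the discriminant $\Delta_\tau$ of a Fano conic bundle is constrained numerically (it is an effective divisor with $-K_Y$-positivity coming from Fanoness of $X$, via the standard formula relating $K_X$, $\tau^*K_Y$ and $\Delta_\tau$), while the totally invariant divisors supplied by Theorem~\ref{thm-TID-Fano-3fold} are forced to sit inside a toric boundary and hence to be among a short explicit list of smooth rational surfaces. I would push this to show either $D_i$ cannot support the required connected double cover, or the resulting $X$ fails to be Fano, or it contradicts $\tau$ being elementary (e.g.\ $\rho$ jumps).

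The main obstacle I expect is the case analysis over the three families of $Y$ in Theorem~\ref{thm-TID-Fano-3fold}, and in particular controlling the \emph{self-intersection} $D_i^2$ (a curve class on $Y$) and its pairing against the discriminant: the excerpt itself flags in Remark~\ref{rem-difference} that ``the discriminant and its self-intersection are less known in higher dimension'', and Lemma~\ref{lem-no-3-comp} is singled out as the tool that rescues this. So the delicate step is to combine the transversality/``no triple point'' statement with the toric-boundary description to rule out a nonempty $\Delta_\tau$. Concretely I would: (i) note $\Delta_\tau$, being $g^{-1}$-invariant, lies in the toric boundary $\Delta$ of Theorem~\ref{thm-TID-Fano-3fold}, so each $D_i$ is one of the listed smooth rational surfaces ($\bP^2$, $\bF_0$, $\bF_1$, or a $\bP^1$-bundle over $\bP^1$); (ii) for each such $D_i$, analyze the étale-in-codimension-one double cover $\sigma^{-1}(D_i) \to D_i$ — connectedness forces a nontrivial $2$-torsion class related to the restriction $\Delta_\tau|_{D_i}$ and the normal bundle, which is very restrictive on a rational surface; (iii) if a candidate survives (ii), derive a contradiction with Fanoness of $X$ by computing $-K_X$ on a curve in the singular locus of a fibre over $D_i$, or with elementarity by exhibiting an extra divisor class. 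I anticipate step (ii)–(iii) for the $\bP^1$-bundle-over-toric-surface case to be the hardest, since there the geometry of $\Delta_\tau$ has the most room; the fallback is that any surviving configuration makes $\tau$ non-elementary or $X$ non-Fano, both excluded by hypothesis, yielding $\Delta_\tau = \emptyset$.
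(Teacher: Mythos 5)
Your overall architecture (contradiction, equivariance, Theorem~\ref{thm-TID-Fano-3fold} forcing $\Delta_{\tau}$ into a toric boundary, Lemma~\ref{lem-no-3-comp}, connectedness of the double cover) matches the paper's, but two essential ingredients are missing, and without them the plan does not close. First, you propose to run the case analysis directly over all three families of Theorem~\ref{thm-threefold-class}, including the imprimitive case where $Y$ is a blow-up of a splitting $\bP^1$-bundle along curves; you correctly anticipate this as the hardest case, but the paper never confronts it. Instead it proves a reduction (Lemmas~\ref{lem-Z-to-minimal} and \ref{lem-Y-to-minimal}): when $Y$ is imprimitive, the elementary conic bundle descends to an elementary Fano conic bundle $\tau'$ over the blown-down $Y'$ with $X \cong X' \times_{Y'} Y$, so smoothness of $\tau'$ implies smoothness of $\tau$. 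This collapses the problem to exactly three minimal bases ($\bP^3$, a splitting $\bP^1$-bundle over $\bP^2$, a splitting $\bP^1$-bundle over $\bF_0$). Proving that reduction is itself nontrivial (one must show the intermediate contraction $\chi$ has fibre dimension $\leq 1$, that $X'$ is Fano, and that $\tau'$ is again a conic bundle), and your plan contains no substitute for it.

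Second, your proposed endgame — that any configuration surviving the double-cover/no-triple-point analysis ``makes $\tau$ non-elementary or $X$ non-Fano'' — is not what happens. Over $\bP^2$ the configuration $\Delta_{\tau} = p^*(\ell_1+\ell_2+\ell_3)$ with three lines forming a triangle (and the analogous square over $\bF_0$) genuinely supports a connected nontrivial double cover and is perfectly compatible with $X$ Fano and $\tau$ elementary; neither Lemma~\ref{lem-no-3-comp} nor a $-K_X$ computation on a curve in a singular fibre rules it out. The paper kills these cases by restricting the conic bundle to an $f^{-1}$-periodic section $S$ of $p \colon Y \to Z$ (or a periodic fibre $p^*L_{z_2}$), verifying via $\cO_Y(S)|_S$ anti-nef that the restricted threefold $X_S$ is again Fano with $\rho(X_S/S)=1$, and then invoking the already-established threefold smoothness theorem \cite{meng2020nonisomorphic}*{Theorem 4.1} to get $\Delta_{\tau_S} = \emptyset$, a contradiction. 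This dimension-reduction to the known threefold statement is the decisive step, and it is absent from your outline.
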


\begin{notn}\label{notation-main-reduction-mmp}
	We will assume and use the following notation throughout this section.
	\begin{enumerate}
		\item $X$ is a smooth Fano fourfold, and $\tau \colon X \to Y$ is an elementary conic bundle;
		      hence $Y$ is a smooth Fano threefold (cf.~Lemma~\ref{lem-base-Fano}).
		\item $f \colon X \to X$ is int-amplified and it descends to an int-amplified endomorphism $g \coloneqq f|_Y$ on $Y$ after iteration (cf.~Lemmas~\ref{lem-equiv-MMP} and \ref{lem-intamplified}).
		\item We use $E_{\bullet}$ to denote the sum of all exceptional divisors  $\Exc(\bullet)$ for simplicity.
	\end{enumerate}
\end{notn}

Before proving Theorem~\ref{thm-elementary-smoothness}, we prepare two lemmas (cf.~Lemmas~\ref{lem-Z-to-minimal} and \ref{lem-Y-to-minimal}).
% We refer readers to \cite{mori1983fano}*{Proposition 6.8} for the converse direction of the following lemma.

\begin{lem}[cf.~\cite{casagrande2008quasi}*{Theorem 3.14}]
	\label{lem-Z-to-minimal}
	Let $p \colon Y \to Z$ be a $\bP^1$-bundle over a smooth projective surface $Z$ and $\phi \colon Z \to Z'$ the blow-up of a point $Q$ on a smooth projective surface $Z'$.
	% Let $Y \xrightarrow{p} Z \xrightarrow{\phi} Z'$ be a composition such that $p$ is a $\bP^1$-bundle to a smooth projective surface $Z$ and $\phi$ is the blow-up of a smooth projective surface $Z'$.
	Then $Z'$ is Fano.
	Moreover, one has the following commutative diagram
	\[
		\xymatrix{
			Y \ar[r]^{\varphi} \ar[d]_{p} &Y' \ar[d]^{p'} \\
			Z \ar[r]_{\phi} &Z'
		}
	\]
	where $\varphi$ is the blow-up of a smooth Fano threefold $Y'$ along a smooth rational curve, the exceptional divisor $E_{\varphi} = p^*E_{\phi}$, and $p'$ is a Fano contraction and further a $\bP^1$-bundle.
\end{lem}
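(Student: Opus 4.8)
The plan is to produce $Y'$ by running a relative MMP, or more directly by contracting the appropriate extremal ray of $Y$, and to identify it with a blow-up. First I would set up the geometry on the base: since $\phi\colon Z\to Z'$ is the blow-up of the point $Q$, we have $E_\phi$ a $(-1)$-curve on $Z$ and $K_Z=\phi^*K_{Z'}+E_\phi$. Pulling back along the $\bP^1$-bundle $p\colon Y\to Z$, the divisor $D:=p^*E_\phi$ is a smooth divisor in $Y$, isomorphic to $E_\phi\times_Z Y\cong\bP^1$-bundle over $E_\phi\cong\bP^1$, i.e.\ a Hirzebruch surface. The key point is that the restriction $p|_D\colon D\to E_\phi$ is one of the two rulings of this Hirzebruch surface; I want to contract the other ruling. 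More precisely, $D$ is negative on the fibre class of the second ruling, and a short computation with $K_Y=p^*K_Z+(\text{relative canonical})$ shows that this ruling class $R$ spans a $K_Y$-negative extremal ray of $\NE(Y)$ whose contraction $\varphi\colon Y\to Y'$ contracts exactly $D$ onto a curve. By Lemma~\ref{lem-blowup-or-conic} (applied to the smooth fourfold... here threefold $Y$, with all fibres of $\varphi$ of dimension $\le 1$), $\varphi$ is either a conic bundle or a blow-up along a smooth curve of codimension $2$; since it contracts the divisor $D$ it must be the latter, so $Y'$ is smooth and $E_\varphi=D=p^*E_\phi$ as claimed, and the centre is a smooth curve. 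That the centre is a rational curve is immediate: it is the image of $D$ under $p|_D$... rather, it is $\phi$-image wise the point $Q$, so the centre maps isomorphically to $Q$ under... no: the centre is $\varphi(D)$, and $\varphi(D)\to Z'$ via the induced map is... I would argue that the centre is isomorphic to $E_\phi\cong\bP^1$ because the second ruling collapses $D\cong\mathbb F_e$ onto its base $\cong\bP^1$.

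Next I would construct $p'$ and the square. The composition $p\colon Y\to Z\xrightarrow{\phi}Z'$ contracts the curve class $R$ (since $R$ lies in a fibre of $p$, hence certainly in a fibre of $\phi\circ p$) as well as... wait, that is not quite right either; rather $\phi\circ p$ does not contract $R$ unless $R$ maps to a point of $Z'$, which it does since $p(D)=E_\phi$ maps to $Q$. So $\phi\circ p$ factors through the contraction $\varphi$, giving $p'\colon Y'\to Z'$ with $p'\circ\varphi=\phi\circ p$. To see $p'$ is a $\bP^1$-bundle: away from $Q$ it agrees with $p$ over $Z'\setminus\{Q\}\cong Z\setminus E_\phi$, which is a $\bP^1$-bundle; over $Q$, the fibre is $\varphi(p^{-1}(E_\phi))=\varphi(D)$, a curve, and one checks it is a smooth $\bP^1$ with the right normal bundle, e.g.\ by flatness of $p'$ (semicontinuity plus the fact that $p'$ has irreducible one-dimensional fibres and $Y'$ is smooth, $Z'$ smooth surface, so Miracle Flatness applies) together with the conic-bundle criterion; since the discriminant of $p'$ is empty (it would have to be supported on $Q$, but a reduced divisor on the surface $Z'$ cannot be a point), $p'$ is a genuine $\bP^1$-bundle. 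That $p'$ is a Fano contraction of an extremal ray follows because $p$ is one and the relevant ray of $Y'$ is the $\varphi$-image of a face of $\NE(Y)$; alternatively one reads it off from $-K_{Y'}$ being relatively ample for $p'$, which descends from $-K_Y$ relatively ample for $p$ via $K_Y=\varphi^*K_{Y'}+E_\varphi$.

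Finally, $Z'$ is Fano: this is the cited input \cite{casagrande2008quasi}*{Theorem 3.14}, but I would also note it follows from the hypothesis chain — $Y$ is (implicitly, by the standing Notation~\ref{notation-main-reduction-mmp}) Fano, $\varphi$ is a blow-up of a smooth curve so $Y'$ is Fano by a standard computation, and a $\bP^1$-bundle $Y'\to Z'$ with $Y'$ Fano forces $Z'$ Fano by Lemma~\ref{lem-base-Fano}. The main obstacle I anticipate is the verification that $\varphi$ is divisorial (not a small or fibre-type contraction) and that its exceptional divisor is exactly $p^*E_\phi$ rather than something larger: this requires pinning down that the $K_Y$-negative extremal ray generated by the second ruling of $D$ has $D$ (equivalently $p^*E_\phi$) as its locus, which I would do by an intersection-number argument showing $D\cdot R<0$ while every other effective curve class meeting $D$ either lies in $R$ or pairs nonnegatively, combined with the classification of length-$1$ contractions of smooth threefolds. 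Everything else — the flatness of $p'$, rationality and smoothness of the centre, the commutative square — is then routine.
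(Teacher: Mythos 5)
Your proposal reaches the same destination by a genuinely different route: where the paper produces the extremal ray $R_{\varphi}$ with $p_*R_{\varphi}=R_{\phi}$ from polyhedrality of $\NE(Y)$ and then quotes \cite{casagrande2008quasi}*{Theorem 3.14\,(v)} as a black box for all of the structural claims ($\varphi$ divisorial, a smooth blow-up along a smooth rational curve, $E_{\varphi}=p^*E_{\phi}$, and $Y'$ Fano), you attempt to re-derive that theorem directly from Lemma~\ref{lem-blowup-or-conic} and intersection theory. Your remaining steps ($Z'$ Fano via the ramification formula, the commutative square via rigidity, $p'$ a $\bP^1$-bundle by inspecting fibres) coincide with the paper's. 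The self-contained route is viable, but two points need repair.

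First, framing the ray as ``the second ruling of $D:=p^*E_{\phi}$'' is circular: a priori $D\cong\bF_e$ for some $e\ge 0$, and only $\bF_0$ has a second ruling. The clean order is the reverse: $D|_D=(p|_D)^*\bigl(E_{\phi}|_{E_{\phi}}\bigr)$ is minus a fibre of $p|_D$, hence not nef, so some $K_Y$-negative extremal ray $R$ has $D\cdot R<0$ with locus inside the irreducible divisor $D$; since $D\cdot\ell=0$ for $p$-fibres $\ell$, no curve of $R$ is a $p$-fibre, so $\varphi(D)$ is not a point and all $\varphi$-fibres are $1$-dimensional; Lemma~\ref{lem-blowup-or-conic} then yields the smooth blow-up with $E_{\varphi}=D$, and only at this stage does $D$ acquire a second ruling $\varphi|_D$, forcing $D\cong\bF_0$ and the centre $\cong\bP^1$. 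Second, and more seriously, your fallback claim that $Y'$ is Fano ``by a standard computation'' because $\varphi$ is a smooth blow-up is false as a general principle: blowing down a Fano threefold along an extremal ray need not produce a Fano, and the paper itself has to exclude this by hand elsewhere (see the proofs of Lemmas~\ref{lem-Y-to-minimal} and \ref{lem-bundle-over-P2-Fano}, where the obstruction $E_{\varphi}|_{E_{\varphi}}\cong\cO(-1,-1)$, equivalently $K_{Y'}\cdot C=0$, is exactly what must be ruled out). Here the step can be saved by computing $E_{\varphi}|_{E_{\varphi}}\cong\cO_{\bF_0}(0,-1)\neq\cO(-1,-1)$ and invoking \cite{mori1983fano}*{Lemma 4.4}, or simply by citing \cite{casagrande2008quasi} for this conclusion as the paper does; as written, however, this is a gap, since the statement explicitly asserts that $Y'$ is Fano.
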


\begin{proof}
	First, $Z$ is Fano (cf.~Lemma~\ref{lem-base-Fano}) and thus $Z'$ is also Fano by the ramification divisor formula of $\phi$.
	Let $R_{\phi}$ (resp. $R_p$) be the extremal ray of $\NE(Z)$ (resp.~$\NE(Y)$) contracted by $\phi$ (resp. $p$).
	Given that $Y$ is Fano and thus $\NE(Y)$ is polyhedral, there is an extremal ray $R_{\varphi}$ of $\NE(Y)$ such that $p_* R_{\varphi} = R_{\phi}$ and $R_{\varphi} \cap R_p = \{0\}$.
	Denote by $\varphi \colon Y \to Y'$ the ($K_Y$-negative) contraction of $R_{\varphi}$.
	By the rigidity lemma (cf.~\cite{debarre2001higher}*{Lemma 1.15}), $\phi \circ p$ factors through $\varphi$ and we get the above commutative diagram.

	By \cite{casagrande2008quasi}*{Theorem 3.14\,(v)}, $\varphi$ is the blow-up of a smooth Fano threefold $Y'$ along a smooth rational curve with $E_{\varphi} = p^*E_{\phi}$.
	By Theorem~\ref{thm-TID-Fano-3fold}, $E_{\varphi}$ being $g^{-1}$-invariant is smooth rational.
	Since the fibre of $p'$ over $Q$ is $\varphi(E_{\varphi}) \cong \bP^1$ and every other fibre of $p'$ (which is isomorphic to the corresponding fibre of $p$) is $\bP^1$,
	our $p'$ is a $\bP^1$-bundle.
\end{proof}

% The following lemma is a main reduction for Theorem~\ref{thm-elementary-smoothness}.
% Indeed, it can be generalized to the cases of non-elementary Fano conic bundles (cf.~Lemma~\ref{lem-key-reduction}).

\begin{lem}\label{lem-Y-to-minimal}
	Suppose $Y$ is imprimitive.
	Then one has the following commutative diagram
	\[
		\xymatrix{
			X \ar[r]^{\chi} \ar[d]_{\tau}	&X' \ar[d]^{\tau'} \\
			Y \ar[r]_{\varphi}				&Y'
		}
	\]
	such that the following assertions hold.
	\begin{enumerate}[label=(\arabic*),ref=4.4\,(\arabic*)]
		\item $\varphi$ is the blow-up of a smooth Fano threefold $Y'$ along a smooth curve $C$.
		\item \label{lem-Y-to-minimal-blow-up} $\chi$ is the blow-up of a smooth Fano fourfold $X'$ along a smooth projective surface.
		\item The exceptional divisor $E_{\chi} = \tau^*E_{\varphi}$.
		\item \label{lem-Y-to-minimal-conic}
		      $\tau'$ is an elementary Fano conic bundle and $X \cong X' \times_{Y'} Y$.
		      In particular, if $\tau'$ is a smooth $\bP^1$-bundle, then so is $\tau$.
		\item The above commutative diagram is $f$-equivariant after replacing $f$ by a power.
	\end{enumerate}
\end{lem}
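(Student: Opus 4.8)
The plan is to construct the right-hand square from the known imprimitivity structure of $Y$ and then transport it to $X$ via base change. First I would apply the fact that $Y$ is imprimitive to get a blow-up $\varphi\colon Y \to Y'$ of a smooth Fano threefold along a smooth irreducible curve $C$ (this is essentially the definition, \cite{mori1983fano}*{Definition 1.3}), giving assertion (1). By Theorem~\ref{thm-TID-Fano-3fold}, since $E_\varphi$ is a $g^{-1}$-invariant (after iteration) prime divisor on the toric threefold $Y$, it is a smooth rational surface; in fact $\varphi$ is a toric blow-up. Next I would produce $\chi$: on $X$, the contraction $\tau^* E_\varphi$ together with $\tau$ should be used to find an extremal ray $R_\chi$ of $\NE(X)$ with $\tau_* R_\chi = R_\varphi$ and $R_\chi \cap R_\tau = \{0\}$ (this is where Fano-ness of $X$, hence polyhedrality of $\NE(X)$, is needed), and let $\chi\colon X \to X'$ be its contraction. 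The rigidity lemma (\cite{debarre2001higher}*{Lemma 1.15}) then forces $\varphi \circ \tau$ to factor through $\chi$, giving the commutative square and a morphism $\tau'\colon X' \to Y'$.

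The core technical step is identifying $\chi$ as a blow-up along a smooth surface and verifying $E_\chi = \tau^* E_\varphi$. Here I would restrict $\tau$ over the exceptional locus: since $E_\varphi \cong \bP(N_{C/Y'})$ is a $\bP^1$-bundle over $C$ and $\tau$ is a conic bundle, $\tau^{-1}(E_\varphi)$ is an embedded conic over $E_\varphi$ (cf.~Lemma~\ref{lem-embedded-stable-under-BC}), and one checks every fibre of $\chi$ has dimension $\le 1$, so Lemma~\ref{lem-blowup-or-conic} applies: $\chi$ is either a conic bundle or a blow-up along a smooth codimension-$2$ subvariety. The conic bundle alternative is excluded because $\chi$ is birational, so $\chi$ is a blow-up of a smooth fourfold $X'$ along a smooth surface $S$, and $X'$ is Fano since $f$ descends (after iteration) to an int-amplified endomorphism of $X'$ and $X'$ is the contraction of a $K_X$-negative extremal ray from the Fano $X$ — more directly, one argues $X'$ smooth Fano via Lemma~\ref{lem-blowup-or-conic} plus the MMP. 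The identity $E_\chi = \tau^* E_\varphi$ follows by comparing the two Cartier divisors: both are $\tau$-horizontal-free and map to $E_\varphi$, and their classes must agree since $\chi$ and $\varphi$ contract matched extremal rays; alternatively it is forced by Lemma~\ref{lem-blowup-basechange} once we know the base-change statement (4).

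For assertion (4), I would invoke Lemma~\ref{lem-blowup-basechange}: with $\tau'\colon X' \to Y'$ flat (a conic bundle is flat) and $C \subseteq Y'$ the center of $\varphi$, the inverse image $D := \tau'^{-1}(C)$ is a surface, and blowing up $X'$ along $D$ yields $X' \times_{Y'} Y$; since $E_\chi = \tau^* E_\varphi = \tau^* \varphi^{-1}(C)$ pulls back $C$, the blow-up of $X'$ along $D$ is exactly $\chi\colon X \to X'$, giving $X \cong X' \times_{Y'} Y$. Then $\tau'$ inherits from $\tau$ the property of being a conic bundle with fibres conics — one uses that base change by $\varphi$ (which is birational, not contained in $\Delta_{\tau'}$) and Lemma~\ref{lem-embedded-stable-under-BC} to see $\Delta_\tau = \varphi^{-1}(\Delta_{\tau'})$ behaves well; in particular if $\Delta_\tau = \emptyset$ then $\Delta_{\tau'} = \emptyset$, and elementarity $\rho(X') = \rho(X) - 1 = \rho(Y) = \rho(Y') + 1$ is immediate from the blow-up formulas. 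Finally, $X'$ smooth Fano plus Lemma~\ref{lem-base-Fano} (with $\dim Y' = 3$) confirms $Y'$ is Fano.

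Assertion (5), $f$-equivariance, follows from Lemma~\ref{lem-equiv-MMP}: since $X$ is of Fano type, the contraction $\chi$ of a single extremal ray is $f$-equivariant after iteration, and then $\varphi$ is $g$-equivariant because $\tau$ and $\tau'$ are (the latter descends from $f$ on $X'$). The main obstacle I anticipate is the identification $E_\chi = \tau^* E_\varphi$ and, relatedly, ruling out that $\chi$ could be a genuinely different contraction than the one matching $\varphi$ — this requires careful bookkeeping of extremal rays in $\NE(X)$ and the interplay between the conic bundle structure and the blow-up structure, exactly the kind of subtlety flagged in Remark~\ref{rem-difference} (that the induced contraction need not be $K$-negative in general, though here it is because we choose $R_\chi$ to be a $K_X$-negative extremal ray from the outset).
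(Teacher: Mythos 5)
Your overall architecture matches the paper's: get $\varphi$ from imprimitivity, produce $R_\chi$ with $\tau_*R_\chi = R_\varphi$ and $R_\chi\cap R_\tau=\{0\}$ using polyhedrality of $\NE(X)$, factor via the rigidity lemma, bound fibre dimensions, apply Lemma~\ref{lem-blowup-or-conic}, and recover $X\cong X'\times_{Y'}Y$ from Lemma~\ref{lem-blowup-basechange}. However, there is one genuine gap, and it is the hardest part of assertion (2): the claim that $X'$ is \emph{Fano}. You justify it by saying that $X'$ is the contraction of a $K_X$-negative extremal ray of the Fano $X$ and that $f$ descends to an int-amplified endomorphism of $X'$. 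Neither implies Fano-ness. The blow-down of a smooth Fano manifold along a smooth centre need not be Fano (this is precisely the content of \cite{mori1983fano}*{Lemma 4.4 and Proposition 4.5} for threefolds, and the paper itself warns in Remark~\ref{rem-reduction-lem} and in Lemma~\ref{lem-key-reduction} that ``here, $X'$ may not be Fano'' once the hypotheses change); and int-amplified endomorphisms exist on plenty of non-Fano varieties (toric varieties, abelian varieties). The paper's proof of this point is a genuine argument: assuming $X'$ is not Fano, \cite{wisniewski1991contractions}*{Proposition 3.4} produces a second extremal ray $R_1\neq R_\chi$ with $E_\chi\cdot R_1<0$ and locus inside $E_\chi$; analysing its contraction $\chi_1$ (again via fibre-dimension bounds and Lemma~\ref{lem-blowup-or-conic}) forces $E_\chi\to\chi_1(E_\chi)$ to be a second ruling, whence $E_\varphi\cong\bF_0$ with $E_\varphi|_{E_\varphi}\cong\cO_{E_\varphi}(-1,-1)$, which by \cite{mori1983fano}*{Lemma 4.4} contradicts $Y'$ being Fano. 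Without this step, assertion (2) is unproved, and the downstream reductions (e.g.\ in the proof of Theorem~\ref{thm-elementary-smoothness}) that iterate this lemma while preserving the Fano hypothesis collapse.

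Two smaller remarks. First, the identification $E_\chi=\tau^*E_\varphi$ does not need the base-change statement or any ``class comparison'': once $\tau(E_\chi)\subseteq E_\varphi$ gives $E_\chi\subseteq\tau^*E_\varphi$, irreducibility of $\tau^*E_\varphi$ (which holds because $\tau$ is \emph{elementary}, $\rho(X/Y)=1$) forces equality; your phrasing leaves this dangling. Second, your route to (4) risks circularity: Lemma~\ref{lem-blowup-basechange} is applied \emph{after} one knows $\tau'$ is flat and that the centre of $\chi$ is $\tau'^{-1}(C)$, which the paper obtains by first checking $\tau'$ is equidimensional (fibres over $Y'\setminus C$ via $X'_{y'}\cong X_{\varphi^{-1}(y')}$, fibres over $C$ via flatness of $\chi(E_\chi)\to C$) and then invoking Lemma~\ref{lem-blowup-or-conic}; you should make that ordering explicit rather than using the base change to establish the conic-bundle structure.
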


\begin{proof}
	(1) follows from the imprimitivity of $Y$.
	Let $R_{\varphi}$ be the $K_Y$-negative extremal ray contracted by $\varphi$.
	Since $\NE(X)$ is rational polyhedral and $\tau$ is a $K_X$-negative contraction of an extremal ray $R_{\tau}$, there is an extremal ray $R_{\chi}$ of $\NE(X)$ such that $\tau_* R_{\chi} = R_{\varphi}$ and $R_{\chi} \cap R_{\tau} = \{0\}$, noting that faces of $\NE(Y)$ are in bijection with the faces of $\NE(X)$ containing $R_{\tau}$.
	Since $X$ is Fano, there exists a $K_X$-negative contraction $\chi \colon X \to X'$ of $R_{\chi}$.
	By the rigidity lemma, $\varphi\circ\tau$ factors through $\chi$.
	% there exists $\tau' \colon X' \to Y'$ such that $\tau' \circ \chi = \varphi \circ \tau$.
	So (5) follows from Lemma~\ref{lem-equiv-MMP}.

	We claim that all the fibres of $\chi$ have dimension $\leq 1$.
	Indeed, if there exists a fibre component $F_0$ of $\chi$ with $\dim F_0 \geq 2$, then $\varphi(\tau(F_0))$ is a point.
	Since $R_{\chi} \cap R_{\tau} = \{0\}$, the restriction $\tau|_{F_0}$ is finite; thus $\tau(F_0)$ cannot be contracted to a point along $\varphi$ by (1).
	Hence our claim holds.
	Since $\tau_* R_{\chi} = R_{\varphi}$, we have $\tau(E_{\chi}) \subseteq E_{\varphi}$;
	then $\chi$ is birational with $E_{\chi} \subseteq \tau^* E_{\varphi}$.
	By Lemma~\ref{lem-blowup-or-conic}, $\chi$ is the blow-up along a smooth projective surface.
	Since $\tau^* E_{\varphi}$ is irreducible (cf.~\cite{kollar1998birational}*{Theorem 3.7}), (3) is proved.

	Next we show that $X'$ is Fano.
	Suppose the contrary.
	Then it follows from \cite{wisniewski1991contractions}*{Proposition 3.4} that there exists an extremal ray $R_1\neq R_{\chi}$ of $\NE(X)$ such that $E_{\chi} \cdot R_1 < 0$; thus the locus of $R_1$ is contained in $E_{\chi}$.
	By (3), $R_1$ is not contracted by $\tau$.
	Denote by $\chi_1 \colon X \to X_1$ the ($K_X$-negative) contraction of $R_1$.
	\textbf{We claim that all fibres of $\chi_1$ have dimension $\leq 1$.} Indeed, if there exists a fibre component $F_0$ of $\chi_1$ such that $\dim F_0 \geq 2$,
	then $E_{\chi} \cong \bP^1 \times \bP^1 \times \bP^1$ (cf.~\cite{wisniewski1991contractions}*{Proof of Proposition 3.6\,(ii)}).
	As a result, $\chi_1|_{E_{\chi}}$ is a Fano contraction onto $\bF_0$, a contradiction to $\dim F_0\ge 2$.
	So our claim holds.
	Then, it follows from Lemma~\ref{lem-blowup-or-conic} that $\chi_1$ is the blow-up of smooth $X_1$ along a smooth surface with $E_{\chi_1} = E_{\chi}$ and $E_{\chi} \to \chi_1(E_{\chi})$ is a smooth $\bP^1$-bundle.
	% the exceptional divisor
	% Note that $\rho(E_{\chi}) \leq 3$ by \cite{wisniewski1991contractions}*{Proposition 3.6} and hence $\pi|_{E_{\chi}} \colon E_{\chi} \to \tau_0^{-1}(\Exc(\varphi))$ is an isomorphism, which is a $\bP^1$-bundle over $\Exc(\varphi)$.
	Since the ruling $\chi_1|_{E_{\chi}}$ gives rise to a ruling on $E_{\varphi}$ different from $\varphi|_{E_{\varphi}}$, we see that $E_{\varphi} \cong \bF_0$.
	Moreover, for a fibre $\ell_1$ of $\chi_1$, by the projection formula, we have $E_{\varphi} \cdot \tau(\ell_1) = E_{\chi} \cdot \ell_1 = -1$.
	% Since $\tau(\ell_1)$ is a section of $\varphi|_{E_{\varphi}}$,
	Therefore, $E_{\varphi}|_{E_{\varphi}} \cong \cO_{E_{\varphi}}(-1,-1)$ and thus $Y'$ is not Fano (cf.~\cite{mori1983fano}*{Lemma 4.4}), a contradiction to (1).
	So our assumption is absurd and (2) is proved.

	Note that $\tau'$ is an elementary Fano contraction.
	% has connected fibres.
	% Hence, $\tau'$ is a %$K_{X'}$-negative extremal contraction, which is
	% of relative dimension one.
	For any point $y' \in Y' \setminus C$, we have $X'_{y'} \cong X_{\varphi^{-1}(y')}$ and hence $\dim X'_{y'} = 1$.
	On the other hand, $\chi(E_{\chi}) = \tau'^{-1}(C) \to C$ is flat (onto a smooth curve) and hence also has fibre dimension one.
	By Lemma~\ref{lem-blowup-or-conic}, $\tau'$ is an elementary (flat) conic bundle.
	So (4) follows from Lemma~\ref{lem-blowup-basechange}.
\end{proof}

\noindent
\textbf{Now we begin to prove Theorem~\ref{thm-elementary-smoothness}.}
In view of Lemmas~\ref{lem-Z-to-minimal}, \ref{lem-Y-to-minimal} and Theorem~\ref{thm-threefold-class}, to prove Theorem~\ref{thm-elementary-smoothness}, it suffices to focus on the following cases: (1) $Y \cong \bP^3$;
(2) $p \colon Y \to Z \cong \bP^2$ is a splitting $\bP^1$-bundle;
and (3) $p \colon Y \to Z \cong \bF_0$ is a splitting $\bP^1$-bundle.

Suppose the contrary that $\Delta_{\tau} \neq \emptyset$.
By \cite{cascini2020polarized}*{Lemma 7.4}, $g^{-1}(\Delta_{\tau}) = \Delta_{\tau}$.
After iteration, we may assume that each component $D_i$ of $\Delta_{\tau}$ is $g^{-1}$-invariant.
By Theorem~\ref{thm-TID-Fano-3fold}, $\Delta_{\tau}$ is contained in some toric boundary of $Y$ and thus $\Delta_{\tau}$ has simple normal crossings.
% In the following, we assume that every minimal model program starting from $Y$ is $g$-equivariant after a suitable iteration (cf.~Lemma \ref{lem-equiv-MMP}).

\begin{thm}\label{thm-smooth-P3}
	Suppose that $Y \cong \bP^3$.
	Then $\Delta_{\tau} = \emptyset$.
\end{thm}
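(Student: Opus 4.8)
The plan is to analyse $\Delta_\tau$ directly, using that it sits inside a toric boundary of $Y\cong\bP^3$ (as established just above), together with the double cover $\sigma\colon\widetilde{\Delta_\tau}\to\Delta_\tau$ attached to a conic bundle and the simple connectedness of $\bP^2$ and of $\bA^2$.

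\textbf{First, I would cut down the possibilities for $\Delta_\tau$.} Suppose $\Delta_\tau\neq\emptyset$. Since $\Delta_\tau$ is reduced and contained in a toric boundary of $\bP^3$, each of its irreducible components is a coordinate hyperplane. Any three distinct coordinate hyperplanes of $\bP^3$ have a common point, so Lemma~\ref{lem-no-3-comp} forces $\Delta_\tau$ to have at most two components. Hence $\Delta_\tau$ is either a single hyperplane $H\cong\bP^2$, or a union $H_1\cup H_2$ of two hyperplanes meeting along a line $L\cong\bP^1$, and I will contradict both.

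\textbf{The one-hyperplane case} is immediate: here $\Delta_\tau=H$ is smooth, so by the structure theory of conic bundles with smooth total space (cf.~\cite{sarkisov1983conic} and Notation~\ref{notation-conic-bundle}) the cover $\sigma$ is étale over all of $H\cong\bP^2$, while $\sigma^{-1}(H)$ is connected because $\tau$ is elementary (Notation~\ref{notation-elementary}); but $\bP^2$ carries no connected étale double cover, a contradiction.

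\textbf{The two-hyperplane case is the crux.} Write $L=H_1\cap H_2$; then $\Delta_\tau^{\mathrm{sm}}=(H_1\setminus L)\sqcup(H_2\setminus L)$ with each $H_i\setminus L\cong\bA^2$, so restricting the étale double cover $\sigma$ over $H_1\setminus L$ yields an étale double cover of $\bA^2$, which must be trivial. To contradict this I would localise at a general point $v\in L$: there $\Delta_\tau$ has an ordinary node, and since $X$ is smooth the conic bundle has near $v$ the Sarkisov normal form $z_0^2+y_1z_1^2+y_2z_2^2=0$ in $\bP^2_{z}$, where $y_i$ is a local equation of $H_i$ --- the alternative rank-$2$ form $z_0^2+z_1^2+(\mathrm{unit})\,y_1y_2z_2^2$ would produce a singular point of $X$ on the fibre over $v$. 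Restricting to $H_1=\{y_1=0\}$ near $v$, the fibre becomes $z_0^2+y_2z_2^2=0$, i.e.\ the two lines $\{z_0=\pm\sqrt{-y_2}\,z_2\}$, which are interchanged by the monodromy of a small loop around $L=\{y_2=0\}$ inside $H_1$; thus $\sigma$ has nontrivial monodromy along a loop in the simply connected surface $H_1\setminus L$, which is absurd. This eliminates the two-hyperplane case, so $\Delta_\tau=\emptyset$.

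\textbf{Expected obstacle.} The delicate input is the local analysis at the nodes of $\Delta_\tau$: one needs that over the smooth locus of $\Delta_\tau$ the fibres of $\tau$ are genuine rank-$2$ conics (so that $\sigma$ really is étale there), and that at each node of $\Delta_\tau$ the only local form compatible with smoothness of $X$ is the one above (so that the local monodromy of the two rulings is a transposition). Both facts come from Sarkisov's local classification of conic bundles, and carefully extracting exactly what is needed from it --- rather than quoting it verbatim --- is where the care lies; the remainder is a clean local-to-global argument with $\pi_1$.
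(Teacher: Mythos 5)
Your proposal is correct and its skeleton coincides with the paper's: identify each component of $\Delta_\tau$ as a plane, use Lemma~\ref{lem-no-3-comp} plus the fact that any three planes in $\bP^3$ meet to cut down to at most two components, and then play the simple connectedness of $\bP^2$ (resp.\ $\bP^2\setminus\bP^1$) against the nontriviality of the discriminant double cover. The one difference is in the two-hyperplane case. The paper disposes of it in one line: since $\tau$ is elementary, $\sigma^{-1}(D_i)$ is connected for each component $D_i$ (Notation~\ref{notation-elementary}, via \cite{sarkisov1983conic}*{\S 1.17}), which is impossible when $D_i\setminus(D_i\cap D_j)\cong\bP^2\setminus\bP^1$ is simply connected. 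You instead derive the required nontriviality locally, from Sarkisov's normal form $z_0^2+y_1z_1^2+y_2z_2^2$ at the nodes of $\Delta_\tau$ (the rank-two alternative being excluded by smoothness of $X$), showing that the component-swapping monodromy around $L=H_1\cap H_2$ inside $H_1\setminus L$ is a transposition, contradicting the null-homotopy of that loop in $\bA^2$. This is a valid argument --- indeed it shows that the two-component configuration is impossible for \emph{any} conic bundle with smooth total space over $\bP^3$ whose discriminant is two planes, without invoking elementariness at all in that case --- but it imports more of Sarkisov's local classification (the rank stratification of fibres over $\Delta_\tau$ and $\Sing\Delta_\tau$) than the paper needs, since the paper has already packaged the global connectedness statement into Notation~\ref{notation-elementary}. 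Your flagged ``expected obstacle'' is exactly the right place to be careful, and the facts you need there are indeed in \cite{sarkisov1983conic}*{Proposition 1.8}.
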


\begin{proof}
	By \cite{horing2017totally}*{Corollary 1.2}, each component $D_i \cong \bP^2$.
	By Lemma~\ref{lem-no-3-comp} and the ampleness of each $D_i$, we see that $\Delta_{\tau}$ has at most two components.
	% Delta_{\tau} \subseteq \Delta_Y, no need to say ampleness
	However, both $D_i \cong \bP^2$ and $D_i \setminus (D_i \cap D_j) \cong \bP^2 \setminus \bP^1$ are simply connected;
	thus there is no nontrivial {\'e}tale cover over them.
	This contradicts our $\tau$ being elementary (cf.~Notation~\ref{notation-conic-bundle}).
\end{proof}

\begin{thm}\label{thm-smooth-P1-bundle-over-P2}
	Suppose that $p \colon Y \to Z \cong \bP^2$ is a splitting $\bP^1$-bundle.
	Then $\Delta_{\tau} = \emptyset$.
\end{thm}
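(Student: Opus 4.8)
The plan is to assume $\Delta_{\tau}\neq\emptyset$ and reach a contradiction. By the common set-up preceding Theorem~\ref{thm-smooth-P3} (using Theorem~\ref{thm-TID-Fano-3fold} and Lemma~\ref{lem-no-3-comp}), after iteration every component $D_i$ of $\Delta_{\tau}$ is $g^{-1}$-invariant, hence a smooth rational surface lying in a toric boundary $\Delta$ of $Y$; moreover $\Delta_{\tau}$ is simple normal crossing and no three components of $\Delta_{\tau}$ meet at a point. Here $Y=\bP_{\bP^2}(\cO\oplus\cO(-e))$ with $e\in\{0,1,2\}$ (the Fano splitting $\bP^1$-bundles over $\bP^2$). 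Since the Fano contraction $p$ is toric, $\Delta$ consists exactly of the two sections $\cong\bP^2$ of $p$ and the three ``vertical'' divisors $p^{-1}(\ell_j)\cong\bF_e$ over the coordinate lines $\ell_j\subseteq\bP^2$. First I would rule out that a component of $\Delta_{\tau}$ is a section: if $D\cong\bP^2$ is a section inside $\Delta_{\tau}$, then the other section is disjoint from $D$, and since the three $\ell_j$ meet pairwise, Lemma~\ref{lem-no-3-comp} allows at most one vertical divisor to meet $D$, so $D$ is met by at most one further component, necessarily along a line of $D\cong\bP^2$. The discriminant double cover restricted to $D$ is then a connected double cover of $\bP^2$ that is {\'e}tale away from at most one line; but $\bP^2$ and $\bP^2\setminus(\text{line})\cong\bA^2$ are simply connected, while a degree-one divisor is not $2$-divisible in $\Pic\bP^2$, so no such connected cover exists -- contradicting that $\tau$ is elementary.

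Thus $\Delta_{\tau}=\sum_{j\in S}p^{-1}(\ell_j)$ for some nonempty $S\subseteq\{1,2,3\}$, and I would now cut down to a threefold. After iteration, choose a $g$-invariant section $S_*\cong\bP^2$ of $p$: when $e=0$ take $S_*=\{s\}\times\bP^2$ with $s$ fixed by the induced map on $\bP^1$; when $e\geq1$ take $S_*=\Exc(q)$, where $q\colon Y\to\bar Y$ is the unique divisorial contraction (hence $g$-equivariant after iteration). Since $S_*\not\subseteq\Delta_{\tau}$ and $\Delta_{\tau}\cap S_*=\bigcup_{j\in S}\ell_j$ is simple normal crossing in $S_*\cong\bP^2$, Lemma~\ref{lem-embedded-stable-under-BC} shows that $X_*:=\tau^{-1}(S_*)\to S_*\cong\bP^2$ is again a conic bundle, with discriminant $\bigcup_{j\in S}\ell_j\neq\emptyset$; its discriminant double cover over each $\ell_j$ is the one inherited from $\tau$, hence still nontrivial, so $\rho(X_*)=2$ and $X_*\to\bP^2$ is elementary. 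As $g(S_*)=S_*$, the threefold $X_*$ is $f$-invariant, $f|_{X_*}$ is surjective, and the restriction $\N^1(X)\to\N^1(X_*)$ is a surjective $f^*$-equivariant map; so by Lemma~\ref{lem-intamplified} the induced endomorphism $f|_{X_*}$ is again int-amplified.

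Granting that $X_*$ is Fano, Theorem~\ref{thm-threefold-class} applies to $(X_*,f|_{X_*})$ and forces $X_*$ to be toric. A toric Fano threefold of Picard number $2$ carrying a conic bundle onto $\bP^2$ must be a splitting $\bP^1$-bundle over $\bP^2$ (the other toric Fano threefolds of Picard number $\leq2$, namely $\bP^3$ and the blow-up of $\bP^3$ along a line, admit no conic bundle onto $\bP^2$), so its discriminant is empty, contradicting $\bigcup_{j\in S}\ell_j\neq\emptyset$. Therefore $\Delta_{\tau}=\emptyset$.

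The step I expect to be the main obstacle is showing that $X_*$ is Fano. When $e=0$ this is immediate, since $X_*$ is a fibre of the composite $X\to Y\to\bP^1$: it has trivial normal bundle, so $-K_{X_*}=(-K_X)|_{X_*}$ is ample. When $e\geq1$, however, the section $S_*$ has negative normal bundle and $\tau^*S_*$ is not nef, so one must prove directly that $-(K_X+\tau^*S_*)$ is ample along $X_*$; I expect this to require combining the ampleness of $-K_X$ with the relative geometry of the conic bundle -- e.g.\ lower bounds for $-K_{X/Y}$ on multisections forced by Fano-ness of $X$, together with the conic-bundle formula for $\tau_*(K_X^2)$ in terms of $-K_Y$ and $\Delta_{\tau}$ -- and this positivity estimate is the delicate point of the argument.
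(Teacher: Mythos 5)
Your overall strategy coincides with the paper's: rule out sections of $p$ in $\Delta_{\tau}$ using simple connectedness together with Lemma~\ref{lem-no-3-comp}, then restrict the conic bundle to an $f^{-1}$-invariant section $S_*\cong\bP^2$ of $p$ (the fibre of the trivial bundle, resp.\ the negative section) via Lemma~\ref{lem-embedded-stable-under-BC}, and derive a contradiction from the threefold theory. The only real divergence at the end is cosmetic: the paper invokes \cite{meng2020nonisomorphic}*{Theorem 4.1} (elementary Fano conic bundles over surfaces with int-amplified endomorphisms are smooth), whereas you route through Theorem~\ref{thm-threefold-class} and the observation that a toric Fano threefold of Picard number $2$ with a conic bundle onto $\bP^2$ is a splitting $\bP^1$-bundle; both are legitimate appeals to the threefold paper.

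The genuine gap is the step you yourself flag: the Fano-ness of $X_*=\tau^{-1}(S_*)$ when $Y=\bP_{\bP^2}(\cO\oplus\cO(-e))$ with $e\geq 1$. You leave this unproven and speculate that it needs ``lower bounds for $-K_{X/Y}$ on multisections'' and the formula for $\tau_*(K_X^2)$ --- that is not how it goes, and you have misdiagnosed the role of the negative section. Precisely because $S_*$ is the negative section, one has $\cO_Y(S_*)|_{S_*}\cong\cO_{S_*}(a)$ with $a\leq 0$, so for every curve $C\subseteq X_*$ the adjunction computation
\[
K_{X_*}\cdot C=(K_X+\tau^*S_*)|_{X_*}\cdot C=K_X\cdot C+(S_*\cdot\tau_*C)_Y\leq K_X\cdot C<0
\]
is immediate, since $\tau_*C$ is a curve (or zero) inside $S_*$ and $\cO_{S_*}(a)$ is anti-nef there; combined with $\rho(X_*)=2$ (so $\NE(X_*)$ has exactly two extremal rays) this gives ampleness of $-K_{X_*}$ by Kleiman. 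In other words, the negativity of the normal bundle of $S_*$ is exactly what makes the argument work, not an obstacle to it; as written, your proof is incomplete at its acknowledged ``delicate point,'' and the sketched route for filling it would lead you astray. A smaller unjustified assertion, easily repaired, is that the discriminant double cover of $\bigcup_{j}\ell_j$ inherited on $S_*$ remains nontrivial (equivalently $\rho(X_*/S_*)=1$); the paper instead gets this from $\Delta_{\tau_{S_*}}\neq\emptyset$ and \cite{mori1983fano}*{Corollary 6.4}.
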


\begin{proof}
	Up to a twist, we may write $Y = \bP_Z(\cF)$ where $\cF \cong \cO_Z \oplus \cO_Z(a)$ with $-2 \leq a \leq 0$ by an easy calculation (cf.~\cite{szurek1990fano}).
	There are two possibilities:

	\par \vskip 0.3pc \noindent
	\textbf{Case 1: $\Delta_{\tau}$ contains no section of $p$.} Then each component $D_i$ of $\Delta_{\tau}$, being the pullback of some line on $Z \cong \bP^2$, is a $g^{-1}$-invariant Hirzebruch surface by Theorem~\ref{thm-TID-Fano-3fold}.
	Since $D_i \cong \bF_d$ and $D_i \setminus (D_i \cap D_j) \cong \bA^1 \times \bP^1$ are both simply connected,
	the double cover $\sigma \colon \widetilde{\Delta_{\tau}} \to \Delta_{\tau}$ being nontrivial implies that
	$\Delta_{\tau}$ is the pullback of a union of three $(g|_Z)^{-1}$-periodic lines with no common intersection (looking like \tikztriangle) on $Z \cong \bP^2$ (cf.~Lemma~\ref{thm-bh}).

	If $p$ is a trivial bundle with $Y \cong Z \times T \cong Z \times \bP^1$, then after iteration, $g = g|_Z \times g|_T$ with $g|_T$ being polarized on $T$ (cf.~Lemma~\ref{lem-intamplified}).
	So we can pick a $g|_T$-periodic point $t \in T$ (cf.~\cite{fakhruddin2003questions}*{Theorem 5.1}) and define $S \coloneqq Z \times \{t\} \cong \bP^2$, a $g$-periodic section of $p$.
	If $p$ is nontrivial, then we take $S$ to be the ``negative section'' of this splitting $\bP^1$-bundle such that $\cO_Y(S)|_S = \cO_S(a)$, which is $g^{-1}$-periodic (cf.~\cite{meng2020nonisomorphic}*{Lemma 2.3}).
	% tautological divisor class
	In both cases, $g|_S$ is int-amplified after iteration.
	Then the base change $\tau_S \colon X_S \coloneqq X \times_Y S \to S$ is proper and flat.
	% , which is an embedded conic by Lemma~\ref{lem-embedded-stable-under-BC}
	Moreover, $S \not\subset \Delta_{\tau}$ and $\Delta_{\tau_S} = \Delta_{\tau}|_S$ is a loop consisting of three rational curves on $S$ with simple normal crossings.
	By Lemma~\ref{lem-embedded-stable-under-BC}, $X_S$ is smooth and $\tau_S$ is a conic bundle.
	Since $S \cong \bP^2$ and $\Delta_{\tau_S} \neq \emptyset$, one has $\rho(X_S/S) = 1$ (cf.~\cite{mori1983fano}*{Corollary 6.4}).
	On the other hand, $X_S$ is Fano.
	%$X_S$ is Fano by the adjunction formula, the projection formula and the fact that $O_Y(S)|_S \cong \cO(a)$ with $-2 \leq a \leq 0$;
	Indeed, for any curve $C \subseteq X_S$, we have
	%$\cO_{Y}(S)|_S \cong \cO(a)$ with $-2 \leq a \leq 0$ and
	\[
		K_{X_S} \cdot C = (K_X + X_S)|_{X_S} \cdot C = K_X \cdot C + X_S \cdot C = K_X \cdot C + (S \cdot \tau_* C)_Y < 0,
	\]
	since $\cO_{Y}(S)|_S \cong \cO(a)$ with $a \leq 0$.
	Since $\NE(X_S)$ has only two extremal rays, our $-K_{X_S}$ is ample.
	So we apply \cite{meng2020nonisomorphic}*{Theorem 4.1} to conclude $\Delta_{\tau_S} = \emptyset$, a contradiction.

	\par \vskip 0.3pc \noindent
	\textbf{Case 2: $\Delta_{\tau}$ contains at least one section $S$ of $p$.}
	Then $S \cong \bP^2$ and $S \setminus (S \cap D_i) \cong \bP^2 \setminus \bP^1$ are both simply connected where $D_i$ is a vertical component (if exists).
	So there exist at least two components $D_1, D_2$ of $\Delta_{\tau}$, which are the pullback of some lines $\ell_1, \ell_2$ on $Z \cong \bP^2$.
	Given that $\ell_1 \cap \ell_2$ is nonempty in $\bP^2$, so is $S \cap D_1 \cap D_2$.
	But this violates Lemma~\ref{lem-no-3-comp}.

	So we finish the proof of our theorem.
\end{proof}

\begin{thm}\label{thm-smooth-P1-bundle-over-P1xP1}
	Suppose that $p \colon Y \to Z \cong \bF_0$ is a splitting $\bP^1$-bundle.
	Then $\Delta_{\tau} = \emptyset$.
\end{thm}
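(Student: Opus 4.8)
The plan is to run the same two-case analysis as in the proof of Theorem~\ref{thm-smooth-P1-bundle-over-P2}, now over $Z \cong \bF_0 \cong \bP^1\times\bP^1$, whose toric boundary is a cycle of four rational curves (two parallel fibres of each ruling) instead of a triangle. As in the paragraph preceding Theorem~\ref{thm-smooth-P3}, after iteration we may assume $g = f|_Y$ descends, each component $D_i$ of $\Delta_\tau$ is $g^{-1}$-invariant, and $\Delta_\tau$ is contained in a toric boundary of $Y$ and has simple normal crossings; suppose $\Delta_\tau \neq \emptyset$ for contradiction. Since $p$ is a splitting $\bP^1$-bundle, up to a twist $Y = \bP_Z(\cO_Z \oplus \cO_Z(a,b))$, and by the classification of Fano $\bP^1$-bundles over $\bF_0$ (cf.~\cite{szurek1990fano}) only the cases $(a,b)\in\{(0,0),(-1,0),(-1,-1),(1,-1)\}$ occur, up to swapping the two rulings and the two summands.

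\textbf{Case 1: $\Delta_\tau$ contains no section of $p$.} Then every component is the pullback $p^*\ell$ of a $(g|_Z)^{-1}$-invariant toric curve $\ell$ of $Z$, hence a Hirzebruch surface, and each stratum $p^*\ell\setminus(p^*\ell\cap p^*\ell')$ is a $\bP^1$-bundle over $\bP^1$ with some fibres deleted, hence simply connected. As in the $\bP^2$ case, using Lemma~\ref{lem-no-3-comp} together with the $2$-divisibility of the branch divisors, the double cover $\sigma\colon\widetilde{\Delta_\tau}\to\Delta_\tau$ can be nontrivial over every component only if the dual graph of $\Delta_\tau$ contains a cycle; the unique subconfiguration of the four toric curves of $\bF_0$ with a cyclic dual graph is the full toric boundary $\partial Z$, so $\Delta_\tau = p^*(\partial Z)$. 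For $(a,b)\in\{(0,0),(-1,0),(-1,-1)\}$ I would then argue as in the $\bP^2$ case: choose a $g$-periodic section $S$ of $p$ with anti-nef (or trivial) normal bundle $\cO_Y(S)|_S$ — if $p$ is trivial, split $g = g|_Z\times g|_{\bP^1}$ after iteration and take $S = Z\times\{t\}$; otherwise take the relevant coordinate section, $g^{-1}$-periodic by \cite{meng2020nonisomorphic}*{Lemma 2.3} — so that $g|_S$ is int-amplified after iteration and, by Lemma~\ref{lem-embedded-stable-under-BC}, $\tau_S\colon X_S\coloneqq X\times_Y S\to S\cong\bF_0$ is a conic bundle with $\Delta_{\tau_S}=\partial S$ nonempty with simple normal crossings, $X_S$ smooth, and $X_S$ Fano since $K_{X_S}\cdot C = K_X\cdot C + (S\cdot\tau_*C)_Y < 0$ (note $\tau_*C\subseteq S$); then \cite{meng2020nonisomorphic}*{Theorem 4.1} forces $\Delta_{\tau_S}=\emptyset$, a contradiction. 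For the ``mixed'' bundle $(a,b)=(1,-1)$, no section of $p$ has anti-nef normal bundle, so instead I would cut with a $g$-periodic fibre $Y_{z_0}\cong\bF_1$ of the composite $Y\xrightarrow{p}Z\to\bP^1$ chosen away from the toric fibres: then $X_{z_0}\coloneqq\tau^{-1}(Y_{z_0})$ is a fibre of $X\to\bP^1$, so has trivial normal bundle in $X$ and hence is Fano, and $\tau_{z_0}\colon X_{z_0}\to\bF_1$ is a Fano conic bundle with $\Delta_{\tau_{z_0}}$ two disjoint fibres, again contradicting \cite{meng2020nonisomorphic}*{Theorem 4.1} (possibly after reducing its base to $\bP^2$ via Lemma~\ref{lem-Z-to-minimal}).

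\textbf{Case 2: $\Delta_\tau$ contains a section $S$ of $p$.} Then $S\cong Z\cong\bF_0$ is simply connected, and by Lemma~\ref{lem-no-3-comp} the curves cut on $S$ by the remaining components are pairwise disjoint, hence are parallel fibres of a single ruling; the branch divisor of $\sigma^{-1}(S)\to S$ must be nonzero and $2$-divisible in $\Pic(S)\cong\bZ^2$, which forces it to be exactly two parallel toric fibres, so $\Delta_\tau$ contains both toric fibres $p^*\ell_1,p^*\ell_2$ of that ruling. A second $2$-divisibility check on $\sigma^{-1}(p^*\ell_i)\to p^*\ell_i\cong\bF_e$ — whose branch is the one or two coordinate sections of $\bF_e$ cut by $S$ and a possible second section $S'$ — forces $\Delta_\tau$ to contain $S'$ as well and forces $e=0$, ruling out $(a,b)\in\{(-1,-1),(1,-1)\}$. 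In the surviving cases $(0,0)$ and $(-1,0)$ one has $\Delta_\tau = S + S' + p^*\ell_1 + p^*\ell_2$ with $p^*\ell_i\cong\bF_0$, and base-changing to the coordinate section with trivial (resp.~anti-nef) normal bundle gives a Fano conic bundle over $\bF_0$ with discriminant two disjoint fibres, so \cite{meng2020nonisomorphic}*{Theorem 4.1} again yields the contradiction.

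The main obstacle is the Case~2 casework together with the ``mixed'' bundle $\bP_Z(\cO_Z\oplus\cO_Z(1,-1))$. Over $\bP^2$, two components of $\Delta_\tau$ meeting a section automatically produce a forbidden triple intersection (Lemma~\ref{lem-no-3-comp}); over $\bF_0$ the two rulings instead allow $\Delta_\tau$ to meet a section in parallel fibres whose complement has $\pi_1\cong\bZ$, so the clean ``nontrivial cover $\Rightarrow$ cycle'' argument must be replaced by the two-layered $2$-divisibility (parity) analysis of double covers of Hirzebruch surfaces sketched above, and because $\bP_Z(\cO_Z\oplus\cO_Z(1,-1))$ carries no section with anti-nef normal bundle the base-change step has to be rerouted through a fibre of $Y\to\bP^1$. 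I also expect some care to be needed in verifying that the conic bundles produced by these base changes fall within the scope of \cite{meng2020nonisomorphic}*{Theorem 4.1}, using Lemmas~\ref{lem-Z-to-minimal} and~\ref{lem-Y-to-minimal} to reduce their bases when necessary.
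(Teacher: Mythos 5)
Your case division and the configurations you extract for $\Delta_\tau$ (the pullback of the full four-curve toric boundary of $Z$ in Case 1; two sections plus two parallel vertical divisors in Case 2) agree with the paper, and your treatment of Case 1 for the bundles admitting a coordinate section with anti-nef normal bundle is exactly the paper's argument. The serious problem is the endgame of Case 2 and of the mixed bundle $\cO_Z\oplus\cO_Z(1,-1)$ in Case 1: in both places you restrict to a surface on which the discriminant becomes \emph{two disjoint fibres} and then invoke \cite{meng2020nonisomorphic}*{Theorem 4.1}. That theorem is only ever applied in this paper to \emph{elementary} conic bundles, and the paper always verifies $\rho(\cdot/\cdot)=1$ for the restricted bundle first, via \cite{mori1983fano}*{Corollary 6.4}, which requires the restricted discriminant to be connected and ample. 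Two disjoint fibres are neither, and the restricted bundle genuinely need not be elementary: $\bP^1\times S$ with $S$ a sextic del Pezzo surface viewed as a conic bundle over $\bP^1$ with two degenerate fibres is a Fano conic bundle over $\bF_0$ carrying an int-amplified endomorphism whose discriminant is exactly two disjoint fibres, so the configuration you are trying to contradict actually occurs. This is precisely why the paper, in Case 2, restricts not to a section of $p$ but to the fibre $Y_{z_2}=p^*(Z_1\times\{z_2\})$ over a periodic point of the ruling \emph{transverse} to $\ell_1,\ell_2$: that surface meets all four components of $\Delta_\tau$, the restricted discriminant is a connected ample four-cycle, $\rho(X_{z_2}/Y_{z_2})=1$ follows, and Theorem 4.1 applies. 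For the mixed bundle in Case 1 the paper instead quotes \cite{mori1983fano}*{Corollary 6.7} directly for the two-fibre discriminant on $\bF_1$; your proposed detour through Lemma~\ref{lem-Z-to-minimal} does not apply there ($\tau_H$ is a conic bundle over a surface, not a $\bP^1$-bundle over a surface inside a threefold) and faces the same elementariness issue.

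Two smaller points. In Case 2 for the non-trivial unmixed bundle the unique coordinate section with anti-nef normal bundle is one of the two sections you have just placed inside $\Delta_\tau$, so Lemma~\ref{lem-embedded-stable-under-BC} cannot be applied to it; this again points to the transverse fibre $Y_{z_2}$, not a section, as the right surface to cut with. And your repeated ``$2$-divisibility of the branch divisor'' arguments presuppose that $\sigma^{-1}(D)$ is a normal double cover branched exactly along $D\cap\Sing\Delta_\tau$, which you do not justify (a priori the cover could be two sheets glued along those curves, in which case its normalization is \'etale and the parity constraint disappears). The paper reaches the same configuration using only the simple connectedness of the strata, Lemma~\ref{lem-no-3-comp}, and the bound $\Delta_\tau\cdot Y_z\le -K_Y\cdot Y_z=2$ from Lemma~\ref{thm-bh}, which is both safer and shorter.
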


\begin{proof}
	According to \cite{szurek1990fano}, up to a twist, we may write $Y = \bP_Z(\cF)$ where $\cF \cong \cO_Z \oplus \cO_Z(a, b)$ with $(a, b) = (0, 0), (0, -1), (-1, -1)$ or $\cF \cong \cO_Z \oplus \cO_Z(1, -1)$.

	\par \vskip 0.3pc \noindent
	\textbf{Case 1: $\Delta_{\tau}$ contains no section of $p$.}
	Then, similar to the proof of Theorem~\ref{thm-smooth-P1-bundle-over-P2}, $\Delta_{\tau}$ is the pullback of a loop (looking like \tikzsharp) on $Z$.
	If $\cF \cong \cO_Z \oplus \cO_Z(1, -1)$, then for a general fibre $\ell$ of the first projection $Z \to \bP^1$, one has $\cF|_{\ell} \cong \cO_{\ell} \oplus \cO_{\ell}(-1)$.
	Hence $H \coloneqq p^* \ell \cong \bP_{\ell}(\cF|_{\ell})\cong\bF_1$, and $\tau_H \colon \tau^* H \to H$ is a Fano conic bundle with $\Delta_{\tau_H} = \Delta_{\tau}|_H$ being two fibres (cf.~Lemma~\ref{lem-embedded-stable-under-BC}), a contradiction to \cite{mori1983fano}*{Corollary 6.7}.
	% of the ruling $p|_H$,
	In the remaining cases, $S|_S$ is anti-nef, and then with the same proof as in Theorem~\ref{thm-smooth-P1-bundle-over-P2}, we can deduce a contradiction to \cite{meng2020nonisomorphic}*{Theorem 4.1}.

	\par \vskip 0.3pc \noindent
	\textbf{Case 2: $\Delta_{\tau}$ contains at least one section $S\cong\bF_0$ of $p$.}
	After iteration, we may assume that $Y \to Z \coloneqq Z_1 \times Z_2 \to Z_i \cong \bP^1$ is $g$-equivariant (cf.~Lemma~\ref{lem-equiv-MMP}).
	Take a general point $z \in Z$ and let $Y_z \coloneqq p^{-1}(z)$ be its (movable) fibre.
	By Lemma~\ref{thm-bh}, we have $\Delta_{\tau} \cdot Y_z \leq -K_Y \cdot Y_z = 2$;
	thus $\Delta_{\tau}$ contains at most two sections of $p$.

	Since $S$ is simply connected, there exists a component $D_1 = p^* \ell_1 \subseteq \Delta_{\tau}$, with some curve $\ell_1$ on $Z$ (cf.~Notation~\ref{notation-elementary}).
	By \cite{cascini2020polarized}*{Lemma 7.5}, $\ell_1$ is $(g|_Z)^{-1}$-periodic.
	% Then $\ell_1$ is either $\{\cdot\} \times \bP^1$ or $\bP^1 \times \{\cdot\}$ (cf.~\cite{meng2020nonisomorphic}*{Lemma 3.1}).
	%We may assume $\ell_1$ is the former one.
	So we may assume $\ell_1$ is $\{\cdot\} \times \bP^1$ (cf.~\cite{meng2020nonisomorphic}*{Lemma 3.1}).
	Clearly, $D_1\cong\bF_d$ for some $d\ge 0$, and both $S \setminus (S \cap D_1) \cong \bA^1 \times \bP^1$ and $D_1 \setminus (S \cap D_1) \cong \bF_d \setminus (\text{a section of } \bF_d)$ are simply connected.
	On the other hand, by Lemma~\ref{lem-no-3-comp}, any three components of $\Delta_{\tau}$ have no common intersection.
	So the double cover $\sigma \colon \widetilde{\Delta_{\tau}} \to \Delta_{\tau}$ being nontrivial implies that $\Delta_{\tau}$ consists of exact two sections of $p$, and two disjoint components $D_1, D_2$ which are pullbacks of some curves $\ell_1, \ell_2$ on $Z$, respectively.
	%, noting that $Y$ contains at most two $g^{-1}$-periodic sections (cf.~\cite{meng2020nonisomorphic}*{Theorem 3.3}).
	In particular, $\ell_2$ is of the form $\{\cdot\} \times \bP^1$.

	Take a $g|_{Z_2}$-periodic point $\{z_2\} \in Z_2$ (cf.~Lemma~\ref{lem-intamplified} and \cite{fakhruddin2003questions}*{Theorem 5.1}) and define $L_{z_2} \coloneqq Z_1 \times \{z_2\}$.
	Then $Y_{z_2} \coloneqq p^* L_{z_2}$ is a Hirzebruch surface and $X_{z_2} \coloneqq (p \circ \tau)^* L_{z_2}$ is a Fano threefold admitting an int-amplified endomorphism after iteration.
	% It follows from Lemma~\ref{lem-embedded-stable-under-BC} that $\tau_{z_2} \colon X_{z_2} \to Y_{z_2}$ is an embedded conic.
	Since $Y_{z_2} \not\subset \Delta_{\tau}$ and $\Delta_{\tau_{z_2}} = \Delta_{\tau}|_{Y_{z_2}}$ is a loop of four rational curves (looking like \tikzsharp) on $Y_{z_2}$ with simple normal crossings, by Lemma~\ref{lem-embedded-stable-under-BC}, $X_{z_2}$ is smooth and $\tau_{z_2} \colon X_{z_2} \to Y_{z_2}$ is a conic bundle.
	Since $\Delta_{\tau_{z_2}}$ is connected and ample on $Y_{z_2}$, we have $\rho(X_{z_2}/Y_{z_2}) = 1$ (cf.~\cite{mori1983fano}*{Corollary 6.4}).
	By \cite{meng2020nonisomorphic}*{Theorem 4.1}, $\Delta_{\tau_{z_2}} = \emptyset$, a contradiction.
\end{proof}

\begin{proof}[\textbf{End of Proof of Theorem~\ref{thm-elementary-smoothness}.}]
	Theorem~\ref{thm-elementary-smoothness} follows from Theorems~\ref{thm-smooth-P3} $\sim$ \ref{thm-smooth-P1-bundle-over-P1xP1} and \cite{meng2020nonisomorphic}*{Lemma 2.12}, noting that $Y$ is rational (cf.~\cite{zhang2012rationality}*{Theorem 1.2}).
\end{proof}

\section{Main reduction for non-elementary conic bundles}
\label{section-main-reduction}

In this section, we shall study non-elementary (singular) conic bundles.
The main results are Theorem~\ref{thm-structure-mmp} and Lemma~\ref{lem-key-reduction}; see \cite{meng2020nonisomorphic}*{Theorem 6.2} for the threefold case.

\begin{thm}[Equivariant minimal model for conic bundles]
	\label{thm-structure-mmp}
	Let $X$ be a smooth Fano fourfold and $\tau \colon X \to Y$ a conic bundle.
	Suppose $X$ admits an int-amplified endomorphism $f$.
	Then, after iteration, there exists an $f$-equivariant minimal model program
	\[
		\xymatrix{
		X = X_0 \ar[r]^-{\pi_1} &X_1 \ar[r]^{\pi_2} &X_2 \ar[r]^{\pi_2} &{\cdots} \ar[r]^-{\pi_r} &X_r \eqqcolon W \ar[r]^-{\tau_0} &Y
		}
	\]
	such that the following assertions hold.
	\begin{enumerate}[label=(\arabic*),ref=5.1\,(\arabic*)]
		\item $r = \rho(X) - \rho(Y) - 1$ and each $X_i$ is a smooth projective fourfold.
		\item \label{thm-structure-mmp-algebraic}
		      $\tau_0 \colon W = \bP_Y(\cE) \to Y$ is an algebraic $\bP^1$-bundle over a smooth Fano threefold $Y$.
		\item There are $r$ disjoint prime divisors $D_1, \cdots, D_r$ on $Y$ and $r$ pairs of prime divisors $E_i, \widetilde{E_i}$ on $X$ such that $\tau^* D_i = E_i + \widetilde{E_i}$ and $D_i \cap D_j = \emptyset$ for $i \neq j$.
		\item $\tau_0$ is smooth and $\Delta_{\tau} = \bigsqcup_{i=1}^{r} D_i$ with each $D_i$ a $(g:=f|_Y)^{-1}$-invariant smooth rational connected component of $\Delta_{\tau}$.
		      Moreover, $\tau$ has reduced fibres over $\bigcup_{i=1}^rD_i$.
		\item The composition $X_i \to Y$ is a conic bundle with the discriminant $D_{i+1} \cup \cdots \cup D_r$.
		\item The composition $\pi=\pi_r\circ\cdots\circ\pi_1 \colon X \to W$ is the blow-up of $W$ along $r$ disjoint union of $(f|_W)^{-1}$-invariant smooth rational surfaces $\bigcup_{i=1}^r \overline{D_i}$ with $\tau_0|_{\overline{D_i}} \colon \overline{D_i} \cong D_i$.
	\end{enumerate}
\end{thm}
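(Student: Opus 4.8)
The plan is to realise the desired program as an $f$-equivariant relative minimal model program of $X$ over $Y$, and then to identify its output with an algebraic $\bP^1$-bundle using Theorem~\ref{thm-elementary-smoothness} and Lemma~\ref{lem-Rom4.2}. Since $\tau$ is a conic bundle, $\dim Y=3$ and $Y$ is a smooth Fano threefold by Lemma~\ref{lem-base-Fano}, and after iteration $f$ descends to an int-amplified endomorphism $g=f|_Y$ (cf.~Notation~\ref{notation-main-reduction-mmp}).

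As $-K_X$ is $\tau$-ample, every extremal ray of $\NE(X/Y)$ is $K_X$-negative, so one may run a $K_X$-MMP over $Y$; by Lemma~\ref{lem-equiv-MMP} it is $f$-equivariant after a further iteration, and $f$ descends along it to int-amplified endomorphisms of all intermediate varieties. At a stage $X_i\to Y$ with $\rho(X_i/Y)\ge 2$ the chosen extremal contraction $\pi_{i+1}\colon X_i\to X_{i+1}$ factors $X_i\to Y$ by the rigidity lemma, so each of its fibres lies in a fibre of $X_i\to Y$ and has dimension $\le 1$; hence $X_{i+1}$ is smooth and $\pi_{i+1}$ is either a conic bundle or the blow-up of a smooth surface by Lemma~\ref{lem-blowup-or-conic} (in particular there are no flips). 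A fibre-type contraction is impossible: every fibre of $\pi_{i+1}$ would then be $1$-dimensional, so it would contract the general fibre of $X_i\to Y$, making $X_{i+1}\to Y$ generically finite and — being a fibration with $\rho$-difference $\ge 1$ — birational but non-isomorphic, hence contracting a divisor over whose image $X_i\to Y$ would have a fibre of dimension $\ge 2$, contradicting that $\tau$ is a conic bundle. Thus every step is a smooth blow-up, $\rho(-/Y)$ drops by $1$ at each step, and after $r:=\rho(X)-\rho(Y)-1$ steps we reach $W:=X_r$ with $\rho(W/Y)=1$, i.e.\ $\tau_0\colon W\to Y$ is an elementary conic bundle; iterating $f$, all blow-up centres are totally invariant. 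This gives (1) and the smoothness of the $X_i$.

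The heart of the proof is to show that $\tau_0$ is a $\bP^1$-bundle. If $r=0$ then $\tau=\tau_0$ and $W=X$ is Fano, and this is exactly Theorem~\ref{thm-elementary-smoothness}. If $r\ge 1$ then $\pi=\pi_r\circ\cdots\circ\pi_1\colon X\to W$ exhibits $X$ as a composition of blow-ups of $W$ along smooth surfaces with $\tau_0$ elementary — the hypothesis of Lemma~\ref{lem-Rom4.2} — whose parts (1), (2) and (4) show $\tau_0$ to be a (splitting) $\bP^1$-bundle whenever $\rho(X)-\rho(Y)\ge 3$ or $X$ is a product of del Pezzo surfaces. There remains the case $\rho(X)-\rho(Y)=2$ (so $r=1$) with $\tau_0$ singular, which we must exclude: here Lemma~\ref{lem-Rom4.2-=2} provides a smooth $\bP^1$-bundle $Y\to Z$ over a del Pezzo surface, and, $Y$ being toric by Theorem~\ref{thm-threefold-class}, after contracting $Z$ to $\bP^2$ or $\bF_0$ by Lemma~\ref{lem-Z-to-minimal} we may assume $Y$ is a toric $\bP^1$-bundle over $\bP^2$ or $\bF_0$; one then runs the arguments of Theorems~\ref{thm-smooth-P1-bundle-over-P2} and \ref{thm-smooth-P1-bundle-over-P1xP1} — restriction of the conic bundle to a $g$-periodic section ($\cong\bP^2$ or $\bF_0$) and to $g$-periodic fibres, followed by \cite{meng2020nonisomorphic}*{Theorem 4.1} and \cite{mori1983fano}*{Corollary 6.4} — but applied to the Fano conic bundle $X\to Y$ itself rather than to the possibly non-Fano $W$, to reach a contradiction. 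I expect this last case to be the main obstacle: $W$ need not be Fano (cf.~Theorem~\ref{thm-threefold-class}\,(B)), so Theorem~\ref{thm-elementary-smoothness} cannot be quoted for $\tau_0$ directly and the arguments of Section~\ref{section-elementary-Fano-conic} must be redeployed on $X$ (which is Fano) or on a Fano base change of it.

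Finally I read off the explicit structure. As $\tau_0$ is a smooth $\bP^1$-bundle over the smooth toric threefold $Y$, it is an algebraic $\bP^1$-bundle $W\cong\bP_Y(\cE)$ with $\cE$ locally free of rank $2$ — this is (2). Labelling so that $\pi_i$ contracts the exceptional divisor over a smooth surface $\overline{D_i}\subseteq W$, each $X_i$ is the blow-up of $W$ along $\overline{D_{i+1}}\sqcup\cdots\sqcup\overline{D_r}$, the $i$-th blow-up introduces reducible fibres exactly over the prime divisor $D_i:=\tau_0(\overline{D_i})\subseteq Y$, and hence $X_i\to Y$ is a conic bundle with discriminant $D_{i+1}\cup\cdots\cup D_r$ — which is (5), and for $i=0$ it gives $\Delta_\tau=D_1\cup\cdots\cup D_r$. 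If $D_i\cap D_j\ne\emptyset$ for some $i\ne j$, then the fibre of $\tau$ over a common point would be a chain of three $\bP^1$'s, which is not a conic in $\bP^2$; so the $D_i$ are pairwise disjoint, $\tau^{*}D_i=E_i+\widetilde{E_i}$ (the exceptional divisor over $\overline{D_i}$ plus the strict transform of $\tau_0^{-1}(D_i)$), and $\tau$ has reduced fibres over each $D_i$ — giving (3) and (4). Each $D_i$ is a $g^{-1}$-invariant prime divisor on the smooth Fano threefold $Y$, hence a smooth rational surface by Theorem~\ref{thm-TID-Fano-3fold}; moreover $\overline{D_i}$ is a section of $\tau_0$ over $D_i$ (it maps finitely onto $D_i$ and, being the locus of a reducible-fibre degeneration, contains no fibre of $\tau_0$), so $\tau_0|_{\overline{D_i}}\colon\overline{D_i}\xrightarrow{\sim}D_i$; the surfaces $\overline{D_i}$ are pairwise disjoint and $(f|_W)^{-1}$-invariant, and $\pi$ is the simultaneous blow-up of $W$ along $\bigsqcup_i\overline{D_i}$, which is (6).
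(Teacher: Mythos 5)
Your proposal reproduces the paper's skeleton for most of the statement: the $f$-equivariant relative MMP over $Y$ (the paper simply quotes \cite{romano2019non}*{Propositions 3.4, 3.5} for the structure you re-derive by hand), the dispatch of $r=0$ via Theorem~\ref{thm-elementary-smoothness} and of $\rho(X)-\rho(Y)\ge 3$ via Lemma~\ref{lem-Rom4.2}, and the extraction of (2)--(6) once smoothness of $\tau_0$ is known. The genuine gap is exactly where you write ``I expect this last case to be the main obstacle'': the case $r=1$ with $\tau_0$ singular is not actually proved, and neither of your two suggested moves goes through. First, ``after contracting $Z$ to $\bP^2$ or $\bF_0$ by Lemma~\ref{lem-Z-to-minimal} we may assume $Y$ is a $\bP^1$-bundle over $\bP^2$ or $\bF_0$'' requires lifting the blow-down $\varphi\colon Y\to Y'$ through the conic bundle; that is Lemma~\ref{lem-key-reduction}, which needs $E_{\varphi}\not\subset\Supp\Delta_{\tau}$, and Remark~\ref{rem-reduction-lem} records that without this hypothesis $\psi$ need not be $K_W$-negative and $W'$ need not be $\bQ$-factorial --- so the reduction is unavailable precisely in the configurations you would need it for (and $Z$ is a priori any del Pezzo surface, not only $\bP^2$ or $\bF_0$). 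Second, ``redeploying'' Theorems~\ref{thm-smooth-P1-bundle-over-P2} and \ref{thm-smooth-P1-bundle-over-P1xP1} on $X\to Y$ does not work as stated: those proofs restrict to a section $S$ of $p$ and conclude from $\rho(X_S/S)=1$ together with \cite{meng2020nonisomorphic}*{Theorem 4.1}, whereas for the non-elementary $\tau$ one has $\rho(X_S/S)=2$ whenever $S$ meets $D_1$; moreover $D_1\subseteq\Delta_{\tau}$ carries a \emph{trivial} double cover (since $\tau^*D_1=E_1+\widetilde{E_1}$ splits globally), so the simple-connectedness versus nontrivial-cover obstructions cannot be run against it. Restricting $\tau_0\colon W\to Y$ instead collides with $W$ not being Fano, as you yourself note.

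The missing idea is the one the paper actually uses: stay with the elementary $\tau_0$ and exploit the blown-up divisor $D_1$ as an \emph{extra constraint} rather than an obstacle. Since the $D_i$ of assertion (3) are pairwise disjoint and a fibre of $\tau$ over a point of $D_1\cap\Delta_{\tau_0}$ would fail to be a conic, $D_1$ must be disjoint from $\Delta_{\tau_0}$. By \cite{meng2020nonisomorphic}*{Theorem 3.3} every component of $\Delta_{\tau}$ is a section of $p\colon Y\to Z$ or the pullback of an $(f|_Z)^{-1}$-periodic curve lying in a toric boundary loop of $Z$. If $\Delta_{\tau_0}$ contains a section, its simple connectedness forces a vertical component of $\Delta_{\tau_0}$ meeting it, and then no component disjoint from $\Delta_{\tau_0}$ is left to serve as $D_1$. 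If all components are vertical pullbacks $p^{-1}(C_i)$, removing $p(D_1)$ from the boundary loop leaves a chain, whose end component $C_i$ meets exactly one other $C_j$, so $p^{-1}(C_i)\setminus p^{-1}(C_i\cap C_j)\cong\bA^1\times\bP^1$ is simply connected, contradicting the nontrivial double cover attached to the elementary $\tau_0$. Without this (or an equivalent) argument for the $r=1$ singular case, the theorem is not established.
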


\begin{proof}
	First, by Lemma~\ref{lem-base-Fano}, $Y$ is smooth Fano.
	By \cite{romano2019non}*{Propositions 3.4, 3.5}, we may run a relative minimal model program $X \to X_1 \to \cdots \to X_i \to \cdots \to X_r$ of $X$ over $Y$ which is $f$-equivariant after iteration (cf.~Lemma~\ref{lem-equiv-MMP}), such that (1) and (3) hold.

	We will show the smoothness of $\tau_0$ in the following two paragraphs. %under the dynamical assumption.
	By Lemma~\ref{lem-Rom4.2} and Theorem~\ref{thm-elementary-smoothness}, we only need to consider the case $r = 1$, i.e., $\pi = \pi_1$ is a single blow-up along a smooth projective surface.
	Suppose the contrary that $\tau_0$ is singular, i.e., $\Delta_{\tau_0} \neq \emptyset$.
	By Lemma~\ref{lem-Rom4.2-=2}, there exists a (smooth) $\bP^1$-bundle $p \colon Y \to Z$ to a smooth rational surface.
	After iteration, we may assume $f$ descends to an int-amplified endomorphism $f|_Z$ on $Z$ and $\Delta_{\tau}$ is $g^{-1}$-invariant on $Y$ (cf.~Lemmas~\ref{lem-equiv-MMP}, \ref{lem-intamplified} and \cite{cascini2020polarized}*{Lemma 7.4}).
	By \cite{meng2020nonisomorphic}*{Theorem 3.3}, each component of $\Delta_{\tau}$ is either a section of $p$ or the pullback of some $(f|_Z)^{-1}$-periodic (rational) curve on $Z$ (cf.~\cite{cascini2020polarized}*{Lemma 7.5}).

	Suppose that $\Delta_{\tau_0}$ contains a section $S$ of $p$.
	Since $S \cong Z$ is simply connected, the nontrivial double cover $\widetilde{\Delta_{\tau_0}} \to \Delta_{\tau_0}$ (cf.~Notation~\ref{notation-conic-bundle}) implies that there is another irreducible (vertical) component $F$ of $\Delta_{\tau_0}$ intersecting $S$.
	Then, $F$ is the pullback of some $(f|_Z)^{-1}$-periodic (rational) curve on $Z$.
	But now there is no $D_1$ disjoint from $\Delta_{\tau_0}$, a contradiction.
	%* if D_1 dominants Z, then intersects S_2, if p(D_1) is a curve, then D_1 intersects with S_1
	Hence each component of $\Delta_{\tau}$ is a pullback of some curve $C_i$ on $Z$.
	Since $C_i$'s are $(f|_Z)^{-1}$-periodic, there is a toric pair $(Z, \Delta_Z)$ such that each $C_i \subseteq \Delta_Z$ (cf.~\cite{meng2020nonisomorphic}*{Theorem 3.2}).
	Clearly, $\Delta_Z$ is a simple loop of smooth rational curves.
	Since $D_1$ is disjoint from the connected $\Delta_{\tau_0}$, there is some $C_i$ such that $p^{-1}(C_i) \subseteq \Delta_{\tau_0}$ and $C_i$ intersects exact one of other $C_j$'s.
	However, $p^{-1}(C_i) \setminus (p^{-1}(C_i \cap C_j)) \cong \bA^1 \times \bP^1$ being simply connected contradicts the existence of the nontrivial double cover (cf.~Notation \ref{notation-conic-bundle}).
	Therefore, $\tau_0$ is a smooth $\bP^1$-bundle.

	Since $\tau_0$ is smooth, by \cite{meng2020nonisomorphic}*{Lemma 2.12}, $W = \bP_Y(\cE)$ for some locally free sheaf $\cE$ of rank $2$ over $Y$, since $Y$ is rational (cf.~\cite{zhang2012rationality}*{Theorem 1.2}).
	So (2) is proved.

	Let $f_i \coloneqq f|_{X_i}$.
	Since the exceptional divisor $\Exc(\pi_i)$ is $f_{i-1}^{-1}$-invariant and the surface $D_i' \subseteq X_i$ blown up by $\pi_i$ is $f_i^{-1}$-invariant,
	its image $\overline{D_i}$ on $W$ is $f_r^{-1}$-invariant, and its image $D_i$ on $Y$ is $g^{-1}$-periodic (cf.~\cite{cascini2020polarized}*{Lemma 7.5}).
	In particular, Theorem~\ref{thm-TID-Fano-3fold} implies that $D_i$ is rational for all $i$.
	Together with (3), (4) $\sim$ (6) are proved.
\end{proof}

From now on till the end of this section, we assume Notation~\ref{notation-main-reduction-mmp} except that our $\tau$ here may not be elementary.
We shall prove Lemma~\ref{lem-key-reduction} below, which generalizes Lemma~\ref{lem-Y-to-minimal}.
% and will be heavily used in Sections~\ref{section-splitting-ness-to-surface} $\sim$ \ref{section-splittingness}.

\begin{lem}\label{lem-key-reduction}
	Let $\tau \colon X \to Y$ be a Fano conic bundle, which factors as $X \xrightarrow{\pi} W \xrightarrow{\tau_0} Y$ where $\pi$ is the blow-up along disjoint surfaces and $W = \bP_Y(\cE) \to Y$ is an algebraic $\bP^1$-bundle.
	Suppose that $Y$ is imprimitive.
	Then we have the following commutative diagram
	\[
		\xymatrix@C=5pc{
		X \ar[r]^{\chi} \ar@/_2pc/[dd]_{\tau} \ar[d]^{\pi} \ar@[blue][dr]^{\color{blue} \eta}	&X' \ar@[blue][d]_{\color{blue} \pi'} \ar@/^2pc/[dd]^{\tau'} \\
		W \ar[d]^{\tau_0} \ar@[blue][r]_{\color{blue} \psi}										&{\color{blue} W'} \ar@[blue][d]_{\color{blue} \tau_0'} \\
		Y \ar[r]_{\varphi}																		&Y'
		}
	\]
	such that the following assertions hold.
	\begin{enumerate}
		\item $\varphi$ is the blow-up of a smooth Fano threefold $Y'$ along a smooth curve $C$.
		\item $\chi$ is the blow-up of a smooth fourfold $X'$ along a smooth projective surface.
		\item The above commutative diagram is $f$-equivariant after replacing $f$ by a power.
	\end{enumerate}
	Moreover, if the exceptional divisor $E_{\varphi} \not\subset \Supp \Delta_{\tau}$, then the following assertions hold.
	\begin{enumerate}[label=(\roman*),ref=5.2\,(\roman*)]
		\item $\psi$ is a $K_W$-negative contraction and is a blow-up along a smooth projective surface.
		\item $E_{\psi} = \tau_0^*E_{\varphi}$ and $E_{\chi} = \pi^*E_{\psi} = \tau^*E_{\varphi}$.
		\item \label{lem-key-reduction-W'}
		      $\tau_0': W' = \bP_{Y'}(\cE') \to Y'$ is an algebraic $\bP^1$-bundle such that $\cE = \varphi^*\cE'$.
		\item $X'$ is Fano, and $\tau'$ is a Fano conic bundle.
		\item $W \cong W'\times_{Y'} Y$ and $X \cong X' \times_{Y'} Y \cong X'\times_{W'}W$.
	\end{enumerate}
\end{lem}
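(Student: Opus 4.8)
The plan is to follow the proof of Lemma~\ref{lem-Y-to-minimal}, upgraded so as to carry along the extra algebraic $\bP^1$-bundle layer $W=\bP_Y(\cE)$.

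Assertion (1) is the definition of imprimitivity: fix such a divisorial contraction $\varphi$; after iteration it is $g$-equivariant, since $g_*$ permutes the finitely many extremal rays of the rational polyhedral cone $\NE(Y)$. To construct $\chi$ I argue exactly as in Lemma~\ref{lem-Y-to-minimal}: as $\NE(X)$ is rational polyhedral ($X$ is Fano) and $\tau$ contracts a $K_X$-negative extremal ray $R_\tau$, there is an extremal ray $R_\chi$ with $\tau_*R_\chi=R_\varphi$ and $R_\chi\cap R_\tau=\{0\}$; let $\chi\colon X\to X'$ be its $K_X$-negative contraction. Every fibre of $\chi$ has dimension $\le 1$ (a fibre of dimension $\ge 2$ would be contracted by $\varphi\circ\tau$ while $\tau$ is finite on it, impossible since the fibres of $\varphi$ are at most curves), so by Lemma~\ref{lem-blowup-or-conic} $\chi$ is either a conic bundle or the blow-up of a smooth fourfold $X'$ along a smooth surface; the first option is excluded because a curve with class in $R_\chi$ is not $\tau$-contracted, so its image lies in $E_\varphi$, and if $\chi$ were a conic bundle this would force $\tau(X)\subseteq E_\varphi$. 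Hence (2) holds, and the rigidity lemma makes $\varphi\circ\tau$ factor through $\chi$, producing $\tau'\colon X'\to Y'$ with $\tau'\circ\chi=\varphi\circ\tau$; the resulting square is $f$-equivariant after iteration by Lemma~\ref{lem-equiv-MMP}, which is (3). From $\tau(E_\chi)\subseteq E_\varphi$ we get $E_\chi\subseteq\tau^*E_\varphi$, and when $E_\varphi\not\subseteq\Supp\Delta_\tau$ the divisor $\tau^*E_\varphi$ is reduced and irreducible, so $E_\chi=\tau^*E_\varphi$ (the start of (ii)).

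For the ``moreover'' part, first $X'$ is Fano: if not, then \cite{wisniewski1991contractions}*{Proposition 3.4} yields an extremal ray $R_1\neq R_\chi$ with $E_\chi\cdot R_1<0$, and contracting it while running the argument of Lemma~\ref{lem-Y-to-minimal} (legitimate now that $E_\chi=\tau^*E_\varphi$) produces a second ruling on $E_\varphi$, forcing $E_\varphi\cong\bF_0$ and $E_\varphi|_{E_\varphi}\cong\cO(-1,-1)$, whence $Y'$ is not Fano, against (1). Then $\tau'$ is a $K_{X'}$-negative contraction all of whose fibres have dimension $\le 1$ (over $Y'\setminus C$ these are the fibres of $\tau$, and $\tau'^{-1}(C)=\chi(E_\chi)$ is a surface dominating the curve $C$), and it is not birational, so by Lemma~\ref{lem-blowup-or-conic} it is a Fano conic bundle; this is (iv). Since $\tau'$ is flat with $\tau'^{-1}(C)$ the smooth surface blown up by $\chi$, and $\varphi$ is the blow-up of $Y'$ along $C$, Lemma~\ref{lem-blowup-basechange} gives $X\cong X'\times_{Y'}Y$.

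The remaining point, which I expect to be the main obstacle, is to produce $W'$, namely to descend $\cE$ along $\varphi$. For a fibre $\ell\cong\bP^1$ of $E_\varphi\to C$ one has $\tau_0^{-1}(\ell)=\bP_\ell(\cE|_\ell)\cong\bF_d$, and claims (i)--(iii) amount to $d=0$ for every such $\ell$. I would deduce this from $\chi$: restricting over $\ell$, the $\chi$-contracted curves inside $\tau^{-1}(\ell)$ push forward under $\pi$ to curves sweeping out a $\bP^1$-ruling of $\tau_0^{-1}(\ell)\cong\bF_d$ distinct from the one induced by $\tau_0$ (distinct since $R_\chi\cap R_\tau=\{0\}$), which is possible only if $d=0$; here $E_\varphi\not\subseteq\Supp\Delta_\tau$ is essential, as it makes $E_\chi=\tau^*E_\varphi$ irreducible so that the restriction over $\ell$ behaves well, and one still has to check that the finitely many curves $D_i\cap E_\varphi$ blown up by $\pi$ inside $\tau_0^{-1}(E_\varphi)$ do not disturb the ruling. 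Granting $d=0$, each $\cE|_\ell$ is balanced; twisting $\cE$ by the appropriate multiple of $\cO_Y(E_\varphi)$ (the same for all $\ell$, as $c_1(\cE)\cdot\ell$ is constant) makes it trivial on every fibre of $E_\varphi\to C$, hence $\cE\cong\varphi^*\cE'$ for a rank-two bundle $\cE'$ on $Y'$. Setting $W'\coloneqq\bP_{Y'}(\cE')$ and $\tau_0'\colon W'\to Y'$, we get $W=\bP_Y(\varphi^*\cE')=W'\times_{Y'}Y$; taking $\psi\colon W\to W'$ to be the projection, Lemma~\ref{lem-blowup-basechange} (applied to the flat $\tau_0'$, the curve $C$, and $\tau_0'^{-1}(C)$) identifies $\psi$ with the blow-up of $W'$ along the smooth surface $\tau_0'^{-1}(C)$; comparing exceptional divisors yields $E_\psi=\tau_0^*E_\varphi$ and $E_\chi=\pi^*E_\psi=\tau^*E_\varphi$, and $\psi$ is $K_W$-negative since its contracted curves have normal bundle $\cO^{\oplus 2}\oplus\cO(-1)$, giving (i)--(iii). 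Finally $X\cong X'\times_{W'}W$ follows formally from $X\cong X'\times_{Y'}Y$ and $W\cong W'\times_{Y'}Y$, while $\eta\coloneqq\psi\circ\pi$ is constant on the fibres of $\chi$ (a $\chi$-contracted curve maps under $\pi$ to a fibre of the second ruling of some $\tau_0^{-1}(\ell)\cong\bF_0$, which $\psi$ contracts), hence factors as $\pi'\circ\chi$, completing the diagram and (v).
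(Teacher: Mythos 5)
Your construction of $W'$ departs from the paper's (which obtains $W'$ as the target of a contraction of a $K_X$-negative extremal face of $\NE(X)$ containing $F_\pi$, then invokes \cite{casagrande2008quasi}*{Theorem 3.14} and a discriminant-dimension argument to get $\psi$, $\tau_0'$ and $\cE=\varphi^*\cE'$), and as written it has a genuine gap at its crux. You claim that the images $\pi(\ell_\chi)$ of the $\chi$-contracted curves sweep out ``a $\bP^1$-ruling of $\tau_0^{-1}(\ell)\cong\bF_d$ distinct from the one induced by $\tau_0$, which is possible only if $d=0$.'' But what you actually produce at this stage is a covering family of irreducible rational curves transverse to the ruling, and that alone does not force $d=0$: the Hirzebruch surface $\bF_1$ is covered by the proper transforms of lines in $\bP^2$ (class $\sigma+f$), a covering family of rational curves distinct from the ruling. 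To conclude $d=0$ you must additionally show that two general members of your family are disjoint (equivalently have self-intersection $0$); this is true here because they are images of pairwise disjoint fibres of the $\bP^1$-bundle $\chi|_{E_\chi}\colon E_\chi\to\chi(E_\chi)$ under the morphism $\pi|_{\tau^{-1}(\ell)}\colon\tau^{-1}(\ell)\to\tau_0^{-1}(\ell)$, which is birational precisely because $E_\varphi\not\subset\Supp\Delta_\tau$ keeps $E_\chi=\tau^*E_\varphi$ irreducible — but you never make this disjointness argument, and the concern you do flag (the blown-up curves ``disturbing the ruling'') is not the missing point.

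A second, structural problem: the lemma asserts the full commutative diagram, including $W'$, $\psi$, $\pi'$, $\eta$ and $\tau_0'$, \emph{unconditionally} — Remark~\ref{rem-reduction-lem} explicitly relies on the diagram existing even when $E_\varphi\subseteq\Supp\Delta_\tau$. Your $W'$ is defined as $\bP_{Y'}(\cE')$ only after descending $\cE$, which uses the hypothesis $E_\varphi\not\subset\Supp\Delta_\tau$ throughout, so you do not establish the diagram in the general case; the paper avoids this by constructing $\eta\colon X\to W'$ as a contraction of an extremal face and factoring by rigidity. Two smaller points: in this lemma $\tau$ and $\tau_0$ contract extremal \emph{faces} $F_\tau$, $F_{\tau_0}$, not rays, so your conditions should read $R_\chi\cap F_\tau=\{0\}$; and Lemma~\ref{lem-blowup-or-conic} cannot be applied directly to $\tau'$ to conclude (iv), since $\tau'$ is elementary only when $X=W$ — the conic-bundle property of $\tau'$ should instead be read off from $X\cong X'\times_{Y'}Y$ (its fibres are fibres of $\tau$), which your own argument does supply afterwards.
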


% Note that the elementary conic bundle $\tau_0$ in Lemma~\ref{lem-key-reduction} is an algebraic $\bP^1$-bundle under the dynamical assumption as shown in Theorem~\ref{thm-structure-mmp-algebraic}.

\begin{proof}
	(1) follows from the imprimitivity of $Y$.
	Let $R_{\varphi}$ be the $K_Y$-negative extremal ray contracted by $\varphi$.
	Since $\tau$ (resp.~$\tau_0$) is a $K_X$ (resp.~$K_W$)-negative contraction of an extremal face $F_{\tau}$ of $\NE(X)$ (resp.~ $F_{\tau_0}$ of $\NE(W)$), there are extremal rays $R_{\psi}$ and $R_{\chi}$ of $\NE(W)$ and $\NE(X)$, respectively such that $\tau_* R_{\chi} = (\tau_0)_* R_{\psi} = R_{\varphi}$ and $R_{\chi} \cap F_{\tau} = R_{\psi} \cap F_{\tau_0} = \{0\}$ (cf. Proof of Lemma \ref{lem-Y-to-minimal}).
	% both $\NE(X)$ and $\NE(W)$ are rational polyhedrons and
	% Let $F_X$ (resp.~$F_W$) be the unique face of $\NE(X)$ (resp.~$\NE(W)$) containing $F_X'$ (resp.~$F_W'$) such that $\tau_* F_X = R_Y$ (resp.~$(\tau_0)_*F_W = R_Y$).
	% noting that faces of $\NE(Y)$ are in one-to-one correspondence to the faces of $\NE(W)$ (resp.~$\NE(X)$) containing $F_W'$ (resp.~$F_X'$).
	Since $X$ is Fano, there exists a $K_X$-negative contraction $\chi \colon X \to X'$ of $R_{\chi}$.
	By the rigidity lemma (cf.~\cite{debarre2001higher}*{Lemma 1.15}), %there exists $\tau' \colon X' \to Y'$ such that $\tau' \circ \chi = \varphi \circ \tau$.
	$\varphi\circ\tau$ factors through $\chi$.
	% \[
	% \xymatrix@C=3pc{
	% 	X \ar[r]^{\chi} \ar@/_2pc/[dd]_{\tau} \ar[d]^{\pi}	& X' \ar@/^2pc/[dd]^{\tau'} \\
	% 	W \ar[d]^{\tau_0} \\
	% 	Y \ar[r]_{\varphi}									&Y'
	% }
	% \]
	Then, the same proof of Lemma \ref{lem-Y-to-minimal-blow-up} shows (2) (here, $X'$ may not be Fano).

	% Applying the same argument as above,
	Similarly, there exists a contraction $\eta \colon X \to W'$ of the $K_X$-negative extremal face $F_{\eta}$ containing the $\pi$-contracted extremal face $F_{\pi}$ such that $\pi_* F_{\eta} = R_{\chi}$.
	By the rigidity lemma several times, we see that $\eta$ factors through $\pi$, $\eta$ factors though $\chi$, and $\tau'$ factors through $\pi'$.
	% there exist contractions $\psi \colon W \to W'$ and $\pi' \colon X' \to W'$ such that $\pi' \circ \chi = \eta = \psi \circ \pi$.
	%For every fibre $F'$ of $\pi'$, our $\varphi \circ \tau(\chi^{-1}(F'))$ is a point;
	%hence $\tau'(F')$ is a point.
	%Applying the rigidity lemma once more, $\tau'$ factors through $\pi'$.
	So we get the commutative diagram and (3) follows from Lemma~\ref{lem-equiv-MMP}.
	% and Lemma~\ref{lem-intamplified}.
	% Note that $\tau_0$ is a $K_X$-negative contraction and $\varphi$ is a birational contraction with fibres of dimension at most $1$.

	\textbf{From now on, we further assume that $E_{\varphi} \not\subset \Supp \Delta_{\tau}$.}
	Then $\tau^* E_{\varphi}$ is irreducible.
	Since $\tau_* E_{\chi} \subseteq E_{\varphi}$, it follows that $\tau^* E_{\varphi} = E_{\chi}$ (cf.~\cite{kollar1998birational}*{Theorem 3.7}).
	With the same proof of Lemma \ref{lem-Y-to-minimal-blow-up}, our $X'$ is Fano.

	% Similarly, $\Exc(\chi) = \pi^* \Exc(\psi)$.
	Note that different components of $E_{\pi}$ are disjoint from each other;
	hence $E_{\pi} \cdot \ell_{\pi} < 0$ for every fibre $\ell_{\pi}$ of $\pi$.
	Since $E_{\chi} \cdot \ell_\pi = 0$ by the projection formula, our \textbf{$E_{\chi}$ is not a component of $E_{\pi}$}.
	Let $\ell_{\psi} \in R_{\psi}$ be a curve on $W$, and $\ell_{\chi} \in R_{\chi}$ a curve on $X$.
	Then $\pi_* \ell_{\chi} = a \ell_{\psi}$ for some $a \in \bZ_{> 0}$, and we have
	% Applying the projection formula and the ramification divisor formula for $\pi$, we have
	\[
		K_W \cdot (a \ell_{\psi}) = \pi^* K_W \cdot \ell_{\chi} = (K_X - E_{\pi}) \cdot \ell_{\chi} < 0.
	\]
	Here, $E_{\pi} \cdot \ell_{\chi} \geq 0$;
	% the last inequality is due to $K_X \cdot \ell_{\chi} < 0$ and
	otherwise, $E_{\chi}$ will coincide with some component of $E_{\pi}$ by (2), a contradiction.
	So $\psi$ is a $K_W$-negative contraction.
	Applying \cite{casagrande2008quasi}*{Theorem 3.14} for the diagram $\tau_0' \circ \psi = \varphi \circ \tau_0$, (i) and (ii) are proved.
	% we have $W'$ is smooth, $\psi$ is a blow-up along a smooth surface, and $\Exc(\psi) = \tau_0^*\Exc(\varphi)$ which proves
	% \[
	% \xymatrix@C=3pc{
	% 	W \ar[r]^{\psi} \ar[d]_{\tau_0}	&W' \ar[d]^{\tau_0'} \\
	% 	Y \ar[r]_{\varphi}				&Y'
	% }
	% \]

	With the same proof of Lemma~\ref{lem-Y-to-minimal-conic}, $\tau_0'$ is an elementary conic bundle.
	% $\tau_0'$ is equidimensional of relative dimension $1$ and of relative Picard number $1$.
	% Indeed, the flatness also follows from the facts that $W'$ is Cohen-Macaulay, $Y'$ is smooth and $\tau_0'$ is equidimensional.
	% terminal singularity -> rational singularity -> CM
	% (cf.~\cite{ando1985extremal}*{Theorem 3.1}).
	Note that, outside the curve $C \subseteq Y'$ blown up by $\varphi$, every fibre of $\tau_0'$ is a smooth conic since so is $\tau_0$.
	Thus the divisor $\Delta_{\tau_0'}\subseteq C$, which is absurd (cf.~Notation~\ref{notation-discriminant}).
	% \cite{sarkisov1983conic}*{Proposition 1.8}
	Hence, $\tau_0'$ is a smooth $\bP^1$-bundle.
	By (3) and \cite{zhang2012rationality}*{Theorem 1.2}, $Y'$ is rational;
	thus $W' = \bP_{Y'}(\cE')$ for some locally free sheaf $\cE'$ of rank $2$ on $Y'$ (cf.~\cite{meng2020nonisomorphic}*{Lemma 2.12}).
	By Lemma~\ref{lem-blowup-basechange}, $W \cong W' \times_{Y'} Y$, hence up to a twist, $\cE = \varphi^* \cE'$, which implies (iii).
	% by the universal property of blow-up,

	\textbf{We claim that $\tau'$ is equidimensional.}
	Consider the behaviour of $\pi'$ and note that $E_{\pi'} \subseteq \chi(E_{\pi})$ is a disjoint union of prime divisors.
	% Exc(\pi') is of pure codimension $1$
	% Note that outside the blown up locus $C$ of $\varphi$, all the fibres of $\tau'$ are of dimension $1$, since so are fibres of $\tau$ over $Y \setminus \Exc(\varphi)$.
	% For fibres of $\tau'$ over $C$, we need to consider the behaviour of $\pi'$.
	On the one hand, outside $\psi(E_{\psi})$, the fibres of $\pi'$ have dimension $\leq 1$ since so are fibres of $\pi$ over $W \setminus E_{\psi}$.
	On the other hand, $(\pi')^{-1}(\psi(E_{\psi})) = \chi(E_{\chi})$ is a smooth surface by (1);
	thus the fibres of $\pi'$ over $\psi(E_{\psi})$ have dimension $\leq 1$.
	So, all fibres of $\pi'$ have dimension $\leq 1$ and it follows from $X'$ being Fano, Lemma~\ref{lem-blowup-or-conic} and the induction on $\rho(X'/W')$ that $\pi'$ is the blow-up along disjoint surfaces $S'$ on $W'$.
	%* use an extended version of Wis91 Thm 1.2
	Moreover, $S'$ is the image under $\psi$ of the surfaces blown up by $\pi$ by the above diagram;
	hence $S'$ is the union of subsections of $\tau_0'$.
	This further implies the fibres of $\tau'$ over $C$ are of dimension $1$ and our claim holds.

	Since $X'$ is Cohen-Macaulay and $Y'$ is smooth, our $\tau'$ is flat.
	By (iii) and Lemma~\ref{lem-blowup-basechange}, (iv) and (v) are proved.
	%, $X \cong X' \times_{Y'} Y$, which together with proves
	So we complete the proof of Lemma~\ref{lem-key-reduction}.
\end{proof}

% As a supplement, we give the following remark at the end of this section.

\begin{rem}\label{rem-reduction-lem}
	Removing the condition ``$E_{\varphi} \not\subset \Supp\Delta_{\tau}$'', we still have the commutative diagram in Lemma~\ref{lem-key-reduction}.
	However, in this case, $\psi$ is possibly not a $K_W$-negative contraction;
	hence Lemma~\ref{lem-blowup-or-conic} cannot be applied and $W'$ may not even be $\bQ$-factorial!
\end{rem}

\section{Conic bundles onto \texorpdfstring{$\bP^1$}{P1}-bundles over rational surfaces}
\label{section-splitting-ness-to-surface}

In this section, we consider the conic bundles with the base isomorphic to a $\bP^1$-bundle over a rational surface.
Theorem~\ref{thm-splitting-P1-bundle-over-minimal} is our main result in this section.

% We will do some preparations before the proof.

\begin{notn}\label{notation-splitting}
	We follow the notations below throughout this section.
	\begin{enumerate}[label=(\arabic*),ref=6.1\,(\arabic*)]
		\item\label{notation-splitting-factor} $\tau \colon X \to Y$ is a conic bundle from a smooth Fano fourfold $X$, which factors as $X \xrightarrow{\pi} W \xrightarrow{\tau_0} Y$ where $\pi$ is a composition of blow-ups along disjoint smooth projective surfaces and $\tau_0$ is an elementary conic bundle.
		\item By Theorem~\ref{thm-structure-mmp}, $Y$ is a smooth Fano threefold and $W = \bP_Y(\cE)$ with $\cE$ being a locally free sheaf of rank $2$ on $Y$.
		\item $f \colon X \to X$ is an int-amplified endomorphism.
		      After iteration, $f$ descends to $g \coloneqq f|_Y$ and $h \coloneqq f|_W$, which are both int-amplified (cf.~Lemmas~\ref{lem-equiv-MMP} and \ref{lem-intamplified}).
		\item $p \colon Y \to Z$ is a $\bP^1$-bundle over a smooth rational surface $Z$.
		      By \cite{meng2020nonisomorphic}*{Theorem 6.4}, $Y = \bP_Z(\cF)$ with $\cF$ being a splitting locally free sheaf of rank $2$ on $Z$.
		      \[
			      \xymatrix@C=2.5pc{
			      {f \acts X} \ar[r]_-{\pi} \ar@/^0.8pc/[rr]^-{\tau}
			      &{h \acts W} \ar[r]_-{\tau_0}
			      &{g \acts Y} \ar[r]_-{p}
			      &Z
			      }
		      \]
		\item For each $z \in Z$, let $Y_z \coloneqq p^{-1}(z)$, $X_z \coloneqq (p \circ \tau)^{-1}(z)$ and $W_z \coloneqq (p \circ \tau_0)^{-1}(z)$.
		\item We use $E_{\bullet}$ to denote the sum of all exceptional divisors $\Exc(\bullet)$ for simplicity.
	\end{enumerate}
\end{notn}

\begin{thm}\label{thm-splitting-P1-bundle-over-minimal}
	In the setting of Notation~\ref{notation-splitting}, suppose that $Z$ is isomorphic to $\bP^2$, $\bF_0$ or $\bF_1$.
	Then $\tau_0$ is a splitting $\bP^1$-bundle.
\end{thm}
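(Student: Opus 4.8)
By Notation~\ref{notation-splitting}, $W=\bP_Y(\cE)$ is an algebraic $\bP^1$-bundle over the Fano threefold $Y$, which is toric (Theorem~\ref{thm-TID-Fano-3fold}), and being a \emph{splitting} $\bP^1$-bundle means precisely that $\cE$ is a direct sum of two invertible sheaves. So the plan is to argue by contradiction: assume $\cE$ is \emph{not} split. The main tool will be the generalized walls $W_{\cE}(\cF)\subseteq P(Y)$ of Notation~\ref{notation-walls}; by Lemma~\ref{lem-locally-finite-walls} they are locally finite, so $P(Y)$ is cut into (locally finitely many) chambers, in each open chamber $\cE$ carries a locally constant maximal destabilizing sub-line-bundle, and on a wall this destabilizer degenerates so that $\cE$ becomes strictly $H^2$-semistable there. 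The argument then rests on two separate statements.

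\emph{The semistable case.} First I would prove a semistability--splitting lemma: if $\cE$ is $H^2$-semistable for some polarization $H^2$ in the closure of $P(Y)$, then $\cE$ splits. Here the dynamics enters. After normalizing $\cE$ by a twist and replacing $f$ by an iterate, the relative direction of $h^{*}|_{\N^1(W)}$ acts by an integer $q>1$ (Lemma~\ref{lem-intamplified}); a strictly semistable non-split $\cE$ sits in a non-split extension $0\to\cL\to\cE\to\cQ\to0$ with $H^2\cdot c_1(\cL)=H^2\cdot c_1(\cQ)$, and since $\cE$ is not split this maximal destabilizer is unique, so the corresponding section $S\subseteq W$ of $\tau_0$ is $h^{-1}$-invariant after iteration. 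The extension class in $H^1(Y,\cL\otimes\cQ^{-1})$ is then an eigenvector of the automorphism induced by $h$, and I would force it to vanish via a vanishing theorem for $H^1$ of the line bundle $\cL\otimes\cQ^{-1}$ on the toric Fano threefold $Y$ (using that, across the wall, $\xi_{\cL}=c_1(\cL)-c_1(\cQ)$ lies in a half-space to which Demazure or Kawamata--Viehweg vanishing applies). This contradicts non-splitness.

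\emph{The unstable case.} It remains to rule out that $\cE$ is unstable with respect to \emph{every} polarization. This is where the hypothesis $Z\in\{\bP^2,\bF_0,\bF_1\}$ is used: by Notation~\ref{notation-splitting}, $Y=\bP_Z(\cF)$ with $\cF$ split, so $Y$ ranges over an explicit short list of toric Fano threefolds of Picard number $2$ or $3$, whose $\Nef$ and Mori cones are known through their two extremal contractions ($p$ and the complementary $\bP^1$- or conic-bundle ray). The destabilizing section $S\cong Y$ is $g^{-1}$-invariant, so by Theorem~\ref{thm-TID-Fano-3fold} it lies in a toric boundary of $Y$ and is a smooth rational surface, which pins $c_1(\cL)$ (hence $\xi_{\cL}$) down to finitely many possibilities. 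Going through the list case by case (and, when $Z\cong\bF_1$ and $\cF$ happens to be a pullback, reducing first to the $\bP^2$-case via Lemma~\ref{lem-key-reduction}), I would exhibit an ample $H$ on $Y$ with $H^2\cdot\xi_{\cL}=0$, i.e.\ a nonempty wall $W_{\cE}(\cL)$; at such an $H$ the bundle $\cE$ is $H^2$-semistable, and the previous paragraph applies.

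\emph{Main obstacle.} The genuinely hard part is the unstable case. On the threefold base $Y$ the classical surface wall-crossing theory (in the spirit of Qin) does not apply, so one cannot blackbox a finite-wall structure to force semistability; instead one must, for each of the finitely many $Y$, combine the explicit description of $\Nef(Y)\subseteq\N^1(Y)$ with the constraints on $c_1(\cL)$ coming from the $g^{-1}$-invariant destabilizing divisor, and verify by hand that $\xi_{\cL}$ can be neither strictly $H^2$-positive nor strictly $H^2$-negative throughout $P(Y)$ --- equivalently, that the relevant nonempty-wall statements hold. A secondary technical point is to arrange the vanishing input of the semistable case compatibly with the eigenspace decomposition of $h^{*}$ on $\N^1(W)$.
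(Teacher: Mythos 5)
Your overall architecture (generalized walls, a semistable-implies-split step using the dynamics, and a case analysis over the short list of $Y=\bP_Z(\cF)$) matches the spirit of the paper, but the \emph{unstable case} as you set it up has a genuine gap. You reduce everything to the claim that, for the maximal destabilizing sub-line-bundle $\cL$, one can always exhibit an ample $H$ with $H^2\cdot\xi_{\cL}=0$. This is false in the relevant range: for example on $Y=\bP_{\bP^2}(\cO\oplus\cO(-k))$ with $\xi=S+tF$ and $t\geq 0$, one computes $\xi\cdot(S+uF)^2=(u-(k-t))^2+t(k-t)$, which is strictly positive for all ample $S+uF$ (i.e.\ $u>k$), so the wall is empty and $\cE$ is destabilized by the same subsheaf for \emph{every} polarization. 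The paper closes exactly this regime by a completely different mechanism: it works not with an abstract destabilizer but with the canonical subsheaf $\cF_1=p^*p_*\cE$ coming from the evaluation map along $p\colon Y\to Z$, and in the no-wall regime ($t\geq 0$, resp.\ $t_i\geq 0$, resp.\ $e\geq 0$) it proves $\Ext^1(\cQ,\cF_1)=H^1(Y,K_Y+(S-K_Y)+tF)=0$ by Kawamata--Viehweg, so the evaluation sequence itself splits. Your proposal has no fallback when no wall exists, and your suggested substitute --- that $g^{-1}$-invariance of the destabilizing section pins $c_1(\cL)$ down to finitely many classes all of which admit a wall --- is unsubstantiated (the invariant-divisor theory of Theorem~\ref{thm-TID-Fano-3fold} lives on the Fano threefold $Y$, not on $W$, which need not be Fano, and $c_1(\cL)$ ranges over an infinite lattice a priori).

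A second, related omission: to even define the evaluation sequence the paper must first control $\cE|_{Y_z}$ on every fibre of $p$, which is the content of Lemma~\ref{lem-fibre-fano} ($W\to Z$ is an $\bF_0$- or $\bF_1$-bundle); this is a substantial geometric argument (Claims~\ref{key-claim-fano} and \ref{claim-section-is-contracted}) that your route silently bypasses, and the separate subcase $a=0$ (where $\cE$ descends to $Z$ and one invokes \cite{amerik2003endomorphisms}*{Proposition 3} or \cite{meng2020nonisomorphic}*{Theorem 6.4}, with a nontrivial induction when $Z\cong\bF_0$) is not addressed. Finally, in your semistable step, note that at a wall of $W_{\cE}(\cF_1)$ the bundle may still be destabilized by a \emph{different} subsheaf $\cF_2$; the paper's Lemma~\ref{lem-semistable-split} handles this by showing $\cF_2\hookrightarrow\cQ$ and either $\cE=\cF_1\oplus\cF_2$ or the wall $W_{\cE}(\cF_2)$ violates local finiteness near $A_0^2$ --- this needs to be spelled out rather than asserting semistability on the wall.
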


In what follows, we generalize \cite{meng2020nonisomorphic}*{Lemma 6.3} to the following higher dimensional case.
The proof of Lemma~\ref{lem-fibre-fano} will last till the paragraph before Lemma \ref{lem-semistable-split}.
\begin{lem}\label{lem-fibre-fano}
	% Suppose that $Y \xrightarrow{p} Z$ is a smooth $\bP^1$-bundle over a smooth rational surface $Z$.
	% Then
	$p \circ \tau_0 \colon W \to Z$ is a fibre bundle such that fibres are either all $\bF_0$ or all $\bF_1$.
\end{lem}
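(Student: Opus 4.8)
The plan is to show that $W \to Z$ is a locally trivial $\bF_0$- or $\bF_1$-bundle by studying the restriction of $\tau_0$ over the movable fibres $Y_z$ of $p$ and over the $g^{-1}$-periodic divisors of $Y$. First I would fix a general point $z \in Z$ and consider $W_z = \tau_0^{-1}(Y_z) \to Y_z \cong \bP^1$. Since $\tau_0$ is an elementary conic bundle and $Y_z$ is a fibre of $p$, Lemma~\ref{lem-embedded-stable-under-BC} shows $\tau_0|_{W_z} \colon W_z \to Y_z$ is a conic bundle; its discriminant $\Delta_{\tau_0}|_{Y_z}$ is empty for general $z$ (because $\Delta_{\tau_0} \subsetneq Y$ is a divisor, and by Theorem~\ref{thm-TID-Fano-3fold} after iteration each component of $\Delta_{\tau_0}$ is $g^{-1}$-invariant and sits inside a toric boundary, so it cannot dominate $Z$ unless it is vertical). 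Hence $W_z \to \bP^1$ is a smooth $\bP^1$-bundle, i.e. a Hirzebruch surface $\bF_{d(z)}$. The first key point is that $d(z)$ is constant on a dense open subset of $Z$: this is the standard semicontinuity of the splitting type of $\cE|_{Y_z}$, together with the fact that $W = \bP_Y(\cE)$ so $W_z = \bP_{\bP^1}(\cE|_{Y_z})$.

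The next step is to rule out $d \geq 2$ on the general fibre and then upgrade ``generic'' to ``everywhere''. To control $d$, I would use that $W$ is covered by the surfaces $\overline{D_i}$ blown up by $\pi$ (when $\tau$ is non-elementary) and, more importantly, that $X$ is Fano: a jumping line with $\cE|_{Y_z} \cong \cO \oplus \cO(-d)$, $d \geq 2$, would produce a curve in $W$, and after pulling back along $\pi$ a curve in $X$, on which $-K_X$ is non-positive, contradicting the Fano condition — this is exactly the kind of estimate used in Theorems~\ref{thm-smooth-P1-bundle-over-P2} and~\ref{thm-smooth-P1-bundle-over-P1xP1}. Concretely, one bounds $\Delta_{\tau_0} \cdot Y_z$ and the relevant intersection numbers via Lemma~\ref{thm-bh} ($-(K_W + \Delta)$ effective for the $h^{-1}$-invariant boundary) and via $-K_Y \cdot Y_z = 2$. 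Having pinned $d \in \{0,1\}$ generically, I would then use the $g$-equivariant structure: by Theorem~\ref{thm-TID-Fano-3fold}, $g^{-1}$-periodic divisors of $Y$ lie in a toric boundary, the loci where the splitting type of $\cE|_{Y_z}$ could jump are closed and $g^{-1}$-periodic (since $f$ acts), hence also lie in that toric boundary and in particular are either empty or again vertical/$\bP^1$-bundle pullbacks; a direct check on each such locus, using that any fibre of $p$ meeting a $g|_Z$-periodic configuration in too many points is impossible (cf.~Lemma~\ref{thm-bh}, as in Claim~\ref{claim-L-subseteq-intersection}), forces the jump locus to be empty. Therefore $d(z)$ is globally constant and $W \to Z$ is Zariski-locally trivial with fibre $\bF_0$ or $\bF_1$.

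Finally I would assemble these into the statement: the composition $p \circ \tau_0$ has all fibres isomorphic to $\bF_0$ (resp.\ all $\bF_1$), and local triviality follows because $\bP_Y(\cE) \to Z$ is the projectivization of $\cE$ restricted to the $\bP^1$-bundle $Y \to Z$, whose restriction to each fibre has constant splitting type; standard base-change (Lemma~\ref{lem-blowup-basechange}-type arguments and cohomology-and-base-change for $\cE$) then gives a trivialization over a Zariski cover of $Z$.

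\textbf{Main obstacle.} The delicate point is not the generic fibre — that is routine semicontinuity — but excluding jumps of the splitting type over the special (non-generic) fibres of $p$. A priori $\cE|_{Y_z}$ could be $\cO \oplus \cO(-d)$ with larger $d$ over a proper closed subset $B \subseteq Z$, and one must show $B = \emptyset$. The leverage is entirely dynamical: $B$ is $g|_Z$-periodic and, via Theorem~\ref{thm-TID-Fano-3fold} applied to $Y$, is severely constrained to lie in a toric boundary; combined with the Fano condition on $X$ and the fibrewise intersection bounds from Lemma~\ref{thm-bh}, this should close the gap, but it requires a careful case analysis of the possible configurations of $B$ inside $\bP^2$, $\bF_0$, or $\bF_1$ — essentially the surface-level analogue of Claims~\ref{claim-nonempty-wall-P2}~$\sim$~\ref{claim-nonempty-wall-F1}.
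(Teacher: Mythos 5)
Your plan has the right overall shape — reduce to showing that every fibre $W_z=\bP_{Y_z}(\cE|_{Y_z})$ is $\bF_0$ or $\bF_1$, i.e.\ Fano, and conclude because $\bF_0$ and $\bF_1$ do not deform into one another — and this is indeed how the paper frames the lemma. Two remarks before the main issue. First, your generic-fibre discussion is largely redundant: in Notation~\ref{notation-splitting} the map $\tau_0$ is already a smooth algebraic $\bP^1$-bundle $W=\bP_Y(\cE)$, so \emph{every} fibre $W_z$ is a Hirzebruch surface with no discriminant of $\tau_0|_{W_z}$ to analyse. Second, for any $z$ with $Y_z\not\subset\Delta_{\tau}$ the exclusion of $d\geq 2$ is easy and is what the paper does: the centres $\overline{D_i}$ meet $W_z$ in finitely many points, so $X_z$ is an irreducible blow-up of $W_z$ at points, hence a del Pezzo surface by adjunction (flatness of $p\circ\tau$ plus $X$ Fano), and $W_z$ is Fano by the ramification formula. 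This covers your ``generic'' step and also your Case where $\Delta_{\tau}$ is horizontal.

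The genuine gap is the case $Y_z\subseteq D_1$ for a vertical component $D_1$ of $\Delta_{\tau}$. There $\overline{D_1}\cap W_z$ is a \emph{curve} $C_z$, the fibre $X_z$ is reducible, the adjunction argument breaks down, and the naive lift of a curve $\sigma_z\subset W_z$ with $K_{W_z}\cdot\sigma_z\geq 0$ to $X$ fails precisely when $\sigma_z$ lies inside the blow-up centre $\overline{D_1}$ (which can happen, since $C_z$ is a section of $W_z\to Y_z$): then $E_1$ contributes negatively and one cannot conclude $K_X\cdot\widetilde{\sigma_z}\geq K_{W_z}\cdot\sigma_z$. Your proposed remedy — that the jump locus is $g|_Z$-periodic and confined to a toric boundary of $Z$ — only constrains \emph{where} this can happen; it yields no contradiction there, and indeed the paper's hard case occurs exactly over such toric boundary curves. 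The paper resolves it not by dynamics or by the configuration counting of Lemma~\ref{thm-bh}, but by a geometric computation: either the intermediate blow-down $X_1$ is Fano and one inducts on the number of blow-ups, or one invokes Wi\'sniewski's structure theory for the non-Fano case to show that $E_1$ is a trivial product or that a second divisorial contraction $\pi_1'$ contracts $S_1=E_1\cap\widetilde{E_1}$ to a curve, from which $(\ell_z\cdot S_1)_{\widetilde{E_1}}=0$, hence $(\ell_z^2)=0$ and $W_z\cong\bF_0$ (Claims~\ref{key-claim-fano} and~\ref{claim-section-is-contracted}). This step is the heart of the proof and is missing from your proposal; the analysis you point to (the wall claims) concerns the splitting of $\cE$ in Lemma~\ref{lem-semistable-split} and is not the relevant analogue.
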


\begin{proof}
	First, note that each fibre $W_z\cong \bF_d$ for some $d \geq 0$.
	So the lemma is equivalent to showing that $d \leq 1$, i.e., $W_z$ is Fano, since $\bF_0$ and $\bF_1$ cannot deform to each other.

	Let $r \coloneqq \rho(X) - \rho(W)$.
	If $r = 0$, then $W=X$ is Fano;
	hence our lemma follows from the adjunction formula.
	% $K_{W_z} = K_W|_{W_z}$.
	% \mathcal{N}_{W_z/W} trivial
	So we may assume $r > 0$.
	By Theorem~\ref{thm-structure-mmp}, $\Delta_{\tau}$ is a disjoint union of $r$ smooth $g^{-1}$-invariant surfaces $D_i$, and $\pi$ is the blow-up of $W$ along the disjoint union $\bigsqcup_{i=1}^{r} \overline{D_i}$ of %$r$ many
	$h^{-1}$-invariant surfaces $\overline{D_i}$.
	% after iteration

	Write $\tau^{-1}(D_i) = \tau^* D_i = E_i + \widetilde{E_i}$ where $E_i$ is the $\pi$-exceptional divisor with center $\overline{D_i}$ and $\widetilde{E_i}$ is the $\pi$-strict transform of $\tau_0^{-1}(D_i)$.
	Let $S_i \coloneqq E_i \cap \widetilde{E_i}$, which is $f^{-1}$-invariant.
	% consistent with above
	Then $\pi|_{\widetilde{E_i}} \colon \widetilde{E_i} \cong \tau_0^{-1}(D_i)$ and $\pi|_{S_i} \colon S_i \cong\overline{D_i}$.
	% Since $g$ is int-amplified and every component of $\Delta_{\tau}$ is $g^{-1}$-invariant,
	By Theorem~\ref{thm-TID-Fano-3fold}, $\Delta_{\tau}$ is contained in some toric boundary of $Y$.
	We shall discuss case by case in terms of $\Delta_{\tau}$.

	\par \vskip 0.3pc \noindent
	\textbf{Case 1: $\Delta_{\tau}$ contains at least one section of $p$.}
	Then the $g^{-1}$-invariant $\Delta_{\tau}$ consists of either one or two disjoint sections of $p$ (cf.~Theorem \ref{thm-structure-mmp} and \cite{meng2020nonisomorphic}*{Theorem 3.3}).
	Hence, for every $z \in Z$, the fibre $Y_z \not\subset \Delta_{\tau}$.
	Then every surface $\overline{D_i}$ blown up by $\pi$ is either disjoint from $W_z$ or intersects with $W_z$ at some points;
	otherwise, $\tau_0(W_z \cap \overline{D_i})$ being a curve contradicts $Y_z \not\subset \Delta_{\tau}$.
	%* \tau_0(W_z \cap \overline{D_i}) = Y \cap \Delta_{\tau}
	So $X_z$ is the blow-up of $W_z$ along several points $\bigsqcup_{i=1}^{r} \overline{D_i}\cap W_z$ and hence $X_z$ is smooth and irreducible.
	Since $X$ is Fano and $p\circ\tau$ is flat, each $X_z$ is a del Pezzo surface by the adjunction formula.
	Then $W_z$ is also Fano by the ramification divisor formula.
	So our lemma holds in this case.

	\par \vskip 0.3pc \noindent
	\textbf{Case 2: $\Delta_{\tau}$ contains no section of $p$.}
	Then $\Delta_{\tau}$ consists of prime divisors $D_i$, each of which is the pullback of a smooth rational curve on $Z$ along $p$ (cf.~\cite{meng2020nonisomorphic}*{Corollary 3.4}).
	We may assume $Y_z \subseteq D_i$ for some $i$;
	otherwise, $W_z \cong X_z$ being Fano follows from the adjunction.
	Without loss of generality, we may further assume $Y_z \subseteq D_1$ and $\pi$ factors as $X \xrightarrow{\pi_1} X_1 \to W$ where $\pi_1$ is the blow-up with $E_{\pi_1} = E_1 = \pi^{-1}(\overline{D_1})$.
	% \overline{D_1} \subseteq W

	If $X_1$ is Fano, then we are done by induction on $r$.
	Thus we may assume that $X_1$ is not Fano.
	Let $H_z \coloneqq (\pi|_{\widetilde{E_1}})^{-1}(W_z) \cong W_z$ and
	\[
		\ell_z \coloneqq S_1 \cap H_z = (E_1 \cap \widetilde{E_1}) \cap H_z = (\pi|_{\widetilde{E_1}})^{-1}(W_z \cap \overline{D_1}),
	\]
	which is a cross-section of the ruling $H_z\cong W_z \to Y_z$; see the following picture.
	%The following picture is for readers' convenience.
	%* horizontal, not a fibre, using claim to conclude cross-section

	\begin{center}
		% \label{pic}
		% \tag{$\star$}
		\begin{tikzpicture}
			\pgfmathsetmacro{\cubex}{4}
			\pgfmathsetmacro{\cubey}{2}
			\pgfmathsetmacro{\cubez}{2}
			\draw (0,0,0) -- ++(-\cubex,0,0) -- ++(0,-\cubey,0) -- ++(\cubex,0,0) -- cycle;
			\draw (0,0,0) -- ++(0,0,-\cubez) -- ++(0,-\cubey,0) -- ++(0,0,\cubez) -- cycle;
			\draw (0,0,0) -- ++(-\cubex,0,0) -- ++(0,0,-\cubez) -- ++(\cubex,0,0) -- cycle;
			\draw[dashed] (-\cubex,-\cubey,-\cubez) -- ++(\cubex,0,0);
			\draw[dashed] (-\cubex,-\cubey,-\cubez) -- ++(0,\cubey,0);
			\draw[dashed] (-\cubex,-\cubey,-\cubez) -- ++(0,0,\cubez);
			\draw[fill=yellow,opacity=0.6] (-\cubex/2,0,-\cubez) -- ++(0,0,\cubez) -- ++(0,-\cubey,0);
			\draw[fill=yellow,opacity=0.6,dashed] (-\cubex/2,0,-\cubez) -- ++(0,-\cubey,0) -- ++(0,0,\cubez);
			\draw[fill=black!20,opacity=0.5] (0,0,-\cubez/2) -- ++(-\cubex/2,0,0) -- ++(0,-\cubey,0) -- ++(\cubex/2,0,0) -- cycle;
			\node at (-\cubex*3/4,0,-\cubez*3/5) {\scriptsize $E_1$};
			\node at (-\cubex/4,0,-\cubez*3/4) {\scriptsize $\widetilde{E_1}$};
			\node at (-\cubex/2,-\cubey*3/4,-\cubez/4) {\scriptsize $S_1$};
			\node at (-\cubex/4,-\cubey/2,-\cubez/4) {\scriptsize $H_z$};
			\node at (-\cubex/2,-\cubey*2/5,-\cubez/2) {\scriptsize $\ell_z$};
			\draw (\cubex*5/4,0,0) -- ++(-\cubex/2,0,0) -- ++(0,-\cubey,0) -- ++(\cubex/2,0,0) -- cycle;
			\draw (\cubex*5/4,0,0) -- ++(0,0,-\cubez) -- ++(0,-\cubey,0) -- ++(0,0,\cubez) -- cycle;
			\draw (\cubex*5/4,0,0) -- ++(-\cubex/2,0,0) -- ++(0,0,-\cubez) -- ++(\cubex/2,0,0) -- cycle;
			\draw[dashed] (\cubex*3/4,-\cubey,-\cubez) -- ++(\cubex/2,0,0);
			\draw[fill=yellow,opacity=0.6] (\cubex*3/4,0,-\cubez) -- ++(0,0,\cubez) -- ++(0,-\cubey,0);
			\draw[fill=yellow,opacity=0.6,dashed] (\cubex*3/4,0,-\cubez) -- ++(0,-\cubey,0) -- ++(0,0,\cubez);
			% \draw[dashed] (\cubex*3/4,-\cubey,-\cubez) -- ++(0,0,\cubez);
			\draw[fill=black!20,opacity=0.5] (\cubex*5/4,0,-\cubez/2) -- ++(-\cubex/2,0,0) -- ++(0,-\cubey,0) -- ++(\cubex/2,0,0) -- cycle;
			\node at (\cubex,-\cubey/2,-\cubez/4) {\scriptsize $W_z$};
			\node at (\cubex*3/4,-\cubey*3/4,-\cubez/4) {\tiny $\overline{D_1}$};
			\draw (\cubex*2,0,0) -- ++(0,0,-\cubez) -- ++(0,-\cubey,0) -- ++(0,0,\cubez) -- cycle;
			\draw[blue] (\cubex*2,0,-\cubez/2) -- ++(0,-\cubey,0);
			\node at (\cubex*2,-\cubey/2,-\cubez*0.47) {\tiny $Y_z$};
			\node at (\cubex*2,-\cubey*3/4,-\cubez/4) {\tiny $D_1$};
			\draw[->] (\cubex/4,-\cubey/2,-\cubez/4) -- ++(\cubex/3,0,0);
			\draw[->] (\cubex*3/2,-\cubey/2,-\cubez/4) -- ++(\cubex/3,0,0);
			\node at (\cubex*0.43,-\cubey*2/5,-\cubez/4) {\scriptsize $\pi$};
			\node at (\cubex*1.67,-\cubey*2/5,-\cubez/4) {\scriptsize $\tau_0$};
		\end{tikzpicture}
	\end{center}

	\begin{claim}\label{key-claim-fano}
		$(\ell_z \cdot S_1)_{\widetilde{E_1}} = 0$.
	\end{claim}
	Suppose Claim \ref{key-claim-fano} for the time being.
	Then we have
	$$(\ell_z^2)_{H_z} = (\ell_z \cdot S_1|_{H_z})_{H_z} = (\ell_z \cdot S_1)_{\widetilde{E_1}} = 0.$$
	Since $\ell_z$ is an (irreducible) cross-section of the ruling $H_z\to Y_z$, our $H_z \cong \bF_0$ (cf.~\cite{hartshorne1977algebraic}*{Chapter \RomanNumeralCaps{5}, Proposition 2.20}).
	As a result, $W_z \cong H_z \cong \bF_0$ and hence our lemma follows.
	So we are only left to show Claim~\ref{key-claim-fano}.

	\par \vskip 0.3pc \noindent
	\textbf{Proof of Claim~\ref{key-claim-fano}.}
	Since $X_1$ is not Fano and our $S_1\cong\overline{D_1}$ has Picard number $2$, by \cite{wisniewski1991contractions}*{Proposition 3.6}, either of the following cases occurs:
	\begin{enumerate}[label=(\arabic*), ref=(\arabic*)]
		% \item \label{W91_3.6_P2}
		% $S_1 \cong \overline{D_1} \cong \bP^2$;
		\item \label{W91_3.6_P1xP1}
		      $S_1 \cong \overline{D_1} \cong \bF_0$ and $E_1 \cong S_1 \times \bP^1$ is a trivial bundle;
		\item \label{W91_3.6_P1-bundle}
		      $X$ admits another blow-down $\pi_1' \colon X \to X_1'$ onto a smooth fourfold $X_1'$ which contracts the divisor $E_1$ to a smooth surface $S' \subseteq X_1'$.
	\end{enumerate}

	% Under our assumption, \ref{W91_3.6_P2} cannot happen since the blown up surface $\overline{D_1}$ (which is isomorphic to a component of $\Delta_{\tau}$) is a Hirzebruch surface by our condition in \textbf{Case B}.
	If Case \ref{W91_3.6_P1xP1} holds, then we have $E_1 \cong \bP^1 \times \bP^1 \times \bP^1$ and hence
	\begin{align*}
		(\ell_z \cdot S_1)_{\widetilde{E_1}} = (\ell_z \cdot E_1|_{\widetilde{E_1}})_{\widetilde{E_1}} = \ell_z \cdot E_1 = -\ell_z \cdot \widetilde{E_1} = -(\ell_z \cdot \widetilde{E_1}|_{E_1})_{E_1} = -(\ell_z \cdot S_1)_{E_1} = 0,
	\end{align*}
	noting that $(E_1 + \widetilde{E_1}) \cdot \ell_z = (D_1 \cdot Y_z)_Y = 0$ by the projection formula. %and $S_1$ is $(f|_{E_1})^{-1}$-invariant on $E_1$.
	So Claim~\ref{key-claim-fano} holds for Case \ref{W91_3.6_P1xP1}.
	From now on, we assume Case \ref{W91_3.6_P1-bundle}.
	\begin{claim}\label{claim-section-is-contracted}
		Under the condition of Case \ref{W91_3.6_P1-bundle}, $S_1$ is contracted by $\pi_1'$.
	\end{claim}

	\begin{proof}
		Suppose the contrary that $\pi_1'(S_1) = S'$.
		% $S_1$ is not contracted by $\pi_1'$.Then
		Note that $\rho(S') = \rho(E_1) - 1 = \rho(\overline{D_1}) = 2$, hence $S'\cong S_1$ is a (rational) ruled surface.
		Note also that the ruling of $\pi|_{E_1}$ induces a natural ruling on $S'$ since $\pi_1'$ does not contract any fibre of $\pi$.
		By Lemma~\ref{lem-equiv-MMP}, $\pi_1'$ is $f$-equivariant after iteration.
		Take a general $f|_{S'}$-periodic fibre of $S' \to \bP^1$ (cf.~\cite{fakhruddin2003questions}*{Theorem 5.1}) and denote its $\pi_1'$-inverse image in $E_1 \subseteq X$ by $T$, which is an $f$-periodic Hirzebruch surface.
		After iteration, we may assume $f(T) = T$.

		On the one hand, $T$ contains a fibre of $\pi$ since every fibre of $S'\to \bP^1$ is dominated by a fibre of $E_1 \to \overline{D_1}$.
		Hence, considering $\rho(T) = \rho(\overline{D_1}) = 2$, $\pi(T)$ is a curve.
		% is of Picard number $\rho(T) - 1 = 1$, which is absurd.
		Since $S_1 \cong \pi(S_1) = \overline{D_1}$, we have $\pi(T) = \pi(c \coloneqq S_1 \cap T) \cong \bP^1$ on $\overline{D_1}$.
		So $\pi|_T$ gives another ruling of $T$ different from $\pi_1'|_T$;
		thus $T \cong \bF_0$.
		% since they contract different extremal rays
		On the other hand, since $\pi_1'(S_1) = S'$, we see that $c$ is an $(f|_T)^{-1}$-invariant curve.
		By \cite{meng2020nonisomorphic}*{Lemma 3.1}, $c = \{\cdot\} \times \bP^1$ or $\bP^1 \times \{\cdot\}$.
		Nevertheless, this is impossible since $c$ is neither a fibre of $\pi$ nor $\pi_1'$.
		%* c is like a fibre
	\end{proof}

	% Now we come back to the proof of Claim~\ref{key-claim-fano}.
	\noindent
	\textbf{End of the proof of Claim~\ref{key-claim-fano} (and Lemma~\ref{lem-fibre-fano}).}
	By Claim~\ref{claim-section-is-contracted}, $S_1$ is contracted to a curve, since the dimension of each fibre of $\pi_1'$ is no more than one.
	If $\ell_z$ is contracted, then we have $E_1 \cdot \ell_z < 0$ since $E_1$ is $\pi_1'$-anti-ample.
	Then
	\[
		(K_{E_1} \cdot \ell_z)_{E_1} = (K_X + E_1) \cdot \ell_z < 0.
	\]
	So $\pi_1'|_{E_1}$ is a Fano contraction.
	In particular, $(\ell_z \cdot S_1)_{E_1} = 0$ by the cone theorem.
	% $(\ell \codt S_1)_{F_1} = - (\ell \cdot S_1)_{E_1}$ as above

	If $\ell_z$ is not contracted by $\pi_1'$, then $\overline{D_1} \cong S_1 \cong \bF_0$ since the Hirzebruch surface $S_1$ admits another ruling (induced by $\pi_1'|_{S_1}$).
	In this case, for $c'$ being a fibre of $\pi_1'|_{S_1}$, we have
	\[
		(K_{\overline{D_1}} \cdot \pi(c'))_{\overline{D_1}} = (K_{S_1} \cdot c')_{S_1} = ((K_{E_1} + S_1) \cdot c')_{E_1} = (K_{E_1} \cdot c')_{E_1},
	\]
	since $(S_1 \cdot c')_{E_1} = 0$ by the cone theorem and $\pi|_{S_1}$ is an isomorphism.
	So $E_1 \cong \overline{D_1} \times \bP^1$ (cf.~\cite{wisniewski1991contractions}*{Remark following Lemma 3.3}).
	With the same argument as in Case \ref{W91_3.6_P1xP1}, $(\ell_z \cdot S_1)_{E_1} = 0$.
	Now Claim~\ref{key-claim-fano} is proved and we have finished the proof of Lemma~\ref{lem-fibre-fano}.
\end{proof}

The following lemma contributes to showing the splitting-ness of $\cE$.
% Note that the system of walls given by \cite{greb2017compact}*{Theorem 6.6} (cf.~Lemma~\ref{lem-locally-finite-walls}) yields an obvious stratification of $P(X)$ into connected chambers.
We recall Notation~\ref{notation-walls} and Lemma~\ref{lem-locally-finite-walls} for the related notations and properties.
\begin{lem}\label{lem-semistable-split}
	Suppose Notation \ref{notation-splitting} and
	the existence of the following exact sequence
	\begin{equation}\label{eq-exact-sq-stable}
		0 \to \cF_1 \to \cE \to \cQ \to 0,
		\tag{$\dagger$}
	\end{equation}
	with $\cF_1$ and $\cQ$ being invertible sheaves such that the wall $W_{\cE}(\cF_1) \neq \emptyset$.
	Then $\cE$ splits.
\end{lem}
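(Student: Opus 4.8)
The plan is the following. Since $\xi_{\bullet}$ and the conclusion are unchanged by tensoring $\cE$ with a line bundle, I would first twist by $\cF_1^{-1}$ and assume $\cF_1 = \cO_Y$, so that $(\dagger)$ becomes $0 \to \cO_Y \to \cE \to \cL \to 0$ with $c_1(\cL) = -\xi_{\cF_1}$, and the hypothesis $W_{\cE}(\cF_1) \neq \emptyset$ reads: there is an ample class $H_0$ on $Y$ with $c_1(\cL) \cdot H_0^2 = 0$. The bundle $\cE$ splits as soon as this extension does, and the extension is governed by its class in $\Ext^1(\cL, \cO_Y) = H^1(Y, \cL^{-1})$; equivalently, writing $C_1 \subseteq W = \bP_Y(\cE)$ for the section of $\tau_0$ determined by $\cO_Y \hookrightarrow \cE$ (so that $\cO_W(C_1)|_{C_1} \cong \cL$), it suffices to produce a second section of $\tau_0$ disjoint from $C_1$.

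Next I would use the wall condition to obtain semistability and dispose of the degenerate cases. Since $c_1(\cL) \cdot H_0^2 = 0$ means $\mu_{H_0}(\cO_Y) = \mu_{H_0}(\cE)$, the sheaf $\cE$ is strictly $H_0$-semistable with Jordan--H\"older factors $\cO_Y$ and $\cL$. The Hodge index theorem on the threefold $Y$, applied with $H_0$, gives $c_1(\cL)^2 \cdot H_0 \leq 0$, with equality only if $c_1(\cL) \equiv 0$; in that case $\cL \cong \cO_Y$ (a smooth Fano variety has trivial $\Pico$) and the extension class lies in $H^1(Y, \cO_Y) = 0$, so $\cE$ splits. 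If instead $c_1(\cL) \not\equiv 0$, any effective divisor in the class $\pm c_1(\cL)$ would have zero degree against the ample curve class $H_0^2$ and hence be zero, so $h^0(\cL) = h^0(\cL^{-1}) = 0$; a short diagram chase then shows that $\cO_Y \subseteq \cE$ is the \emph{unique} saturated sub-line-bundle of $H_0$-slope $\mu_{H_0}(\cE)$, unless $\cE$ already splits. From now on I would assume $c_1(\cL)$ nonzero, $c_1(\cL)^2 \cdot H_0 < 0$, and $\cE$ non-split, aiming for a contradiction.

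Now the dynamics enter. As $W$ carries the int-amplified endomorphism $h$ descending $g$ on $Y$ (Notation~\ref{notation-splitting}), and as pullback along finite morphisms preserves semistability and is functorial for socle filtrations --- applied to the finite maps $W \to \bP_Y(g^*\cE)$ attached to $h$ --- the uniqueness of the distinguished sub-bundle forces $h$ to carry $C_1$ into itself up to a bounded ambiguity; combined with the local finiteness of the walls $\{W_{\cE}(\cF)\}$ in $P(Y)$ (Lemma~\ref{lem-locally-finite-walls}), which prevents the walls obtained by transporting $\cF_1$ through the powers of $h$ from accumulating, this yields that $C_1$ is $h^{-1}$-periodic; after iteration (Lemmas~\ref{lem-equiv-MMP}, \ref{lem-intamplified}) we may assume $h^{-1}(C_1) = C_1$. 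I would then combine two inputs. First, restriction to a covering family: for any ample $H$ on the wall and any rational curve $\Gamma$ moving in a covering family of $Y$ whose class is proportional to $H^2$, the preimage $\tau_0^{-1}(\Gamma) = \bP_{\Gamma}(\cE|_{\Gamma})$ is a ruled surface over $\bP^1$ in which $C_1 \cap \tau_0^{-1}(\Gamma)$ is an irreducible section of self-intersection $(-\xi_{\cF_1}) \cdot [\Gamma] = 0$; since an irreducible section of self-intersection $0$ in a ruled surface over $\bP^1$ can occur only for the trivial bundle, $\cE|_{\Gamma} \cong \cO_{\bP^1}^{\oplus 2}$. Here I would use the explicit description of $Y$ as a splitting $\bP^1$-bundle $p \colon Y \to Z$ with $Z \in \{\bP^2, \bF_0, \bF_1\}$ (Notation~\ref{notation-splitting}) to extract, out of a wall-polarization $H_0$, an actual covering family of rational curves with class proportional to $H_0^2$. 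Second, effectivity: the $h^{-1}$-invariance of the prime divisor $C_1$ together with Lemma~\ref{thm-bh} shows that $-(K_W + C_1)$ is effective, whence, pushing forward along $\tau_0$, the sheaf $\cE$ admits a global section independent of the one cutting out $C_1$. Feeding in the triviality of $\cE$ on the covering family and exploiting the structure map $p$ (by restricting further to fibres and to sections of $p$), I would locate a section of $\tau_0$ disjoint from $C_1$; hence $\cE$ splits, contradicting non-splitness, and the lemma follows.

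The hardest part, I expect, is in the last paragraph: upgrading the numerical information --- $h^{-1}$-invariance of the numerical class of $C_1$ and triviality of $\cE$ on a covering family of rational curves --- to genuine $h^{-1}$-periodicity of $C_1$ and then to the existence of an honestly disjoint second section, which forces one to use the effectivity of Lemma~\ref{thm-bh} in tandem with the small Picard rank and explicit nef/effective cones of $Y$ and its structure map $p$. A subtler issue hidden above is the comparison of $\bP_Y(g^*\cE)$ with $W$ when the relative degree of $h$ exceeds $1$; and one must also check, case by case for $Z \in \{\bP^2, \bF_0, \bF_1\}$, that the wall $W_{\cE}(\cF_1)$ actually meets polarizations whose associated curve classes are swept out by rational curves.
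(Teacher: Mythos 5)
Your proposal diverges substantially from the paper's argument and contains gaps that break it at the outset. The most serious one is in your second paragraph: from $\xi_{\cF_1}\cdot H_0^2=0$, i.e.\ $\mu_{H_0}(\cF_1)=\mu_{H_0}(\cE)$, you conclude that ``the sheaf $\cE$ is strictly $H_0$-semistable with Jordan--H\"older factors $\cO_Y$ and $\cL$.'' This does not follow: the existence of \emph{one} saturated sub-line-bundle of slope equal to $\mu_{H_0}(\cE)$ says nothing about the other saturated subsheaves, and a different $\cF_2\subseteq\cE$ with $\mu_{H_0}(\cF_2)>\mu_{H_0}(\cE)$ is not excluded. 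Handling exactly this possibility is the entire content of the paper's proof: one perturbs $A_0=H_0$ to a nearby polarization $A_1$ in a chamber where $\xi_{\cF_1}\cdot A_1^2<0$ (using the local finiteness of walls, Lemma~\ref{lem-locally-finite-walls}, to choose a compact neighbourhood in which every wall passes through $A_0^2$); if $\cE$ is $A_1$-semistable one concludes directly from the dynamical hypothesis and \cite{amerik2003endomorphisms}*{Proposition 2.4}, and if not, the maximal destabilizer $\cF_2$ injects into $\cQ$ and either splits $\cE$ off immediately or produces a wall $W_{\cE}(\cF_2)$ crossing the segment from $A_0^2$ to $A_1^2$ away from $A_0^2$, contradicting the choice of neighbourhood. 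Your proposal never confronts the unstable case, and it also never invokes the semistable-implies-split input at all, which is where the endomorphism actually enters in the paper.

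The remaining dynamical and geometric steps you sketch are also not viable as written. The claim that functoriality of socle filtrations under $h$ plus local finiteness of walls forces the section $C_1$ to be $h^{-1}$-periodic is not an argument: as you yourself note, $h$ relates $g^*\cE$ to $W=\bP_Y(\cE)$ only through a fibrewise degree-$d$ map, so $g^*\cE$ is not $\cE$ up to twist and there is no induced map of extensions preserving $\cF_1$; this comparison is precisely the nontrivial content of Amerik's result, which you would be re-proving. Likewise, the step ``an irreducible section of self-intersection $0$ forces $\cE|_{\Gamma}$ trivial'' requires the wall class $H_0^2$ to be (proportional to) the class of an actual covering family of rational curves, which is not established and is generally false for a real class lying on a wall; and the final passage from effectivity of $-(K_W+C_1)$ to a second section of $\tau_0$ \emph{disjoint} from $C_1$ is asserted without proof. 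I would recommend abandoning this route and following the chamber-perturbation argument, which is short and avoids all of these issues.
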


\begin{proof}
	Since $W_{\cE}(\cF_1) \neq \emptyset$, there exists $A_0\in \textup{Amp}(Y)$ such that $\xi_{\cF_1} \cdot A_0^2 = 0$.
	% (cf.~Notation~\ref{notation-walls}).
	By Lemma~\ref{lem-locally-finite-walls}, we can take a sufficiently small convex compact neighbourhood $\widehat{K}$ of $A_0 \in \Amp(Y)$ and let $K \subseteq P(Y)$ be its homeomorphic image such that all the walls in $K$ pass through $A_0^2$.
	Since $\xi_{\cF_1} \cdot H^2 = 0$ for each $H \in W_{\cE}(\cF_1)$, there is a chamber $\mathcal{C}$ in $K$ such that for any ample ($\bR$-Cartier) divisor $A$ with $A^2\in \mathcal{C}$, we have $\xi_{\cF_1} \cdot A^2 < 0$.
	Fix one such $A_1$.

	If $\cE$ is $A_1$-semistable, then by our dynamical assumption and \cite{amerik2003endomorphisms}*{Proposition 2.4}, $\cE$ splits and our lemma holds.

	Suppose that $\cE$ is not $A_1$-semistable.
	Let $\cF_2$ be a maximal destabilizing (saturated invertible) sheaf associated to $A_1$.
	Then one has $\xi_{\cF_2} \cdot A_1^2 > 0$ by definition; thus $\cF_2\not\subset\cF_1$.
	Consider the natural restriction of the exact sequence \eqref{eq-exact-sq-stable} to $\cF_2$.
	% Since $\cF_2\not\subset\cF_1$, the map $\cF_2\to \cQ$ is nontrivial.
	% Moreover, $\cQ$ is a locally free sheaf;
	% hence $\cF_2 \to \cQ$ is an injection.
	Since $\cF_2\not\subset\cF_1$ and $\cQ$ is locally free, the map $\cF_2\to \cQ$ is an injection.
	% nothing that subsheaf of torsion free sheaf is torsion free and hence has positive rank;
	% besides, the rank of short exact sequence is additive.
	% \cF_2/\ker is of dim = dim Y - 1
	% H^1(Y, \cO_Y) = 0, then c_1 \colon H^1(Y, \cO_Y^*) \to H^2(Y, \bZ) is injective
	If $c_1(\cF_2) = c_1(\cQ)$, then it is easy to verify $\cE = \cF_1 \oplus \cF_2$, which shows the splitting-ness of our $\cE$.
	If $c_1(\cF_2) < c_1(\cQ)$, then
	\[
		\xi_{\cF_2} \cdot A_0^2 < \xi_{\cQ} \cdot A_0^2 = -\xi_{\cF_1} \cdot A_0^2 = 0 < \xi_{\cF_2} \cdot A_1^2.
	\]
	This in turn implies that $(x_0 A_0 + (1 - x_0)A_1)^2$ lies in the wall $W_{\cE}(\cF_2)$ for some $0 < x_0 < 1$.
	% (consider the continuous function $\xi_{\cF_2} \cdot (x A_0 + (1 - x)A)^2$ with $x \in [0,1]$).
	However, $x_0 A_0 + (1 - x_0)A_1 \in \widehat{K}$ by the convexity;
	hence $(x_0 A_0 + (1 - x_0) A_1)^2 \in K$, a contradiction to the choice of $K$, noting that $W_{\cE}(\cF_2)$ does not pass through $A_0^2$.
\end{proof}

%With all the preparations settled down, we shall begin our proof of Theorem~\ref{thm-splitting-P1-bundle-over-minimal}.

\begin{proof}[Proof of Theorem~\ref{thm-splitting-P1-bundle-over-minimal}]
	We divide the proof into the following three cases in terms of $Z$.
	\par \vskip 0.5pc \noindent
	\textbf{Case A: $Z \cong \bP^2$.}
	We may write $Y = \bP_Z(\cF)$ where $\cF \cong \cO_Z \oplus \cO_Z(-k)$ with $k = 0, 1$ or $2$ (cf.~\cite{szurek1990fano}).
	Let $F \coloneqq p^*L$ be the ``fibre'' class of $p$ where $L$ is any line on $Z \cong \bP^2$ and $S$ the section class of $Y$ such that $\cO_Y(S)|_S \cong \cO_S(-k)$ (with respect to the surjection $\cF \to \cO_Z(-k)$).
	If $k = 1$ or $2$, then $S|_S$ is not pseudo-effective and hence $S$ is $f^{-1}$-periodic (cf.~\cite{meng2020nonisomorphic}*{Lemma 2.3});
	if $k = 0$, let $S$ be an $f$-periodic section (cf.~\cite{fakhruddin2003questions}*{Theorem 5.1}).
	In both cases, we may assume
	$S$ is $f$-invariant after iteration.
	Up to a twist, we assume $c_1(\cE) = aS + bF$ with $-1 \leq a,b \leq 0$.
	Then $\cO_{Y_z}(c_1(\cE|_{Y_z})) \cong \cO_{Y_z}(a)$ for any $z \in Z$.
	Applying $\bP_{Y_z}(\cE|_{Y_z}) = W_z \cong \bF_c$ with $c \leq 1$ by Lemma~\ref{lem-fibre-fano}, we have $\cE|_{Y_z} \cong \cO_{Y_z} \oplus \cO_{Y_z}(a)$ for any $z \in Z$.
	Thus the function $z \mapsto h^0(Y_z,\cE|_{Y_z})$ is constant and the natural morphism $ p^*p_*\cE \to \cE$ has domain a locally free sheaf, which is an evaluation map on every fibre (cf.~\cite{hartshorne1977algebraic}*{Chapter \RomanNumeralCaps{3}, Corollary 12.9}).
	Since $a\le 0$, the global sections of $\cE|_{Y_z}$ are constant.
	Then we have an exact sequence with $\cQ$ being a vector bundle.
	\begin{equation}
		\label{eq-evaluation-sequence-P2}
		0 \to p^*p_*\cE \to \cE \to \cQ \to 0
		\tag{$\ast$}
	\end{equation}
	% where .
	% (cf.~\cite{hartshorne1977algebraic}*{Chapter \RomanNumeralCaps{2}, Exercise 5.8}).

	\textbf{\textit{Suppose that $a = 0$.}}
	Then $p_*\cE$ is locally free of rank $2$;
	thus $p^*p_*\cE\cong\cE$ and $W \cong Y' \times_Z Y$ with $Y'=\bP_Z(p_*\cE)$ by the base change.
	% and one gets the following commutative diagram
	% \[
	% \xymatrix{
	% 	W \ar[r] \ar[d] &\bP_Z(p_*\cE) \ar[d] \\
	% 	Y \ar[r]		&Z
	% }
	% \]
	Since $h|_{Y'}$ is int-amplified (cf.~Lemma~\ref{lem-intamplified}) and $Z \cong \bP^2$, we have $p_*\cE$ and hence $\cE$ split (cf.~\cite{amerik2003endomorphisms}*{Proposition 3}).

	\textbf{\textit{Suppose that $a = -1$.}}
	Then $p_*\cE$ is a line bundle, say $\cO_Z(e)$ for some $e \in \bZ$.
	Hence $\cF_1 \coloneqq p^*p_*\cE \cong \cO_Y(eF)$ and $\cQ \cong \cO_Y(-S + (b-e)F)$.
	Note that
	\[
		\Ext^1(\cQ, p^*p_*\cE) = H^1(Y, S + (2e-b)F) = H^1(Y, K_Y + (S - K_Y) + (2e-b)F).
	\]
	Using the relative canonical bundle formula, we have
	\begin{align*}
		S - K_Y \equiv S-(-2S + p^*(K_Z + \det\cF)) = 3S + (3 + k)F = \frac{5}{2}S + (3 + k)F + \frac{1}{2}S
	\end{align*}
	where $\cO_Y(S)|_S \cong \cO_S(-k)$ with $k = 0, 1, 2$.
	Then one can easily verify that $\frac{5}{2}S + (3+k)F$ is nef and big for any $k = 0, 1, 2$.
	% self-intersection = (125 k^2 + 180 k + 540)/8 > 0
	Since $\frac{1}{2}S$ has the support with only normal crossings, by the Kawamata--Viehweg vanishing theorem (cf.~e.g.,~\cite{kollar1998birational}*{Theorem 2.64}), $\Ext^1(\cQ, p^*p_*\cE)$ vanishes if $t \coloneqq 2e - b \geq 0$, noting that $F$ is nef on $Y$.
	So $t \geq 0$ implies that \eqref{eq-evaluation-sequence-P2} and hence $\cE$ split.
	Therefore, we may assume that $t < 0$.
	Let $\xi_{\cF_1} \coloneqq 2c_1(p^*p_*\cE) - c_1(\cE) \sim S + tF$.
	Then our theorem for the case $Z\cong\bP^2$ follows from Claim~\ref{claim-nonempty-wall-P2} and Lemma~\ref{lem-semistable-split}.

	\begin{claim}\label{claim-nonempty-wall-P2}
		$W_{\cE}(\cF_1) \neq \emptyset$ when $t < 0$.
		% t is used before
	\end{claim}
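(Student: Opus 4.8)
\textbf{Proof proposal for Claim~\ref{claim-nonempty-wall-P2}.}
The plan is to make everything explicit on $Y = \bP_Z(\cF)$ with $Z \cong \bP^2$ and $\cF \cong \cO_Z \oplus \cO_Z(-k)$, $k \in \{0,1,2\}$. First I would record the intersection theory of $Y$: writing $F = p^*L$ for a line $L \subseteq Z$ and recalling that $S$ is the section of $p$ with $\cO_Y(S)|_S \cong \cO_{\bP^2}(-k)$, the Grothendieck relation (or a direct computation from $\cO_Y(S)|_S$ and $F|_S$) gives $F^3 = 0$, $S \cdot F^2 = 1$, $S^2 \cdot F = -k$ and $S^3 = k^2$, and identifies the nef cone of $Y$ as the cone spanned by $F$ and $S + kF$. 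In particular $H \coloneqq xS + yF$ is ample precisely when $x > 0$ and $y > kx$. Since $W_{\cE}(\cF_1) \neq \emptyset$ just means that some ample $\bR$-Cartier divisor $H$ satisfies $H^2 \cdot \xi_{\cF_1} = 0$, the whole claim becomes an elementary question about one real quadratic.

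Next I would carry out that computation. With $\xi_{\cF_1} \sim S + tF$ and $H = xS + yF$, the intersection numbers above yield $H^2 \cdot S = (kx - y)^2$ and $H^2 \cdot F = x(2y - kx)$, hence $H^2 \cdot \xi_{\cF_1} = (kx - y)^2 + t\,x\,(2y - kx)$. Normalising $x = 1$, I would study $g(y) \coloneqq (y - k)^2 + t(2y - k)$ on the ample ray $y > k$. One checks $g(k) = tk \le 0$, with strict inequality when $k \ge 1$, while $g(y) \to +\infty$ as $y \to +\infty$; and when $k = 0$ one simply notes $g(-2t) = 0$ with $-2t > 0 = k$. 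So in every case $g$ has a zero $y_0 > k$, and then $H \coloneqq S + y_0 F$ is an ample $\bR$-Cartier divisor on $Y$ with $H^2 \cdot \xi_{\cF_1} = 0$, i.e. $H^2 \in W_{\cE}(\cF_1)$, so the wall is nonempty.

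I do not expect a serious obstacle here: the only delicate points are (i) pinning down the correct intersection numbers and the correct description of the nef/ample cone of the (possibly non-trivial) $\bP^1$-bundle $Y$ over $\bP^2$, i.e. the sign conventions distinguishing $S$ from the ``positive'' section $S + kF$; and (ii) seeing clearly where the hypothesis $t < 0$ enters — it is exactly what forces $g(k) \le 0$, so that $g$ changes sign (or already vanishes) beyond $k$ on the ample ray. If instead $t \ge 0$, then $g(y) \ge 0$ for all $y \ge k$ and the wall is empty, which is consistent with the fact that the case $t \ge 0$ was already disposed of directly by the Kawamata--Viehweg vanishing argument preceding the claim.
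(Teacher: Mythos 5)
Your proposal is correct and follows essentially the same route as the paper: both parametrize ample classes as $S+uF$ with $u>k$ (after scaling), compute $\xi_{\cF_1}\cdot(S+uF)^2$ using $S^3=k^2$, $S^2\cdot F=-k$, $S\cdot F^2=1$, $F^3=0$, and find a root of the resulting quadratic in the ample range via a sign change. The only cosmetic difference is that the paper completes the square and observes that the minimum value $t(k-t)$ at the vertex $u=k-t>k$ is negative, whereas you evaluate at the endpoint $u=k$ and treat $k=0$ separately — both are valid.
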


	\noindent
	\textbf{Proof of Claim~\ref{claim-nonempty-wall-P2}.}
	%Recall that
	% $Y$ is a Fano $\bP^1$-bundle over $Z = \bP^2$ and
	% $\xi_{\cF_1} = S + tF$ %where $F$ is the pullback of a line $L$ on $Z$.
	% We may assume
	% and
	%$Y = \bP_Z(\cO \oplus \cO(-k))$ with $k = 0, 1$ or $2$.
	Note that up to a multiple, any ample divisor on $Y$ can be written as $D = D(u) \coloneqq S + uF$ with $u > k$.
	So we have the following:
	\begin{align*}
		\xi_{\cF_1} \cdot D(u)^2 = (S + tF) \cdot (S + uF)^2 = (u - (k - t))^2 - t^2 + kt.
		% &= (S + tF) \cdot (S + uF)^2 \\
		% &= (S + tF) \cdot (S^2 + 2uS \cdot F + u^2 F^2) \\
		% &= S^3 + 2u S^2 \cdot F + u^2 S \cdot F^2 + tF \cdot S^2 + 2tuS \cdot F^2 \\
		% &= (S|_S)^2 + 2u(S|_S) \cdot (F|_S) + u^2 (F|_S)^2 + t(F|_S) \cdot (S|_S) + 2tu(F|_S)^2 \\
		% &= k^2 - 2uk + u^2 - kt + 2tu = (u - (k - t))^2 - t^2 + kt \\
		% (S|_S)^2 = k^2, (S|_S) \cdot (F|_S) = -k, S \cdot F^2 = 1
	\end{align*}
	Since $k - t > k$ and $-t^2 + kt = t(k - t) < 0$, there exists $u' > k$ such that $\xi_{\cF_1} \cdot D(u')^2 = 0$ by the continuity, which completes the proof of our claim.
	% we can easily see that,

	\par \vskip 0.5pc \noindent
	\textbf{Case B: $Z \cong \bF_0$.}
	First, we may write $Y = \bP_Z(\cF)$ where $\cF \cong \cO_Z \oplus \cO_Z(-k_1,-k_2)$ with $0 \leq k_1, k_2 \leq 1$ (resp.~$\cO_Z \oplus \cO_Z(1, -1)$) (cf.~\cite{szurek1990fano}).
	Note that these Fano threefolds $Y$ with $\rho(Y) = 3$ has exactly $3$ extremal rays in $\NE(Y)$.
	Let $F_i \coloneqq p^*L_i$ be the ``fibre'' class of $p$ where $L_1 \cong \cO_{Z}(1, 0)$ and $L_2 \cong \cO_Z(0,1)$, and $S$ the section class of $p$ such that $\cO_Y(S)|_S \cong \cO_S(-k_1, -k_2)$ (resp.~$\cO_S(1, -1)$).
	Similar to \textbf{Case A}, we may assume our $S$ is $f$-invariant after iteration.
	Up to a twist, we assume $c_1(\cE) = aS + b_1 F_1 + b_2 F_2$ with $-1 \leq a, b_i \leq 0$.
	% Then $\cO_{Y_z}(c_1(\cE|_{Y_z})) \cong \cO_{Y_z}(a)$ for any $z \in Z$, and hence $\cE|_{Y_z} \cong \cO_{Y_z}(t) \oplus \cO_{Y_z}(a-t)$ for some $t$.
	% Moreover, by Lemma~\ref{lem-fibre-fano}, we have $\bP_{Y_z}(\cE|_{Y_z}) = W_z \cong \bF_c$ with $c \leq 1$ by base change.
	% As a result, $t = 0$ and we have $\cE|_{Y_z} \cong \cO_{Y_z} \oplus \cO_{Y_z}(a)$ for any $z \in Z$.
	Then we have $\cE|_{Y_z} \cong \cO_{Y_z} \oplus \cO_{Y_z}(a)$ for any $z \in Z$ and we get the exact sequence \eqref{eq-evaluation-sequence-P2} again, noting that the global sections of $\cE|_{Y_z}$ are constant.

	\textit{\textbf{Assume first that $a = -1$.}}
	Then $p_*\cE$ is a line bundle, say $\cO_Z(e_1, e_2)$ for some $e_i \in \bZ$.
	Hence $\cF_1 \coloneqq p^*p_*\cE \cong \cO_Y(e_1 F_1 + e_2 F_2)$ and $\cQ \cong \cO_Y(-S + (b_1 - e_1)F_1 + (b_2 - e_2)F_2)$.
	Then
	\begin{align*}
		\Ext^1(\cQ, p^*p_*\cE) & = H^1(Y, K_Y + (S - K_Y) + (2e_1 - b_1)F_1 + (2e_2 - b_2)F_2).
		% &= H^1(Y, S + (2e_1 - b_1)F_1 + (2e_2 - b_2)F_2) \\
	\end{align*}
	Applying the relative canonical bundle formula, we have
	\[
		S - K_Y \equiv S - (-2S + p^*(K_Z + \det\cF)) = 3S + (2 + k_1)F_1 + (2 + k_2)F_2 ~(\textup{resp.~} 3S + F_1 + 3F_2).
	\]
	Using the three extremal rays of $\NE(Y)$, we can verify that $S - K_Y$ (resp.~$S - K_Y - F_1$) is nef and big, noting that the bigness follows from the nefness and the positive self-intersection, and when $\cF \cong \cO_Z \oplus \cO_Z(1,-1)$, the two horizontal extremal curves lie in distinct sections of $p$.
	% since it has a positive self-intersection;
	% (3S + F_1 + 3F_2)^3 = 27 S^3 + 27 S^2 F_1 + 81 S^2 F_2 + 54 S F_1 F_2 = 27 (-2) + 27 (-1) + 81 (1) + 54 (1) = 54.
	By the Kawamata--Viehweg vanishing theorem (cf.~e.g.,~\cite{kollar1998birational}*{Theorem 2.64}), $\Ext^1(\cQ, p^*p_*\cE)$ vanishes if $t_i \coloneqq 2e_i - b_i \geq 0$ for $i = 1, 2$ (resp.~$t_1 \geq -1$ and $t_2 \geq 0$).
	Therefore, $t_i \geq 0$ $(i = 1, 2)$ (resp.~$t_1 \geq -1$ and $t_2 \geq 0$) implies that \eqref{eq-evaluation-sequence-P2} and hence $\cE$ split.
	So we may assume $t_i < 0$ for some $i$ (resp.~$t_1 < -1$ or $t_2 < 0$).
	Let $\xi_{\cF_1} \coloneqq 2c_1(p^*p_*\cE) - c_1(\cE) \sim S + t_1 F_1 + t_2 F_2$.
	Then our theorem for the case $Z \cong \bF_0$ and $a = -1$ follows from Claim~\ref{claim-nonempty-wall-P1xP1} and Lemma~\ref{lem-semistable-split}.

	\begin{claim}\label{claim-nonempty-wall-P1xP1}
		$W_{\cE}(\cF_1) \neq \emptyset$ when $t_i < 0$ for some $i$ (resp.~$t_1 < -1$ or $t_2 < 0$).
	\end{claim}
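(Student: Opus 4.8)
The plan is to carry out the argument of Claim~\ref{claim-nonempty-wall-P2}, but with the two-parameter slice of $\Amp(Y)$ in place of the one-parameter family used there, and to produce a zero of the resulting quadratic function by the intermediate value theorem. First I would record the ample cone of $Y=\bP_Z(\cF)$. Since the $S$-coefficient of any ample class is positive (it is the degree on a fibre of $p$) and $\rho(Y)=3$, every ample class is, up to scaling, of the form $A(u_1,u_2)\coloneqq S+u_1F_1+u_2F_2$. Restricting $A(u_1,u_2)$ to the section $S$ and to the disjoint complementary section $S'$ of $p$ --- writing $S|_S=\cO_S(-k_1,-k_2)$ uniformly, so that $(k_1,k_2)=(-1,1)$ in the twisted case and $\cO_Y(S)|_{S'}$ is trivial in every case --- one checks that $A(u_1,u_2)$ is ample exactly when $(u_1,u_2)$ lies in the convex open region
\[
	\mathcal A=\bigl\{\,(u_1,u_2)\ :\ u_1>\max(k_1,0),\ u_2>\max(k_2,0)\,\bigr\}.
\]
No further inequality is needed: the two bounding walls of $\mathcal A$ are the walls of $\NE(Y)$ coming from curves in $S$ and in $S'$, while the remaining extremal ray is a fibre of $p$, which pairs positively with every $A(u_1,u_2)$.

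Next I would compute $P(u_1,u_2)\coloneqq\xi_{\cF_1}\cdot A(u_1,u_2)^2$ for $\xi_{\cF_1}\sim S+t_1F_1+t_2F_2$. Using $F_i=p^*L_i$ (so that $F_i^2=0$ and $S\cdot F_1F_2=1$) together with the intersection numbers $S^3=2k_1k_2$, $S^2F_1=-k_2$, $S^2F_2=-k_1$ read off from $S|_S$, a short computation collapses $P$ to
\[
	P(u_1,u_2)=2(u_1+t_1-k_1)(u_2+t_2-k_2)-2t_1t_2+k_2t_1+k_1t_2.
\]
The coefficient of $u_1u_2$ is $2>0$, so $P\to+\infty$ along the diagonal inside $\mathcal A$. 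The hypothesis is exactly what guarantees that $\mathcal A$ also meets a half-plane on which one of the two linear factors of $P$ is negative: if $t_1<0$ (resp.~$t_1<-1$) the interval $\bigl(\max(k_1,0),\,k_1-t_1\bigr)$ is nonempty, so one fixes $u_1$ in it with $u_1+t_1-k_1<0$ and lets $u_2\to+\infty$, whence $P(u_1,u_2)=2(u_1+t_1-k_1)\,u_2+O(1)\to-\infty$; the case $t_2<0$ is symmetric. Hence $P$ takes a negative value somewhere in $\mathcal A$.

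Finally, since $\mathcal A$ is convex, hence connected, and $P$ is continuous and takes both a positive and a negative value on it, the intermediate value theorem yields $(u_1^0,u_2^0)\in\mathcal A$ with $P(u_1^0,u_2^0)=0$; then $A(u_1^0,u_2^0)^2\in W_{\cE}(\cF_1)$, so $W_{\cE}(\cF_1)\neq\emptyset$, and Claim~\ref{claim-nonempty-wall-P1xP1} follows. I expect the only genuinely delicate point to be pinning down the boundary of $\mathcal A$ correctly --- above all in the twisted case $\cF\cong\cO_Z\oplus\cO_Z(1,-1)$, where it is the complementary section $S'$, and not the restriction to $S$, that forces $u_1>0$ rather than $u_1>-1$; this is precisely why the threshold ``$t_1<-1$'' (and not ``$t_1<0$'') is the correct hypothesis there.
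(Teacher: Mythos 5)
Your proposal is correct and follows essentially the same route as the paper: the same parametrization $D(u_1,u_2)=S+u_1F_1+u_2F_2$ of the ample cone (with the identical thresholds $u_i>k_i$, resp.\ $u_1>0$, $u_2>1$), the identical factored expression $2(u_1+t_1-k_1)(u_2+t_2-k_2)-2t_1t_2+k_2t_1+k_1t_2$ for $\xi_{\cF_1}\cdot D(u_1,u_2)^2$, and a continuity argument to locate a zero. You merely make explicit the ample-cone boundaries, the intersection numbers, and the sign analysis (one factor negative versus growth along the diagonal) that the paper compresses into ``by the continuity''.
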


	\noindent
	\textbf{Proof of Claim~\ref{claim-nonempty-wall-P1xP1}.}
	Recall that $Y = \bP_Z(\cF)$ with $\det\cF = \cO(-k_1, -k_2)$ with $k_i = 0$ or $1$ (resp.~$\cO(1,-1)$).
	% $\xi_{\cF_1} = S + t_1 F_1 + t_2 F_2$ and we may assume $Y = \bP_Z(\cF)$ with $\det\cF = \cO(-k_1, -k_2)$ with $k_i = 0$ or $1$ (resp.~$\cO(1,-1)$).
	% $Y$ is a Fano $\bP^1$-bundle over $\bP^1 \times \bP^1$ and
	Then up to a multiple, any ample divisor $D$ on $Y$ can be written as $D = D(u_1, u_2) \coloneqq S + u_1 F_1 + u_2 F_2$ with $u_i > k_i$ (resp.~$u_1 > 0$ and $u_2 > 1$). %, noting that the two horizontal extremal curves lie in distinct sections of $p$)
	% after replacing $D$ by a multiple.
	%Note that $S|_S = \cO(-k_1,-k_2)$ (resp.~$\cO(1,-1)$), $F_1|_S = \cO(1,0)$ and $F_2|_S = \cO(0,1)$.
	Hence,
	\begin{align*}
		\xi_{\cF_1} \cdot D(u_1, u_2)^2
		                 & = (S + t_1 F_1 + t_2 F_2) \cdot (S + u_1 F_1 + u_2 F_2)^2              \\
		% &= (S + t_1 F_1 + t_2 F_2) \cdot (S^2 + 2u_1 S \cdot F_1 + 2u_2 S \cdot F_2 + 2 u_1 u_2 F_1 \cdot F_2) \\
		% &= S^3 + 2u_1 S^2 \cdot F_1 + 2u_2 S^2 \cdot F_2 + 2 u_1 u_2 S \cdot F_1 \cdot F_2 \\
		% & \quad + t_1 S^2 \cdot F_1 + 2 t_1 u_2 S \cdot F_1 \cdot F_2 + t_2 S^2 \cdot F_2 + 2 t_2 u_1 S \cdot F_1 \cdot F_2 \\
		% &= (S|_S)^2 + 2u_1 (S|_S) \cdot (F_1|_S) + 2u_2 (S|_S) \cdot (F_2|_S) + 2 u_1 u_2 (F_1|_S) \cdot (F_2|_S) \\
		% & \quad + t_1 (S|_S) \cdot (F_1|_S) + 2 t_1 u_2 (F_1|_S) \cdot (F_2|_S) \\
		% & \quad + t_2 (S|_S) \cdot (F_2|_S) + 2 t_2 u_1 (F_1|_S) \cdot (F_2|_S) \\
		% &= 2 k_1 k_2 - 2 k_2 u_1 - 2 k_1 u_2 + 2 u_1 u_2 - t_1 k_2 + 2 t_1 u_2 - t_2 k_1 + 2 t_2 u_1 \\
		                 & = 2(u_1 - (k_1-t_1)) (u_2 - (k_2-t_2)) - 2 t_1 t_2 + t_1 k_2 + t_2 k_1 \\
		(\textup{resp.}~ & = 2(u_1 + (t_1 + 1))(u_2 + (t_2 - 1)) - 2t_1 t_2 + t_1 - t_2).
	\end{align*}
	Since $u_i > k_i$ for each $i$ (resp.~$u_1 > 0$ and $u_2 > 1$) and $t_j < 0$ for some $j$ (resp.~$t_1 < -1$ or $t_2 < 0$), there exist $u_i' > k_i$ (resp.~$u_1' > 0$ and $u_2' > 1$) such that $\xi_{\cF_1} \cdot D(u_1', u_2')^2 = 0$ by the continuity.
	So our claim holds.

	\textit{\textbf{We still need to consider the case $a = 0$.}}
	Now $p_*\cE$ is locally free of rank $2$ and one gets the following commutative diagram such that $p^*p_*\cE \cong \cE$ and $W \cong Y \times_Z Y'$.
	\[
		\xymatrix@C=3pc{
		X \ar[r] \ar[rd]^{\pi} \ar[rdd]_{\tau}	&X_{r-1} \ar[r]^{q_{r-1}} \ar[d]_{\pi_r}		&T \ar[d]^{\phi} \\
		&W = \bP_Y(\cE) \ar[r]_q \ar[d]_{\tau_0}	&Y' \coloneqq \bP_Z(p_*\cE) \ar[d]^{p'} \\
		&Y \ar[r]_p								&Z
		}
	\]
	Clearly, $q \colon W \to Y'$ is also a (smooth) $\bP^1$-bundle.
	Let $r \coloneqq \rho(X) - \rho(W)$ and $\overline{D} \coloneqq \bigsqcup_{i=1}^{r} \overline{D_i} \subseteq W$ be the blown up centres of $\pi$, each component of which is of dimension $2$.

	$1^{\circ} \colon$ If $\dim q(\overline{D_i}) = 2$ for all $1 \leq i \leq r$, then each $\overline{D_i}$, as an $h^{-1}$-invariant divisor on $q^{-1}(q(\overline{D_i}))$, is a subsection of $q$;
	hence $q \circ \pi$ is a conic bundle and hence $Y'$ is a smooth Fano threefold (cf.~Lemma~\ref{lem-base-Fano}).

	$2^{\circ} \colon$ Otherwise, after rearranging the blow-ups, we may assume that $q(\overline{D_r}) \eqqcolon C$ is a curve in $Y'$, where $\overline{D_r}$ is the blow-up centre of $\pi_r$.
	\textbf{We claim that $\overline{D_r} = q^{-1}(C)$ in this case.}
	Note that $\tau_0(\overline{D_r})$ is a divisor on $Y$ and thus $p'(C) = p \circ \tau_0(\overline{D_r})$ cannot be a point.
	Let $F = p'^{-1}(p'(C))$, a Hirzebruch surface.
	Note also that $\overline{D_r} \subseteq q^{-1}(F)$ is a (prime) divisor, and $q|_{q^{-1}(F)} \colon q^{-1}(F) \to F$ is a $\bP^1$-bundle by the base change.
	Then $q^{-1}(C) = \left(q|_{q^{-1}(F)}\right)^{-1}(C) \supseteq \overline{D_r}$ is irreducible;
	hence they coincide and our claim holds.

	Let $T$ be the blow-up of $Y'$ along the curve $C$.
	% By Lemma~\ref{lem-blowup-basechange}, there exists a unique morphism $q_{r-1} \colon X_{r-1} \to T$ such that $X_{r-1} \cong T \times_{Y'} W$, hence $q_{r-1}$ is a $\bP^1$-bundle.
	By Lemma~\ref{lem-blowup-basechange}, $X_{r-1} \cong T \times_{Y'} W$ and we denote by $q_{r-1} \colon X_{r-1} \to T$ the natural projection.
	Then $q_{r-1}$ is a $\bP^1$-bundle.
	Note that $p' \circ \phi \colon T \to Z$ is a conic bundle and $\rho(X) - \rho(X_{r-1}) = r - 1$.
	By induction, $Y'$ is dominated by a smooth Fano threefold.

	No matter $1^{\circ}$ or $2^{\circ}$ occurs, the above commutative diagram is $f$-equivariant after iteration (cf.~Lemma~\ref{lem-equiv-MMP}).
	Hence, $p_*\cE$ splits by \cite{meng2020nonisomorphic}*{Theorem 6.4} and then $\cE = p^*p_*\cE$ splits.
	This completes the proof for the case $Z \cong \bF_0$.

	\par \vskip 0.5pc \noindent
	\textbf{Case C: $Z \cong \bF_1$.}
	Denote by $c$ and $\ell$ the negative section and a fibre of the Hirzebruch surface $Z$, respectively.
	We may write $Y = \bP_Z(\cF)$ where $\cF \cong \cO_Z \oplus \cO_Z(-k(C + \ell))$ with $k = 0$ or $1$ (cf.~\cite{szurek1990fano}).
	Let $C \coloneqq p^*c$, $L \coloneqq p^*\ell$ be two ``fibre'' classes, and $S$ the section class of $p$ such that $\cO_Y(S)|_S \cong \cO_Z(-k(C + \ell))$.
	Similar to \textbf{Case A}, we may assume our $S$ is $f$-invariant after iteration.
	Up to a twist, we may assume $c_1(\cE) = aS + b_1 C + b_2 L$ with $-1 \leq a, b_i \leq 0$.
	Then, $\cO_{Y_z}(c_1(\cE|_{Y_z})) \cong \cO_{Y_z}(a)$ for any $z \in Z$, and we get the exact sequence \eqref{eq-evaluation-sequence-P2} again, noting that the global sections of $\cE|_{Y_z}$ are constant.

	\textbf{\textit{Assume that $a = -1$.}}
	Then $p_*\cE$ is a line bundle, say $\cO_Z(e_1 c + e_2 \ell)$ for some $e_i \in \bZ$.
	So $\cF_1 \coloneqq p^*p_*\cE \cong \cO_Y(e_1 C + e_2 L)$ and $\cQ \cong \cO_Y(-S + (b_1 - e_1) C + (b_2 - e_2) L)$.
	% Then
	% \begin{align*}
	% 	\Ext^1(\cQ, p^*p_*\cE) = H^1(Y, K_Y + (S - K_Y) + (2e_1 - b_1) C + (2e_2 - b_2) L).
	% 	&= H^1(Y, S + (2e_1 - b_1) C + (2e_2 - b_2) L) \\
	% \end{align*}
	% Using the relative canonical bundle formula, we have
	% $K_Y = -2S + p^*(K_Z + \det\cF) = -2S - 2C - 3L + p^* \det\cF$.
	Similar to \textbf{Case B}, one can verify that
	\[
		S - K_Y \equiv S-(-2S + p^*(K_Z + \det\cF)) = 3S + (2 + k)C + (3 + k)L
	\]
	is nef and big for $k = 0, 1$ and then $\Ext^1(\cQ, p^*p_*\cE)$ vanishes if $t_i \coloneqq 2e_i - b_i \geq 0$ for $i = 1, 2$.
	% By the Kawamata--Viehweg vanishing theorem (cf.~e.g.,~\cite{kollar1998birational}*{Theorem 2.64}),
	So $t_i \geq 0$ $(i = 1, 2)$ implies that \eqref{eq-evaluation-sequence-P2} and hence $\cE$ split.
	Therefore, we assume $t_i< 0$ for some $i$.
	Let $\xi_{\cF_1} \coloneqq 2c_1(p^*p_*\cE) - c_1(\cE) \sim S + t_1 C + t_2 L$.
	Then our theorem for the case $Z \cong \bF_1$ and $a = -1$ follows from Claim~\ref{claim-nonempty-wall-F1} and Lemma~\ref{lem-semistable-split}.

	\begin{claim}\label{claim-nonempty-wall-F1}
		$W_{\cE}(\cF_1) \neq \emptyset$ when $t_i < 0$ for some $i$.
	\end{claim}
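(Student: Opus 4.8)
The plan is to follow the template of Claims~\ref{claim-nonempty-wall-P2} and \ref{claim-nonempty-wall-P1xP1}: produce an ample $\bR$-Cartier divisor $A$ on $Y$ with $\xi_{\cF_1}\cdot A^2 = 0$ by a continuity / intermediate-value argument on an explicit slice of $\Amp(Y)$.

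First I would record the intersection theory of $Y = \bP_Z(\cF)$. With $c,\ell$ the negative section and a fibre of $Z\cong\bF_1$ and $C = p^*c$, $L = p^*\ell$, the section $S$ satisfies $S|_S \equiv -k(c+\ell)$ on $S\cong Z$; hence $S^3 = k^2$, $S^2\cdot C = 0$, $S^2\cdot L = -k$, $S\cdot C^2 = -1$, $S\cdot C\cdot L = 1$, $S\cdot L^2 = 0$, and every product of three pullback classes vanishes. Next I would pin down $\Amp(Y)$. Since $S,C,L$ form a basis of $\N^1(Y)$ and $Y$ is one of the Picard-number-$3$ Fano threefolds with exactly three extremal rays, up to scaling every ample class is $D(u_1,u_2) \coloneqq S + u_1 C + u_2 L$, the three ampleness conditions being: positivity on a fibre of $\tau_0$ (automatic, equal to $1$) together with $D(u_1,u_2)|_S = (u_1-k)c + (u_2-k)\ell$ ample on $\bF_1$, i.e. $u_1 > k$ and $u_2 > u_1$. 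So the admissible parameters form the convex open region $R \coloneqq \{\,u_1 > k,\ u_2 > u_1\,\}$, the precise shape being forced by the nef cone of $\bF_1$ and the twist by $\det\cF$ (for $k = 0,1$).

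Substituting $\xi_{\cF_1} \sim S + t_1 C + t_2 L$ into these intersection numbers yields an explicit quadratic $g(u_1,u_2) \coloneqq \xi_{\cF_1}\cdot D(u_1,u_2)^2$ whose leading part is indefinite (coming from $c^2 = -1$); completing squares, $g = -(u_1 - u_2 + t_1 - t_2)^2 + (u_2 + t_2 - k)^2 + (t_2-t_1)^2 - t_2(t_2-k)$. The point is that $g$ changes sign on the connected set $R$. Along the ray $u_2 = 2u_1$, $u_1\to\infty$, which stays in $R$, one has $g\to+\infty$, so $g>0$ somewhere in $R$. For a point with $g<0$ I would use the hypothesis $t_i < 0$ for some $i$: if $t_2 < 0$, then $g\to t_2(k-t_2) < 0$ as $(u_1,u_2)\to(k-t_2,\,k-t_2)\in\overline{R}$ (note $k-t_2 > k$), so $g<0$ at nearby points of $R$; if instead $t_1 < 0$, then along the edge $u_1\to k$ one computes $g|_{u_1=k} = 2t_1(u_2 - k) + kt_2 \to -\infty$ as $u_2\to\infty$, so again $g<0$ somewhere in $R$. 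Since $R$ is convex, the segment joining a negative to a positive point lies in $R$, and continuity yields a zero $(u_1',u_2')\in R$ of $g$; then $A \coloneqq D(u_1',u_2')$ is ample with $\xi_{\cF_1}\cdot A^2 = 0$, so $A^2\in W_{\cE}(\cF_1)$, proving the claim.

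The only genuine work is bookkeeping: correctly identifying the ample cone of the $\bP^1$-bundle $Y$ over $\bF_1$ (the nef cone of $\bF_1$ twisted by $\det\cF$, separately for $k = 0$ and $k = 1$) and splitting the sign analysis into the two subcases $t_1 < 0$ and $t_2 < 0$. Once the parametrization of $\Amp(Y)$ is in place, the argument reduces to the same one-variable continuity argument already used for $\bP^2$ and $\bF_0$ in Claims~\ref{claim-nonempty-wall-P2} and \ref{claim-nonempty-wall-P1xP1}.
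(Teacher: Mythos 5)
Your proposal is correct and follows essentially the same route as the paper: parametrize the ample cone as $D(u_1,u_2)=S+u_1C+u_2L$ with $u_2>u_1>k$, compute $\xi_{\cF_1}\cdot D(u_1,u_2)^2$ from the intersection numbers of $S,C,L$, and locate a sign change (your completed-square expression agrees with the paper's factored form, and your negative points — near $u_1=u_2=k-t_2$ when $t_2<0$, and near $u_1=k$ with $u_2\gg 1$ when $t_1<0$ — are the same ones the paper uses). The only additions are cosmetic: you exhibit an explicit positive point and invoke convexity before applying the intermediate value theorem, where the paper simply appeals to continuity.
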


	\noindent
	\textbf{Proof of Claim~\ref{claim-nonempty-wall-F1}.}
	Note that any ample divisor $D$ on $Y$ can be written as $D = D(u_1, u_2) \coloneqq S + u_1 C + u_2 L$ with $u_2 > u_1 > k$ after replacing $D$ by a multiple.
	% Note also that $\cO_Y(S)|_S \cong \cO_S(-k(c + \ell))$, $C|_S = \cO_S(c)$ and $L|_S = \cO_S(\ell)$.
	So
	\begin{align*}
		\xi_{\cF_1} \cdot D(u_1, u_2)^2
		% &= (S + t_1 C + t_2 L) \cdot (S + u_1 C + u_2 L)^2 \\
		% &= (S + t_1 C + t_2 L) \cdot (S^2 + u_1^2 C^2 + 2u_1 S \cdot C + 2u_2 S \cdot L + 2 u_1 u_2 C \cdot L) \\
		% &= S^3 + u_1^2 S \cdot C^2 + 2u_1 S^2 \cdot C + 2u_2 S^2 \cdot L + 2u_1 u_2 S \cdot C \cdot L \\
		% &\quad + t_1 S^2 \cdot C + 2 t_1 u_1 S \cdot C^2 + 2t_1 u_2 S \cdot C \cdot L + t_2 S^2 \cdot L + 2 t_2 u_1 S \cdot C \cdot L \\
		% &= S^3 + (2u_1 + t_1) S^2 \cdot C + (2u_2 + t_2) S^2 \cdot L \\
		% &\quad + (u_1^2 + 2t_1 u_1) S \cdot C^2 + 2(u_1 u_2 + t_1 u_2 + t_2 u_1) S \cdot C \cdot L\\
		% &= k^2 - (2u_2 + t_2)k - (u_1^2 + 2t_1 u_1) + 2(u_1 u_2 + t_1 u_2 + t_2 u_1) \\
		% &= -u_1^2 + 2u_1 u_2 + 2(t_2 - t_1)u_1 - 2(k - t_1)u_2 - t_2 k + k^2 \\
		= u_1^2 + 2(u_1 - (k - t_1)) ((u_2 - u_1) - (k - t_2)) - (k - 2t_1)(k - t_2).
	\end{align*}
	If $t_1 < 0$, taking $k < u_1 < k - t_1$ and $u_2 - u_1 \gg 1$, we have $\xi_{\cF_1}\cdot D(u_1,u_2)<0$.
	If $t_2 < 0$, taking $u_1 = k - t_2$ and $0 < u_2 - u_1 \ll k - t_2$, we have $\xi_{\cF_1} \cdot D(u_1, u_2)^2 \simeq t_2 (k - t_2) < 0$.
	In both cases, there exist $u_2' > u_1' > k$ such that $\xi_{\cF_1} \cdot D(u_1', u_2')^2 = 0$;
	hence our claim holds. % by the continuity.

	\textbf{\textit{The case $a = 0$}} has the same proof as in \textbf{Case B}.

	We have completed the proof of Theorem~\ref{thm-splitting-P1-bundle-over-minimal}.
\end{proof}

\section{Conic bundles over (the blow-up of) \texorpdfstring{$\bP^3$}{P3}}
\label{section-splitting-ness-to-P3}
In this section, we shall study conic bundles over $\bP^3$ or the blow-up of $\bP^3$ along a line.
Our main results are Theorems~\ref{thm-splitting-over-P3} and \ref{thm-splitting-over-P3-blowup} below.
% First, Theorem~\ref{thm-splitting-over-P3} is a direct consequence of \cite{amerik2003endomorphisms}*{Proposition 3} and Lemma~\ref{lem-intamplified}.
% show in this section that any Fano conic bundle over $Y$ is a splitting $\bP^1$-bundle when $Y$ is either $\bP^3$ or the blow-up of $\bP^3$ along a line.

\begin{thm}
	\label{thm-splitting-over-P3}
	Let $X$ be a smooth Fano fourfold admitting an int-amplified endomorphism $f$.
	Suppose that $X$ admits a conic bundle $\tau \colon X \to Y \cong \bP^3$, which factors as $X \xrightarrow{\pi} W \xrightarrow{\tau_0} Y$ as in Notation \ref{notation-splitting-factor}.
	Then $\tau_0$ is a splitting $\bP^1$-bundle.
\end{thm}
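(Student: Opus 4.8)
The plan is to combine the structural result of Theorem~\ref{thm-structure-mmp} with the dynamical splitting criterion for semistable bundles (the result of \cite{amerik2003endomorphisms} already exploited in Lemma~\ref{lem-semistable-split}). By Theorem~\ref{thm-structure-mmp}, $\tau_0\colon W=\bP_Y(\cE)\to Y\cong\bP^3$ is already a smooth $\bP^1$-bundle with $\cE$ locally free of rank $2$; after replacing $f$ by an iterate it descends to an int-amplified endomorphism $h=f|_W$ on $W$, and then to an int-amplified $g=f|_Y$ on $\bP^3$ with $\tau_0\circ h=g\circ\tau_0$. Since $\N^1(\bP^3)=\bR H$ with $H=\cO_{\bP^3}(1)$, we have $g^*H\sim dH$ for some integer $d\ge 2$ (cf.~Lemma~\ref{lem-intamplified}). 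Twisting $\cE$ by a line bundle changes neither $W$ nor the splitting property, so we may normalise $c_1(\cE)=\gamma H$ with $\gamma\in\{0,1\}$. It then remains to prove that $\cE$ splits.

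The key step is to show that $\cE$ is semistable with respect to the (unique up to scaling) polarisation $H$: once this is known, the dynamical hypothesis on $W=\bP_Y(\cE)$ together with \cite{amerik2003endomorphisms}*{Proposition 2.4} (used exactly as in the semistable case of Lemma~\ref{lem-semistable-split}) forces $\cE$ to be a direct sum of two line bundles. Suppose for contradiction that $\cE$ is not $H$-semistable, and let $\cF_1\cong\cO_{\bP^3}(m)$ be the maximal destabilising (saturated) line subsheaf, so $2m>\gamma$ and we get $0\to\cF_1\to\cE\to\cQ\to 0$ with $\cQ$ torsion-free of rank one, $\cQ\cong\cI_Z(\gamma-m)$ for some $Z$ of codimension $\ge 2$. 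First I would argue that $h$ has fibre-wise degree $1$ along $\tau_0$, so that $W\cong\bP_Y(g^*\cE)$ over $Y$ and hence $g^*\cE\cong\cE\otimes\cO_{\bP^3}(a)$ for some $a\in\bZ$. Then, since $g$ is finite and $g^*H\sim dH$ is proportional to $H$, the Harder--Narasimhan filtration is preserved under $g^*$, so the maximal destabilising subsheaf of $g^*\cE\cong\cE\otimes\cO(a)$ is simultaneously $g^*\cF_1\cong\cO(dm)$ and $\cF_1\otimes\cO(a)\cong\cO(m+a)$; thus $a=(d-1)m$, and comparing determinants gives $d\gamma=\gamma+2a=\gamma+2(d-1)m$, i.e.~$\gamma=2m$, contradicting $2m>\gamma$. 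Hence $\cE$ is $H$-semistable, the quotient $\cQ$ is then reflexive and $Z=\varnothing$, and the theorem follows. (This is the $\bP^3$-analogue of Claims~\ref{claim-nonempty-wall-P2}~$\sim$~\ref{claim-nonempty-wall-F1}: on $\bP^3$, where $\Amp(Y)$ is a single ray, a wall $W_{\cE}(\cF)$ is nonempty precisely when $\xi_{\cF}=0$, so the wall computation degenerates to the Chern-class identity above.)

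The main obstacle I expect is controlling $h$ along the fibres of $\tau_0$: a priori $h$ restricts to a degree-$e$ self-map of each $\bP^1$-fibre (as already for $\bP^1\times\bP^1\to\bP^1$), and when $e>1$ one only gets an inclusion $g^*\cE^\vee\hookrightarrow\operatorname{Sym}^e(\cE^\vee)\otimes M$ rather than an isomorphism $g^*\cE\cong\cE\otimes(\text{line bundle})$, so the clean Chern-class comparison must be replaced by a more careful analysis of how the Harder--Narasimhan filtration of $\cE$ sits inside that of $\operatorname{Sym}^e(\cE^\vee)$. An alternative route around this, which I would pursue in parallel, is to use that the destabilising section $\overline{\bP(\cQ)}\subseteq W$ is canonically attached to $\cE$, hence $h^{-1}$-invariant after iteration, and to derive a contradiction from the structure of an $h^{-1}$-invariant section over $\bP^3$ (via Theorem~\ref{thm-TID-Fano-3fold} and \cite{horing2017totally}), after which only the semistable case survives and \cite{amerik2003endomorphisms}*{Proposition 2.4} finishes the proof.
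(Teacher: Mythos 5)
Your overall strategy (reduce to the algebraic $\bP^1$-bundle $W=\bP_Y(\cE)$ via Theorem~\ref{thm-structure-mmp-algebraic}, descend $f$ to an int-amplified endomorphism of $W$, and then show $\cE$ splits) is the right one, but the paper disposes of the splitting step in one line: since $Y\cong\bP^3$, it simply cites \cite{amerik2003endomorphisms}*{Proposition 3}, which states that a rank-two bundle on projective space whose projectivization admits a non-isomorphic surjective endomorphism is a direct sum of line bundles --- with no semistability hypothesis. Your proposal instead tries to \emph{reprove} this by first establishing $H$-semistability and then invoking the semistable case (\cite{amerik2003endomorphisms}*{Proposition 2.4}, as in Lemma~\ref{lem-semistable-split}), and it is exactly this reduction that has a genuine gap.

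Concretely, your Chern-class argument hinges on $g^*\cE\cong\cE\otimes\cO_{\bP^3}(a)$, which holds only if $h=f|_W$ has fibre degree $e=1$ along $\tau_0$. But $e$ is an eigenvalue of $h^*|_{\N^1(W)}$ (in the basis $\{\tau_0^*H,\xi\}$ the matrix of $h^*$ is triangular with diagonal entries $d$ and $e$), so int-amplifiedness forces $e\geq 2$ by Lemma~\ref{lem-intamplified}; the case you treat in detail therefore never occurs, and iterating $f$ only makes $e$ larger. In the actual case $e\geq 2$ one only has an inclusion of $\cE^\vee\otimes(\text{line bundle})$ into $\Sym^e(\cE^\vee)\otimes M$, and the comparison of Harder--Narasimhan filtrations you would need there is precisely the technical content of Amerik's proof, which you defer rather than supply. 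Your fallback via the $h^{-1}$-invariance of the destabilizing (sub)section is also only a sketch: when the quotient $\cQ$ is not locally free the invariance is not immediate from \cite{meng2020nonisomorphic}*{Lemma 2.3}, and the tools you cite to derive a contradiction (Theorem~\ref{thm-TID-Fano-3fold} and \cite{horing2017totally}) concern totally periodic divisors on the \emph{base} $\bP^3$ (or on Fano threefolds), not sections of the fourfold $W\to\bP^3$, where $W$ need not even be Fano when $\pi$ is nontrivial. The efficient fix is simply to quote \cite{amerik2003endomorphisms}*{Proposition 3} for the bundle $\cE$ on $\bP^3$, as the paper does.
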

\begin{proof}
	By Theorem \ref{thm-structure-mmp-algebraic}, $W=\bP_Y(\cE)$ for some locally free rank 2 sheaf $\cE$ on $Y$.
	So our theorem follows from Lemmas~\ref{lem-equiv-MMP}, \ref{lem-intamplified} and \cite{amerik2003endomorphisms}*{Proposition 3}.
\end{proof}

\begin{thm}\label{thm-splitting-over-P3-blowup}
	Let $X$ be a smooth Fano fourfold admitting an int-amplified endomorphism $f$.
	Suppose that $X$ admits a conic bundle $\tau \colon X \to Y$ with $\varphi \colon Y \to Y' \cong \bP^3$ being the blow-up along a line.
	Suppose further that $\tau$ factors as $X \xrightarrow{\pi} W \xrightarrow{\tau_0} Y$ as in Notation \ref{notation-splitting-factor}.
	Then $\tau_0$ is a splitting $\bP^1$-bundle.
\end{thm}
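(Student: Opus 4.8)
The plan is to use the equivariant reduction of Lemma~\ref{lem-key-reduction} to descend the situation to $Y'\cong\bP^3$, and then apply the argument of Theorem~\ref{thm-splitting-over-P3}. Since $\varphi\colon Y\to Y'\cong\bP^3$ realizes $Y$ as the blow-up of the Fano threefold $\bP^3$ along a smooth curve, namely a line $L$ with exceptional divisor $E_\varphi$, the threefold $Y$ is imprimitive, and $W=\bP_Y(\cE)\to Y$ is an algebraic $\bP^1$-bundle; hence Lemma~\ref{lem-key-reduction} yields an $f$-equivariant (after iteration) commutative diagram with $\chi\colon X\to X'$ a blow-up along a smooth projective surface, $\psi\colon W\to W'$, and $\tau_0'\colon W'\to Y'\cong\bP^3$. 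The proof then divides into two cases depending on whether $E_\varphi\subseteq\Supp\Delta_\tau$, which is precisely the dichotomy of Remark~\ref{rem-reduction-lem}.

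In the case $E_\varphi\not\subseteq\Supp\Delta_\tau$, Lemma~\ref{lem-key-reduction-W'} gives $W'=\bP_{Y'}(\cE')$, an algebraic $\bP^1$-bundle over $Y'\cong\bP^3$, with $\cE=\varphi^*\cE'$; moreover, by the equivariance of the diagram, $f$ descends after iteration to an int-amplified endomorphism of $W'$ (cf.~Lemmas~\ref{lem-equiv-MMP} and \ref{lem-intamplified}). Then, arguing as in the proof of Theorem~\ref{thm-splitting-over-P3} via \cite{amerik2003endomorphisms}*{Proposition 3}, the bundle $\cE'$ splits, and hence so does $\cE=\varphi^*\cE'$; therefore $\tau_0\colon W=\bP_Y(\cE)\to Y$ is a splitting $\bP^1$-bundle.

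The main obstacle is the case $E_\varphi\subseteq\Supp\Delta_\tau$, where Lemma~\ref{lem-key-reduction} no longer produces the structural conclusions (cf.~Remark~\ref{rem-reduction-lem}: $\psi$ need not be $K_W$-negative, and $W'$ need not be $\bQ$-factorial); the goal here is to exclude this case outright. First I would observe that the image in $\bP^3$ of any prime divisor $D\neq E_\varphi$ on $Y$ is again a prime divisor, hence meets the line $L=\varphi(E_\varphi)$, so that $D\cap E_\varphi\neq\emptyset$; since the components of $\Delta_\tau$ are pairwise disjoint (Theorem~\ref{thm-structure-mmp}), this forces $\Delta_\tau=E_\varphi\cong\bF_0$. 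Next, $Y=\Bl_L\bP^3$ carries a second extremal (Fano) contraction $\sigma\colon Y\to\bP^1$ presenting $Y$ as a $\bP^2$-bundle; after iteration $\sigma$ is $g$-equivariant and $g|_{\bP^1}$ is polarized, so one may choose a $(g|_{\bP^1})$-periodic point $b$ (cf.~\cite{fakhruddin2003questions}*{Theorem 5.1}) and, iterating once more, obtain a plane $P\coloneqq\sigma^{-1}(b)\cong\bP^2$ meeting $E_\varphi$ along a line such that $f$ restricts to an int-amplified endomorphism of $X_P\coloneqq\tau^{-1}(P)$. Because $N_{P/Y}\cong\cO_P$, adjunction gives $-K_{X_P}=(-K_X)|_{X_P}$, which is ample, so $X_P$ is a Fano threefold; moreover $\tau_P\colon X_P\to P\cong\bP^2$ is a conic bundle by Lemma~\ref{lem-embedded-stable-under-BC} (as $\Delta_\tau|_P$ is a single reduced line), with $\Delta_{\tau_P}=E_\varphi\cap P\neq\emptyset$ and $\rho(X_P/P)=1$ (cf.~\cite{mori1983fano}*{Corollary 6.4}). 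This contradicts \cite{meng2020nonisomorphic}*{Theorem 4.1}; hence $E_\varphi\subseteq\Supp\Delta_\tau$ cannot happen, which finishes the proof.
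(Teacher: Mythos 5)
Your handling of the case $E_\varphi\not\subseteq\Supp\Delta_\tau$ is exactly what the paper does (Lemma~\ref{lem-key-reduction} plus the argument of Theorem~\ref{thm-splitting-over-P3}), and your observation that otherwise $\Delta_\tau=E_\varphi$ and $\pi$ is a single blow-up also agrees with the paper. The fatal problem is your attempt to \emph{exclude} the case $E_\varphi\subseteq\Supp\Delta_\tau$: the step ``$\rho(X_P/P)=1$'' is false, and without it the appeal to \cite{meng2020nonisomorphic}*{Theorem 4.1} collapses. By Theorem~\ref{thm-structure-mmp}\,(3), $\tau^*E_\varphi=E_1+\widetilde{E_1}$ has two irreducible components, and both meet $X_P=\tau^{-1}(P)$ nontrivially (each dominates $E_\varphi$ over $Y$), so $\tau_P^{-1}(\ell)$ with $\ell=E_\varphi\cap P$ is reducible and $\rho(X_P/P)\geq 2$. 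Equivalently: the discriminant of $\tau_P$ is a single \emph{smooth} line $\ell\cong\bP^1$, over which the associated double cover is \'etale and hence trivial by simple connectedness, which forces the conic bundle to be non-elementary over $\ell$ (this is precisely why Notation~\ref{notation-elementary} requires the double cover to be nontrivial over each component for an elementary conic bundle). Mori--Mukai's Corollary 6.4 is invoked in this paper only when the discriminant is a \emph{loop} of rational curves (which carries a nontrivial connected double cover); it cannot yield $\rho(X_P/P)=1$ here. Since \cite{meng2020nonisomorphic}*{Theorem 4.1} concerns \emph{elementary} Fano conic bundles, no contradiction is obtained, and indeed the configuration $\Delta_\tau=E_\varphi$ is not vacuous --- non-elementary Fano conic bundles over $\bP^2$ whose discriminant is a single line do exist and admit int-amplified endomorphisms, so this case cannot be ruled out.

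This is exactly why the paper's proof does not stop there: it keeps the case $\Delta_\tau=E_\varphi$ and runs a substantially longer analysis --- restricting to the fibres $Y_z\cong\bP^2$ of $p\colon Y\to\bP^1$ to show each $W_z$ is Fano (via a periodic fibre, \cite{amerik2003endomorphisms} and Lemma~\ref{lem-bundle-over-P2-Fano}), using the contraction $\chi$ of Lemma~\ref{lem-key-reduction} to pin down $c_1(\cE)=aS+bF$ and to show $\tau_0^{-1}(\ell_\varphi)\cong\bF_1$, and then splitting $\cE$ through the evaluation sequence, Kawamata--Viehweg vanishing, and the wall argument of Claim~\ref{claim-nonempty-wall-P3} with Lemma~\ref{lem-semistable-split}. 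Your proposal is missing all of this; the hard case remains open in your write-up.
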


%In the rest of this section, we focus on the proof of .
%We begin with the following lemma.

\begin{lem}\label{lem-bundle-over-P2-Fano}
	Let $q \colon Y \to Z \cong \bP^2$ be a conic bundle from a smooth Fano threefold $Y$.
	Suppose that $q$ factors as $Y \xrightarrow{\varphi} Y' \xrightarrow{q_0} Z$, where $\varphi$ is the blow-up of a smooth threefold $Y'$ along a smooth curve $C$ and $q_0 \colon Y' \to Z$ is a splitting $\bP^1$-bundle.
	Then $Y'$ is Fano.
\end{lem}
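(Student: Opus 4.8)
The plan is to reduce the statement to a one-line numerical check on the negative section of the $\bP^{1}$-bundle $Y'$, pulled back to $Y$ along $\varphi$.

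First I would normalise the bundle. Since $q_{0}\colon Y'\to Z\cong\bP^{2}$ is a splitting $\bP^{1}$-bundle, twisting $\cF$ by a line bundle we may write $Y'=\bP_{\bP^{2}}(\cO\oplus\cO(-k))$ for a unique integer $k\geq 0$ (this does not change $q_{0}$). With $\xi=\cO_{Y'}(1)$ one has $-K_{Y'}\equiv 2\xi+q_{0}^{*}\cO(k+3)$, and $\NE(Y')$ is generated by a fibre $f$ of $q_{0}$ (with $-K_{Y'}\cdot f=2$) together with a line $\ell$ in the section $\sigma\subseteq Y'$ arising from the projection $\cO\oplus\cO(-k)\twoheadrightarrow\cO(-k)$. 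Since $q_{0}|_{\sigma}\colon\sigma\cong\bP^{2}$ and $\xi|_{\sigma}\cong\cO_{\bP^{2}}(-k)$, one gets $-K_{Y'}\cdot\ell=-2k+(k+3)=3-k$; hence $-K_{Y'}$ is positive on both generators of $\NE(Y')$, i.e.\ $Y'$ is Fano, if and only if $k\leq 2$. So it suffices to rule out $k\geq 3$.

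Assume $k\geq 3$, so that $-K_{Y'}\cdot\ell\leq 0$ for every line $\ell\subseteq\sigma$. As $C$ is a curve it cannot contain a general line of $\sigma\cong\bP^{2}$, so pick such an $\ell$ with $\ell\not\subseteq C$. Because $\varphi$ is the blow-up of the smooth threefold $Y'$ along the smooth codimension-$2$ subvariety $C$, we have $-K_{Y}=\varphi^{*}(-K_{Y'})-E_{\varphi}$ with $E_{\varphi}=\Exc(\varphi)$ effective. Let $\widetilde{\ell}\subseteq Y$ be the strict transform of $\ell$: it is an irreducible curve with $\varphi_{*}\widetilde{\ell}=\ell$, and $\widetilde{\ell}\not\subseteq E_{\varphi}$ since $\varphi(\widetilde{\ell})=\ell\not\subseteq C=\varphi(E_{\varphi})$. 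Then the projection formula together with effectivity of $E_{\varphi}$ gives
\[
	-K_{Y}\cdot\widetilde{\ell}=\varphi^{*}(-K_{Y'})\cdot\widetilde{\ell}-E_{\varphi}\cdot\widetilde{\ell}=(-K_{Y'}\cdot\ell)-E_{\varphi}\cdot\widetilde{\ell}=(3-k)-E_{\varphi}\cdot\widetilde{\ell}\leq 0,
\]
contradicting the ampleness of $-K_{Y}$. Hence $k\leq 2$, so $Y'$ is Fano.

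The argument is short, and the only point requiring care is pinning down the exact Fano threshold for $\bP_{\bP^{2}}(\cO\oplus\cO(-k))$ (the self-intersection of the negative section shows it is $k\leq 2$, not $k\leq 1$); this is precisely what the displayed inequality recovers. I would also remark that this proof uses only that $Y$ is Fano, that $Y'$ is a splitting $\bP^{1}$-bundle over $\bP^{2}$, and that $\varphi$ is a blow-up along a curve — the conic bundle hypothesis on $q_{0}\circ\varphi$ is not needed here, and is present only because of how the lemma is invoked in the proof of Theorem~\ref{thm-splitting-over-P3-blowup}.
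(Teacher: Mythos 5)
Your proof is correct, and it takes a genuinely more self-contained route than the paper's. The paper argues by contradiction via \cite{mori1983fano}*{Lemma 4.4 and Proposition 4.5}: if $Y'$ were not Fano then $K_{Y'}\cdot C=0$, from which it deduces $\xi\cdot C<0$, hence that $C$ lies in the negative section, and then contradicts ampleness of $-K_Y$ by pulling back a curve numerically proportional to $C$ inside that section. You bypass the Mori--Mukai input and any information about the position of $C$ entirely: you pin down the Fano threshold $k\leq 2$ for $\bP_{\bP^2}(\cO\oplus\cO(-k))$ by an explicit computation on the two generators of $\NE(Y')$, and then kill $k\geq 3$ by testing $-K_Y$ on the strict transform of a \emph{general} line in the negative section, which is legitimate to choose off $C$ and off $E_\varphi$. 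The final numerical step ($-K_Y\cdot\widetilde{\ell}=(3-k)-E_\varphi\cdot\widetilde{\ell}\leq 0$) is the same mechanism as the paper's last display, but your version needs only the projection formula and effectivity of $E_\varphi$. What the paper's route buys is brevity given that Mori--Mukai is already a standing reference throughout; what yours buys is independence from that citation and, as you note, from the conic bundle hypothesis altogether. One small point of care: your reduction does rely on the full description of $\NE(Y')$ by the fibre class and a line in the negative section (to get the ``if'' direction, $k\leq 2\Rightarrow Y'$ Fano); this is standard, as $\xi+kq_0^*H$ is nef, but it deserves the one-line justification rather than being taken for granted.
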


\begin{proof}
	Suppose the contrary that $Y'$ is not Fano.
	Then $K_{Y'} \cdot C = 0$ (cf.~\cite{mori1983fano}*{Lemma 4.4 and Proposition 4.5}).
	After twisting, we may assume $Y' = \bP_Z(\cF)$ with $\cF \cong \cO_Z \oplus \cO_Z(a)$ and $a \leq 0$.
	Then by the relative canonical bundle formula,
	\[
		K_{Y'} = -2\xi + q_0^*(K_Z + \det\cF) \sim -2\xi + (a - 3) q_0^*H,
	\]
	where $H \subseteq Z$ is a line and $\xi \cong \cO_{Y'}(1)$ is the tautological divisor which is a section of $q_0$ (with respect to the surjection $\cF \to \cO_Z(a)$;
	thus $\cO_{Y'}(\xi)|_{\xi} \cong \cO_{\xi}(a)$).
	Then we have
	\[
		0 = K_{Y'} \cdot C = -2(\xi \cdot C) + (a - 3)H \cdot (q_0)_* C < -2(\xi \cdot C).
	\]
	This implies $\xi \cdot C < 0$ and hence $C \subseteq \xi$.
	Take a curve $\ell \subseteq \varphi^{-1}(\xi) \subseteq Y$ such that $\varphi_*\ell \equiv tC$ on $\xi \cong \bP^2$ for some $t \in \bZ_{> 0}$.
	Then $E_{\varphi} \cdot \ell \geq 0$ and
	\[
		K_Y \cdot \ell = \varphi^* K_{Y'} \cdot \ell + E_{\varphi} \cdot \ell \geq K_{Y'} \cdot \varphi_*\ell = (K_{Y'}|_{\xi} \cdot tC)_{\xi} = t K_{Y'} \cdot C = 0,
	\]
	This is absurd and our lemma is proved.
\end{proof}

In the rest of this section, we will focus on the proof of Theorem~\ref{thm-splitting-over-P3-blowup}.
During the proof,
we stick to Notation~\ref{notation-splitting} except that our $p$ here is a $\bP^2$-bundle.

\begin{proof}[Proof of Theorem~\ref{thm-splitting-over-P3-blowup}]
	By Lemma~\ref{lem-key-reduction} and Theorem~\ref{thm-splitting-over-P3}, we may assume $E_{\varphi} \subseteq \Supp \Delta_{\tau}$.
	Further, we have the following $f$-equivariant commutative diagram after iteration.
	\[
		\xymatrix@C=2.5pc{
		X \ar[r]^-{\pi} \ar[rd]_{\tau}	&W = \bP_Y(\cE) \ar[d]^{\tau_0} \ar@[blue][r]^{\color{blue} \psi}	&{\color{blue} W' = \bP_{Y'}(\cE')} \ar@[blue][d]_{\color{blue} \tau_0'} \\
		&Y \ar[d]_{p} \ar[r]^-{\varphi}										&{Y' \cong \bP^3} \\
		&Z \cong \bP^1
		}
	\]

	Note that there is a (smooth) $\bP^2$-bundle $p \colon Y \to Z \cong \bP^1$ and $\Delta_{\tau} \subseteq Y$ is a disjoint union of $r$ components with $r = \rho(X) - \rho(W)$.
	Since every $(f|_{Y'})^{-1}$-invariant prime divisor on $Y'$ is an (ample) hyperplane (cf.~\cite{horing2017totally}*{Corollary 1.2}), we have $r = 1$ (i.e., $\Delta_{\tau} = E_{\varphi}$), and $\pi$ is a single blow-up along a surface $\overline{S} \subseteq W$ (cf.~\cite{cascini2020polarized}*{Lemma 7.5}).

	For each $z\in Z$, the fibre $X_z$ is a (smooth) Fano threefold and $\pi|_{X_z}$ is the blow-up of $W_z=\bP_{Y_z}(\cE|_{Y_z})$
	along a (smooth) rational curve $C_z \coloneqq W_z|_{\overline{S}}$.
	Take an $f|_Z$-periodic point $z_0 \in Z$ (cf.~\cite{fakhruddin2003questions}*{Theorem 5.1}).
	After iteration, $f$ restricts to an int-amplified endomorphism on $W_{z_0} \cong \bP_{Y_{z_0}}(\cE|_{Y_{z_0}})$.
	Hence, it follows from \cite{amerik2003endomorphisms}*{Proposition 3} that $\cE|_{Y_{z_0}}$ splits, noting that $Y_{z_0} \cong \bP^2$.
	By Lemma~\ref{lem-bundle-over-P2-Fano}, $W_{z_0}$ is Fano.
	Note that $(p \circ \tau_0)|_{\overline{S}}$ is flat onto a smooth curve $Z$.
	Then the $C_z$'s are numerically equivalent on $\overline{S}$ and hence on $W$.
	Now for every $z \in Z$, applying the adjunction, we have
	\[
		(K_{W_z} \cdot C_z)_{W_z} = (K_W|_{W_z} \cdot C_z)_{W_z} = K_W \cdot C_z = K_W \cdot C_{z_0} = (K_{W_{z_0}} \cdot C_{z_0})_{W_{z_0}} < 0.
	\]
	Therefore, every $W_z$ is Fano (cf.~\cite{mori1983fano}*{Lemma 4.4 and Proposition 4.5}).
	On the other hand, Fano $\bP^1$-bundles over $\bP^2$ cannot deform to each other (cf.~e.g.,~\cite{mori1983fano}).
	As a result, for all $z \in Z$, $W_z \cong \bP_{Y_z}(\cE|_{Y_{z}}) \cong \bP_{Y_z}(\cO \oplus \cO(-k))$ with $k = 0, 1$ or $2$ (cf.~\cite{szurek1990fano}).

	Let $F \cong \bP^2$ and $S \cong \bF_0$ be the fibre class of $p$ and the exceptional divisor of $\varphi$, respectively.
	Up to a twist, we can write $c_1(\cE) = aS + bF$ with $-1 \leq a, b \leq 0$.
	Then $\cO_{Y_z}(c_1(\cE|_{Y_z})) \cong \cO_{Y_z}(a)$.
	By assumption, $S = E_{\varphi} = \Delta_{\tau}$; hence $\tau^*S = E + \widetilde{E}$ with $E = E_{\pi}$ being $\pi$-exceptional.
	Now we refer to Lemma~\ref{lem-key-reduction} and use the notations therein.
	By the projection formula, $(E + \widetilde{E}) \cdot \ell_{\chi} <0$, where $\ell_{\chi} \cong \bP^1$ is a fibre of $\chi$ lying in $R_{\chi}$.
	Since $\chi$ is a divisorial contraction, the locus of $R_{\chi}$ either equals $E$ or equals $\widetilde{E}$.
	In both cases, $E \cap \widetilde{E}$ is contracted to a curve (cf.~Claim~\ref{claim-section-is-contracted}).
	So we may assume $\ell_{\chi}\subseteq E \cap \widetilde{E}$.

	If the locus of $R_{\chi}$ equals $\widetilde{E}$, then $\widetilde{E} \cdot \ell_{\chi} = -1$ since $\chi$ is a blow-up.
	%* smooth blow-down
	Since $\tau|_{\widetilde{E} \cap E}$ is an isomorphism, for a fibre $\ell_{\varphi}$ of $\varphi$, we have
	\[
		(E + \widetilde{E}) \cdot \ell_{\chi} = \tau^* S \cdot\ell_\chi= S \cdot \ell_{\varphi} = -1.
	\]
	So $E \cdot \ell_{\chi} = 0$ and thus $K_W\cdot\ell_{\psi}=K_X\cdot\ell_\chi<0$.
	Then, $\psi$ is a $K_W$-negative contraction.
	By Lemma~\ref{lem-key-reduction-W'} (or \cite{casagrande2008quasi}*{Theorem 3.14} for $\tau_0' \circ \psi = \varphi \circ \tau_0$), we have $\cE = \varphi^* \cE'$.
	Since $f|_{W'}$ is int-amplified, $\cE'$ and hence $\cE$ split (cf.~\cite{amerik2003endomorphisms}*{Proposition 3}).

	Thus we may assume that the locus of $R_{\chi}$ equals $E$, $E \cdot \ell_{\chi} = -1$, and then $\widetilde{E} \cdot \ell_{\chi} = 0$.
	Then $\chi|_{\widetilde{E}}$ induces a divisorial contraction contracting $E \cap \widetilde{E}$, noting that $K_{\widetilde{E}} \cdot \ell_{\chi} = K_X \cdot \ell_{\chi} < 0$.
	% Let $\ell$ be a fibre of the ruling $S \to \bP^1$ induced by $\varphi$.
	Let $H \coloneqq \pi_*^{-1}(\tau_0^{-1}\ell_{\varphi}) \subseteq \widetilde{E}$ be the $\bP^1$-bundle over $\ell_{\varphi} \cong \bP^1$. % (cf.~Picture~\eqref{pic}).
	Since $H \cap (E \cap \widetilde{E})$ is contracted by $\chi$, our $\chi|_H$ is a (smooth) blow-down on $H$;
	% $H \neq E \cap \widetilde{E}$ and
	hence $\bP_{\ell_{\varphi}}(\cE|_{\ell_{\varphi}}) = \tau_0^{-1}(\ell_{\varphi}) \cong H \cong \bF_1$.
	Now $c_1(\cE)|_{\ell_{\varphi}} = (aS + bF)|_{\ell_{\varphi}} = b - a$ and $-1 \leq a, b \leq 0$ imply $b - a = \pm 1$.

	Since $W_z=\bP_{Y_z}(\cE|_{Y_{z}}) \cong \bP_{Y_z}(\cO \oplus \cO(-k))$ with $k = 0, 1$ or $2$, there are three cases:
	% noting that -1 \leq a,b \leq 0
	\begin{enumerate}
		\item $a = -1, b = 0, c_1(\cE) = -S$ and $\cE|_{Y_z} = \cO \oplus \cO(-1)$ for all $z \in Z$;
		\item $a = 0, b = -1, c_1(\cE) = -F$ and $\cE|_{Y_z} = \cO \oplus \cO$ for all $z \in Z$;
		\item $a = 0, b = -1, c_1(\cE) = -F$ and $\cE|_{Y_z} = \cO(1) \oplus \cO(-1)$ for all $z \in Z$.
	\end{enumerate}
	In Case (3), replacing $\cE$ by $\cE \otimes \cO_Y(-S)$, we may assume
	\begin{enumerate}[label=($\arabic*'$), start=3]
		\item $a = -2, b = -1, c_1(\cE) = -2S - F$ and $\cE|_{Y_z} = \cO \oplus \cO(-2)$ for all $z \in Z$.
	\end{enumerate}

	In all the cases, the function $z \mapsto h^0(Y_z,\cE|_{Y_z})$ is constant on $Z$.
	With the same argument as in the proof of Theorem~\ref{thm-splitting-P1-bundle-over-minimal}, global sections of $\cE|_{Y_z}$ are constant in the cases: (1), (2) and ($3'$).
	Then we get the same exact sequence \eqref{eq-evaluation-sequence-P2}.

	% In the following, we also apply the same notations therein.
	If Case (2) occurs, then $\cE|_{Y_z} \cong \cO \oplus \cO$ for all $z$;
	hence $p_*\cE$ is locally free of rank $2$ (on $\bP^1$) and splits always, which implies that $\cE \cong p^*p_*\cE$ splits.

	If Cases (1) or ($3'$) occur, then $p_*\cE$ is a line bundle, say $\cO_Z(e)$ for some $e \in \bZ$.
	Then $\cF_1 \coloneqq p^*p_*\cE \cong \cO_Y(eF)$ and $\cQ \cong \cO_Y(aS + (b - e)F)$.
	Note that
	\[
		\Ext^1(\cQ, p^*p_*\cE) = H^1(Y, -aS + (2e - b)F) = H^1(Y, K_Y + (-aS - K_Y) + (2e - b)F).
	\]
	Since $(-aS - K_Y) + (2e - b)F \equiv (3 - a)S + (4 - b)F + 2eF$, it is nef and big (for both cases (1) and ($3'$)) when $e \geq 0$.
	%* if a = -1, b = 0, then 4(S + F) + 2eF,
	%* if a = -2, b = -1, then 5(S + F) + 2eF
	By the Kawamata--Viehweg vanishing theorem (cf.~e.g.,~\cite{kollar1998birational}*{Theorem 2.64}), $\Ext^1(\cQ, p^*p_*\cE)$ vanishes if $e \geq 0$.
	So $e \geq 0$ implies that \eqref{eq-evaluation-sequence-P2} and hence $\cE$ split.
	Therefore, we may assume $e < 0$.
	Then $\xi_{\cF_1} \coloneqq 2 c_1(p^*p_*\cF_1) - c_1(\cE) \sim -aS + (2e - b)F$.
	Since $e < 0$, no matter (1) or ($3'$) occurs, $2e - b < 0$.
	So our theorem follows from Claim~\ref{claim-nonempty-wall-P3} and Lemma~\ref{lem-semistable-split}.
	\begin{claim}\label{claim-nonempty-wall-P3}
		$W_{\cE}(\cF_1) \neq \emptyset$ when $t \coloneqq 2e - b < 0$.
	\end{claim}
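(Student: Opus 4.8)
The plan is to turn the wall condition $W_{\cE}(\cF_1)\neq\emptyset$ into a one-variable positivity statement on the two-dimensional cone $\Amp(Y)$, exactly in the spirit of Claims~\ref{claim-nonempty-wall-P2} $\sim$ \ref{claim-nonempty-wall-F1}. First I would record the numerical geometry of $Y$, the blow-up of $Y'\cong\bP^3$ along a line $L$. Writing $\mathcal{H}\coloneqq\varphi^*\cO_{Y'}(1)$, one has $\mathcal{H}=S+F$, since a plane through $L$ has strict transform $\mathcal{H}-S$ and this is precisely the fibre class $F$ of $p$. The standard intersection formulas for the blow-up of a smooth curve in a threefold give $\mathcal{H}^3=1$, $\mathcal{H}^2\cdot S=0$, $\mathcal{H}\cdot S^2=-1$, $S^3=-\deg N_{L/\bP^3}=-2$; equivalently, in the basis $\{S,F\}$ of $\N^1(Y)$,
\[
	S^3=-2,\qquad S^2\cdot F=1,\qquad S\cdot F^2=0,\qquad F^3=0 .
\]
The vanishings $S\cdot F^2=F^3=0$ simply reflect that $F\cong\bP^2$ is a fibre of the morphism $p$ to a curve, so $\cO_Y(F)|_F\cong\cO_F$; together with $S^2\cdot F=(S|_F)^2=(\text{line in }\bP^2)^2=1$ they give the sanity check $F^3=(\mathcal{H}-S)^3=0$. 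Finally, since $Y$ is a Fano threefold of Picard number $2$ whose two extremal contractions are exactly $\varphi$ and $p$, the nef cone $\Nef(Y)$ is spanned by $\mathcal{H}=S+F$ and $F$; hence, after rescaling, every ample divisor on $Y$ is of the form $D(u)\coloneqq S+uF$ with $u>1$.

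Then I would substitute directly. In the two remaining cases one has $\cF_1=p^*p_*\cE=\cO_Y(eF)$ and $c_1(\cE)=aS+bF$ with $a\in\{-1,-2\}$, so $\xi_{\cF_1}=2c_1(\cF_1)-c_1(\cE)\sim -aS+tF$ with $t=2e-b<0$. From the table above, $S\cdot D(u)^2=2u-2$ and $F\cdot D(u)^2=1$, whence
\[
	\xi_{\cF_1}\cdot D(u)^2 = (-a)(2u-2)+t .
\]
Since $-a\in\{1,2\}$ is positive and $t<0$, the right-hand side equals $t<0$ at $u=1$ and is strictly increasing in $u$, so it vanishes at the unique value $u'=1-\dfrac{t}{2(-a)}>1$. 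Thus $D(u')=S+u'F$ is ample and satisfies $\xi_{\cF_1}\cdot D(u')^2=0$, i.e.\ $D(u')^2\in W_{\cE}(\cF_1)$, proving the claim. (Alternatively one invokes the continuity argument used in the earlier claims.)

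The only step requiring genuine care is pinning down the four structure constants of the intersection form on $Y$ --- in particular the signs $\mathcal{H}\cdot S^2=-1$ and $S^3=-2$ --- and the ensuing description of $\Amp(Y)$; after that, the argument is the single displayed identity above followed by a trivial positivity check. I therefore expect this bookkeeping to be essentially the only obstacle, and it is dispatched by the consistency checks above together with the fact that the Fano threefold $Y$ has exactly the two contractions $\varphi$ and $p$.
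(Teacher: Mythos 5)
Your proof is correct and follows essentially the same route as the paper: both parametrize ample classes as $D(u)=S+uF$ with $u>1$, arrive at the identity $\xi_{\cF_1}\cdot D(u)^2=-2a(u-1)+t$, and conclude by monotonicity/continuity since $-2a>0$ and $t<0$. The only (immaterial) difference is bookkeeping: you derive the structure constants $S^3=-2$, $S^2\cdot F=1$, $S\cdot F^2=F^3=0$ by pulling back from $\bP^3$ and using the standard blow-up formulas, whereas the paper reads them off from $\cO_Y(S)|_S\cong\cO_S(-1,1)$.
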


	\noindent
	\textit{\textbf{Proof of Claim~\ref{claim-nonempty-wall-P3}.}}
	Note that any ample divisor $D$ on $Y$ can be written as $D = D(u) \coloneqq S + uF$ with $u > 1$ after replacing $D$ by a multiple.
	So we have the following:
	\begin{align*}
		\xi_{\cF_1} \cdot D(u)^2
		% &= (-aS + tF) \cdot (S + uF)^2 \\
		% &= (-aS + tF) \cdot (S^2 + 2uS \cdot F) \\
		% &= -aS^3 - 2au S^2 \cdot F + tF \cdot S^2 + 2tu S \cdot F^2 \\
		= -aS^3 + (-2au + t) S^2 \cdot F = -2a(u - 1) + t.
	\end{align*}
	The last equality is due to $\cO_Y(S)|_S \cong \cO_S(-1, 1)$, noting that for a fibre $C_0$ of $p|_S$ (which is the second ruling of $S$ different from $\varphi|_S$), we have $S \cdot C_0 = 1$ by the ramification divisor formula of $\varphi$ and the adjunction formula.
	Since $-2a > 0$ and $t < 0$, there exists a real number $u_0 > 1$ such that $\xi_{\cF_1} \cdot D(u_0)^2 = 0$.
	So our claim holds.
\end{proof}
% \begin{proof}
% Let $C_0 \coloneqq (1,0)$ the horizontal curve of $S \to \bP^1$ and $f \coloneqq (0,1)$ the fibre of $S \to \bP^1$, where $S$ is the exceptional divisor.
% Since $Y \to Y'$ is a blow-up along a line, we have $S \cdot f = -1$.
% By the ramification divisor formula for $\varphi$, we have
% \[
% 	K_Y=\varphi^*K_{Y'}+S
% \]
% where $Y' \cong \bP^3$.
% Hence, we have
% \[
% 	K_S \cdot C_0 = (K_Y + S) \cdot C_0 = K_{Y'} \cdot \varphi_*C_0 + 2 \cdot S \cdot C_0.
% \]
% Since $S \cong \bF_0$, the left hand side $= -2$.
% Note that, $K_{Y'} \cdot \varphi_*C_0 = -4$. So we have $S \cdot C_0 = 1$.
% \end{proof}

%
%
%
%

\section{Splitting-ness of algebraic \texorpdfstring{$\bP^1$}{P1}-bundles}
\label{section-splittingness}

Combining with the results in Sections \ref{section-splitting-ness-to-surface} and \ref{section-splitting-ness-to-P3}, our main focus in this section is Theorem~\ref{thm-splitting}, which is a key to proving Theorem~\ref{thm-main}.
% natural generalization of \cite{meng2020nonisomorphic}*{Theorem 6.4}.
% Due to the loss of similar results as in \cite{mori1983fano} for higher-dimensional case, the extension from \cite{meng2020nonisomorphic}*{Theorem 6.4} to our present Theorem~\ref{thm-splitting} is not an easy copy.
\begin{thm}\label{thm-splitting}
	Let $X$ be a smooth Fano fourfold admitting an int-amplified endomorphism $f$.
	Suppose that $\tau \colon X \to Y$ is a conic bundle which factors as $X \xrightarrow{\pi} W \xrightarrow{\tau_0} Y$ as in Notation \ref{notation-splitting-factor}.
	If one of the following holds, then $\tau_0$ is a splitting $\bP^1$-bundle.
	\begin{enumerate}[label=(\arabic*), ref=(\arabic*)]
		\item $\rho(X)-\rho(W) \neq 1$, i.e., either $\tau = \tau_0$ is elementary or $\rho(X)-\rho(W) \geq 2$;
		\item $Y \cong \bP^3$ or the blow-up of a line on $\bP^3$;
		\item $Y$ is a $\bP^1$-bundle over $\bP^2$, $\bF_0$ or $\bF_1$.
	\end{enumerate}

	Furthermore, if $\tau_0$ is not a splitting $\bP^1$-bundle, then $(\ddagger)$ there exists another conic bundle $\widehat{\tau} \colon X \to \widehat{W} \to \widehat{Y}$ with $\widehat{\tau}_0 \colon \widehat{W} \to \widehat{Y}$ being elementary such that $\rho(X)-\rho(\widehat{W}) = 2$.
	In particular, $\widehat{W}$ is a splitting $\bP^1$-bundle over $Y$.
\end{thm}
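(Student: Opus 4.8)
The plan is to dispatch the three listed conditions first and then, for the last assertion, to produce $\widehat\tau$ by descending along a blow-down of the base. Write $r\coloneqq\rho(X)-\rho(W)$, so $\rho(X)-\rho(Y)=r+1$ and, by Theorem~\ref{thm-structure-mmp}, $\Delta_\tau=\bigsqcup_{i=1}^{r}D_i$ with each $D_i$ a $g^{-1}$-invariant smooth rational surface. Condition~(2) is exactly Theorems~\ref{thm-splitting-over-P3} and~\ref{thm-splitting-over-P3-blowup}; condition~(3) is Theorem~\ref{thm-splitting-P1-bundle-over-minimal} (recall $Y=\bP_Z(\cF)$ with $\cF$ splitting by \cite{meng2020nonisomorphic}*{Theorem 6.4}). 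For condition~(1): if $r\ge 3$, then $\rho(X)-\rho(Y)\ge 4$, so by Lemmas~\ref{lem-Rom4.2-geq-4} and~\ref{lem-split-P^1-bundle}, $X\cong S_1\times S_2$ and $\tau_0$ is a splitting $\bP^1$-bundle. Thus, in (1), it remains to treat $r=2$.

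Suppose $r=2$, i.e.~$\rho(X)-\rho(Y)=3$. By Lemma~\ref{lem-Rom4.2-=3}, $W=\bP_Y(\cE)$ and there is a smooth $\bP^1$-bundle $p\colon Y\to Z$ onto a del Pezzo surface $Z$; after iteration $p$ is $g$-equivariant, so $Z$ carries an int-amplified endomorphism and is a toric del Pezzo surface. If $Z\in\{\bP^2,\bF_0,\bF_1\}$, Theorem~\ref{thm-splitting-P1-bundle-over-minimal} applies. If $\rho(Z)\ge 3$, then $Z$ has torus-invariant $(-1)$-curves; since $\Delta_\tau=D_1\sqcup D_2$ has only two components, each a section of $p$ or the pullback of a curve on $Z$ (cf.~\cite{meng2020nonisomorphic}*{Theorem 3.3}, \cite{cascini2020polarized}*{Lemma 7.5}), we may choose such a curve $E_\phi$ with $p^{*}E_\phi\not\subseteq\Supp\Delta_\tau$. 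Blowing it down equivariantly, Lemma~\ref{lem-Z-to-minimal} yields $Y\xrightarrow{\varphi}Y_1\xrightarrow{p_1}Z_1$ with $Y_1$ smooth Fano, $\varphi$ a blow-up along a smooth rational curve, $p_1$ a $\bP^1$-bundle, $\rho(Z_1)=\rho(Z)-1$; since $E_\varphi=p^{*}E_\phi\not\subseteq\Supp\Delta_\tau$, Lemma~\ref{lem-key-reduction} produces a conic bundle $\tau'\colon X'\to Y_1$ of the same shape with $\cE=\varphi^{*}\cE'$ and $\rho(X')-\rho(W')=2$. By induction on $\rho(Y)$ (base $Z_k\in\{\bP^2,\bF_0,\bF_1\}$), $\cE'$ splits, hence so does $\cE=\varphi^{*}\cE'$. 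This proves (1)--(3).

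For the last assertion, assume $\tau_0$ is not a splitting $\bP^1$-bundle. By (1)--(3), $r=1$ (so $\rho(X)=\rho(Y)+2$), and $Y$ is not $\bP^3$, the blow-up of a line on $\bP^3$, or a $\bP^1$-bundle over $\bP^2,\bF_0,\bF_1$; since by Theorem~\ref{thm-threefold-class}(A) these exhaust the toric Fano threefolds of Picard number $\le 2$ carrying an int-amplified endomorphism, $\rho(Y)\ge 3$. By Theorem~\ref{thm-threefold-class}(B) and the classification of smooth toric Fano threefolds, $Y$ admits (after iteration) a blow-down $\varphi\colon Y\to Y'$ onto a smooth Fano threefold along a $g^{-1}$-invariant smooth rational curve $C$; set $\widehat Y\coloneqq Y'$, so $\rho(\widehat Y)=\rho(Y)-1$, and it suffices to exhibit $\widehat W$ with $\widehat W\to\widehat Y$ elementary and $\widehat\pi\colon X\to\widehat W$ a composition of two blow-ups along disjoint smooth surfaces (then $\widehat\tau\coloneqq\widehat\tau_0\circ\widehat\pi$ and $\rho(X)-\rho(\widehat W)=2$). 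If $E_\varphi\not\subseteq\Supp\Delta_\tau=D_1$, Lemma~\ref{lem-key-reduction} gives $W\cong\bP_{Y'}(\cE')\times_{Y'}Y$ with $\psi\colon W\to W'=\bP_{Y'}(\cE')$ a blow-up along the smooth surface $(\tau_0')^{-1}(C)$ and $\cE=\varphi^{*}\cE'$; re-associating the contractions in the tower $X\xrightarrow{\pi}W\xrightarrow{\psi}W'\xrightarrow{\tau_0'}Y'$ (and, when the blow-up centres are not disjoint, first replacing $(X,Y)$ by $(X',Y')$ of smaller $\rho(Y)$ via Lemma~\ref{lem-key-reduction} and inducting, the base cases $\rho(Y')\le 2$ being impossible by (1)--(3)) produces the desired $\widehat\tau$. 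Hence we may assume $\Delta_\tau=D_1=E_\varphi$. In this degenerate case one lifts $R_\varphi$, as in the proof of Lemma~\ref{lem-key-reduction}, to a $K_X$-negative extremal ray $R_\chi$ of $\NE(X)$ with $\tau_{*}R_\chi=R_\varphi$ and $R_\chi\cap F_\tau=\{0\}$, and contracts it by $\chi\colon X\to X'$; the same fibre-dimension estimate shows, via Lemma~\ref{lem-blowup-or-conic}, that $\chi$ is the blow-up of a smooth fourfold $X'$ along a smooth surface, with $E_\chi\in\{E_\pi,\widetilde{E_1}\}$ and $E_\pi\cap\widetilde{E_1}$ contracted by $\chi$ (cf.~the proof of Lemma~\ref{lem-fibre-fano}). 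Using that $\overline{D_1}\cong D_1=E_\varphi$ is a section of $\tau_0$ over $E_\varphi$, one checks that $X'$ is a smooth Fano fourfold with a $\bP^1$-bundle structure over $Y$ and that a further $K_{X'}$-negative blow-down $\eta\colon X'\to\widehat W$, obtained by lifting the $\bP^1$-bundle structure $\tau_0^{-1}(D_1)\to C$, makes $\widehat\tau_0\colon\widehat W\to\widehat Y$ an elementary conic bundle with $\widehat\pi\coloneqq\eta\circ\chi$ a composition of two blow-ups along disjoint smooth surfaces; smoothness and the Fano property of the intermediate models are verified as in the proof of Lemma~\ref{lem-key-reduction}, via \cite{wisniewski1991contractions}*{Propositions 3.4 and 3.6} and \cite{mori1983fano}*{Lemma 4.4}. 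In either case $\rho(X)-\rho(\widehat W)=2\ne 1$, so part~(1) applied to the conic bundle $\widehat\tau$ (again of the form in Notation~\ref{notation-splitting-factor}) shows that $\widehat\tau_0$ is a splitting $\bP^1$-bundle, as asserted.

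The main obstacle is the degenerate case $\Delta_\tau=E_\varphi$: there the descent of Lemma~\ref{lem-key-reduction} is unavailable, since $\psi$ need not be $K_W$-negative and $W'$ need not be $\bQ$-factorial (Remark~\ref{rem-reduction-lem}), so one must run a genuinely different tower of extremal contractions of the Fano fourfold $X$ and, by a Wi\'sniewski-type analysis, keep tight control of the smoothness, Fano property, disjointness of the blown-up surfaces, and fibration structure of every intermediate model. A secondary difficulty, pervading the reductions above, is arranging that the bad configuration $E_\varphi\subseteq\Supp\Delta_\tau$ is avoided whenever the base admits more than one suitable divisorial contraction, which relies on the precise description of $\Delta_\tau$ furnished by Theorem~\ref{thm-structure-mmp}.
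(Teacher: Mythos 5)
Your treatment of conditions (1)--(3) is essentially the paper's: (2) and (3) are Theorems~\ref{thm-splitting-over-P3}, \ref{thm-splitting-over-P3-blowup} and \ref{thm-splitting-P1-bundle-over-minimal}, and your reduction of the case $\rho(X)-\rho(W)=2$ to $Z\in\{\bP^2,\bF_0,\bF_1\}$ via Lemmas~\ref{lem-Z-to-minimal} and \ref{lem-key-reduction} matches Theorem~\ref{thm-splitting-r-geq-2} (you avoid the common component of $E_\varphi$ and $\Delta_\tau$ by picking a different $(-1)$-curve, where the paper rules it out with the Lefschetz defect; both work since $\Delta_\tau$ has only two components).

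The proof of the ``Furthermore'' assertion, however, has a genuine gap: the object you construct is not a conic bundle. Statement $(\ddagger)$ requires a conic bundle $\widehat\tau\colon X\to\widehat Y$, hence an equidimensional fibration with one-dimensional fibres. Your $\widehat\tau$ is the composite $X\xrightarrow{\pi}W\xrightarrow{\psi}W'\xrightarrow{\tau_0'}Y'$ (and its analogue in the degenerate case), which by construction factors through $\varphi\circ\tau\colon X\to Y\to Y'$. Over any point $y'\in C=\varphi(E_\varphi)$ the fibre contains $\tau^{-1}(\varphi^{-1}(y'))$, which is two-dimensional; equivalently, the centre $(\tau_0')^{-1}(C)$ of $\psi$ dominates only a curve in $Y'$, not a divisor, so $\widehat\tau_0\circ\widehat\pi$ cannot be flat of relative dimension one. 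No re-association of the contractions in that tower can repair this, because every map in it lies over $Y'$ through $\varphi$. The same objection applies to your handling of the degenerate case $\Delta_\tau=E_\varphi$, where in addition the key verifications (that $\eta$ exists, that the intermediate models are smooth Fano, that $W'$ is $\bQ$-factorial) are exactly the points flagged as failing in Remark~\ref{rem-reduction-lem} and are only asserted, not proved.

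The paper's mechanism is different in kind: it does not modify the base of $\tau$ at all, but produces a genuinely new extremal contraction of $X$. One shows that a suitable irreducible divisor (e.g.\ $\tau^*\varphi_*^{-1}D_{Y'}$ for a section $D_{Y'}$ disjoint from $\varphi(\Delta_\tau)$, or $\tau^*p^*C$ for a curve $C$ disjoint from $p(\Delta_\tau)$) has small Picard number, forcing the Lefschetz defect $\delta_X\geq 3$; Casagrande's theorem (Lemma~\ref{lem-leftschetz-defect}) then yields a conic bundle $X\to\widehat Y$ with $\rho(X)-\rho(\widehat Y)=3$ onto a possibly different threefold, and Theorems~\ref{thm-structure-mmp} and \ref{thm-splitting-r-geq-2} give $(\ddagger)$. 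This use of the Lefschetz defect is the missing idea in your argument; without it (or some other device producing a new fibration structure on $X$ itself, rather than on $Y$), the second assertion is not established.
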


\begin{rem}\label{rem-before-split}
	% We give the following remarks for the convenience of proofs.
	\begin{enumerate}[label=(\arabic*)]
		\item We assume Notation \ref{notation-splitting} (except $p$) throughout this section.
		\item \label{rem-before-split-2} The first part of Theorem~\ref{thm-splitting} follows from Theorems~\ref{thm-splitting-P1-bundle-over-minimal}, \ref{thm-splitting-over-P3}, \ref{thm-splitting-over-P3-blowup}, and \ref{thm-splitting-W=X}, \ref{thm-splitting-r-geq-2}.
		\item We are unable to show the splitting-ness of $\tau_0$ when $\pi$ is a single blow-up, since Lemma~\ref{lem-key-reduction} cannot be applied if $E_{\varphi}\subseteq\Delta_{\tau}$;
		      see Remark~\ref{rem-reduction-lem}.
		      % However, as a corollary of our main theorem, the splitting-ness holds;
		      % see Corollary~\ref{cor-splitting}.
		      % \item With a suitable adjustment of the conic bundle structure, Theorem~\ref{thm-splitting} is enough for us to show Theorem~\ref{thm-main};
		      % see Section \ref{section-proof-of-main-thm}.
	\end{enumerate}
\end{rem}

\begin{thm}\label{thm-splitting-W=X}
	Suppose that $X = W$ (which is Fano).
	Then $\tau_0$ is a splitting $\bP^1$-bundle.
\end{thm}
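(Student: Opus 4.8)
The plan is to argue by induction on $\rho(Y)$, exploiting the classification of smooth Fano threefolds admitting an int-amplified endomorphism. Here $X = W = \bP_Y(\cE)$ is a Fano fourfold, $\tau = \tau_0 \colon X \to Y$ is an algebraic $\bP^1$-bundle over the smooth Fano threefold $Y$ (so $\Delta_\tau = \emptyset$), and after iteration $f$ descends to an int-amplified endomorphism $g$ on $Y$ (cf.~Notation~\ref{notation-main-reduction-mmp} and Lemmas~\ref{lem-equiv-MMP}, \ref{lem-intamplified}); the goal is to show that $\cE$ splits. By Theorem~\ref{thm-threefold-class}, $Y$ is toric, and after iteration it is of type (A) when $\rho(Y) \le 2$ and of type (B) when $\rho(Y) \ge 3$.

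First I would dispose of the basic cases. If $\rho(Y) \le 2$, then by Theorem~\ref{thm-threefold-class}(A) the threefold $Y$ is $\bP^3$, a (toric) splitting $\bP^1$-bundle over $\bP^2$, or a (toric) blow-up of $\bP^3$ along a line; in these three situations $\cE$ splits by Theorem~\ref{thm-splitting-over-P3}, by Theorem~\ref{thm-splitting-P1-bundle-over-minimal} (taking $Z \cong \bP^2$), and by Theorem~\ref{thm-splitting-over-P3-blowup}, respectively. More generally, whenever $Y$ is a $\bP^1$-bundle over $\bP^2$, $\bF_0$ or $\bF_1$ — in which case $Y = \bP_Z(\cF)$ with $\cF$ splitting (cf.~\cite{meng2020nonisomorphic}*{Theorem 6.4}) — Theorem~\ref{thm-splitting-P1-bundle-over-minimal} again yields the splitting-ness of $\cE$.

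For the inductive step, assume $\rho(Y) \ge 3$ and that the statement is known for all strictly smaller Picard numbers. By Theorem~\ref{thm-threefold-class}(B) and Lemma~\ref{lem-base-Fano}, $Y$ admits a conic bundle onto a toric del Pezzo surface $Z$ which factors through a splitting $\bP^1$-bundle $Y' \to Z$; analysing this structure (and applying Lemma~\ref{lem-Z-to-minimal} to replace $Z$ by $\bP^2$, $\bF_0$ or $\bF_1$ when $Z$ is not minimal) one finds that either $Y$ is a $\bP^1$-bundle over $\bP^2$, $\bF_0$ or $\bF_1$, a case already settled, or $Y$ is imprimitive, say $\varphi \colon Y \to Y'$ is the blow-up of a smooth Fano threefold $Y'$ along a smooth irreducible curve with $\rho(Y') = \rho(Y) - 1$. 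In the latter case, since $\tau = \tau_0$ is a $\bP^1$-bundle we have $\Delta_\tau = \emptyset$, so the extra hypothesis $E_\varphi \not\subset \Supp \Delta_\tau$ of Lemma~\ref{lem-key-reduction} holds vacuously; applying that lemma (with $\pi = \id$, whence $X' = W'$ and $\chi = \psi$) produces an $f$-equivariant (after iteration) blow-up $\chi \colon X \to X'$ onto a Fano fourfold $X' = \bP_{Y'}(\cE')$ together with the identity $\cE = \varphi^*\cE'$. Since $X'$ is again an algebraic $\bP^1$-bundle over the smooth Fano threefold $Y'$, which carries an induced int-amplified endomorphism and has $\rho(Y') < \rho(Y)$, the inductive hypothesis gives that $\cE'$ splits, and hence $\cE = \varphi^*\cE'$ splits too.

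I expect the main obstacle to be the case distinction in the inductive step: one must check, via Theorem~\ref{thm-threefold-class}(B), Lemma~\ref{lem-Z-to-minimal} and the Mori classification of Fano threefolds (cf.~\cite{mori1983fano}), that every $Y$ with $\rho(Y) \ge 3$ which is not a $\bP^1$-bundle over $\bP^2$, $\bF_0$ or $\bF_1$ is imprimitive in the precise sense required by Lemma~\ref{lem-key-reduction} — that is, a single blow-up of a smooth Fano threefold along a smooth irreducible curve. Once this is established, the reduction furnished by Lemma~\ref{lem-key-reduction} together with the three direct splitting theorems of Sections~\ref{section-splitting-ness-to-surface} and \ref{section-splitting-ness-to-P3} closes the argument; it is worth noting that the hypothesis $X = W$ (equivalently, $\Delta_\tau = \emptyset$) is exactly what makes Lemma~\ref{lem-key-reduction} available here, unlike the situation where $\pi$ is a genuine blow-up (cf.~Remark~\ref{rem-reduction-lem}).
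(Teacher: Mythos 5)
Your argument is correct and follows essentially the same route as the paper: reduce, via Theorem~\ref{thm-threefold-class} and the imprimitive/minimal-base reduction lemmas, to the cases where $Y$ is $\bP^3$ or a $\bP^1$-bundle over a minimal rational surface, and then invoke Theorems~\ref{thm-splitting-over-P3} and \ref{thm-splitting-P1-bundle-over-minimal}. The only cosmetic differences are that you use Lemma~\ref{lem-key-reduction} with $\pi=\id$ (which in that degenerate case coincides with the paper's Lemma~\ref{lem-Y-to-minimal}, the hypothesis $E_{\varphi}\not\subset\Supp\Delta_{\tau}$ being vacuous since $\Delta_{\tau}=\emptyset$) and that you handle the blow-up of $\bP^3$ along a line directly via Theorem~\ref{thm-splitting-over-P3-blowup} rather than reducing it to $\bP^3$.
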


\begin{proof}
	By Theorem \ref{thm-threefold-class} and Lemmas~\ref{lem-Z-to-minimal}, \ref{lem-Y-to-minimal}, we only need to consider when $Y$ is $\bP^3$ or a $\bP^1$-bundle over $\bP^2$ or $\bF_0$.
	This is clear by Theorems~\ref{thm-splitting-over-P3} and \ref{thm-splitting-P1-bundle-over-minimal}.
\end{proof}

% Before proving Theorem~\ref{thm-splitting-r-geq-2}, we prepare the following lemma, which will work later on.

% \begin{lem}\label{lem-X-splits-bundle-splits}
% Suppose that $X \cong S_1 \times S_2$ with $S_i$ being del Pezzo surfaces.
% Then both $S_i$ are toric Fano surfaces and $W \cong S_1' \times S_2'$ with $S_i'$ being the smooth blow-downs from $S_i$.
% In particular, $\cE$ splits.
% \end{lem}

% \begin{proof}
% After iteration, $f$ splits into $f_1 \times f_2$ with $f_i$ being int-amplified on $S_i$ (cf.~Lemmas~\ref{lem-equiv-MMP} Lemma~\ref{lem-intamplified}), noting that $\NE(X)$ is rational polyhedral.
% Then both $S_i$ are toric (cf.~\cite{nakayama2002ruled}).
% Since extremal rays of del Pezzo surfaces are clear,
% Clearly, $W$ is a product of del Pezzo surfaces and $Y$ is a product of $\bP^1$ and a del Pezzo surface;
% hence $\cE$ is the pull-back of a splitting sheaf of rank $2$ over $\bP^1$ under the base change and thus splits.
% W \cong \bF_d \times S_2' with d \leq 1, then W \cong Y \times_{\bP^1} \bF_d
% \end{proof}

\begin{thm}\label{thm-splitting-r-geq-2}
	Suppose that $\rho(X)-\rho(W) \geq 2$.
	Then $\tau_0$ is a splitting $\bP^1$-bundle.
\end{thm}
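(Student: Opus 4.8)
The plan is to dispose of the easy sub-case $\rho(X)-\rho(W)\geq 3$ at once, and to reduce the remaining sub-case $\rho(X)-\rho(W)=2$ to Theorem~\ref{thm-splitting-P1-bundle-over-minimal} by running an equivariant minimal model program on the surface lying below $Y$. Set $r\coloneqq\rho(X)-\rho(W)$. Since $\tau_0$ is elementary, $\rho(X)-\rho(Y)=r+1\geq 3$. If $r\geq 3$, then $\rho(X)-\rho(Y)\geq 4$, so $X\cong S_1\times S_2$ with $S_i$ del Pezzo surfaces by Lemma~\ref{lem-Rom4.2-geq-4}, and part~(4) of Lemma~\ref{lem-Rom4.2} then shows that $\tau_0$ is already a splitting $\bP^1$-bundle. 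So from now on I assume $r=2$, i.e.\ $\rho(X)-\rho(Y)=3$.

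In this case Lemma~\ref{lem-Rom4.2-=3} tells us that $\tau_0$ is a smooth $\bP^1$-bundle, so $W=\bP_Y(\cE)$ by Theorem~\ref{thm-structure-mmp-algebraic}, and that $Y$ admits a smooth $\bP^1$-bundle $p\colon Y\to Z$ onto a del Pezzo surface $Z$; after a further iteration $f$ descends along $p$ to an int-amplified endomorphism $g|_Z$ of $Z$ (Lemmas~\ref{lem-equiv-MMP}, \ref{lem-intamplified}), and $\rho(Z)=\rho(Y)-1$. It remains to prove that the rank-$2$ bundle $\cE$ splits, which I would do by induction on $\rho(Z)$. If $Z\cong\bP^2,\bF_0$ or $\bF_1$, then our data (with this $p$) is precisely the setting of Notation~\ref{notation-splitting}, and Theorem~\ref{thm-splitting-P1-bundle-over-minimal} applies directly to give that $\tau_0$ is a splitting $\bP^1$-bundle.

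For the inductive step, suppose $Z$ is del Pezzo but not isomorphic to $\bP^2,\bF_0$ or $\bF_1$; then $Z$ has degree $\leq 7$ and hence carries at least three $(-1)$-curves, whereas $\Delta_\tau=D_1\sqcup D_2$ has only two irreducible components by Theorem~\ref{thm-structure-mmp}. After replacing $f$ by a power we may assume every $(-1)$-curve on $Z$ is $(g|_Z)^{-1}$-invariant, and then choose a $(-1)$-curve $E_\phi$ on $Z$ with $p^*E_\phi$ not a component of $\Delta_\tau$. Contracting $E_\phi$ gives a blow-down $\phi\colon Z\to Z'$ onto a del Pezzo surface $Z'$ with $\rho(Z')=\rho(Z)-1$; by Lemma~\ref{lem-Z-to-minimal} it lifts (after iteration, using Lemma~\ref{lem-equiv-MMP} for equivariance) to a $g$-equivariant blow-up $\varphi\colon Y\to Y'$ of a smooth Fano threefold along a smooth rational curve, with $E_\varphi=p^*E_\phi$ and $p'\colon Y'\to Z'$ again a $\bP^1$-bundle, so $Y$ is imprimitive. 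Since $E_\varphi\not\subseteq\Supp\Delta_\tau$ by construction, Lemma~\ref{lem-key-reduction} applied to this $\varphi$ yields an $f$-equivariant diagram in which $X'$ is a smooth Fano fourfold, $\tau'\colon X'\to Y'$ is a conic bundle factoring as $X'\xrightarrow{\pi'}W'\xrightarrow{\tau_0'}Y'$ with $W'=\bP_{Y'}(\cE')$, $\pi'$ a blow-up along disjoint smooth surfaces, $\tau_0'$ an elementary smooth $\bP^1$-bundle, $\cE=\varphi^*\cE'$ by Lemma~\ref{lem-key-reduction-W'}, and $X\cong X'\times_{Y'}Y$. Here $\rho(X')-\rho(W')=2$ and $Y'$ is a $\bP^1$-bundle over the del Pezzo surface $Z'$ with $\rho(Z')<\rho(Z)$, so the inductive hypothesis gives that $\cE'$ splits; hence $\cE=\varphi^*\cE'$ splits and $\tau_0$ is a splitting $\bP^1$-bundle.

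The step I expect to be the crux is guaranteeing that Lemma~\ref{lem-key-reduction} really is applicable at every stage of the induction, i.e.\ that on the current del Pezzo base one can always contract a $(-1)$-curve whose pullback is not a component of the discriminant. This is exactly why it is essential that the equality $\rho(X)-\rho(W)=2$ is preserved by the reduction — so that, by Theorem~\ref{thm-structure-mmp}, the discriminant never has more than two irreducible components — together with the elementary fact that every del Pezzo surface other than $\bP^2,\bF_0,\bF_1$ has at least three $(-1)$-curves. Were $E_\varphi$ forced to lie in $\Supp\Delta_\tau$, then by Remark~\ref{rem-reduction-lem} the induced contraction $\psi\colon W\to W'$ need not be $K_W$-negative and $W'$ could fail to be $\bQ$-factorial, so the whole reduction would collapse; steering clear of that is the main point.
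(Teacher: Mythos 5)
Your proof is correct and follows the same overall reduction as the paper: dispose of $\rho(X)-\rho(W)\geq 3$ via the product case of Lemma~\ref{lem-Rom4.2}, and for $\rho(X)-\rho(W)=2$ descend along Lemmas~\ref{lem-Z-to-minimal} and \ref{lem-key-reduction}, inducting on $\rho(Z)$ until Theorem~\ref{thm-splitting-P1-bundle-over-minimal} applies. The one genuine divergence is at the crux you correctly identify, namely guaranteeing $E_{\varphi}\not\subset\Supp\Delta_{\tau}$ so that the second half of Lemma~\ref{lem-key-reduction} is available. The paper does not choose the blow-down: it shows that if $E_{\varphi}$ and $\Delta_{\tau}$ shared a component $S$, then the proper transform of $\tau_0^*S$ would be a prime divisor of Picard number $3$ on $X$ while $\rho(X)=\rho(Z)+4\geq 7$, forcing the Lefschetz defect $\delta_X\geq 4$ and hence $X\cong S_1\times S_2$ by Lemma~\ref{lem-leftschetz-defect}, contradicting the standing assumption that $X$ does not split. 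You instead select the $(-1)$-curve to be contracted so that its pullback avoids the (at most two, by Theorem~\ref{thm-structure-mmp}) components of $\Delta_{\tau}$, using that a del Pezzo surface other than $\bP^2$, $\bF_0$, $\bF_1$ carries at least three $(-1)$-curves, all of which are $(g|_Z)^{-1}$-invariant after iteration. Your mechanism is more elementary (no Lefschetz-defect input at this point) and dispenses with the reduction to the non-product case; the paper's version has the mild advantage of working for an arbitrary choice of imprimitive structure on $Y$ rather than a carefully chosen one. Both arguments are valid and yield the same induction on $\rho(Z)$ terminating in Theorem~\ref{thm-splitting-P1-bundle-over-minimal}.
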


\begin{proof}
	If $\rho(X)-\rho(W) \geq 3$, then $X$ is a product of del Pezzo surfaces and thus our theorem follows (cf.~Lemma~\ref{lem-Rom4.2}).
	%(cf.~Lemma~\ref{lem-Rom4.2-geq-4}) and our theorem follows from Lemma~\ref{lem-X-splits-bundle-splits}.
	So we may assume that $\rho(X)-\rho(W) = 2$ and $X$ does not split.
	By Lemma~\ref{lem-Rom4.2-=3}, $Y$ is a smooth $\bP^1$-bundle over a del Pezzo surface $Z$.
	By Lemmas~\ref{lem-Z-to-minimal} and \ref{lem-key-reduction}, we have the following $f$-equivariant commutative diagram after iteration,
	\begin{align}\label{diagram-case(b)}
		\xymatrix@C=5pc{
		X \ar[r]^{\chi} \ar@/_2pc/[dd]_{\tau} \ar[d]^{\pi} \ar[dr]^{\eta} & X' \ar[d]_{\pi'} \ar@/^2pc/[dd]^{\tau'} \\
		W \ar[d]^{\tau_0} \ar[r]_{\psi}                                   & W' \ar[d]_{\tau_0'}                     \\
		Y \ar[r]_{\varphi} \ar[d]^p                                       & Y' \ar[d]_{p'}                          \\
		Z \ar[r]_{\phi}                                                   & Z'
		}
	\end{align}
	where $\phi$ is the blow-up of a point on a Fano surface $Z'$ and $\varphi$ is the blow-up of a smooth curve on a smooth Fano threefold $Y'$ with the exceptional locus $E_{\varphi} = p^* E_{\phi}$.
	By Theorem~\ref{thm-splitting-P1-bundle-over-minimal}, we may further assume $Z$ is a toric Fano surface with degree $6$ or $7$ (cf.~\cite{nakayama2002ruled}*{Theorem 3}).

	We claim that $E_{\varphi}$ and $\Delta_{\tau}$ have no common component.
	Suppose the contrary that there exists a surface $S$ which is a component of both $E_{\varphi}$ and $\Delta_{\tau}$.
	Note that $S$ is a $\bP^1$-bundle over $C \coloneqq \varphi(S)$ and hence $\rho(S) = 2$.
	Consider the proper transform $D \coloneqq \pi_*^{-1}(\tau_0^* S)$, which is a prime divisor of Picard number $3$ on $X$.
	Note also that $\rho(X) = \rho(Z) + 4 \geq 7$.
	Then the Lefschetz defect $\delta_X \geq 4$ and hence $X \cong S_1 \times S_2$ with $S_i$ being del Pezzo surfaces (cf.~Lemma \ref{lem-leftschetz-defect}), contradicting our assumption at the beginning of the proof.

	By Lemma~\ref{lem-key-reduction}, $\tau'$ is a Fano conic bundle, $W' = \bP_{Y'}(\cE')$ and $\cE = \varphi^*\cE'$;
	thus our theorem follows from Theorem~\ref{thm-splitting-P1-bundle-over-minimal} \textbf{Case A: $Z \cong \bP^2$} (and the induction on $\rho(Z)$).
\end{proof}

% We end up this section by showing the corollary below, which is a patch of Theorem~\ref{thm-splitting}.
% \begin{cor}\label{coro-rho(X)=1-new}
% Suppose that none of the conditions $(\ddagger)$ in Theorem~\ref{thm-splitting} holds.
% Then either of the following holds.
% \begin{enumerate}
% 	\item $W$ is a splitting $\bP^1$-bundle over $Y$; or
% 	\item there exists another conic bundle $\widehat{\tau} \colon X \to \widehat{W} \to \widehat{Y}$ with $\widehat{W} \to \widehat{Y}$ being a $\bP^1$-bundle such that $\rho(X) - \rho(\widehat{W}) = 2$.
% 	In particular, $\widehat{W}$ is a splitting $\bP^1$-bundle over $Y$.
% \end{enumerate}
% \end{cor}

\begin{proof}[Proof of Theorem~\ref{thm-splitting}]
	By Remark \ref{rem-before-split} \ref{rem-before-split-2}, we only need to show the second part.
	Since $\tau_0$ is not a splitting $\bP^1$-bundle, we have $\rho(X)-\rho(W) = 1$ (and hence $\Delta_{\tau}$ is irreducible) by the first part.
	In the light of Theorems~\ref{thm-threefold-class}, \ref{thm-splitting-P1-bundle-over-minimal}, \ref{thm-splitting-over-P3}, \ref{thm-splitting-over-P3-blowup} and Lemma~\ref{lem-base-Fano}, we may further assume either of the following.
	\begin{enumerate}[label=(\alph*)]
		\item $Y$ admits a non-elementary conic bundle $Y \to Z$ to a smooth Fano surface $Z$;
		\item $Y$ is a smooth $\bP^1$-bundle over $Z$ where $Z$ is a del Pezzo surface with degree $6$ or $7$.
	\end{enumerate}
	Note that, in both cases, $Z$ is a toric Fano surface (cf.~\cite{nakayama2002ruled}*{Theorem 3}).

	\par \vskip 0.5pc \noindent
	\textbf{\textit{Suppose that (a) occurs.}}
	We will use \cite{mori1983fano}*{Proposition 9.10}.
	Then $\rho(Z) \leq 2$ and we have two small cases: (i) $\Delta_{\tau}$ is not $\varphi$-exceptional, or (ii) $\Delta_{\tau}$ is $\varphi$-exceptional.

	If (i) occurs, Lemma~\ref{lem-key-reduction} and Theorem~\ref{thm-splitting-P1-bundle-over-minimal} would imply $\cE$ splits, a contradiction.

	If (ii) occurs, then we have the following commutative diagram
	\begin{align}
		\xymatrix{
		X \ar[r]_{\pi} \ar@/^1pc/[rr]^{\tau} & W \ar[r]_{\tau_0} & Y \ar[r]_{\varphi} \ar@/^1pc/[rr]^{p} & Y' \ar[r]_{p_0} & Z
		}
	\end{align}
	where $Y'$ is a smooth Fano threefold, $Z\cong\bP^2$, $\bF_0$ or $\bF_1$, $\tau$ and $p$ are singular conic bundles, and $\tau_0$ and $p_0$ are smooth $\bP^1$-bundles (cf.~Theorem \ref{thm-structure-mmp} and \cite{meng2020nonisomorphic}*{Theorem 6.2}).

	\textbf{We claim that the Lefschetz defect $\delta_X \geq 3$.}
	Let $D_{Y'}$ be a section of $p_0$ and disjoint from $\varphi(\Delta_{\tau})$ (cf.~Theorem \ref{thm-threefold-class} \textbf{(B)}).
	Then the irreducible divisor $\tau^*\varphi_*^{-1} D_{Y'}$ has Picard number $\rho(Z) + 1$.
	On the other hand, the inequality $\rho(X) \geq \rho(Z) + 4$ implies that $\delta_X \geq \rho(X) - \rho(\tau^*\varphi_*^{-1} D_{Y'}) \geq 3$;
	hence our claim holds.

	By Lemma \ref{lem-leftschetz-defect}, there exists another Fano conic bundle $X \to \widehat{Y}$ such that $\rho(X)-\rho(\widehat{Y}) = 3$.
	Together with Theorems~\ref{thm-structure-mmp} and \ref{thm-splitting-r-geq-2}, $(\ddagger)$ holds.

	\par \vskip 0.5pc \noindent
	\textbf{\textit{Suppose that (b) occurs.}}
	Then we also have two small cases: (i') $\Delta_{\tau}$ is a section of $p \colon Y \to Z$, or (ii') $\Delta_{\tau}$ is the pullback of an $(f|_Z)^{-1}$-invariant curve on $Z$ (cf.~Corollary~\ref{cor-splitting-to-toric}).

	If (i') happens, then we have the commutative diagram (\ref{diagram-case(b)}).
	Since $\Delta_{\tau}$ is the section of $p$, it is not $\varphi$-exceptional.
	Hence $X' \to Y'$ is a Fano conic bundle, $W \cong W' \times_{Y'} Y$ and $\varphi(\Delta_{\tau})$ is a section of $Y' \to Z'$ (cf.~Lemma~\ref{lem-key-reduction}).
	After one more reduction if necessary, we may assume $Z' \cong \bP^2$.
	So $\cE'$ and hence $\cE$ split by Theorem~\ref{thm-splitting-P1-bundle-over-minimal}, a contradiction.

	If (ii') happens, then choosing a rational curve $C$ disjoint from $p(\Delta_{\tau})$ in $Z$ (this is doable by considering $Z$ as blow-ups from $\bP^2$), we have $\rho(X) - \rho(\tau^*p^*C) = \rho(Z) \geq 3$.
	Applying Lemma \ref{lem-leftschetz-defect} again, there exists another Fano conic bundle $X \to \widehat{Y}$ such that $\rho(X) - \rho(\widehat{Y}) = 3$.
	By Theorem~\ref{thm-splitting-r-geq-2}, we have $(\ddagger)$.
\end{proof}

\section{Proof of Theorem~\ref{thm-main} and Corollary~\ref{cor-splitting}}
\label{section-proof-of-main-thm}
To prove Theorem~\ref{thm-main}, we begin with the theorem below.
For its proof, we follow the idea of \cite{meng2020nonisomorphic}*{Proof of Theorem 8.1}.

\begin{thm}\label{thm-main-int-to-toric}
	Let $X$ be a smooth Fano fourfold admitting a conic bundle $\tau \colon X \to Y$.
	Suppose that $X$ admits an int-amplified endomorphism $f$.
	Then $X$ is toric.
\end{thm}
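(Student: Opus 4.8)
The plan is to reduce the conic bundle $\tau$ to a \emph{splitting} $\bP^1$-bundle over a toric threefold, and then recognize $X$ as a toric blow-up of the resulting toric fourfold.

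First I would apply Theorem~\ref{thm-structure-mmp} to $\tau\colon X\to Y$: after replacing $f$ by a power there is an $f$-equivariant minimal model program $X=X_0\to\cdots\to X_r=W\xrightarrow{\tau_0}Y$ in which $Y$ is a smooth Fano threefold, $\tau_0\colon W=\bP_Y(\cE)\to Y$ is a \emph{smooth} algebraic $\bP^1$-bundle, and $\pi\colon X\to W$ is the blow-up along a disjoint union $\bigsqcup_{i=1}^{r}\overline{D_i}$ of $(f|_W)^{-1}$-invariant smooth rational surfaces with $\tau_0|_{\overline{D_i}}\colon\overline{D_i}\xrightarrow{\ \sim\ }D_i$, where $D_1,\dots,D_r$ are pairwise disjoint $(g\coloneqq f|_Y)^{-1}$-invariant smooth rational surfaces on $Y$. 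If $\tau_0$ is already a splitting $\bP^1$-bundle I keep $\tau$; otherwise, by the last part of Theorem~\ref{thm-splitting}, there is another conic bundle $\widehat\tau\colon X\to\widehat W\to\widehat Y$ with $\widehat\tau_0$ elementary and $\rho(X)-\rho(\widehat W)=2$, so that $\widehat\tau_0$ is a splitting $\bP^1$-bundle (Theorem~\ref{thm-splitting-r-geq-2}); applying Theorem~\ref{thm-structure-mmp} to the conic bundle $X\to\widehat Y$ yields the analogous picture for $\widehat\tau$. Thus, after renaming, I may assume $W=\bP_Y(\cE)$ with $\cE$ a direct sum of two line bundles, $Y$ a smooth Fano threefold, and $\pi\colon X\to W$ the blow-up along $\bigsqcup_i\overline{D_i}$ as above.

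Next, $f|_W$ is int-amplified (Lemmas~\ref{lem-equiv-MMP} and \ref{lem-intamplified}) and $Y$ is rational (\cite{zhang2012rationality}*{Theorem 1.2}); by Corollary~\ref{cor-splitting-to-toric}, $W$ is toric and there is a toric pair $(W,\Delta_W)$ whose boundary $\Delta_W$ contains every $(f|_W)^{-1}$-periodic prime divisor. In particular $\Delta_W$ contains each $\tau_0^{-1}(D_i)$ (which is $(f|_W)^{-1}$-invariant since $D_i$ is $g^{-1}$-invariant) and, after possibly twisting $\cE$, the relevant coordinate sections of $\tau_0$ (cf.~\cite{meng2020nonisomorphic}*{Lemma 2.3}). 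It then remains to check that $\pi$ is a \emph{toric} blow-up, i.e.\ that each $\overline{D_i}$ is invariant under a single big torus of $W$; granting this, $X$ is toric by the construction of toric blow-ups (Notation~\ref{notation}, cf.~also \cite{meng2020nonisomorphic}*{Proposition 2.9}), and the rationality of $X$ in Theorem~\ref{thm-main} follows. To verify it I would restrict everything over $D_i$: the induced map $\tau_0^{-1}(D_i)\to D_i$ is a splitting $\bP^1$-bundle over the toric rational surface $D_i$ (a boundary divisor of a toric threefold, smooth and rational by Theorem~\ref{thm-TID-Fano-3fold}), it carries an int-amplified endomorphism after iteration, and $\overline{D_i}$ is an invariant section of it; by the surface- and $\bP^1$-bundle classification of totally periodic subvarieties (\cite{meng2020nonisomorphic}*{Theorems 3.2 and 3.3} together with Lemma~\ref{lem-split-pdt} and \cite{meng2020nonisomorphic}*{Lemma 2.3}), and a choice of big torus on $W$ compatible with these restrictions, $\overline{D_i}$ is the intersection of $\tau_0^{-1}(D_i)$ with a coordinate section of $\tau_0$, hence a torus-invariant surface in $W$.

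The hard part is exactly this last verification: coordinating the various toric structures so that all the $(f|_W)^{-1}$-invariant surfaces $\overline{D_i}$ become simultaneously invariant under one big torus of $W$. The delicate point is that an $f^{-1}$-invariant section of a trivial $\bP^1$-bundle need not be a coordinate section for a prescribed torus, so both the equivariant minimal model program for $\tau$ and the big torus on $W$ must be chosen with care — this is why the alternative conic bundle $\widehat\tau$ of Theorem~\ref{thm-splitting} is needed, and where the argument follows \cite{meng2020nonisomorphic}*{Proof of Theorem 8.1} most closely. Once $\pi$ is exhibited as a toric blow-up, $X$ is toric, which proves Theorem~\ref{thm-main-int-to-toric} and therefore completes the proofs of Theorem~\ref{thm-main} and Corollary~\ref{cor-splitting}.
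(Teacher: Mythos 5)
Your reduction to the case where $X$ is a blow-up of a splitting $\bP^1$-bundle $W=\bP_Y(\cE)$ over a toric Fano threefold matches the paper (Theorem~\ref{thm-structure-mmp} plus the second part of Theorem~\ref{thm-splitting} and Theorem~\ref{thm-splitting-r-geq-2}), and you correctly identify that everything hinges on showing the blow-up is \emph{toric}, i.e.\ that the centres are invariant under a single big torus. But that step is precisely where your argument stops: you assert that ``by a choice of big torus on $W$ compatible with these restrictions, $\overline{D_i}$ is the intersection of $\tau_0^{-1}(D_i)$ with a coordinate section of $\tau_0$,'' and you yourself flag this as ``the hard part.'' This is a genuine gap. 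An $(f|_W)^{-1}$-invariant subsection of $\tau_0$ over $D_i$ need not extend to a global invariant section of $\tau_0$, let alone a torus-invariant one, and Corollary~\ref{cor-splitting-to-toric} only controls invariant prime \emph{divisors}, not codimension-two invariant surfaces. You also misattribute the fix: the alternative conic bundle $\widehat\tau$ of Theorem~\ref{thm-splitting} is there to handle the case where $\tau_0$ fails to split, not to repair the torus-invariance of the centres.

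The paper closes this gap by a different maneuver. It first produces an $f^{-1}$-invariant section $S$ of $\tau$ dominating $Y$ (via an $f$-periodic point and Lemma~\ref{lem-split-pdt} when $\cE$ is trivial, via the negative section and \cite{meng2020nonisomorphic}*{Lemma 2.3} otherwise), shows each blown-up surface is either contained in or disjoint from the corresponding section $S_r$ of $\tau_0$ (using Lemma~\ref{thm-bh} to rule out intersection along a curve), and then \emph{relabels} $E_i\leftrightarrow\widetilde{E_i}$ so that every $E_i$ meets $S$. Running the other $f$-equivariant contraction $\pi'\colon X\to X'$ that contracts these $E_i$ lands on a splitting $\bP^1$-bundle $X'$ over $Y$ whose blow-up centres are $D_i'=F_i'\cap S'$, intersections of two $(f')^{-1}$-invariant prime divisors; by Corollary~\ref{cor-splitting-to-toric} both lie in a single toric boundary of $X'$, so the centres are automatically torus-invariant and $\pi'$ is a toric blow-up. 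Without this relabel-and-recontract step (or an equivalent argument showing each $\overline{D_i}$ is cut out by two invariant boundary divisors of one toric structure on $W$), your proof does not go through. The paper also disposes of $\rho(X)-\rho(Y)\geq 4$ separately as a product of toric del Pezzo surfaces, which your sketch leaves implicit.
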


\begin{proof}
	We follow the notations in Theorem~\ref{thm-structure-mmp}.
	If $\rho(X)-\rho(Y)= 1$, then $X$ is a splitting $\bP^1$-bundle over the smooth toric Fano threefold $Y$ (cf.~Lemma~\ref{lem-base-Fano}, Theorems~\ref{thm-splitting-W=X} and \ref{thm-threefold-class}); thus $X$ is toric by \cite{meng2020nonisomorphic}*{Proposition 2.9}.
	If $\rho(X)-\rho(Y) \geq 4$, then $X$ is a product of del Pezzo surfaces (cf.~Lemma~\ref{lem-Rom4.2-geq-4}); hence $f = f_1 \times f_2$ after iteration, noting that $\NE(X)$ is rational polyhedral.
	So it follows from Lemma~\ref{lem-intamplified} and \cite{nakayama2002ruled}*{Theorem 3} that both $S_i$ are toric.
	As a result, $X$ is also toric.
	In the following, we may assume $r \coloneqq \rho(X) - \rho(Y) - 1 = 1$ or $2$.
	By Theorem~\ref{thm-structure-mmp}, $\tau^{-1}(D_i) = E_i \cup \widetilde{E_i}$, and $E_i$, $\widetilde{E_i}$ are (smooth) $\bP^1$-bundles over $D_i$, with $E_i \cap \widetilde{E_i}\cong D_i$ being an $f^{-1}$-invariant surface.

	\par \vskip 0.3pc \noindent
	\textbf{Case: $r = 2$.} By Theorem~\ref{thm-splitting-r-geq-2}, $X_r = \bP_Y(\cE)$ is a splitting $\bP^1$-bundle over $Y$;
	hence after a suitable twisting, we may assume $\cE = \cO_Y \oplus \cL$ with $\cL$ being trivial or not pseudo-effective.
	We claim that there is an $f^{-1}$-invariant section $S$ of $\tau$ dominating $Y$.
	Note that $\pi(E_1 \cap \widetilde{E_1})$ is an $f_{r}^{-1}$-invariant subsection over $Y$ (not contracted by $\tau_{0}$) and $\pi(E_1 \cup \widetilde{E_1}) = \tau_{0}^{-1}(D_1)$ is a $\bP^1$-bundle over $D_1$.
	If $\tau_0$ is a trivial bundle so that $X_{r} = Y \times Z \cong Y \times \bP^1$, then $\pi(E_1 \cup \widetilde{E_1}) \cong D_1 \times \bP^1$.
	Now $\pi(E_1\cap \widetilde{E_1})$ is contained in an $f_{r}^{-1}$-invariant horizontal section $S_{r}$ of $\tau_0$ (cf.~\cite{cascini2020polarized}*{Lemma 7.5} and Lemma~\ref{lem-split-pdt} applied to $\pi(E_1 \cup \widetilde{E_1})$).
	If $\tau_0$ is not a trivial bundle, then some section $S_{r}$ has $S_{r}|_{S_{r}} \cong \det \cE = \cL$ being not pseudo-effective and hence is $f_{r}^{-1}$-invariant after iteration by \cite{meng2020nonisomorphic}*{Lemma 2.3}.
	% \cite{zhang2010polarized}*{Lemma 2.9} or

	In both cases, let $S \subseteq X$ be the proper transform of $S_{r}$.
	Note that the surfaces in $W$ blown up by $\pi$ are either contained in $S_r$ or disjoint from $S_r$.
	Indeed, if there is some $\overline{D_i}$ intersects $S_r$ along a curve $C$, which is also $f_r^{-1}$-invariant,
	then the three $f_r^{-1}$-invariant prime divisors $\overline{D_i}$, $\tau_0^{-1}(\tau_0(C))$ and $\tau_0^{-1}(\tau_0(\overline{D_i})) \cap S_r$ in the $f_r^{-1}$-invariant smooth projective threefold $\tau_0^{-1}(\tau_0(\overline{D_i}))$ will have a common intersection curve $C$, a contradiction to Lemma~\ref{thm-bh}.
	So we get the $f^{-1}$-invariant section $S \cong S_{r}$ of $\tau$.

	% The remaining proof is the same as \cite{meng2020nonisomorphic}*{Proof of Theorem 8.1} after replacing \cite{meng2020nonisomorphic}*{Theorems 6.2, 6.4 and 3.3} by Theorems~\ref{thm-structure-mmp}, \ref{thm-splitting-r-geq-2} and Corollary~\ref{cor-splitting-to-toric}, respectively (cf.~\cite{romano2019non}*{Remark 3.6}).

	Since $\pi(S) = Y$, we have $(E_i \cup \widetilde{E_i}) \cap S \neq \emptyset$.
	Hence we may assume $S\cap E_i \neq \emptyset$ for each $i$.
	By Theorem~\ref{thm-structure-mmp} (cf.~\cite{romano2019non}*{Remark 3.6}), after iteration, there is an $f$-equivariant birational morphism $\pi' \colon X \to X'$ over $Y$ contracting all the $E_i$ with $f' \coloneqq f|_{X'}$, such that:
	\begin{enumerate}
		\item the induced morphism $\tau_0' \colon X' \to Y$ is an algebraic $\bP^1$-bundle;
		\item $\pi'$ is the blow-up of $X'$ along $2$ smooth projective surfaces $D_i' \coloneqq F_i' \cap S'$, where $F_i' \coloneqq (\tau_0')^{-1}(D_i)$ and $S' \coloneqq \tau_0'(S)$ are $(f')^{-1}$-invariant prime divisors.
	\end{enumerate}

	Since $\rho(X) - \rho(X') = 2$, it follows from Theorem~\ref{thm-splitting-r-geq-2} that $X'$ is a splitting $\bP^1$-bundle over $Y$;
	thus $X'$ is toric (cf.~\cite{meng2020nonisomorphic}*{Proposition 2.9}).
	By Corollary~\ref{cor-splitting-to-toric}, there is a toric pair $(X', \Delta')$ such that $\Delta'$ contains all the $(f')^{-1}$-invariant prime divisors (including $F_i', S'$).
	By the construction, $\pi'$ is the composition of toric blow-ups of the intersection of prime divisors in the toric boundary starting from $(X', \Delta')$.
	Thus $X$ is toric.

	\par \vskip 0.3pc \noindent
	\textbf{Case: $r = 1$.}
	By Theorem~\ref{thm-structure-mmp}, $\tau$ factors as $\tau_0 \colon X_1 \to Y$ (resp.~$\widetilde{\tau}_0 \colon \widetilde{X_1} \to Y$).
	If both $\tau_0$ and $\widetilde{\tau}_0$ are splitting $\bP^1$-bundles, then we are done with the same argument as above.
	Otherwise, by Theorem~\ref{thm-splitting}, there exists another conic bundle $X \to \widehat{Y}$ such that $\rho(X)-\rho(\widehat{Y}) = 3$.
	So we reduce to \textbf{Case: $r = 2$} and our theorem holds.
\end{proof}

\begin{proof}[Proof of Theorem~\ref{thm-main}]
	Clearly, (2) implies (3).
	By Theorem~\ref{thm-main-int-to-toric}, (3) implies (1).
	Since every projective toric variety has a polarized endomorphism (cf.~\cite{nakayama2002ruled}*{Lemma 4} and \cite{meng2020rigidity}*{Proof of Theorem 1.4}), (1) implies (2).
\end{proof}

\begin{proof}[Proof of Corollary~\ref{cor-splitting}]
	By Theorem~\ref{thm-structure-mmp}, $W = \bP_Y(\cE)$ for some locally free sheaf $\cE$ of rank $2$ on $Y$.
	By Theorem~\ref{thm-main}, $X$ is toric.
	Fix a toric action $G$ on $X$ such that it descends to a unique action $G_Y$ on $Y$ which is $\tau$-equivariant (cf.~\cite{brion2011automorphism}*{Proposition 2.1}).
	Since $X$ is toric and $X \to W$ has connected fibres, by \cite{brion2011automorphism}*{Proposition 2.1} again, $W$ is toric with the (uniquely descended) toric action $G_W$ on $W$.
	By \cite{druel1999structures}*{Lemma 1}, $\cE$ splits.
\end{proof}

% \bibliographystyle{ieeetr}
% \bibliography{JiaRef}

\begin{bibdiv}
	\begin{biblist}

		\bib{amerik2003endomorphisms}{article}{
			author={Amerik, Ekaterina},
			title={On endomorphisms of projective bundles},
			date={2003},
			journal={Manuscripta Mathematica},
			volume={111},
			number={1},
			pages={17\ndash 28},
			url={https://doi.org/10.1007/s00229-002-0347-z},
		}

		\bib{ando1985extremal}{article}{
			author={Ando, Tetsuya},
			title={On extremal rays of the higher dimensional varieties},
			date={1985},
			journal={Inventiones mathematicae},
			volume={81},
			number={2},
			pages={347\ndash 357},
			url={https://doi.org/10.1007/BF01389057},
		}

		\bib{beauville1977varietes}{inproceedings}{
		author={Beauville, Arnaud},
		title={Vari{\'e}t{\'e}s de prym et jacobiennes interm{\'e}diaires},
		date={1977},
		booktitle={Annales scientifiques de l'{\'e}cole normale sup{\'e}rieure},
		volume={4e s{\'e}rie, 10},
		publisher={Elsevier},
		pages={309\ndash 391},
		url={http://www.numdam.org/item/ASENS_1977_4_10_3_309_0},
		}

		\bib{broustet2014singularities}{article}{
			author={Broustet, Ama{\"e}l},
			author={H{\"o}ring, Andreas},
			title={Singularities of varieties admitting an endomorphism},
			date={2014},
			journal={Mathematische Annalen},
			volume={360},
			number={1-2},
			pages={439\ndash 456},
			url={https://doi.org/10.1007/s00208-014-1015-9},
		}

		\bib{brion2011automorphism}{article}{
			author={Brion, Michel},
			title={On automorphism groups of fiber bundles},
			date={2011},
			journal={Publicaciones Matem{\' a}ticas del Uruguay},
			volume={12},
			pages={39\ndash 66},
		}

		\bib{casagrande2008quasi}{article}{
			author={Casagrande, Cinzia},
			title={Quasi-elementary contractions of Fano manifolds},
			date={2008},
			journal={Compositio Mathematica},
			volume={144},
			number={6},
			pages={1429\ndash 1460},
		}

		\bib{casagrande2012picard}{article}{
		author={Casagrande, Cinzia},
		title={On the Picard number of divisors in Fano manifolds},
		date={2012},
		journal={Annales scientifiques de l'{\'E}cole Normale Sup{\'e}rieure},
		volume={Ser.~4, 45},
		number={3},
		pages={363\ndash 403},
		url={http://www.numdam.org/item/ASENS_2012_4_45_3_363_0},
		}

		\bib{cascini2020polarized}{article}{
			author={Cascini, Paolo},
			author={Meng, Sheng},
			author={Zhang, De-Qi},
			title={Polarized endomorphisms of normal projective threefolds in arbitrary characteristic},
			date={2020},
			journal={Mathematische Annalen},
			volume={378},
			pages={637\ndash 665},
			url={https://doi.org/10.1007/s00208-019-01877-6},
		}

		\bib{debarre2001higher}{book}{
			author={Debarre, Olivier},
			title={Higher-dimensional algebraic geometry},
			series={Universitext},
			publisher={Springer-Verlag, New York},
			date={2001},
			isbn={0-387-95227-6},
		}

		\bib{druel1999structures}{article}{
		author={Druel, St{\'e}phane},
		title={Structures de contact sur les vari{\'e}t{\'e}s toriques},
		date={1999},
		journal={Mathematische Annalen},
		volume={313},
		number={3},
		pages={429\ndash 435},
		url={http://link.springer.com/10.1007/s002080050268},
		}

		\bib{fakhruddin2003questions}{article}{
			author={Fakhruddin, Najmuddin},
			title={Questions on self maps of algebraic varieties},
			organization={Citeseer},
			date={2003},
			journal={Journal of the Ramanujan Mathematical Society},
			pages={109\ndash 122},
		}

		\bib{greb2017compact}{article}{
			author={Greb, Daniel},
			author={Toma, Matei},
			title={Compact moduli spaces for slope-semistable sheaves},
			date={2017},
			journal={Algebraic Geometry},
			volume={4},
			number={1},
			pages={40\ndash 78},
		}

		\bib{hartshorne1977algebraic}{book}{
			author={Hartshorne, Robin},
			title={Algebraic geometry},
			series={Graduate Texts in Mathematics},
			publisher={Springer-Verlag, New York},
			date={1977},
			volume={52},
			isbn={0-387-90244-9},
		}

		\bib{hwang2011endomorphisms}{article}{
			author={Hwang, Jun-Muk},
			author={Nakayama, Noboru},
			title={On endomorphisms of Fano manifolds of Picard number one},
			date={2011},
			ISSN={1558-8599},
			journal={Pure and Applied Mathematics Quarterly},
			volume={7},
			number={4},
			pages={1407\ndash 1426},
			url={https://doi-org/10.4310/PAMQ.2011.v7.n4.a15},
		}

		\bib{horing2017totally}{article}{
			author={H{\"o}ring, Andreas},
			title={Totally invariant divisors of endomorphisms of projective spaces},
			date={2017},
			ISSN={1432-1785},
			journal={Manuscripta Mathematica},
			volume={153},
			number={1-2},
			pages={173\ndash 182},
			url={https://doi.org/10.1007/s00229-016-0881-8},
		}

		\bib{kollar1998birational}{book}{
			author={Koll{\'a}r, J{\'a}nos},
			author={Mori, Shigefumi},
			title={Birational geometry of algebraic varieties},
			series={Cambridge Tracts in Mathematics},
			publisher={Cambridge University Press},
			date={1998},
		}

		\bib{krieger2017cohomological}{article}{
		title={Cohomological Conditions on Endomorphisms of Projective Varieties},
		author={Krieger, Holly},
		author={Reschke, Paul},
		journal={Bulletin de la Soci{\'e}t{\'e} Math{\'e}matique de France},
		volume={145},
		number={3},
		pages={449--468},
		year={2017},
		publisher={Soci{\'e}t{\'e} Math{\'e}matique de France},
		}

		\bib{liu2002algebraic}{book}{
			title={Algebraic geometry and arithmetic curves},
			author={Liu, Qing},
			volume={6},
			date={2002},
			publisher={Oxford University Press on Demand},
		}

		\bib{meng2020building}{article}{
			author={Meng, Sheng},
			title={Building blocks of amplified endomorphisms of normal projective varieties},
			date={2020},
			journal={Mathematische Zeitschrift},
			volume={294},
			number={3},
			pages={1727\ndash 1747},
			url={https://doi.org/10.1007/s00209-019-02316-7},
		}

		\bib{mori1983fano}{inproceedings}{
			author={Mori, Shigefumi},
			author={Mukai, Shigeru},
			title={On Fano 3-folds with $B_2 \geq 2$},
			date={1983},
			booktitle={Algebraic varieties and analytic varieties},
			publisher={Mathematical Society of Japan},
			address={Tokyo, Japan},
			pages={101\ndash 129},
			url={https://doi.org/10.2969/aspm/00110101},
		}

		\bib{mori1982threefolds}{inproceedings}{
			title={Threefolds whose canonical bundles are not numerically effective},
			author={Mori, Shigefumi},
			booktitle={Algebraic Threefolds},
			pages={155\ndash 189},
			year={1982},
			publisher={Springer},
		}

		\bib{romano2019characterization}{article}{
			title={A characterization of some Fano $4$-folds through conic fibrations},
			author={Montero, Pedro},
			author={Romano, Eleonora~Anna},
			journal={International Mathematics Research Notices},
			year={2019},
			issn = {1073-7928},
			url={https://doi.org/10.1093/imrn/rnz244},
		}

		\bib{meng2018building}{article}{
			author={Meng, Sheng},
			author={Zhang, De-Qi},
			title={Building blocks of polarized endomorphisms of normal projective varieties},
			date={2018},
			issn={0001-8708},
			journal={Advances in Mathematics},
			volume={325},
			pages={243\ndash 273},
			url={http://www.sciencedirect.com/science/article/pii/S0001870817303468},
		}

		\bib{meng2019characterizations}{article}{
			author={Meng, Sheng},
			author={Zhang, De-Qi},
			title={Characterizations of toric varieties via polarized endomorphisms},
			date={2019},
			ISSN={1432-1823},
			journal={Mathematische Zeitschrift},
			volume={292},
			pages={1223\ndash 1231},
			% doi={10.1007/s00209-018-2160-8},
			url={https://doi.org/10.1007/s00209-018-2160-8},
		}

		\bib{meng2020semi}{article}{
			author={Meng, Sheng},
			author={Zhang, De-Qi},
			title={Semi-group structure of all endomorphisms of a projective variety admitting a polarized endomorphism},
			date={2020},
			journal={Mathematical Research Letters},
			volume={27},
			number={2},
			pages={523\ndash 549},
		}

		\bibitem[MZg20]{meng2020rigidity}
		{Sheng Meng and Guolei Zhong,
			\textit{Rigidity of rationally connected smooth projective varieties from dynamical viewpoints},
			{(2020)},
			{Mathematical Research Letters (to appear)},
			{available at \href{https://arxiv.org/abs/2005.03983}{\texttt{2005.03983}}.}
		}

		% \bib{meng2020rigidity}{article}{
		% 	author={Meng, Sheng},
		% 	author={Zhong, Guolei},
		% 	title={Rigidity of rationally connected smooth projective varieties from dynamical viewpoints},
		% 	date={2020},
		% 	eprint={2005.03983},
		% }

		\bib{meng2020nonisomorphic}{article}{
			author={Meng, Sheng},
			author={Zhang, De-Qi},
			author={Zhong, Guolei},
			title={Non-isomorphic endomorphisms of Fano threefolds},
			date={2022},
			journal={Mathematische Annalen},
			volume={383},
			number={3-4},
			pages={1567\ndash 1596},
			url = {\href{https://doi.org/10.1007/s00208-021-02274-8}{\texttt{doi.org/10.1007/s00208-021-02274-8}}},
		}

		\bib{nakayama2002ruled}{article}{
			author={Nakayama, Noboru},
			title={Ruled surfaces with non-trivial surjective endomorphisms},
			date={2002},
			journal={Kyushu Journal of Mathematics},
			volume={56},
			number={2},
			pages={433\ndash 446},
		}

		\bib{nakayama2010polarized}{article}{
			author={Nakayama, Noboru},
			author={Zhang, De-Qi},
			title={Polarized endomorphisms of complex normal varieties},
			date={2010},
			journal={Mathematische Annalen},
			volume={346},
			number={4},
			pages={991\ndash 1018},
			url={https://doi.org/10.1007/s00208-009-0420-y},
			note = {\href{https://arxiv.org/abs/0908.1688v1}{\texttt{arXiv:0908.1688v1}}},
		}

		\bib{qin1993equivalence}{article}{
			author={Qin, Zhenbo},
			title={Equivalence classes of polarizations and moduli spaces of sheaves},
			date={1993},
			journal={Journal of Differential Geometry},
			volume={37},
			number={2},
			pages={397\ndash 415},
		}

		\bib{romano2019non}{article}{
			author={Romano, Eleonora~Anna},
			title={Non-elementary Fano conic bundles},
			date={2019},
			journal={Collectanea Mathematica},
			volume={70},
			number={1},
			pages={33\ndash 50},
			url={https://doi.org/10.1007/s13348-018-0218-x},
		}

		\bib{romano2019note}{article}{
			title={A note on flatness of some fiber type contractions},
			author={Romano, Eleonora~Anna},
			journal={Proceedings of the Japan Academy, Series A, Mathematical Sciences},
			volume={95},
			number={9},
			pages={103\ndash 106},
			year={2019},
			publisher={The Japan Academy},
			url={https://doi.org/10.3792/pjaa.95.103},
		}

		\bib{sarkisov1983conic}{article}{
			author={Sarkisov, Victor~G},
			title={On conic bundle structures},
			date={1983},
			journal={Mathematics of the USSR-Izvestiya},
			volume={20},
			number={2},
			pages={355\ndash 390},
		}

		\bib{szurek1990fano}{article}{
			author={Szurek, Micha{\l}},
			author={Wi{\'s}niewski, Jaros{\l}aw~A},
			title={Fano bundles of rank $2$ on surfaces},
			date={1990},
			journal={Compositio Mathematica},
			volume={76},
			number={1-2},
			pages={295\ndash 305},
			url={http://www.numdam.org/item/CM_1990__76_1-2_295_0},
		}

		\bib{wisniewski1991contractions}{article}{
			author={Wi{\'s}niewski, Jaros{\l}aw~A},
			title={On contractions of extremal rays of Fano manifolds},
			date={1991},
			journal={Journal f{\"u}r die reine und angewandte Mathematik},
			volume={417},
			pages={141\ndash 158},
		}

		\bib{zhang2010polarized}{article}{
			author={Zhang, De-Qi},
			title={Polarized endomorphisms of uniruled varieties.~with an appendix by Y.~Fujimoto and N.~Nakayama},
			date={2010},
			journal={Compositio Mathematica},
			volume={146},
			number={1},
			pages={145\ndash 168},
		}

		\bib{zhang2012rationality}{article}{
			author={Zhang, De-Qi},
			title={Rationality of rationally connected threefolds admitting non-isomorphic endomorphisms},
			date={2012},
			journal={Transactions of the American Mathematical Society},
			volume={364},
			number={12},
			pages={6315\ndash 6333},
		}
	\end{biblist}
\end{bibdiv}

\end{document}